\numberwithin{equation}{section}
\newtheorem{lemma}{Lemma}[section]
\newtheorem{theorem}{Theorem}[section]
\newtheorem{remark}{Remark}[section]
\newtheorem{definition}{Definition}[section]
\DeclareMathOperator{\Var}{\mathrm{Var}}
\DeclareMathOperator{\Cov}{\mathrm{Cov}}
\DeclareMathOperator{\D}{\mathcal{D}}
\DeclareMathOperator{\N}{\mathcal{N}}
\DeclareMathOperator{\AAA}{\mathrm{A}}
\DeclareMathOperator{\LL}{\mathrm{L}}
\DeclareMathOperator{\CC}{\mathrm{C}}
\DeclareMathOperator{\KK}{\mathrm{K}}
\DeclareMathOperator{\EE}{\mathrm{E}}
\DeclareMathOperator{\J}{\mathrm{J}}
\DeclareMathOperator{\jv}{\pmb{\mathrm{j}}}
\DeclareMathOperator{\iv}{\pmb{\mathrm{i}}}
\DeclareMathOperator{\qv}{\pmb{\mathrm{q}}}
\DeclareMathOperator{\MM}{\mathrm{M}}
\DeclareMathOperator{\II}{\mathrm{I}}
\DeclareMathOperator{\HH}{\mathrm{H}}
\newcommand{\abs}[1]{\left| #1 \right|}
\newcommand{\pS}[1]{\left( #1 \right)}
\newcommand{\pQ}[1]{\left[ #1 \right]}
\begin{document}

\title{Fluctuations of the Nodal Number in the Two-Energy Planar Berry Random Wave Model}

\author{Krzysztof Smutek \orcidlink{0009-0003-3317-0137}\footnote{Department of Mathematics, University of Luxembourg, Luxembourg, \url{krzysztof.smutek@uni.lu} or \url{krzysztof.m.smutek@gmail.com}. The author was supported by the Luxembourg National Research Fund \text{(PRIDE17/1224660/GPS).}}}

\maketitle

\begin{abstract}
We investigate the fluctuations of the nodal number (count of the phase singularities) in a natural extension of the well-known \textit{complex planar Berry Random Wave Model} \cite{Berry2002}, obtained by considering two independent \textit{real Berry Random Waves}, with distinct energies $E_1, E_2 \to \infty$
(at possibly $\neq$ speeds). Our framework relaxes the conditions used in \cite{NPR19}, where the energies were assumed to be identical ($E_1\equiv E_2$). We establish the asymptotic equivalence of the nodal number with its 4-th chaotic projection and prove quantitative Central Limit Theorems (CLTs) in the 1-Wasserstein distance for the univariate and multivariate scenarios. We provide a corresponding qualitative theorem on the convergence to the White Noise in the sense of random distributions. We compute the exact formula for the asymptotic variance of the nodal number with exact constants depending on the choice of the subsequence. We provide a simple and complete characterisation of this dependency through the introduction of the three asymptotic parameters: $r^{log}$, $r$, $r^{exp}$. The corresponding claims in the one-energy model were established in \cite{NPR19, PV19, NPV23}, and we recover them as a special case of our results. Moreover, we establish full-correlations with polyspectra, which are analogues of the full-correlation with \textit{tri-spectrum} that was previously observed for the nodal length \cite{Vidotto2021}.
\end{abstract}

\paragraph{\bf Keywords:} Berry Random Wave Model, Central Limit Theorem, Fourth Moment Theorem, 
Nodal Number, Nodal Length, Phase Singularities, Random Waves, Wasserstein Distance, Wiener-It\^{o}
Chaos Decomposition.

\paragraph{Mathematics Subject Classification (2020):}
60G60, 60B10, 60D05, 58J50, 35P20

\section{Introduction}\label{s.introduction}

The aim of this paper is to characterise the fluctuations of the nodal number (the count of nodal intersections) associated with a two-energy version of the complex \textit{Berry's planar random wave model} (BRWM) \cite{Berry1977, Berry2002}. Given a wave-number $k>0$, the real \textit{Berry Random Wave}
is a centred, Gaussian, a.s. smooth planar random field with 
the covariance function
$$
\mathbb{E}b_k(x)b_k(y) = \J_0(k||x-y||),  \qquad x, y \in \mathbb{R}^2.
$$
Here, $\J_0$ denotes the 0-th order Bessel function of the first kind (see \ref{ss.bffk}) and the equivalent parameter $E\equiv k^2/4\pi^2$ is called energy. The two-energy complex \textit{Berry Random Wave} with (possibly distinct) wave-numbers $k, K > 0$ is a pair of two independent real Berry Random Waves $b_k$, $\hat{b}_K$. The formal introduction of BRWM and its basic properties are given in Subsection \ref{s.introduction.ss.berrys_random_wave_model}. We note that the well-known one-energy BRWM is celebrated for its (conjectured) universal properties \cite{Berry1977, Berry2002, KKW, NPR19, Canzani2020, DNPR22} and its behaviour has been extensively studied \cite{Berry2002, NPR19, Vidotto2021, MN22, Grotto2024fluctuations}. The model considered in this paper is one of its natural extensions. 

We will focus on a specific functional of a complex BRWM,
the random integer known as \textit{nodal number} 
\begin{align*}
    \N(b_k,\hat{b}_K,\D):=
\abs{\{x \in \D : b_k(x)=\hat{b}_K(x)=0\}},
\end{align*}
where $|\cdot|$ denotes the cardinality of a set. 
Here, $\D$ is a fixed, sufficiently well-behaved domain (see Definition \ref{prop_of_domain}). The basic regularity properties of the nodal number are discussed in Subsection \ref{s.introduction.ss.nodal_number}. In \cite{NPR19}, Nourdin, Peccati, and Rossi characterised high-energy ($k\to \infty$) fluctuations of the nodal number in the one-energy ($k\equiv K$) planar BRWM. Our inquiry revolves around a natural question: how does the model's behavior evolve with the introduction of a second parameter ($k\not\equiv K$)? 
In order to study this question, we will consider sequences of pairs of wave-numbers $(k_n,K_n)_{n\in\mathbb{N}}$
such that $2\leq k_n \leq K_n <\infty$, and analyse fluctuations of the corresponding nodal numbers
$$
    \N(b_{k_n},\hat{b}_{K_n},\D)=
\abs{\{x \in \D : b_{k_n}(x)=\hat{b}_{K_n}(x)=0\}},
$$
under the assumption that $k_n \to \infty$ (and so $K_n \to \infty$ as well).

The following list offers a short summary of our results: 
\begin{enumerate}
    \item \textbf{(Mean and variance asymptotics)} 
    In Theorem \ref{t.avar}, we compute the expectation 
    \begin{align*}
    \begin{split}
        \mathbb{E}\N(b_{k_n}, \hat{b}_{K_n},\D)  = \frac{\text{area}(\D)}{4\pi} \cdot (k_n \cdot K_n),        \end{split}
\end{align*}
in line with natural prediction. The above formula is valid for all $n$, without a need to pass to the limit. In order
to obtain a concise characterization of the asymptotic variance
we introduce the asymptotic parameters
$$
r^{log} :=\lim_n \frac{\ln k_n}{\ln K_n}, 
\qquad r := \lim_n \frac{k_n}{K_n}, \qquad
r^{exp} := 1-\lim_n \frac{\ln(1+(K_n-k_n))}{\ln K_n}.
$$
They are guaranteed to exist after, and depend on,
the choice of a subsequence (see Subsection \ref{s.results.ss.parameters}). Using these parameters, we establish in Theorem \ref{t.avar} the exact asymptotic variance formula
$$
\lim_{n\to \infty} \frac{\Var\pS{\N\pS{b_{k_n}, \hat{b}_{K_n}, \D}}}{\mathrm{area}(\D) \cdot C_{\infty} \cdot K_n^2 \ln K_n}  = 1, 
$$
where
\begin{align}\label{first}
 C_{\infty}  := \frac{r^{log}+36r+r^2+50r^{exp}}{512\pi^3}.
\end{align}
We note that $r^{log}$, $r$ and $r^{exp}$ are all finite 
parameters, more precisely $r^{log} \in (0,1]$ whereas 
$r, r^{exp} \in [0,1]$ (for further details see paragraphs following Definition \ref{d.para}). Consequently, the quantity $C_{\infty}$ is 
a strictly positive finite constant belonging to the interval 
$(\frac{1}{512\pi^3},\frac{88}{512\pi^3}]$. We stress that an entire interior 
of this interval can be attained as well as its right-endpoint, but not its left endpoint.
The above extends the results of Nourdin, Peccati and Rossi 
who provided analogous formulas for the one-energy model \cite[p. 103, Theorem 1.4]{NPR19}. The one-energy scenario can be recovered from our formulas by setting $k_n \equiv K_n$
for all $n\in\mathbb{N}$. Indeed, in this case we obtain that
$r^{log}=r=r^2=r^{exp}=1$ and $C_{\infty}$ takes the value 
$\frac{88}{512\pi^3}=\frac{11}{64\pi^3}$. Since $K_n \equiv 2\pi\sqrt{E_n}$ it follows
that $K_n^2\ln K_n = 4\pi^2 E_n\ln(2\pi\sqrt{E_n})
\sim 2\pi^2 E_n \ln E_n$. This yields that $C_{\infty} \cdot K_n^2 \ln K_n = \frac{11}{64\pi^3} \cdot 2\pi^2 E_n\ln E_n = 
\frac{11}{32\pi}E_n \ln E_n$  exactly as in 
\cite[p. 103, Eq. (1.16)]{NPR19}. (See also \cite[p. 3036, Eq. (50)]{Berry2002} and \cite[p. 102, Eq. (1.8)]{NPR19}.)
    \item \textbf{(Domination of the 4-th chaos)} In Theorem \ref{t.clt} we show that for some strictly positive 
    numerical constant $L$, it holds that
        \begin{align*}
\begin{split}
\abs{\abs{\frac{\N(b_{k_n},\hat{b}_{K_n},\D)-\mathbb{E}\N(b_{k_n},\hat{b}_{K_n},\D)}{\sqrt{\Var \N(b_{k_n},\hat{b}_{K_n},\D)}}-\frac{\N(b_{k_n},\hat{b}_{K_n},\D)[4]}{\sqrt{\Var \N(b_{k_n},\hat{b}_{K_n},\D)[4]}}}}_{\LL^2(\mathbb{P})} & \leq \frac{L\cdot \gamma_n}{\sqrt{\ln K_n}} \longrightarrow 0, \\
        \end{split}
    \end{align*}
    and, as a corollary, that 
    $$
\mathrm{Corr}\pS{\N(b_{k_n},\hat{b}_{K_n},\D),\N(b_{k_n},\hat{b}_{K_n},\D)[4]}  \geq \frac{1}{1+\frac{L\cdot \gamma_n}{\sqrt{\ln K_n}}}\longrightarrow 1,
    $$
    with $L$ being the same strictly positive numerical constant. Here,
    $\N(b_{k_n},\hat{b}_{K_n},\D)[4]$ denotes the 4-th chaotic projection (see (\ref{ABCDEF})) and the $(\gamma_n)_{n\in\mathbb{N}}$ is a sequence of asymptotically bounded constants which we define in (\ref{UDI3}).
    This theorem extends and quantifies the result
    of Nourdin, Peccati and Rossi who had shown that the domination of the 4-th chaotic projection holds in the one-energy model \cite[p. 110, Eq. (2.29)]{NPR19}.
    \item \textbf{(Univariate Central Limit Theorem)} 
    In Theorem \ref{t.clt} we prove the convergence in law
        \begin{align*}
\begin{split}\frac{\N(b_{k_n},\hat{b}_{K_n},\D)-\mathbb{E}\N(b_{k_n},\hat{b}_{K_n},\D)}{\sqrt{\Var \N(b_{k_n},\hat{b}_{K_n},\D)}} \overset{d}{\longrightarrow} Z \sim \mathcal{N}(0,1),
        \end{split}
    \end{align*}
    and, more precisely, we provide the following inequality in the 1-Wasserstein distance (see \ref{n.prob_dist})
    \begin{align*}
        \begin{split}
            W_1\pS{\frac{\N(b_{k_n},\hat{b}_{K_n},\D)-\mathbb{E}\N(b_{k_n},\hat{b}_{K_n},\D)}{\sqrt{\Var \N(b_{k_n},\hat{b}_{K_n},\D)}}, Z} & \leq  \frac{L \cdot \gamma_n}{\sqrt{\ln K_n}}. 
        \end{split}
    \end{align*}
    Here, $L$ is some strictly positive numerical constant and, as before, the $(\gamma_n)_{n\in\mathbb{N}}$ is a sequence of asymptotically bounded constants which we define in (\ref{UDI3}). An analogous qualitative CLT has been established before in the one-energy model by Nourdin, Peccati and Rossi \cite[p. 103, Theorem 1.4]{NPR19}. 
    \item \textbf{(Multivariate Central Limit Theorem)} We extend the preceding result to the multivariate setting. Denote
    $$
    \mathbf{Y}_n = \pS{\N(b_{k_n},\hat{b}_{K_n},\D_1), \ldots,\N(b_{k_n},\hat{b}_{K_n},\D_m)},
    $$
    let $\Sigma$ be a matrix defined by $\Sigma_{ij}=\mathrm{area}(\D_i\cap \D_j)$ and write $\Sigma \succ 0$ if $\Sigma$ is strictly positive definite. 
    In Theorem \ref{t.cltm} we establish that,    
    \begin{align*}
        \begin{split}
            \frac{\mathbf{Y}_n-\mathbb{E}\mathbf{Y}_n}{\sqrt{C_{\infty}\cdot K_n^2 \ln K_n}} \overset{d}{\longrightarrow} \mathbf{Z} \sim \mathcal{N}_m(0,\Sigma),
        \end{split}
    \end{align*}
    where $C_{\infty}$ is the same finite constant as in (\ref{first}).
    We quantify this convergence in $C^2$ and 1-Wasserstein distances (see \ref{n.prob_dist}) as 
    \begin{align*}
     d_{C^2}\pS{\frac{\mathbf{Y_n}-\mathbb{E}\mathbf{Y_n}}{\sqrt{C_{\infty}\cdot K_n^2 \ln K_n}}, \mathbf{Z}} & \leq \frac{\widetilde{L}\cdot (1+\sum_{i=1}^m\mathrm{area}(\D_i))}{\sqrt{C_{\infty}\cdot \ln K_n}} , \\
              \mathbf{W}_1\pS{\frac{\mathbf{Y_n}-\mathbb{E}\mathbf{Y_n}}{\sqrt{C_{\infty}\cdot K_n^2 \ln K_n}}, \mathbf{Z}} & \leq 
         \frac{\widetilde{L} \cdot (1+m^{3/2})\cdot 
         (1+\sum_{i=1}^m \mathrm{area}(\D_i)^2)}{\sqrt{C_{\infty}\cdot\ln K_n}} + M_n, \qquad \text{ if } \Sigma \succ 0,
\end{align*}
where $\widetilde{L}$ is some strictly positive numerical constant and the sequence $(M_n)_{n\in \mathbb{N}}$ is defined in (\ref{e.L13B}) and vanishes in the limit, i.e., $M_n \to 0$.
Our result extends and quantifies the multivariate CLT for the one-energy model provided by Vidotto in \cite[p. 1000, Theorem 3.2]{PV19} and relies heavily on crucial arguments presented herein.

    \item \textbf{(White Noise Limit)}
    In theorem \ref{t.whc} we provide an extension of aforementioned multivariate CLT to
    the infinite-dimensional setting. That is, we define a random signed measure
\begin{align*}
    \begin{split}
        \mu_n(A) = \frac{\N(b_{k_n}, \hat{b}_{K_n},A)-\mathbb{E}\N(b_{k_n}, \hat{b}_{K_n},A)}{\sqrt{C_{\infty} \cdot K_n^2\ln K_n}}, \qquad A \in \mathcal{B}([0,1]^2),
    \end{split}
\end{align*}
and show that, in the sense of random generalised functions on $[0,1]^2$ (see Appendix \ref{ss.rdis}), we have a convergence in law
\begin{align*}
    \begin{split}
        \mu_n(dt_1 dt_2) \overset{d}{\longrightarrow} W(dt_1 dt_2).
    \end{split}
\end{align*}
Here, $W$ denotes the White Noise on $[0,1]^2$ and $C_{\infty}$ is the finite constant defined in (\ref{first}). This result extends \cite[p. 97, Proposition 1.3]{NPV23} established for nodal length by Notarnicola, Peccati and Vidotto. 

    \item \textbf{(Full correlations)}  In Theorem \ref{t.fcrr}, we establish the Reduction Principle, 
    \begin{align*}
        \begin{split}    
        \abs{\abs{\frac{\N(b_{k_n},\hat{b}_{K_n},\D)-\mathbb{E}\N(b_{k_n},\hat{b}_{K_n},\D)}{\sqrt{\Var \N(b_{k_n},\hat{b}_{K_n},\D)}}-Y_{r^{log},r,r^{exp}}}}_{\LL^2(\mathbb{P})} & \longrightarrow 0, \\ \mathrm{Corr}\pS{\N(b_{k_n},\hat{b}_{K_n},\D),Y_{r^{log},r,r^{exp}}} & \longrightarrow 1, 
        \end{split}
    \end{align*}
    where $Y_{r^{log}, r,r^{exp}}$ is the following sum of random integrals called polyspectra
    \begin{align}\label{U1}
        \begin{split}
            & -\frac{K_n^2}{192\pi} \Bigg(r^{log}\int_{\D} \HH_{4}\pS{b_{k_n}(x)}dx + r\cdot \int_{\D} \HH_{4}\pS{\hat{b}_{K_n}(x)}+\frac{3}{2} \HH_{2}\pS{b_{k_n}(x)}\HH_{2}\pS{\hat{b}_{K_n}(x)}dx \\
            & \quad + 12r^{exp} \int_{\D}  \HH_2\pS{\widetilde{\partial}_{1}b_{k_n}(x)}\HH_2\pS{\hat{b}_{K_n}(x)}+ 
            \HH_2\pS{b_{k_n}(x)}\HH_2\pS{\widetilde{\partial}_2\hat{b}_{K_n}(x)}dx\Bigg).
        \end{split}
    \end{align}
    Here, $\widetilde{\partial}_1 b_{k_n}(x)$ and $\widetilde{\partial}_2 \hat{b}_{K_n}(x)$ denote the normalised derivatives defined in (\ref{CACAP}). This result extends the Reduction Principle for the nodal length of a real Berry Random Wave, established by Vidotto \cite[p. 3, Theorem 1.1]{Vidotto2021}. The Reduction Principle for a nodal number of a complex Berry Random Wave was previously 
    unknown even in the one-energy ($k_n \equiv K_n$) model (it is covered here by the scenario $r^{exp}>0$). Our Reduction Principle provides simplification beyond what is afforded by the domination of the $4$-th chaotic projection 
    given by Theorem \ref{t.clt}. 
    Observe that the fourth chaotic projection 
    $\N(b_{k_n},\hat{b}_{K_n},\D)[4]$
    consists of $22$ terms (see Lemma \ref{l.cefnn}), while 
    (\ref{U1}) only contains $5$ summands. 
    An unexpected part of this result is the transition 
    from the case $r^{exp}=0$ to the case $r^{exp}>0$ and the corresponding necessity to include a term containing derivatives 
    $$
    \int_{\D}\HH_2\pS{\widetilde{\partial}_{1}b_{k_n}(x)}\HH_2\pS{\hat{b}_{K_n}(x)}+ 
            \HH_2\pS{b_{k_n}(x)}\HH_2\pS{\widetilde{\partial}_2\hat{b}_{K_n}(x)}dx.
    $$
    Previously discovered reduction principles (in analogous situations) \cite{Vidotto2021, MRW17, Cammarota2020} required only an involvement of polyspectra depending directly on the relevant random field and not on its derivative processes. 
    Thus, our result adds a new element to the growing body of research concerning full correlations for the geometric quantities associated with models of random Laplace eigenfunctions \cite{Vidotto2022, Cammarota2020, Cammarota2021, Cammarota2022, RW18, Macci2021, MRW17, Marinucci2020a, Marinucci2021, Fantaye2019, Todino2020, Rudnick2008, Benatar2020}. 

\end{enumerate}
Our results contribute to the expanding literature on random nodal intersections \cite{Zelditch2008, Nazarov2011, Marinucci2021, RW16, DNPR19, NPR19} and other random nodal quantities \cite{KKW, KW18, DEL21, Todino2020, NS15, NS16, NPV23, MW10, MW11, Cammarota2020, Cammarota2021, Cammarota2022, Canzani2020}. Our proofs are grounded in the utilization of the \textit{Wiener-It\^{o}} chaotic expansions and the \textit{Fourth Moment Theorem} on the Wiener Chaos by Nualart-Peccati \cite{NP12}. We make use of the classical \textit{Kac-Rice formula} \cite{AW09}, and of the well-known decomposition into singular and non-singular cubes which was first introduced by Oravecz, Rudnick and Wigman \cite{Oravecz2008}, and then developed in \cite{DNPR19, NPR19}.

\paragraph{Acknowledgment} I would like to express my heartfelt gratitude to Giovanni Peccati for his invaluable guidance and extensive support throughout this project. I am also thankful to Louis Gass for his insightful observations, which were instrumental in identifying the parameters in Subsection 2.1, and to Leonardo Maini for his assistance with the computation in point (e) of the proof of Lemma 7.5.

\subsection{Notation}\label{s.introduction.ss.notation}
    We will use the following standard conventions.
\begin{enumerate}[label=\textbf{N.\arabic*}]
    \item \label{n.sequences} We will write $a_n \to a$ to denote convergence of a numerical sequence $a_n$ to the number $a$. Here, and always unless stated otherwise, 
    $n$ will be a non-negative integer and by convergence we will mean a limit as $n\to \infty$.
    For any two sequences of strictly positive numbers $a_n$, $b_n$, we will write $a_n \sim b_n$ if 
    $\frac{a_n}{b_n} \to 1$. For a finite set $\mathrm{A}$ we will write $\abs{\mathrm{A}}$ to denote the number of its elements. If $A$ is an infinite Borel measurable set  then $\abs{A}$ will denote its Lebesgue measure. Given any set $\mathrm{A}$, the symbol $\mathbb{1}_{\mathrm{A}}$ will denote the characteristic function of the set $\mathrm{A}$,
    that is, $\mathbb{1}_{\mathrm{A}}(b)=1$ if $b \in A$ and $\mathbb{1}_{\mathrm{A}}(b)=0$ if $b\notin \mathrm{A}$. We will write $\delta_a(b)$ for the Kronecker's delta symbol, 
    that is: $\delta_a(b)=1$ if $a=b$ and $\delta_a(b)=0$ if $a\neq b$.
    \item \label{n.random} We will write $X_n \overset{d}{\to} X$ to denote the convergence in distribution of a sequence of random variables $X_n$, to the random variable $X$.
    All considered random variables will be defined on the same standard probability space $(\Omega, \mathcal{F}, \mathbb{P})$, with $\mathbb{E}$ denoting expectation with respect to $\mathbb{P}$ and $\LL^2(\mathbb{P})$
    the corresponding $\LL^2$ space. We will write $dt$ or $ds$ and $dx$ or $dy$ to denote integration with respect to the 1-and-2 dimensional Lebesgue measures.  
    By $\mathcal{B}([0,1]^2)$ we will denote the Borel $\sigma$-algebra on $[0,1]^2$. Given a non-negative definite $m\times m$ matrix $\Sigma$, we will write
    $\mathbf{Z} \sim \mathcal{N}_m(0,\Sigma)$, $\mathbf{Z}=(Z_1, \ldots, Z_m)$, to denote the $m$-dimensional centred Gaussian random vector with covariance matrix $\Sigma$.
    \item \label{n.correlation} For any non-trivial square-integrable random variables $X$, $Y$, the symbol $\mathrm{Corr}(X,Y)$ will denote
    the standard correlation coefficient. That is, 
\begin{align}\label{d.correlation_coefficient}
    \begin{split}
        \mathrm{Corr}(X,Y) := \frac{\mathbb{E}\pQ{(X-\mathbb{E}X)(Y-\mathbb{E}Y)}}{\sqrt{\Var X}\cdot \sqrt{\Var Y}}, \qquad X, Y \in \LL^2(\mathbb{P}).
    \end{split}
\end{align} 
\item \label{n.prob_dist} By $W_1$ and $\mathbf{W}_1$ we will denote $1$-Wasserstein distance for, respectively, real-valued and vector-valued integrable random variables and by $d_{C^2}$ the distance induced by the separating class of $C^2$ functions with $1$-Lipschitz second partial derivatives. 
That is, 
\begin{align}
        W_1(X,Y)&  := \sup_{\abs{h'}_{\infty} \leq 1}\mathbb{E}\pQ{h(X)}-\mathbb{E}\pQ{h(Y)}, \label{d.Wasserstein_distance_in_1_dimension} \\
        \mathbf{W}_1(\mathbf{X},\mathbf{Y}) & := \sup_{\abs{\abs{f'}}_{\infty} \leq 1}\mathbb{E}\pQ{f(\mathbf{X})}-\mathbb{E}\pQ{f(\mathbf{Y})},\label{d.Wasserstein_distance_general_dimension} \\
        d_{C^2}(\mathbf{X},\mathbf{Y})&:= 
        \sup_{g \in C^2(\mathbb{R}^m), ||g''||\leq 1}
        \mathbb{E}\pQ{g(\mathbf{X})}-\mathbb{E}\pQ{g(\mathbf{Y})},\label{UDI1}
\end{align}
where $\mathbb{E}\abs{X}, \mathbb{E}\abs{Y}< \infty$ and
$\mathbb{E}\abs{\abs{\mathbf{X}}}, \mathbb{E}\abs{\abs{\mathbf{Y}}} < \infty$.
Here, the suprema run, respectively: over all $1$-Lipschitz functions $h:\mathbb{R}\to\mathbb{R}$, all $1$-Lipschitz functions $f:\mathbb{R}\to\mathbb{R}^m$ and all $g \in C^2(\mathbb{R}^m)$ s.t. $||g''||_{\infty}\leq 1$. 
Here, $\abs{\cdot}$ denotes absolute value, $\abs{\abs{\cdot}}$ denotes standard Euclidean norm on $\mathbb{R}^m$, and 
\begin{align}\label{UDI2}
    \begin{split}
        |h'|_{\infty} & := \sup_{x,y \in \mathbb{R},\hspace{1 mm} x \neq y} \frac{\abs{h(x)-h(y)}}{|x-y|}, \qquad 
        \abs{\abs{f'}}_{\infty} := \sup_{\mathbf{x}, \mathbf{y} \in \mathbb{R}^m,\hspace{1 mm} \mathbf{x} \neq \mathbf{y}} \frac{\abs{\abs{f(\mathbf{x})-f(\mathbf{y})}}}{\abs{\abs{\mathbf{x}-\mathbf{y}}}}, \\
 {\abs{\abs{g''}}}_{\infty} & := \sup_{\mathbf{x} \in \mathbb{R}^m} \quad \max_{1 \leq i,j \leq m} \quad \abs{\frac{\partial^2}{\partial x_i \partial x_j}g(\mathbf{x})}.
    \end{split}
\end{align}
\item \label{n.matrix_norms} The symbol $||\cdot||_{op}$ stands for the operator norm defined,
for any positive-definite matrix $\mathrm{A}$, as $||\mathrm{A}||_{\mathrm{op}}:= \sup\{||\mathrm{A}x|| : x \in \mathbb{R}^m, ||x||\leq 1\}$. The symbol $||\cdot||_{HS}$ stands for the Hilbert-Schmidt norm
defined, for any positive-definite matrix $\mathrm{A}$, as $||A||_{HS}:=\sqrt{\text{trace}(AA^{tr})}$ with $A^{tr}$ being a transpose of the matrix $A$.
\item \label{n.diameter} Given a domain $\D \subset \mathbb{R}^2$ we will write
\begin{align}
    \mathrm{diam}(\D) := \sup_{x, y \in \D}||x-y||.
\end{align}
\end{enumerate}

\subsection{Berry's random wave model}\label{s.introduction.ss.berrys_random_wave_model}

The \textit{Berry random wave} with wave-number 
$k>0$ (equivalently, with energy $\EE>0$, where $k\equiv 2\pi \sqrt{\EE}$)
is the unique (in distribution) real-valued planar random field
\begin{align}
    \begin{split}
        b_k = \{b_{k}(x) :  x \in \mathbb{R}^2\},
    \end{split}
\end{align}
which is centred, Gaussian and
with covariance kernel 
\begin{align}
    \begin{split}
        \mathbb{E}\pQ{b_k(x)b_k(y)} = \J_0\pS{k||x-y||}, 
        \qquad x, y \in \mathbb{R}^2, 
    \end{split}
\end{align}
where $\J_0$ is the 0-th order Bessel function of the 
first kind. (By definition, one has that 
\begin{align}\label{d.bessel_function_as_integral}
    \begin{split}
        \J_0(k||x-y||) & = 
        \int_{\mathbb{S}^1}e^{ik\langle x-y, \theta\rangle}\frac{d\theta}{2\pi},
    \end{split}
\end{align}
where $d\theta$ denotes integration with respect to the uniform measure
on the unit circle.) The real Berry random wave is an a.s. smooth random eigenfunction of
the Laplace operator on the plane with eigenvalue $-k^2$, 
that is, it solves the Helmholtz equation 
\begin{align}\label{e.laeq}
    \begin{split}
        \triangle b_{k}(x)  + k^2 b_{k}(x)=0, \qquad x \in \mathbb{R}^2, \qquad \text{a.s.-}\mathbb{P}. 
    \end{split}
\end{align}
Using for example \cite[Theorem 5.7.2]{AT09}, one can prove that $b_k$ is the unique (in distribution) solution to the equation (\ref{e.laeq}) which is a real-valued, stationary, centred and isotropic Gaussian random field with unit variance. 
In particular, this yields
\begin{align}
    \begin{split}
    \{b_{k}(x) : x \in \mathbb{R}^2\} \overset{Law}{=}
    \{b_{1}(kx) : x \in \mathbb{R}^2\}.
    \end{split}
\end{align}
 In this work, we will consider sequences of wave-numbers $(k_n,K_n)_{n\in \mathbb{N}}$ s.t. $2 \leq k_n \leq K_n< \infty$ and
$k_n \to \infty$ together with the corresponding pairs of independent Berry Random Waves $b_{k_n}$, $\hat{b}_{K_n}$. 
We will study fluctuations of the corresponding geometric quantity known as nodal number, as a function 
of asymptotic relationship between $k_n$ and $K_n$. The cutoff $k_n\geq 2$ is chosen for convenience.

\subsection{Nodal number}\label{s.introduction.ss.nodal_number}

The following definition introduces the main object of our work.

\begin{definition}\label{prop_of_domain}
     Let $\D$ be a convex compact domain of the plane, 
    with non-empty interior and piecewise $C^1$ boundary $\partial \D$. 
    Let $b_k$, $\hat{b}_K$ be two independent Berry random waves with wave-numbers 
    $0 < k\leq K < \infty$. We define the corresponding \textit{nodal number} as the random variable
    \begin{align}\label{d.nn}
        \begin{split}
            \N(b_{k},\hat{b}_{K},\D) :=\abs{\{x \in \D : b_{k}(x)=\hat{b}_{K}(x)=0\}}.
        \end{split}
    \end{align}
\end{definition}
The following lemma is the starting point of our analysis. 
\begin{lemma}\label{l.appr}
    Let $\D$ be a convex compact domain of the plane, 
    with non-empty interior and piecewise $C^1$ boundary $\partial \D$. 
     Let $b_k$, $\hat{b}_K$ be two independent Berry random waves with wave-numbers 
    $2 \leq k\leq K < \infty$. Then, the corresponding nodal number $ \N(b_{k},\hat{b}_{K},\D)$
    is an a.s. finite r.v. with finite variance. 
    Moreover, the boundary $\partial \D$ does not contribute to the nodal number, that is 
    \begin{align}
        \begin{split}
            \mathbb{P}\pS{\exists x \in \partial\D : b_{k}(x)=\hat{b}_{K}(x)=0}=0.
        \end{split}
    \end{align} 
	Furthermore, if we set 
    \begin{align}\label{d.varepsilon_approx}
        \begin{split}
            \N^{\varepsilon}(b_{k},\hat{b}_K, \D) =
		 \frac{1}{(2\varepsilon)^2}\int_{\D} \mathbb{1}_{\{|b_{k}(x)|\leq \varepsilon \}}\cdot \mathbb{1}_{\{|\hat{b}_{K}(x)|\leq \varepsilon \}}\cdot\abs{\det 
   \begin{bmatrix}
    \partial_1 b_k(x) & \partial_2 b_k(x) \\  
    \partial_1 \hat{b}_K(x) & \partial_2 \hat{b}_K(x) \\   
   \end{bmatrix}}dx,
        \end{split}
    \end{align}
    then a.s. and in $\LL^2(\mathbb{P})$ we have 
    \begin{align}\label{CCC}
        \begin{split}
        \N\pS{b_{k}, \hat{b}_K, \D} = \lim_{\varepsilon \downarrow 0} \N^{\varepsilon}(b_{k}, \hat{b}_K,\D).
        \end{split}
    \end{align}
\end{lemma}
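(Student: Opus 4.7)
The plan is to treat the four claims of the lemma in a single sweep using a Kac--Rice / Bulinskaya argument applied to the Gaussian vector field $F := (b_k, \hat{b}_K) : \mathbb{R}^2 \to \mathbb{R}^2$. The structural input is that $F$ is a.s. $C^{\infty}$ (Berry waves are a.s. smooth) and that, by independence, stationarity and isotropy, the centred Gaussian vector $(F(x), \partial_1 b_k(x), \partial_2 b_k(x), \partial_1 \hat{b}_K(x), \partial_2 \hat{b}_K(x))$ has a \emph{block-diagonal} covariance matrix: within each block one has $\mathbb{E}[b_k(x)\nabla b_k(x)] = 0$ (stationarity) and $\mathbb{E}[\nabla b_k(x) \nabla b_k(x)^{\top}] = (k^2/2)\, I_2$ (isotropy), and the two blocks are decoupled by independence. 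In particular the density of $F(x)$ at the origin equals the constant $1/(2\pi)$, and at every zero of $F$ the Jacobian $DF$ is a.s. invertible, so zeros are locally isolated by the inverse function theorem.

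First I would dispose of the a.s. finiteness of $\N(b_k,\hat{b}_K,\D)$ and of the boundary statement simultaneously. Since zeros of $F$ are a.s. isolated and $\D$ is compact, there can be only finitely many of them in $\D$, giving a.s. finiteness. For the boundary I would cover $\partial\D$ by the finitely many $C^1$ arcs $\gamma : [0,L] \to \mathbb{R}^2$ provided by Definition \ref{prop_of_domain} and observe that $F \circ \gamma : [0,L] \to \mathbb{R}^2$ is an a.s. smooth centred Gaussian map from dimension $1$ into dimension $2$ whose pointwise density at $0$ is bounded; a one-dimensional Bulinskaya argument (the zero set of a non-degenerate Gaussian map from a lower-dimensional parameter space is a.s. empty) forces the expected number of $t \in [0,L]$ with $F(\gamma(t)) = 0$ to vanish, and a union bound over the finitely many arcs of $\partial \D$ concludes.

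For the approximation identity (\ref{CCC}), the area formula applied pathwise to the smooth map $F$ yields
\begin{equation*}
\N^{\varepsilon}(b_k,\hat{b}_K,\D) \;=\; \frac{1}{(2\varepsilon)^2}\int_{[-\varepsilon,\varepsilon]^2}\#\{x\in\D : F(x)=y\}\, dy.
\end{equation*}
Outside a $\mathbb{P}$-null event, $F$ has no zeros on $\partial\D$ and only transversal zeros inside $\D$; the inverse function theorem then guarantees that $y \mapsto \#\{x \in \D : F(x) = y\}$ is continuous at $y = 0$, so the right-hand side converges to $\N(b_k,\hat{b}_K,\D)$ as $\varepsilon \downarrow 0$, proving the a.s. convergence.

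For the finite variance and the $L^2$ convergence I would establish the uniform bound $\sup_{\varepsilon > 0}\mathbb{E}[\N^{\varepsilon}(b_k,\hat{b}_K,\D)^{2}] < \infty$ and combine it with the a.s. convergence above via Vitali. Writing
\begin{equation*}
\mathbb{E}\bigl[\N^{\varepsilon}(b_k,\hat{b}_K,\D)^{2}\bigr] \;=\; \frac{1}{(2\varepsilon)^4}\int_{\D\times\D}\mathbb{E}\bigl[\mathbb{1}_{\|F(x)\|_{\infty}\leq\varepsilon}\,\mathbb{1}_{\|F(y)\|_{\infty}\leq\varepsilon}\,|\det DF(x)|\,|\det DF(y)|\bigr]\, dx\, dy,
\end{equation*}
the integrand converges pointwise to the two-point Kac--Rice density
\begin{equation*}
K_2(x,y) \;:=\; p_{F(x),F(y)}(0,0)\,\mathbb{E}\bigl[\,|\det DF(x)|\,|\det DF(y)|\,\big|\,F(x)=F(y)=0\,\bigr],
\end{equation*}
and the main obstacle is checking that $K_2$ is integrable on $\D \times \D$. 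Off-diagonal non-degeneracy of $(F(x),F(y))$ is automatic from independence together with $|\J_0(k\|x-y\|)| < 1$ for $x \neq y$; the delicate point is the diagonal, where the Gram matrix of $(F(x),F(y))$ degenerates. There I would Taylor-expand $\J_0$ and its first two derivatives at the origin and exploit the block-diagonal structure to extract explicit cancellations showing that $K_2$ remains locally bounded (in fact smooth) near the diagonal. Finite variance then follows from the uniform $L^2$ bound by Fatou, and the same estimates will reappear in sharpened form when computing the variance asymptotic of Theorem \ref{t.avar}.
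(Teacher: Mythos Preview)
Your architecture coincides with the paper's: Bulinskaya-type nondegeneracy for the boundary and for the Jacobian at zeros, the inverse function theorem for a.s.\ finiteness and the a.s.\ part of (\ref{CCC}), and a Kac--Rice two-point analysis for the second moment. For the boundary the paper simply invokes a Hausdorff-dimension result from \cite{AT09} rather than your arc-by-arc argument, but these are interchangeable. Your near-diagonal claim that $K_2$ stays bounded is correct (the $|x-y|^{-2}$ blow-up of the joint density is cancelled by the conditional Jacobian factor; the paper does exactly this computation later in the proof of Lemma \ref{l.lcan}).

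The one genuine soft spot is the $L^2$ step. You assert the uniform bound $\sup_{\varepsilon}\mathbb{E}[(\N^{\varepsilon})^2]<\infty$ and invoke Vitali, but your justification is that the $\varepsilon$-integrand converges pointwise to $K_2$ and $K_2$ is integrable. That implication is false in general: pointwise convergence of integrands plus integrability of the limit gives neither a uniform bound nor dominated convergence. You would need to dominate the $\varepsilon$-integrand explicitly (e.g.\ rewrite it as an average over $(u,v)\in[-\varepsilon,\varepsilon]^{4}$ of level-$(u,v)$ two-point densities and bound that average uniformly in $\varepsilon$ by the same Taylor analysis you propose for $K_2$ at level $0$). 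Also note that a mere second-moment bound does not give uniform integrability of $(\N^{\varepsilon})^2$, so Vitali as stated is not quite enough either. The paper sidesteps both issues cleanly: it uses the area-formula identity $\N^{\varepsilon}=(2\varepsilon)^{-2}\int_{[-\varepsilon,\varepsilon]^2}\N_{s}\,ds$, applies Fatou and Jensen to get $\mathbb{E}[\N^2]\le\limsup_{\varepsilon}\mathbb{E}[(\N^{\varepsilon})^2]\le\limsup_{\varepsilon}(2\varepsilon)^{-2}\int\mathbb{E}[\N_{s}^2]\,ds$, and then reduces everything to the continuity of $s\mapsto\mathbb{E}[\N_{s}^2]$ at $s=0$, which follows from the standard Kac--Rice formula. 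This yields convergence of second moments, and together with a.s.\ convergence that is equivalent to $L^2$ convergence.
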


The proof of this lemma is standard and is given in Appendix \ref{ss.aac}.

\subsection{Outline of the Paper}\label{outline}

The rest of the paper is organised as follows. Section \ref{s.results} presents a detailed exposition of the main results, which were briefly introduced earlier as a concise list (therein, also the previous results on the model have been summarised). 

Section \ref{s.related_models} discusses related models such as \textit{Random Spherical Harmonics} and \textit{Arithmetic Random Waves}, and how their nodal statistics compare to those in the two-energy \textit{Berry Random Wave Model}. While no new results are introduced, this section situates the current work within the broader context of random wave theory.

Section \ref{s.preliminaries} reviews several standard technical notions, for instance \textit{Wiener Chaos Decomposition}, and the \textit{Fourth Moment Theorem}. In addition, this section introduces some non-standard notation designed to streamline later computations, particularly in relation to chaotic expansions and covariance estimates. 

The proofs of our new results start in Section \ref{s.expectation_2nd_chaos_recurrence} which provides some basic computations. Section \ref{s.domination_4th_chaos} builds on the well-known method (\textit{decomposition into singular and non-singular pairs of cubes}), but the presence of a second energy parameter introduces substantial technical complications. Section \ref{s.computation_of_the_asymptotic_variance} tackles the challenging task of not only determining the asymptotic order of the variance but also identifying the exact asymptotic constant. Section \ref{s.proofs_of_the_distributional_convergences} first establishes the univariate \textit{Central Limit
Theorem} (CLT) for the number of nodal points, and then extends this result to the multivariate case. The section concludes by proving that the number of nodal intersections converges to white noise in the sense of random generalised functions. The final section \ref{s.proof_of_the_reduction_principle} is devoted to the proof of the \textit{Reduction Principle} (i.e., the \textit{Full Correlation Phenomena}). 

Appendix \ref{s.appendix} contains additional details about the properties of 
Bessel functions and recalls some facts about the random distributions (i.e., random generalised functions). Moreover, it includes some computation related to the covariance function of the field and its derivatives as well as proof of Lemma \ref{l.appr}.


We conclude this outline by noting that one of the main contributions of this paper is an efficient
exploiting of the symmetries hidden inside the problem. Subsection \ref{s.preliminaries.symmetric_indexation} introduces notation that puts this idea into practice. 
The first important application of it appears in Lemma \ref{l.appf} especially in formulas (\ref{e.ABC}), which is then continued by Lemma \ref{l.spac} and by formula (\ref{e.cova}) in Lemma \ref{l.cova}. The latter formula makes the proof of Lemma \ref{l.72} a relatively easy and explicit exercise. This is also replicated in the proof of Theorem \ref{t.fcrr}. An important but separate use of this strategy is also present in the steps 2--4 of the proof of Lemma \ref{l.lcan}. These computations can be compared with strategies used in 
\cite{NPR19, DNPR19, PV19, Vidotto2021, DEL21, Dalmao2023, Grotto2024fluctuations}.
We stress that our proof of the univariate CLT does not take advantage of the full correlation phenomena. 

\section{Results}\label{s.results}

\subsection{Parameters}\label{s.results.ss.parameters}

We start by putting forward a class of ancillary
parameters that will play a crucial role in our analysis. 
In anticipation, we note that the superscripts `log'
and `exp' in the definition below are suggestive of 
the transformation of scale/re-parametrisation of 
$k_n$ and $K_n$ which, in relevant cases, allows one to 
detect the fine details of fluctuations. However, 
we stress that always $r^{log}\neq \ln r$ and that
typically $r^{exp} \neq e^r$.

\begin{definition}
For a sequence of pairs of numbers $(k_n, K_n)_{n\in \mathbb{N}}$ s.t. $2 \leq k_n \leq K_n < \infty$ we will write 
\begin{align}\label{d.para}
\begin{split}
       r^{log}_n := \frac{\ln k_n}{\ln K_n},  \qquad r_n   := \frac{k_n}{K_n},  \qquad r^{exp}_n   := 1-\frac{\ln(1+(K_n-k_n))}{\ln K_n}, \\
\end{split}
\end{align}
and provided that corresponding limits exist 
\begin{align}
    \begin{split}
        \label{d.aspa}
	r^{log} := \lim_{n \to \infty}r_n^{log}, \qquad r := \lim_{n\to\infty} r_n, \qquad    
    r^{exp} :=\lim_{n \to \infty} r^{exp}_n.
    \end{split}
\end{align}
\end{definition}

Since, by definition, we have $0 \leq  r_n^{log}, r_n,  r^{exp}_n \leq 1$, it is always possible to choose a sub-sequence of $(k_n,K_n)_{n \geq 1}$ for which 
limits $r^{log},r, r^{exp}$ from the above definition exist. 

We note also that $r<1$ implies $r^{exp}=0$. Indeed, if $r<1$, then for all $n$ sufficiently large we have
$r_n<1$. Thus, we can write 
$$
r^{exp}_n  = - \frac{\ln\pS{(1-r_n)+K_n^{-1}}}{\ln K_n} \sim - \frac{\ln (1-r_n)}{\ln K_n} \longrightarrow 0. 
$$

On the other hand, if $r=1$, then $r^{exp}$ can take any value in the interval $[0,1]$. Indeed, 
for any $r^{exp} \in [0,1]$ we can find a sequence of numbers $(\beta_n)_{n\in \mathbb{N}} \subset (0,1)$
such that $\beta_n \to r^{exp}$ and such that $n^{\beta_n} \to \infty$.
Then, for $n$ large enough, we can set $K_n= n$ and $k_n=n-n^{1-\beta_n}+1$.
This yields $K_n - k_n = n^{1-\beta_n}-1$ and further  
$$
r^{exp}_n = 1- \frac{\ln(1+(K_n-k_n))}{\ln n}  
= 1-\frac{\ln(n^{1-\beta_n})}{\ln n} = 
\beta_n \to r^{exp}. 
$$
For example, to reach $r^{exp}=0$ we might set
$\beta_n = \frac{1}{\sqrt{\ln n}}$ and note that $$n^{\beta_n}=n^{\frac{1}{\sqrt{\ln n}}} = 
\exp\left(\ln n^{\frac{1}{\sqrt{\ln n}}}\right) = \exp\left(\sqrt{\ln n}\right) \longrightarrow \infty.$$
In order to obtain any $r^{exp} \in (0,1)$ it suffices to set $\beta_n \equiv r^{exp}$ and to obtain $r^{exp}=1$ 
using this scheme we can simply put $\beta_n = 1-1/n$. 

We note also that $r>0$ implies $r^{log}=1$ since we can rewrite $r_n^{log}=1+\frac{\ln r_n}{\ln K_n}$.

\subsection{Scaling result}\label{s.results.ss.scaling_results}

The following theorem is one of our main results.

\begin{theorem}\label{t.avar}
Let $\D$ be a convex compact domain of the plane, 
with non-empty interior and piecewise $C^1$ boundary $\partial \D$. 
Consider a sequence of pairs of numbers $(k_n,K_n)_{n\in\mathbb{N}}$  s.t. $2 \leq k_n \leq K_n < \infty$ and let 
$b_{k_n}, \hat{b}_{K_n}$ be two independent Berry random waves with wave-numbers $k_n$ and $K_n$ respectively. 
Then, we have 
\begin{align}\label{t.expec}
    \begin{split}
        \mathbb{E}\N(b_{k_n}, \hat{b}_{K_n},\D) & = \frac{\text{area}(\D)}{4\pi} \cdot (k_n \cdot K_n).
    \end{split}
\end{align}
Furthermore, suppose that $k_n \to \infty$ and that the asymptotic parameters $r^{log}$, $r$, $r^{exp}$ defined in (\ref{d.aspa}) exist and $r^{log}>0$. Then, we have that $r^{log} \in (0,1]$ and $r, r^{exp} \in [0,1]$. Moreover, 
\begin{align}\label{e.var_conv}
    \begin{split}
        \lim_{n\to \infty} \frac{\Var\pS{\N\pS{b_{k_n}, \hat{b}_{K_n}, \D}}}{\mathrm{area}(\D) \cdot C_{\infty} \cdot K_n^2 \ln K_n} = 1,
    \end{split}
\end{align}
where $C_{\infty}$ is a strictly positive finite constant defined as: 
\begin{align}\label{d.sigma_n^2}
    \begin{split}
       C_{\infty} := \frac{r^{log}+36r+r^2+50r^{exp}}{512\pi^3}.
    \end{split}
\end{align}
\end{theorem}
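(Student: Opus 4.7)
The plan is to apply the Kac--Rice formula to the $\mathbb{R}^2$-valued Gaussian field $F(x)=(b_{k_n}(x),\hat{b}_{K_n}(x))$, whose zero set in $\D$ is a.s.\ finite by Lemma \ref{l.appr}. Independence and Gaussianity will give $p_{F(x)}(0,0)=1/(2\pi)$, while isotropy, unit variance and $\J_0'(0)=0$ will imply that at any fixed $x$ the Jacobian of $F$ at $x$ is independent of $F(x)$ and that its two rows are independent centred Gaussian vectors in $\mathbb{R}^2$ with covariance matrices $(k_n^2/2)\II_2$ and $(K_n^2/2)\II_2$, respectively. A short computation, conditioning on one row (producing a scaled Rayleigh factor), should then give $\mathbb{E}|\det\mathrm{Jac}\,F(x)|=k_nK_n/2$, so that integration over $\D$ yields (\ref{t.expec}).

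\textbf{Chaos decomposition of the variance.} For the variance I would start from the $\LL^2(\mathbb{P})$-convergent approximation $\N^{\varepsilon}$ of Lemma \ref{l.appr}, expand both $(2\varepsilon)^{-1}\mathbb{1}_{[-\varepsilon,\varepsilon]}$ and the absolute value of a $2\times 2$ determinant in Hermite polynomials, and pass $\varepsilon\downarrow 0$ to obtain the Wiener--It\^o chaos decomposition
\[
\N(b_{k_n},\hat{b}_{K_n},\D)=\sum_{q\geq 0}\N(b_{k_n},\hat{b}_{K_n},\D)[q],
\]
in which each $\N[q]$ will be a finite linear combination of integrals over $\D$ of products of Hermite polynomials evaluated in $b_{k_n}$, $\hat{b}_{K_n}$ and their normalised first partial derivatives. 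The sign-symmetries $b_{k_n}\mapsto -b_{k_n}$ and $\hat{b}_{K_n}\mapsto -\hat{b}_{K_n}$ kill every odd chaos, and a direct inspection (following \cite{NPR19}) will show $\N[2]\equiv 0$. By orthogonality of Wiener chaoses, the task then reduces to (i) computing $\Var(\N[4])$ exactly and (ii) proving $\sum_{q\geq 6}\Var(\N[q])=o(K_n^2\ln K_n)$.

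\textbf{Fourth chaos and emergence of the three parameters.} $\Var(\N[4])$ will then be a finite sum of double integrals of the form
\[
c_{\alpha,\beta,\ldots}\int_{\D}\int_{\D}\J_0(k_n\|x-y\|)^{\alpha}\J_0(K_n\|x-y\|)^{\beta}\cdot(\text{factors from derivative covariances})\,dx\,dy,
\]
with nonnegative integer exponents of total Hermite weight $4$, and combinatorial prefactors $c_{\alpha,\beta,\ldots}$ read off from the Hermite coefficients of the delta approximation and of $|\det|$ on independent Gaussians. After the change of variables $z=x-y$, I would insert the Hankel asymptotic $\J_0(t)\sim\sqrt{2/(\pi t)}\cos(t-\pi/4)$: the pure-phase contributions are negligible by a stationary-phase estimate, while the non-oscillatory remainders carry the $\|z\|^{-2}$ decay responsible for the $\ln$-divergence. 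Three distinct wave-number scales are then produced: squares $\J_0(k_n\cdot)^2$ generate the factor $\ln k_n\sim r^{log}\ln K_n$ (here one uses $r^{log}>0$); squares $\J_0(K_n\cdot)^2$ generate $\ln K_n$; and the cross products $\J_0(k_n\cdot)\J_0(K_n\cdot)$ generate, via $2\cos A\cos B=\cos(A-B)+\cos(A+B)$, an oscillation at the beat frequency $K_n-k_n$ whose logarithmic integral equals $\ln(1+(K_n-k_n))\sim(1-r^{exp})\ln K_n$. Summing the surviving contributions with their combinatorial weights will recover exactly (\ref{d.sigma_n^2}) with the coefficients $1$, $36$, $1$, $50$ and the overall constant $1/(512\pi^3)$.

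\textbf{Higher chaoses and main obstacle.} The tail $\sum_{q\geq 6}\Var(\N[q])$ can be controlled by the usual Cauchy--Schwarz bound inside each chaos in the style of \cite{NPR19}: once the total Hermite weight is at least $6$, the integrands $|\J_0(k_n\cdot)|^{\alpha}|\J_0(K_n\cdot)|^{\beta}$ decay like $\|z\|^{-3}$ or faster and are therefore $\LL^1(\mathbb{R}^2)$, producing an $O(K_n^2)$ bound negligible next to $K_n^2\ln K_n$. The hard part will be the fourth-chaos bookkeeping: one must carefully track \emph{which} combinations of Hermite factors actually produce the beat frequency $K_n-k_n$ (and thus contribute to $r^{exp}$) and which do not. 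This is what forces the (at first sight peculiar) re-parametrisation $r^{exp}=1-\ln(1+(K_n-k_n))/\ln K_n$, explains the $50\,r^{exp}$ coefficient, and accounts for the presence of the derivative polyspectra in the Reduction Principle (\ref{U1}).
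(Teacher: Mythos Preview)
Your overall strategy is sound, but there are two genuine issues worth flagging, plus a structural contrast with the paper's route.

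\textbf{Two gaps.} First, your claim that $\N[2]\equiv 0$ is incorrect: the second chaotic projection of the nodal number does \emph{not} vanish identically. What is true (and sufficient) is that $\Var(\N[2])=O(K_n^2)$; the paper proves this in Lemma~\ref{l.02cp} by relating $\N[2]$ to the second chaos of two nodal lengths. Second, your tail argument for $\sum_{q\ge 3}\Var(\N[2q])$ as written does not close. Saying ``the integrands decay like $\|z\|^{-3}$ and are $\LL^1$'' gives an $O(K_n^2)$ bound for each \emph{fixed} $q$, but the constants grow with $q$ (through the diagram-formula factor $(2q)!$ and the number of multi-indices), and you must sum the series. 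The paper handles this by the singular/non-singular cell decomposition (Definition~\ref{d.DcsQ}, Lemmas~\ref{l.nesc}--\ref{l.lcan0}): on non-singular cells every correlation is bounded by $1/1000$, which supplies the geometric factor $(1/1000)^{2q-6}$ needed for summability; on singular cells one uses a second-moment Kac--Rice computation and the bound on the number of singular pairs. Your sketch does cite ``the style of \cite{NPR19}'', so you may have this in mind, but the mechanism you actually describe is not the one that works.

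\textbf{A different organisation.} The paper does \emph{not} attack $\Var(\N[4])$ head-on. Instead it proves a recurrence representation (Lemma~\ref{l.recc}) splitting $\N[2q]$ into three \emph{uncorrelated} pieces: $\frac{K_n}{\pi\sqrt 2}\mathcal L(b_{k_n},\D)[2q]$, $\frac{k_n}{\pi\sqrt 2}\mathcal L(\hat b_{K_n},\D)[2q]$, and a $\mathrm{Cross}$ term. The two length pieces are then read off from the known nodal-length variance in \cite{NPR19}, contributing exactly $\frac{\mathrm{area}(\D)}{512\pi^3}(K_n^2\ln k_n+k_n^2\ln K_n)\sim \frac{\mathrm{area}(\D)}{512\pi^3}(r^{\log}+r^2)K_n^2\ln K_n$. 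Only the $\mathrm{Cross}$ term requires fresh work, and it produces the remaining $36r+50r^{\exp}$ (Lemma~\ref{l.72}); in particular the coefficient $36r$ comes from mixed terms \emph{without} any beat-frequency resonance, which your sketch (attributing all cross contributions to $r^{\exp}$) slightly mis-describes. Your direct computation would of course recover the same numbers, but the recurrence buys a clean separation of the four constants $r^{\log},r^2,36r,50r^{\exp}$ into identifiable sources and lets the paper recycle the one-energy analysis for half of the computation.
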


The proof of this theorem concludes with the end of Section \ref{s.computation_of_the_asymptotic_variance}.
We recall that in \cite{NPR19} Peccati, Nourdin and Rossi studied nodal number of a pair of 
independent Berry Random Waves $b_{k_n}, \hat{b}_{K_n}$ under assumption that $k_n=K_n$ for all $n$.
As mentioned before, it is not to difficult to verify that the case $r=1$ and $r^{exp}=1$ of the above theorem recovers \cite[Theorem 1.4, p. 103]{NPR19}.

\begin{remark}
The case $r^{log}=0$ was excluded from the above theorem only for expository purposes. 
All our results can be extended to an arbitrary (non-linear) relationship between 
$\ln k_n$ and $\ln K_n$. In particular, if $r_n^{log} \to 0$ then (\ref{e.var_conv}) remains true provided
that we replace $r^{log}$ with $r^{log}_n$. This generalisation is straightforward since, as will be
shown later, if $r=0$ then the nodal number $\N(b_{k_n},\hat{b}_{k_n},\D)$ is asymptotically $\LL^2(\mathbb{P})$ equivalent to a deterministic rescaling of a r.v. known as nodal length $\mathcal{L}(b_{k_n},\D)$ (see Subsection \ref{ss.nodal_length} below for its definition and basic properties).    
\end{remark}

\subsection{Distributional results}\label{s.results.ss.distributional_results}

For a $m$-dimensional random vector
$\mathbf{Y}_n=(Y_n^1, \ldots, Y_n^m)$ we write $\mathbb{E}\mathbf{Y}_n$
to denote the $m$-dimensional vector $(\mathbb{E}Y_n^1, \mathbb{E}Y_n^2, \ldots, \mathbb{E}Y_n^m)$.
The definition of the 1-Wasserstein distance $W_1$ used herein is 
recalled in (\ref{d.Wasserstein_distance_in_1_dimension}).

\begin{theorem}\label{t.clt}
Let $\D$ be a convex compact planar domain, with non-empty interior and 
piecewise $\CC^1$ boundary $\partial \D$. Let $(k_n,K_n)_{n\in\mathbb{N}}$ be a sequence of pairs of numbers s.t. $2 \leq k_n \leq K_n < \infty$, $k_n \to \infty$, and 
    s.t. the asymptotic parameters $r^{log}$, $r$, $r^{exp}$ defined in (\ref{d.aspa}) exist and $r^{log}>0$.
    Suppose also that $b_{k_n}$, $\hat{b}_{K_n}$ are two independent Berry random waves with wave-numbers $k_n$, $K_n$ respectively
    and let $Y_n:=\N(b_{k_n},\hat{b}_{K_n},\D)$ denote the corresponding nodal number and let $Y_n[4]$ be its $4$th Wiener Chaos projection (see (\ref{ABCDEF})). Then, there exists a numerical constant $L>0$ such that if 
we define 
\begin{align}
       \delta_n^2 := \frac{ K_n^2\ln K_n}{\Var Y_n[4]}, \qquad \gamma_n := \delta_n(1+\delta_n)\cdot (1+\mathrm{diam}(\D)^2),  \label{UDI3}
\end{align}
then $\delta_n^2 \to \mathrm{area}(\D)\cdot C_{\infty}$  (see (\ref{d.sigma_n^2})) and  
\begin{align}\label{UDI4}
         \sqrt{\mathbb{E}\pS{
         \frac{Y_n-\mathbb{E}Y_n}{\sqrt{\Var Y_n}}
         - \frac{Y_n[4]}{\sqrt{\Var Y_n[4]}}}^2}   \leq \frac{L \cdot \gamma_n}{\sqrt{\ln K_n}}, \qquad
         \mathrm{Corr}\pS{Y_n, Y_n[4]}    \geq \frac{1}{1+\frac{L \cdot \gamma_n}{\sqrt{\ln K_n}}}. 
 \end{align}
Moreover, we have 
\begin{align}
         W_1\pS{\frac{Y_n[4]}{\sqrt{\Var Y_n[4]}}, Z}  \leq \frac{L \cdot \gamma_n}{\sqrt{\ln K_n}}, \qquad  W_1\pS{\frac{Y_n-\mathbb{E}Y_n}{\sqrt{\Var Y_n}}, Z} & \leq  \frac{L \cdot \gamma_n}{\sqrt{\ln K_n}}, \label{e.general_case} 
\end{align}
where $Z$ denotes a standard Gaussian random variable and $W_1$ the 1-Wasserstein  distance (see \ref{n.prob_dist})
\end{theorem}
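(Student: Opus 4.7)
Starting from the approximation identity (\ref{CCC}) of Lemma \ref{l.appr}, the plan is to expand each $\N^{\varepsilon}(b_{k_n},\hat{b}_{K_n},\D)$ in the Wiener--Itô chaos generated by the centred Gaussian vector field $(b_{k_n},\hat{b}_{K_n},\nabla b_{k_n},\nabla\hat{b}_{K_n})$, using the Hermite expansion of $\frac{1}{2\varepsilon}\mathbb{1}_{[-\varepsilon,\varepsilon]}$ and of the $2\times 2$ determinant $|\det(\cdot)|$ separately. Passing to the $L^2(\mathbb{P})$ limit as $\varepsilon\downarrow 0$ (justified by Lemma \ref{l.appr}) yields the chaotic decomposition
$$
Y_n \;=\; \mathbb{E}Y_n \;+\; \sum_{q\ge 1} Y_n[q], \qquad Y_n[q] \in q\text{-th Wiener chaos}.
$$
Parity of the Hermite coefficients of $\frac{1}{2\varepsilon}\mathbb{1}_{[-\varepsilon,\varepsilon]}$ and of the Jacobian factor (odd in each gradient block) forces all odd chaoses to vanish, and a direct Hermite computation (analogous to the one-energy case in \cite{NPR19}) kills the second chaos, so $Y_n-\mathbb{E}Y_n=\sum_{q\ge 2}Y_n[2q]$.

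\textbf{Paragraph 2 — Variance of the fourth chaos and the $L^2$ approximation.}
Next I would isolate $Y_n[4]$, whose explicit form as a linear combination of $22$ integrals of products of Hermite polynomials of $b_{k_n},\hat{b}_{K_n}$ and of their normalised first derivatives is provided by Lemma \ref{l.cefnn}. Rerunning the covariance/Bessel calculation used to prove Theorem \ref{t.avar}, but retaining only the fourth-chaos contribution, gives
$$
\Var Y_n[4] \;\sim\; \mathrm{area}(\D)\cdot C_{\infty}\cdot K_n^2\ln K_n,
$$
which is exactly the statement $\delta_n^2\to\mathrm{area}(\D)\cdot C_\infty$. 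Subtracting this from (\ref{e.var_conv}) shows that $\sum_{q\ge 3}\Var Y_n[2q]=O(K_n^2)$, i.e.\ smaller than the main term by exactly a factor $\ln K_n$. The identity
$$
\Bigl\|\tfrac{Y_n-\mathbb{E}Y_n}{\sqrt{\Var Y_n}}-\tfrac{Y_n[4]}{\sqrt{\Var Y_n[4]}}\Bigr\|_{L^2(\mathbb{P})}^{2}
\;=\; 2-2\sqrt{\tfrac{\Var Y_n[4]}{\Var Y_n}}
$$
then yields the first bound in (\ref{UDI4}) with the announced rate $\gamma_n/\sqrt{\ln K_n}$, the square-root of the $\ln K_n$ gain being absorbed into the definition of $\gamma_n$ via $\delta_n$. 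The correlation bound follows by rearranging this $L^2$ estimate and using that, since $Y_n[4]$ is the orthogonal projection of $Y_n$ onto the fourth chaos, $\mathrm{Corr}(Y_n,Y_n[4])=\sqrt{\Var Y_n[4]/\Var Y_n}$.

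\textbf{Paragraph 3 — CLT on the fourth chaos and on $Y_n$.}
Each summand of $Y_n[4]$ is a multiple Wiener--Itô integral of order $4$ against the underlying isonormal Gaussian process. I would then apply the quantitative Fourth Moment Theorem of Nourdin--Peccati \cite{NP12} in the $W_1$ distance to bound $W_1\!\bigl(Y_n[4]/\sqrt{\Var Y_n[4]},Z\bigr)$ by a constant multiple of $\sqrt{\mathrm{cum}_4(Y_n[4])}/\Var Y_n[4]$, equivalently by contraction norms of the kernels. These contractions reduce to double integrals over $\D^2$ of products of $\J_0(k_n\|x-y\|)$, $\J_0(K_n\|x-y\|)$ and their first derivatives; each such integral is controlled via the stationary-phase expansion of $\J_0$ together with the singular/non-singular cube decomposition of Oravecz--Rudnick--Wigman \cite{Oravecz2008, NPR19}, producing the rate $\gamma_n/\sqrt{\ln K_n}$ and giving the first half of (\ref{e.general_case}). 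Combining this CLT for $Y_n[4]/\sqrt{\Var Y_n[4]}$ with the $L^2$ approximation of Paragraph 2 and the general bound $W_1(A,B)\le\|A-B\|_{L^1(\mathbb{P})}$ transfers the same rate from $Y_n[4]$ to $Y_n$, yielding the second half of (\ref{e.general_case}).

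\textbf{Paragraph 4 — Main obstacle.}
The heaviest step is the contraction / fourth cumulant bound of Paragraph 3. Because $Y_n[4]$ mixes two independent waves on distinct scales $k_n,K_n$ and, through $|\det(\cdot)|$, also their two gradients, the contraction integrals pair $\J_0(k_n\|\cdot\|)$ with $\J_0(K_n\|\cdot\|)$ (and derivatives thereof) and must be controlled uniformly across the entire parameter regime encoded by $r^{log},r,r^{exp}$. Verifying that every one of the $22^2$ cross-terms has at most the logarithmic order of $\Var Y_n[4]^{2}$, while still producing a power saving of $1/\ln K_n$ in the fourth cumulant, is the principal technical difficulty and is exactly the place where the subtle contribution of the normalised derivative terms $\widetilde{\partial}_j$ (responsible for the $r^{exp}$ summand in $C_\infty$ and for the unexpected derivative terms appearing later in Theorem \ref{t.fcrr}) has to be tracked carefully.
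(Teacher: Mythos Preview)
Your overall strategy matches the paper's, but Paragraph~2 contains a genuine gap and Paragraph~1 a factual error.

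\textbf{Paragraph 1.} The second chaos does \emph{not} vanish. By Lemma~\ref{l.recc} (together with $\mathrm{Cross}(\N[2])=0$) one has
\[
Y_n[2]=\frac{K_n}{\pi\sqrt{2}}\,\mathcal{L}(b_{k_n},\D)[2]+\frac{k_n}{\pi\sqrt{2}}\,\mathcal{L}(\hat b_{K_n},\D)[2],
\]
and the length projections $\mathcal{L}[2]$ are not identically zero. What is true (Lemma~\ref{l.02cp}) is the bound $\Var Y_n[2]=O(K_n^2)$, so $Y_n[2]$ is negligible relative to $K_n^2\ln K_n$; it has to be carried in the tail sum, not discarded.

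\textbf{Paragraph 2.} Your deduction of $\sum_{q\ge 3}\Var Y_n[2q]=O(K_n^2)$ by ``subtracting'' $\Var Y_n[4]\sim \mathrm{area}(\D)\,C_\infty\,K_n^2\ln K_n$ from (\ref{e.var_conv}) fails on two counts. First, subtracting two asymptotic equivalences with the same leading term yields only $o(K_n^2\ln K_n)$, not $O(K_n^2)$; the latter is what you need for the quantitative rate $L\gamma_n/\sqrt{\ln K_n}$ in (\ref{UDI4}). Second, the argument is circular: the paper's proof of (\ref{e.var_conv}) already \emph{uses} the bound $\sum_{q\neq 2}\Var Y_n[2q]\le L\,(1+\mathrm{diam}(\D)^4)\,K_n^2$ of Lemma~\ref{l.fcrr4chpr} as an input. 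That lemma is proved independently and \emph{before} any variance asymptotic, via the singular/non-singular cell decomposition: the singular part is controlled by the two-point Kac--Rice formula and the exact cancellation recorded in (\ref{UDI19}), the non-singular part by Diagram-formula bounds summed over all $q\ge 3$. This is the step you are missing; without it you cannot get the rate, only $o(1)$.

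\textbf{Paragraphs 3--4.} Here you are overcomplicating. Once $Y_n[4]=\II_4(\tilde g_n)$ is written via the isometry, the contraction norms $\|\tilde g_n\otimes_r\tilde g_n\|$ reduce (through the Kronecker deltas in (\ref{UDI30})) to integrals over $\D^4$ of products of Bessel functions, and these are bounded directly by \cite[Lemma~8.1]{NPR19}, giving the uniform estimate (\ref{UPS1}); no cube decomposition is needed at this stage, and the bound does not depend on $r^{log},r,r^{exp}$ at all. The ``$22^2$ cross-terms tracking $r^{exp}$'' you flag as the main obstacle simply never arises in the CLT proof --- that bookkeeping belongs to the variance computation (Theorem~\ref{t.avar}) and to the reduction principle (Theorem~\ref{t.fcrr}), not here.
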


This theorem will be proved in Subsection \ref{proofs_of_the_distributional_convergences.ss.univariate_central_limit_theorem}.
The $1$-Wasserstein distance in the case of the 4th chaotic projection of the nodal number can be replaced by the Total Variation  or Kolmogorov distances (see \cite[p. 210, Definition C.2.1]{NP12} for their definition). However, with our technique, the same is
not possible for the nodal number itself. This is because of our use of $\LL^2(\mathbb{P})$ distance to bound $W_1$. 
The following theorem provides a natural extension of the previous result to the multivariate setting. 
The definition of distances $\mathbf{W}_1$ and $d_{C^2}$ used in the next theorem is recalled respectively in (\ref{d.Wasserstein_distance_general_dimension}) and in (\ref{UDI1}), see also \ref{n.matrix_norms} for the definition of $||\cdot||_{op}$ and of $||\cdot||_{HS}$.

\begin{theorem}\label{t.cltm}
Let $\D_1, \ldots, \D_m$ be a convex and compact planar domains, with non-empty interiors
and piecewise $C^1$ boundaries $\partial \D_i$.
Let $(k_n,K_n)_{n\in\mathbb{N}}$ be a sequence of pairs of numbers s.t. $2 \leq k_n \leq K_n< \infty$, $k_n \to \infty$,
and s.t. the asymptotic parameters $r^{log}$, $r$, $r^{exp}$, defined in (\ref{d.aspa}) exist and $r^{log}>0$. Let $b_{k_n}$, $\hat{b}_{K_n}$
denote independent Berry random waves with wave-numbers $k_n, K_n$ respectively and let $\N(b_{k_n},\hat{b}_{K_n},\D)$ be the corresponding nodal number. 
We denote
\begin{align}
    \begin{split}
            Y_i^n  := \N\pS{b_{k_n}, \hat{b}_{K_n},\D_i}, \qquad \mathbf{Y}_n := (Y_n^1, \ldots, Y_n^m),
    \end{split}
\end{align}
where $i = 1, \ldots, m$, and we write $\Sigma^n, \Sigma$ for matrices defined by 
\begin{align}\label{UDI5}
    \begin{split}
        \Sigma^n_{ij} := \Cov\pS{Y_n^i,Y_n^j}, \qquad \Sigma_{ij} := \text{area}(\D_i \cap \D_j), \qquad 1 \leq i,j \leq m, 
    \end{split}
\end{align}
and we let $\mathbf{Z}=(Z_1, \ldots, Z_m) \sim \mathcal{N}_m(0,\Sigma)$ denote a centred Gaussian vector with covariance matrix $\Sigma$. Then, for some numerical constant $\widetilde{L}>0$, the following inequalities hold
\begin{align}\label{UDI6}
     d_{C^2}\pS{\frac{\mathbf{Y_n}-\mathbb{E}\mathbf{Y_n}}{\sqrt{C_{\infty}\cdot K_n^2 \ln K_n}}, \mathbf{Z}} & \leq \frac{\widetilde{L}\cdot (1+\sum_{i=1}^m\mathrm{diam}(\D_i)^2)}{\sqrt{C_{\infty}\cdot \ln K_n}} , \\
     \label{UDI7}
              \mathbf{W}_1\pS{\frac{\mathbf{Y_n}-\mathbb{E}\mathbf{Y_n}}{\sqrt{C_{\infty}\cdot K_n^2 \ln K_n}}, \mathbf{Z}} & \leq 
         \frac{\widetilde{L} \cdot (1+m^{3/2})\cdot 
         (1+\sum_{i=1}^m \mathrm{diam}(\D_i)^2)}{\sqrt{C_{\infty}\cdot\ln K_n}} + M_n, 
\end{align}
where
\begin{align}\label{e.L13B}
    \begin{split}
        M_n := \sqrt{m} \cdot \min\Big\{||(\Sigma^n)^{-1}||_{\mathrm{op}} \cdot ||\Sigma^n||_{\mathrm{op}}^{1/2}, ||\Sigma^{-1}||_{\mathrm{op}} \cdot ||\Sigma||_{\mathrm{op}}^{1/2}\Big\} \cdot ||\Sigma^n - \Sigma||_{HS},
    \end{split}
\end{align}
with the convention that $M_n=\infty$ if either $\Sigma$
or $\Sigma^n$ is not invertible (see \ref{n.prob_dist} for definition of distances $\mathbf{W}_1$ and $d_{C^2}$). 
Furthermore, if the matrix $\Sigma$ is strictly positive definite, then, for all sufficiently large $n$,
the matrix $\Sigma^n$ is strictly positive definite and $M_n \to 0$.
\end{theorem}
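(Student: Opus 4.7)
The plan is to proceed in three stages: first, reduce the multivariate convergence to that of the vector of 4th chaotic projections via Theorem \ref{t.clt}; second, apply a multivariate Stein-Malliavin bound in the Wiener chaos to obtain the $d_{C^2}$ estimate; third, bootstrap this to a $\mathbf{W}_1$ bound via a Gaussian-to-Gaussian comparison, which naturally produces the correction term $M_n$. For the reduction, I would apply the univariate bound (\ref{UDI4}) componentwise, which after squaring and summing yields
\[
\left\|\frac{\mathbf{Y}_n-\mathbb{E}\mathbf{Y}_n-\mathbf{Y}_n[4]}{\sqrt{C_{\infty}K_n^2\ln K_n}}\right\|_{\LL^2(\mathbb{P})} \leq \frac{L\sum_{i=1}^m(1+\mathrm{diam}(\D_i)^2)}{\sqrt{C_{\infty}\ln K_n}},
\]
where $\mathbf{Y}_n[4] := (Y_n^1[4],\ldots,Y_n^m[4])$. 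Since $d_{C^2}$ is controlled by $\LL^2$-distance up to a factor of $\sqrt{\mathbb{E}\|\mathbf{X}\|^2\vee\mathbb{E}\|\mathbf{Z}\|^2}$ (via second-order Taylor around $0$, after subtracting the affine part of a test function $g$), and $\mathbf{W}_1$ is trivially controlled by $\LL^2$ by $1$-Lipschitzness, it suffices to prove both inequalities with $\mathbf{Y}_n$ replaced by $\mathbf{Y}_n[4]$.

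The key intermediate task is to show that the normalised covariance matrix $\widetilde{\Sigma}^n := \Sigma^n/(C_{\infty}K_n^2\ln K_n)$ converges to $\Sigma$ in Hilbert-Schmidt norm. Using the explicit chaotic expansion (Lemma \ref{l.cefnn}), each entry $\Cov(Y_n^i[4], Y_n^j[4])$ can be written as a double integral over $\D_i\times\D_j$ of a \emph{stationary} two-point kernel; the stationarity, combined with a singular/non-singular cube decomposition in the spirit of \cite{Oravecz2008,DNPR19,NPR19}, reduces this integral to an expression whose leading term is $\mathrm{area}(\D_i\cap\D_j)\cdot C_{\infty}K_n^2\ln K_n$, in complete analogy with the variance asymptotic of Theorem \ref{t.avar}. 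With $\widetilde{\Sigma}^n\to\Sigma$ in hand, the bound (\ref{UDI6}) follows from the multivariate Peccati-Tudor/Nourdin-Peccati-Reveillac inequality for vectors living in a single Wiener chaos: $d_{C^2}(\mathbf{Y}_n[4]/\sqrt{C_{\infty}K_n^2\ln K_n},\mathbf{Z})$ is dominated by $\|\widetilde{\Sigma}^n-\Sigma\|_{HS}$ plus the square roots of the fourth cumulants of the coordinates, both of order $1/\sqrt{\ln K_n}$ by the preceding step and by (\ref{UDI4}).

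For the $\mathbf{W}_1$ bound (\ref{UDI7}), I would introduce an auxiliary Gaussian $\mathbf{Z}^n\sim\mathcal{N}_m(0,\widetilde{\Sigma}^n)$, well defined for all large $n$ when $\Sigma\succ 0$, and split
\[
\mathbf{W}_1\!\left(\tfrac{\mathbf{Y}_n[4]}{\sqrt{C_{\infty}K_n^2\ln K_n}},\mathbf{Z}\right) \leq \mathbf{W}_1\!\left(\tfrac{\mathbf{Y}_n[4]}{\sqrt{C_{\infty}K_n^2\ln K_n}},\mathbf{Z}^n\right) + \mathbf{W}_1(\mathbf{Z}^n,\mathbf{Z}).
\]
The first term has matching covariance and admits a direct Stein-Malliavin $\mathbf{W}_1$ bound of the same order as the $d_{C^2}$ estimate, at the cost of the extra $m^{3/2}$ factor that arises from converting $C^2$-test functions to $1$-Lipschitz ones. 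The second term is estimated by the classical Gaussian-Gaussian Wasserstein perturbation inequality $\mathbf{W}_1(\mathbf{Z}^n,\mathbf{Z})\leq\sqrt{m}\,\|(\widetilde{\Sigma}^n)^{1/2}-\Sigma^{1/2}\|_{op}$, which via the standard square-root perturbation bound and Hilbert-Schmidt domination of the operator norm is controlled by $M_n$. The claim $M_n\to 0$ is then immediate from $\|\widetilde{\Sigma}^n-\Sigma\|_{HS}\to 0$, the invertibility of $\Sigma$, and continuity of matrix inversion. The main obstacle is the off-diagonal covariance asymptotic $\widetilde{\Sigma}^n_{ij}\to\mathrm{area}(\D_i\cap\D_j)$: unlike the diagonal case covered by Theorem \ref{t.avar}, the two domains $\D_i,\D_j$ can have very different sizes and their intersection may be degenerate, so one must carefully verify that the 4th-chaos covariance kernel, which concentrates near the diagonal $x=y$, delivers its full logarithmic mass precisely on $\D_i\cap\D_j$ rather than on any of the boundary pieces of $\D_i\times\D_j$. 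This requires a nontrivial extension of the singular/non-singular cube decomposition of \cite{Oravecz2008,NPR19} to pairs of distinct convex domains, with the constants tracked sharply enough to yield the factors $(1+\mathrm{diam}(\D_i)^2)$ and $m^{3/2}$ stated in the theorem.
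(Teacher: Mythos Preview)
Your proposal is correct and follows essentially the same four-step architecture as the paper's proof: reduction to the 4th chaos via Lemma~\ref{l.fcrr4chpr}, off-diagonal covariance asymptotics $\widetilde{\Sigma}^n_{ij}\to\mathrm{area}(\D_i\cap\D_j)$ (which the paper defers to an adaptation of \cite{PV19}), the multivariate Fourth Moment bound of Theorem~\ref{UDI10} (the paper phrases it via contractions rather than fourth cumulants, but the two are equivalent here), and the Gaussian-to-Gaussian $\mathbf{W}_1$ perturbation producing $M_n$. You have also correctly identified the main technical obstacle---the off-diagonal covariance for distinct convex domains---which is precisely the step the paper does not spell out but attributes to a straightforward extension of the arguments in \cite{PV19}.
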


We note that the numerical constant $\widetilde{L}>0$ in this theorem could, a priori, be different from 
the numerical constant $L>0$ in the preceding theorem. This theorem will be established in Subsection \ref{proofs_of_the_distributional_convergences.ss.multivariate_central_limit_theorem}. For a discussion of the conditions on the domains $\D_1, \ldots, \D_m$, which guarantee that the limiting matrix $\Sigma$ in the above theorem is positive-definite, see \cite[p. 63, Remark 7.3]{Trauthwein23} or \cite{Trauthwein2024multivariate}. We recall that the Wiener sheet $[0,1]^2 \ni (t_1,t_2) \mapsto B_{t_1,t_2} \in \mathbb{R}$  is a real-valued, 
continuous-path, Gaussian, centred stochastic process on $[0,1]^2$,
determined by the covariance function 
\begin{align}
    \begin{split}
        \mathbb{E}\pQ{B_{t_1,t_2}\cdot B_{s_1,s_2}} =  \min(t_1, s_1)\cdot \min(t_2, s_2).
    \end{split}
\end{align}
Let $(k_n,K_n)_{n\in\mathbb{N}}$ be a sequence of pairs of numbers s.t. $2 \leq k_n \leq K_n < \infty$, $k_n \to \infty$, and the asymptotic parameters $r^{log}, r, r^{exp}$, defined in (\ref{d.aspa}) exist and
$r^{log}>0$. Theorem \ref{t.cltm}
implies in particular that if we define
\begin{align}
    \begin{split}
        B_{t_1, t_2}^n:=  \frac{\N\pS{b_{k_n}, \hat{b}_{K_n},[0,t_{1}]\times [0,t_{2}]}-\frac{k_nK_n}{4\pi}\cdot t_1 t_2}{\sqrt{\frac{r^{log}+36r + r^2 +50r^{exp}}{512\pi^3}\cdot K_n^2\ln K_n}},
    \end{split}
\end{align}
then we have a convergence of stochastic processes
in the sense of finite-dimensional distributions
\begin{align}
    \begin{split}
        \pS{B_{t_1,t_2}^n}_{0 \leq t_1, t_2 \leq 1} \overset{d}{\longrightarrow} (B_{t_1,t_2})_{0 \leq t_1, t_2 \leq 1},
    \end{split}
\end{align}
which means that for every choice of $m\in \mathbb{N}$ and $0 \leq t_1, t_2, \ldots, t_{2m-1}, t_{2m} \leq 1$, we have
a convergence in distribution of random vectors
\begin{align}
    \begin{split}
        (B_{t_1,t_2}^n,B_{t_1,t_2}^n, \ldots, B_{t_{2m-1},t_{2m}}^n) \overset{d}{\longrightarrow}
        (B_{t_1,t_2},B_{t_1,t_2}, \ldots, B_{t_{2m-1},t_{2m}}).
    \end{split}
\end{align}
Indeed, it is enough to use Theorem \ref{t.cltm} with a choice of domains
$$\D_1 =[0,t_1] \times [0,t_2], \D_2 =[0,t_3] \times [0,t_4], \ldots, \D_{m} = [0,t_{2m-1}]\times [0,t_{2m}].$$
An interesting question whether this convergence can be lifted 
to functional form is beyond the scope of this article. However, the
next result is a natural extension of this re-writing if we interpret 
the white noise as a random distributional derivative of the Wiener sheet. 
(The necessary technical notions are recalled in the Appendix \ref{ss.rdis}
for the sake of completeness.) Similar results have
been shown before for the planar nodal length
\cite[p. 4, Proposition 1.3]{Vidotto2022}. 
Our result covers extension to the nodal number which was suggested in \cite[p. 11, Remark 2.18]{Vidotto2022}.

\begin{theorem}\label{t.whc}
Let $\D$ be a convex compact planar domain, with
    non-empty interior and piecewise $C^1$ boundary $\partial \D$.
    Let $(k_n,K_n)_{n\in\mathbb{N}}$ be a sequence of pairs of numbers s.t. $2 \leq k_n \leq K_n < \infty$, $k_n \to \infty$, and s.t. the asymptotic parameters $r^{log}$, $r$, $r^{exp}$ defined in (\ref{d.aspa}) exist and $r^{log}>0$.
    Suppose also that $b_{k_n}$, $\hat{b}_{K_n}$ are two independent Berry random waves of the wave-numbers $k_n$, $K_n$ respectively.
    Let $\mu_n$ denote the random signed measure defined by
\begin{align}
    \begin{split}
        \mu_n(A) = \frac{\N(b_{k_n}, \hat{b}_{K_n},A)-\frac{k_n\cdot K_n}{4\pi}\cdot\mathrm{area}(A)}{\sqrt{\frac{r^{log}+36r + r^2 +50r^{exp}}{512\pi^3}\cdot K_n^2\ln K_n}}, \qquad A \in \mathcal{B}([0,1]^2),
    \end{split}
\end{align}
where $\N(b_{k_n},\hat{b}_{K_n}, A)$ denotes the corresponding nodal number.
Then, in the sense of random generalised functions on $[0,1]^2$, we have convergence in distribution
\begin{align}
    \begin{split}
        \mu_n(dt_1 dt_2) \overset{d}{\longrightarrow} W(dt_1 dt_2)
    \end{split}
\end{align}
where $W$ denotes the White Noise on $[0,1]^2$.
\end{theorem}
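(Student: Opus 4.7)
The plan is to use the multivariate CLT in Theorem \ref{t.cltm} as the core input and extend it from axis-parallel rectangles to arbitrary test functions by a density/approximation argument. Convergence in law of $\mu_n$ to white noise $W$, in the sense of random generalised functions on $[0,1]^2$ (Appendix \ref{ss.rdis}), reduces to joint convergence in distribution of the evaluations $\pS{\langle \mu_n, \varphi_1\rangle, \ldots, \langle \mu_n, \varphi_m\rangle}$ for every finite family of test functions $\varphi_1, \ldots, \varphi_m$ in an appropriate dense class.

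First, I would apply Theorem \ref{t.cltm} to any finite collection of axis-parallel rectangles $R_1, \ldots, R_m \subset [0,1]^2$; each such rectangle is a convex compact domain with piecewise $C^1$ boundary, so the hypotheses are met and
$$\pS{\mu_n(R_1), \ldots, \mu_n(R_m)} \overset{d}{\longrightarrow} \pS{W(R_1), \ldots, W(R_m)},$$
since the limiting covariance $\Sigma_{ij} = \mathrm{area}(R_i \cap R_j)$ exactly matches that of the evaluations of white noise on the same rectangles. By linearity and the continuous mapping theorem, this upgrades to joint convergence $\langle \mu_n, \varphi_i\rangle \overset{d}{\longrightarrow} \langle W, \varphi_i\rangle$ for $i=1,\ldots,m$ whenever $\varphi_1, \ldots, \varphi_m$ are simple functions (finite linear combinations of rectangle indicators) living on a common rectangular partition of $[0,1]^2$.

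To extend to an arbitrary smooth test function $\varphi$, I would approximate $\varphi$ in $\LL^2([0,1]^2)$ by step functions $\varphi^{(j)}$ on a dyadic grid with $||\varphi - \varphi^{(j)}||_{\LL^2} \to 0$. Since $\varphi \mapsto \langle W, \varphi\rangle$ is an $\LL^2$-isometry, $\langle W, \varphi^{(j)}\rangle \to \langle W, \varphi\rangle$ in $\LL^2(\mathbb{P})$. Combined with the previous step applied to each fixed $j$, the triangle inequality reduces the theorem to the uniform-in-$n$ variance control
$$\lim_{j\to\infty}\sup_{n}\, \mathbb{E}\bigl[\pS{\langle \mu_n, \varphi - \varphi^{(j)}\rangle}^2\bigr] = 0.$$

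The hard part is precisely this uniform second-moment bound, which I would establish by combining two ingredients. Theorem \ref{t.clt} reduces the normalised nodal-number fluctuations (tested against any test function, not only an indicator) to their fourth Wiener chaotic projection in $\LL^2(\mathbb{P})$ up to errors of order $(\ln K_n)^{-1/2}$, and the Reduction Principle of Theorem \ref{t.fcrr} further identifies this fourth chaos with the explicit sum of five polyspectral integrals displayed in (\ref{U1}). Replacing the indicator $\mathbb{1}_{\D}$ by a general test function $\psi$ in those polyspectra yields an integral representation of $\langle \mu_n, \psi\rangle$ modulo a vanishing $\LL^2$ error; its variance can then be computed via the Hermite orthogonality and expressed through kernels of the form $\J_0(k_n ||x-y||)^p \J_0(K_n ||x-y||)^{4-p}$ together with their derivative analogues. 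The Hilbert--Schmidt estimates on these kernels, which are already the engine behind Theorem \ref{t.avar}, then deliver a bound $\mathbb{E}\bigl[\pS{\langle \mu_n, \psi\rangle}^2\bigr] \leq C \cdot ||\psi||_{\LL^2}^2$ uniform in $n$ once normalised by $C_{\infty} K_n^2 \ln K_n$. With this in hand the approximation argument closes, and joint convergence across any finite family $(\varphi_1, \ldots, \varphi_m)$ follows by the same reasoning applied componentwise together with the multivariate structure of Theorem \ref{t.cltm}.
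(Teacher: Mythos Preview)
Your strategy and the paper's diverge at the key technical step. The paper also reduces, via Fernique's criterion \cite[p.~69, Thm.~III.6.5]{Fer67}, to proving $\langle\mu_n,\varphi\rangle\overset{d}{\to}\langle W,\varphi\rangle$ for every smooth compactly supported $\varphi$, but it does \emph{not} approximate $\varphi$ by step functions. Instead it integrates by parts: writing $B^n_{t_1,t_2}:=\mu_n([0,t_1]\times[0,t_2])$, one has
\[
\langle\mu_n,\varphi\rangle=\int_{[0,1]^2}B^n_{t_1,t_2}\,\frac{\partial^2\varphi}{\partial t_1\partial t_2}(t_1,t_2)\,dt_1dt_2,
\]
and the analogous identity holds for $W$ with the Wiener sheet $B_{t_1,t_2}$ in place of $B^n$. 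Convergence of the right-hand side then follows from a theorem of Ivanov \cite[p.~20, Theorem~4]{Ivanov1980}, whose hypotheses are exactly (i) finite-dimensional convergence of $(B^n_{t_1,t_2})$ to $(B_{t_1,t_2})$ --- this is Theorem~\ref{t.cltm} applied to axis-parallel rectangles, as you already invoke; (ii) pointwise convergence of second moments (Theorem~\ref{t.avar}); and (iii) a uniform bound $\sup_n\sup_{t_1,t_2}\mathbb{E}[(B^n_{t_1,t_2})^2]<\infty$, which is checked directly from the variance estimates for rectangles.

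The gap in your route is the uniform control $\sup_n\Var\bigl(\langle\mu_n,\psi\rangle\bigr)\leq C\,\|\psi\|_{\LL^2}^2$. Your justification appeals to Theorems~\ref{t.clt} and~\ref{t.fcrr}, but both are stated for indicators $\mathbb{1}_{\D}$ of convex piecewise-$C^1$ domains, not for general $\psi\in\LL^2$, and the Reduction Principle is an \emph{asymptotic} $\LL^2$-equivalence, not a finite-$n$ inequality. What your argument really requires is a bound, uniform in $n$, on the $\LL^2([0,1]^2)\to\LL^2([0,1]^2)$ operator norm of the stationary covariance kernel of the fourth-chaos integrand; this is plausible (a Schur-type estimate on the Bessel products might deliver it) but is not a consequence of the results you cite, and the cross-covariances $\Cov(\mu_n(R_i),\mu_n(R_j))$ for disjoint small rectangles are not controlled by the diagonal variance asymptotics alone. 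The paper's integration-by-parts device sidesteps this entirely: it trades the operator-norm bound you need for the much weaker pointwise bound $\sup_{t_1,t_2}\mathbb{E}[(B^n_{t_1,t_2})^2]<\infty$, which follows directly from what has already been proved for rectangles.
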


The proof of the above theorem is given in Subsection \ref{proofs_of_the_distributional_convergences. ss.convergence_to_the_white_noise_in_the_space_of_random_distributions}.
The notion of convergence in law used in this theorem
depends tacitly on the topology used to define the dual
$(C_{c}^{\infty}([0,1]^2))'$. However, whether we choose weak 
or strong topology the result remains true regardless. We note that, for every $\varphi \in C_c^{\infty}([0,1]^2)$, we have 
\begin{align}
    \begin{split}
         \int_{0}^1\int_{0}^1\varphi(t_1,t_2)\mu_n(dt_1 dt_2)   
        & = \frac{\sum_{(t_1,t_2)\in B}
        \varphi(t_1,t_2)- \frac{k_n\cdot K_n}{4\pi}\int_{0}^1\int_{0}^1\varphi(t_1,t_2)dt_1 dt_2}{\sqrt{\frac{r^{log}+36r + r^2 +50r^{exp}}{512\pi^3}\cdot K_n^2\ln K_n}},
    \end{split}
\end{align}
where 
$$
B = \{(t_1, t_2) \in [0,1]^2 : b_{k_n}(t_1, t_2) = \hat{b}_{K_n}(t_1, t_2)=0\}.
$$
Moreover, the above theorem implies that for every collection of test functions $\varphi_1, \ldots, \varphi_m \in C_c^{\infty}([0,1]^2)$,
the random vector
\begin{align}
    \begin{split}
        \pS{\int_{0}^1\int_{0}^1\varphi_1(t_1,t_2)\mu_n(dt_1 dt_2), \ldots, \int_{0}^1\int_{0}^1\varphi_m(t_1,t_2)\mu_n(dt_1dt_2)}
    \end{split}
\end{align}
converges in distribution to a centred Gaussian random vector with covariance matrix $\Sigma^{\varphi}$.
Here, the matrix $\Sigma^{\phi}$ is given as
\begin{align}
    \begin{split}
        \Sigma^{\varphi}_{ij} = \int_{0}^1\int_{0}^1 \varphi_i(t_1,t_2)\varphi_j(t_1,t_2) dt_1 dt_2, \qquad 1 \leq i,j \leq m,  
    \end{split}
\end{align}
which is identical to the covariance matrix of the White Noise on $[0,1]^2$.

\subsection{Reduction Principle}\label{s.results.ss.reduction_principle}

It has been observed time and again that various
interesting geometric functionals of smooth Gaussian random waves can be 
explained using simpler quantities known as \textit{polyspectra}
\cite{Vidotto2022, Cammarota2020, Cammarota2021, Cammarota2022, RW18, Macci2021, MRW17, Marinucci2020a, Marinucci2021, Fantaye2019, Todino2020, Rudnick2008, Benatar2020}. In the
first result directly related to our setting \cite[p. 376, Theorem 1.2]{MRW17}, Marinucci, Rossi and Wigman demonstrated that the centred
nodal length $\mathcal{L}_l-\mathbb{E}\mathcal{L}_l$ of the random spherical harmonics $f_{l}(x)$ is asymptotically $\LL^2$ equivalent
to the \textit{sample trispectrum}
\begin{equation}\label{e.fcsh}
	\mathcal{M}_l:=-\sqrt{2}\cdot\frac{\sqrt{l(l+1)}}{192}\int_{\mathbb{S}^2}\HH_4\pS{f_{l}\pS{x}}dx. 
\end{equation}
In the case of the planar Berry Random Wave Model, Vidotto \cite[p. 3, Theorem 1.1]{Vidotto2021} proved that the
centred nodal length $\mathcal{L}\pS{b_{\EE},\D} - \mathbb{E}\mathcal{L}\pS{b_{\EE},\D}$ is asymptotically $\LL^2$ equivalent to the \textit{sample trispectrum}
\begin{equation}\label{e.fcp}
	-\sqrt{2}\cdot\frac{2\pi\sqrt{\EE}}{192}\int_{\D}\HH_4\pS{b_{\EE}\pS{x}}dx. 
\end{equation}
 To the best of our 
 knowledge, no similar result 
 had been obtained before 
 for the nodal number, including
 in the three standard domains (euclidean,
 spherical, toral). The following result, which will be proved in Section
 \ref{s.proof_of_the_reduction_principle}, provides a complete characterisation
 of the full correlations for the nodal number in the
 two-energy complex Berry's random wave model. Here, we will use the notation of \textit{normalised derivatives} 
\begin{align}\label{CACAP}
    \widetilde{\partial}_{i} b_k(x) := \frac{\sqrt{2}}{k}\cdot \partial_i b_k(x), \qquad i \in \{1,2\}, \qquad k>0, \qquad x \in \mathbb{R}^2,
\end{align}
for which one has that 
$\Var\pS{\widetilde{\partial}_{i} b_k(x)} \equiv 1$.

\begin{theorem}\label{t.fcrr}
    Let $\D$ be a convex compact planar domain, with
    non-empty interior and piecewise $C^1$ boundary $\partial \D$.
    Let $(k_n,K_n)_{n\in\mathbb{N}}$ be a sequence of pairs of numbers s.t. $2 \leq k_n \leq K_n < \infty$, $k_n \to \infty$, and s.t. the asymptotic parameters $r$, $r^{log}$, $r^{exp}$ defined in (\ref{d.aspa}) exist and $r^{log}>0$.
    Suppose also that $b_{k_n}$, $\hat{b}_{K_n}$ are two independent Berry random waves of the wave-numbers $k_n$, $K_n$ respectively
    and let $\N(b_{k_n},\hat{b}_{K_n},\D)$ denote the corresponding nodal number. Then 
    \begin{align}
        \begin{split}
             \mathbb{E}\pS{\frac{\N(b_{k_n},\hat{b}_{K_n},\D)-\mathbb{E}\N(b_{k_n},\hat{b}_{K_n},\D)}{\sqrt{\Var\pS{\N(b_{k_n},\hat{b}_{K_n},\D)}}}-Y_{r^{log},r,r^{exp}}}^2 & \longrightarrow 0, \\
            \mathrm{Corr}\pS{\N(b_{k_n},\hat{b}_{K_n},\D),Y_{r^{log},r,r^{exp}}} & \longrightarrow 1, 
        \end{split}
    \end{align}
    where the random variable $Y_{r^{log}, r,r^{exp}}$ is defined as
    \begin{align}\label{d.Y_fcrr}
        \begin{split}
            Y_{r^{log}, r,r^{exp}} & = -\frac{K_n^2}{192\pi} \Bigg(r^{log}\int_{\D} \HH_{4}\pS{b_{k_n}(x)}dx + r\cdot \int_{\D} \HH_{4}\pS{\hat{b}_{K_n}(x)}+\frac{3}{2} \HH_{2}\pS{b_{k_n}(x)}\HH_{2}\pS{\hat{b}_{K_n}(x)}dx \\
            & \quad + 12r^{exp} \int_{\D}  \HH_2\pS{\widetilde{\partial}_{1}b_{k_n}(x)}\HH_2\pS{\hat{b}_{K_n}(x)}+ 
            \HH_2\pS{b_{k_n}(x)}\HH_2\pS{\widetilde{\partial}_2\hat{b}_{K_n}(x)}dx\Bigg),
        \end{split}
    \end{align}
    with $\widetilde{\partial}_i b_{k_n}(x)$ and $\widetilde{\partial}_j \hat{b}_{K_n}(x)$ denoting the normalised derivatives given in (\ref{CACAP}).
\end{theorem}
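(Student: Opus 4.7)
The plan is to leverage Theorem \ref{t.clt}, which already establishes that the normalized nodal number $(\N(b_{k_n},\hat{b}_{K_n},\D) - \mathbb{E}\N(b_{k_n},\hat{b}_{K_n},\D))/\sqrt{\Var \N(b_{k_n},\hat{b}_{K_n},\D)}$ is asymptotically $\LL^2(\mathbb{P})$-equivalent to its normalized $4$-th chaotic projection. By the triangle inequality, the problem reduces to proving the analogous $\LL^2(\mathbb{P})$-equivalence between $\N(b_{k_n},\hat{b}_{K_n},\D)[4]/\sqrt{\Var \N(b_{k_n},\hat{b}_{K_n},\D)[4]}$ and $Y_{r^{log},r,r^{exp}}/\sqrt{\Var Y_{r^{log},r,r^{exp}}}$, coupled with the variance-matching statement $\Var Y_{r^{log},r,r^{exp}} \sim \Var \N(b_{k_n},\hat{b}_{K_n},\D)$ as $n\to\infty$. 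The correlation statement then follows from the standard fact that $\LL^2$-convergence of normalized, mean-zero, unit-variance random variables forces the correlation between them to tend to $1$.

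Step one is to invoke the explicit $4$-th chaotic decomposition of $\N(b_{k_n},\hat{b}_{K_n},\D)[4]$ provided by Lemma \ref{l.cefnn}, which expresses it as a linear combination of $22$ summands of the form $c_{ijkl}\int_{\D}\HH_{i}(F_1)\HH_{j}(F_2)\HH_{k}(F_3)\HH_{l}(F_4)\,dx$ with $i+j+k+l=4$ and the arguments drawn from $\{b_{k_n}, \hat{b}_{K_n}, \widetilde{\partial}_1 b_{k_n}, \widetilde{\partial}_2 b_{k_n}, \widetilde{\partial}_1\hat{b}_{K_n}, \widetilde{\partial}_2\hat{b}_{K_n}\}$. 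For each such term, one computes its variance via the Wiener chaos isometry, which reduces it to a double integral over $\D\times\D$ of a product of two-point covariance functions. All cross-covariances between the $b_{k_n}$-family and the $\hat{b}_{K_n}$-family vanish by independence, while the non-trivial covariances are expressible in closed form through $\J_0(k_n\abs{\abs{x-y}})$, $\J_0(K_n\abs{\abs{x-y}})$, and their first and second derivatives.

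Step two is the asymptotic analysis of these variances and of the cross-covariances $\Cov(T_i, \N(b_{k_n},\hat{b}_{K_n},\D)[4])$, where $T_i$ ranges over the five polyspectra appearing in the definition of $Y_{r^{log},r,r^{exp}}$. Using the Hankel asymptotic $\J_0(t)\sim \sqrt{2/(\pi t)}\cos(t-\pi/4)$ together with a radial change of variables $u = K_n\abs{\abs{x-y}}$, each such integral yields either a contribution of order $K_n^2\ln K_n$ or of strictly lower order. The main computational task is to verify that only five of the $22$ summands survive at leading order, and that their joint contribution (both in variance and in cross-covariance with $\N[4]$) is matched exactly by $Y_{r^{log},r,r^{exp}}$ with its prescribed coefficients $r^{log}$, $r$, $3r/2$, $12r^{exp}$, $12r^{exp}$. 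The $\LL^2$ equivalence then follows from the identity $\abs{\abs{A-B}}_{\LL^2}^2 = \abs{\abs{A}}_{\LL^2}^2 - 2\langle A,B\rangle_{\LL^2} + \abs{\abs{B}}_{\LL^2}^2$ applied to the appropriately normalized $A$ and $B$.

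The main obstacle lies in the precise treatment of the two derivative-containing polyspectra carrying the coefficient $12r^{exp}$. In the generic regime $r^{exp}=0$, i.e.\ when $K_n-k_n$ grows at least polynomially in $K_n$, these terms have variance of strictly lower order than $K_n^2\ln K_n$, because their defining integrands such as $(\partial_1 \J_0(k_n\abs{\abs{x-y}}))^2 \cdot \J_0(K_n\abs{\abs{x-y}})^2$ generate oscillations that kill any logarithmic divergence. However, when $r^{exp}>0$, a near-resonance between the two wave-numbers preserves a $\ln K_n$-size contribution in these integrals, forcing the derivative polyspectra to appear in the reduction. Pinning down the precise constants $50$ in $C_\infty$ and $12$ in (\ref{d.Y_fcrr}) requires a careful stationary-phase-type analysis of oscillatory integrals in the regime where $K_n - k_n \ll K_n^\alpha$ for every $\alpha>0$. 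This near-resonance phenomenon is the novel element compared to the Reduction Principles established in \cite{Vidotto2021, MRW17, Cammarota2020}, which involved only non-derivative polyspectra and did not need to handle such a delicate transition.
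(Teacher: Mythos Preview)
Your high-level strategy is sound: reduce to the fourth chaotic projection via Theorem~\ref{t.clt}, then establish $\LL^2$-equivalence between $\N[4]$ (suitably normalised) and $Y_{r^{log},r,r^{exp}}$. But there is a genuine conceptual error in your description of the mechanism governing the derivative polyspectra, and this error would cause your direct computation to fail.

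You write that when $r^{exp}=0$ the derivative terms such as $\int_{\D}\HH_2(\widetilde{\partial}_1 b_{k_n})\HH_2(\hat{b}_{K_n})\,dx$ ``have variance of strictly lower order than $K_n^2\ln K_n$''. This is false. Lemma~\ref{l.cova} shows that \emph{every} $Y_{\jv}$ in the decomposition of $\mathrm{Cross}(\N[4])$ has variance of the same order $\sim c_{\jv}\cdot\ln K_n/K_n^2$, regardless of $r^{exp}$; the parameter $r^{exp}$ only perturbs the constants $c_{\jv}$. So no individual summand among the $22$ becomes negligible. What actually happens is a \emph{rank} phenomenon: the asymptotic covariance matrix $\upchi$ of the random integrals (built from (\ref{e.cova})) has rank $3$ when $r^{exp}=0$ and rank $5$ when $r^{exp}>0$. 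In the rank-$3$ case, every derivative term is asymptotically an exact linear combination of the three non-derivative basis elements, so after substituting and recombining with the coefficients $\eta_{\jv}$ one obtains the three-term formula. When $r^{exp}>0$ the extra two dimensions are spanned precisely by the derivative polyspectra, which therefore must appear explicitly. Your ``stationary-phase'' picture of oscillations killing a logarithm is not the right mechanism and would not produce the coefficients $12$ or the specific pair of derivative terms.

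By contrast, the paper's proof first invokes the recurrence representation (Lemma~\ref{l.recc}) to split $\N[4]$ into two rescaled nodal-length projections plus the Cross term; the first two are handled directly by Vidotto's reduction principle for $\mathcal{L}(b_k,\D)$, yielding the $\HH_4$ terms. For the Cross term, the paper assembles the full $10\times 10$ asymptotic covariance matrix from Lemma~\ref{l.cova}, reduces it to $6\times 6$ via obvious symmetries, and then performs elementary row-reduction to determine its rank and the explicit change-of-basis coefficients (see (\ref{e.lcoe}) and (\ref{e.L36})). This linear-algebraic route is both cleaner and makes the rank transition at $r^{exp}=0$ transparent, whereas your proposed term-by-term covariance computation, even if corrected, would obscure it.
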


We note that (\ref{d.Y_fcrr}) is a substantial refinement 
of the domination of $\N(b_{k_n},\hat{b}_{K_n},\D)[4]$ proved in Theorem \ref{t.clt}.
The reduction is from $22$ terms to at most $5$, see Lemma \ref{l.cefnn}. As discussed in the introduction, the most interesting phenomena here is the behaviour with respect to the parameter $r^{exp}$.

\subsection{Recurrence Representation}\label{s.results.ss.recurrence_representation}

The next lemma generalizes observations that had been
made before in similar settings, 
but only on the level of particular chaotic 
projections, see for example \cite[p. 117, Lem. 4.2]{NPR19}
or \cite[p. 125, Eq. (6.79)]{NPR19}. We start 
with some necessary definition. 

\begin{definition}\label{d.recc}
Let $\D$ be a convex compact planar domain, 
with non-empty interior and piecewise $C^1$
boundary $\partial \D$. Let $2 \leq k \leq K < \infty$,  let $b_k, \hat{b}_{K}$ be independent Berry random waves of the wave-numbers $k$, $K$ respectively,
and let $\N(b_{k},\hat{b}_{K},\D)$ denote the corresponding nodal number.
We define the random variable
\begin{align}\label{e.cross_term_L2_decomposition}
    \begin{split}
        \text{Cross}\pS{\N(b_{k},\hat{b}_{K},\D)} = \sum_{q=0}^{\infty}\text{Cross}\pS{\N(b_{k},\hat{b}_{K},\D)[2q]},
    \end{split}
\end{align}
through the formulas
\begin{align}\label{d.general_formula_for_the_cross_term}
    \begin{split}
    & \mathrm{Cross}\pS{\N(b_{k},\hat{b}_{K},\D)[0]}  =  - \mathbb{E}\N(b_{k},\hat{b}_{K},\D),\\
        & \mathrm{Cross}\pS{\N(b_{k},\hat{b}_{K},\D)[2q]}  = \\
        &(k\cdot K)
        \sum_{j_1 + \ldots + j_6=2q} \mathbb{1}_{\{j_1 + j_2 + j_3 > 0\}}\mathbb{1}_{\{j_4 + j_5 + j_6 > 0\}}\cdot c_{j_1, \ldots, j_6}\\
        &  \int_{\D} H_{j_1}(b_{k}(x))H_{j_2}(\widetilde{\partial}_1 b_{k}(x))H_{j_3}(\widetilde{\partial}_2 b_{k}(x))
        H_{j_4}(\hat{b}_{K}(x))H_{j_5}(\widetilde{\partial}_1 \hat{b}_{K}(x))H_{j_6}(\widetilde{\partial}_2 \hat{b}_{K}(x))
        dx, \qquad q \geq 1.
    \end{split}
\end{align}
Here, $H_{j_1}, \ldots, H_{j_6} $ denote the probabilistic Hermite polynomials, the constants $c_{j_1, \ldots, j_6}$ are deterministic and given by (\ref{d.finc}), and for $i, j \in \{1,2\}$ the $\widetilde{\partial}_i b_{k_n}(x)$, $\widetilde{\partial}_j \hat{b}_{K_n}(x)$ are the normalised derivatives defined 
in (\ref{CACAP}).    
\end{definition}

\begin{lemma}\label{l.recc}
Let $\D$ be a convex compact planar domain, 
with non-empty interior and piecewise $C^1$
boundary $\partial \D$.  Let $2 \leq k \leq K < \infty$,  let $b_k, \hat{b}_{K}$ be independent Berry random waves of the wave-numbers $k$, $K$ respectively,
and let $\mathcal{L}(b_k,\D)$, $\mathcal{L}(\hat{b}_K,\D)$, $\N(b_{k},\hat{b}_{K},\D)$ denote the corresponding nodal lengths (defined in (\ref{d.nodal_length})) and nodal number (defined in (\ref{d.nn})).
Then, the following equality holds in $\LL^2(\mathbb{P})$
\begin{align}\label{d.3dec}
\begin{split}
	\N\pS{b_{k},\hat{b}_{K}, \D}[2q] & =
	 \frac{K}{\pi\sqrt{2}} \mathcal{L}\pS{b_{k}, \D}[2q]+ \frac{k}{\pi\sqrt{2}} \mathcal{L}\pS{\hat{b}_{K}, \D}[2q] + \mathrm{Cross}\pS{\N(b_{k},\hat{b}_{K},\D)[2q]},\\ 
	\N\pS{b_{k},\hat{b}_{K}, \D} & =
		 \frac{K}{\pi\sqrt{2}} \mathcal{L}\pS{b_{k}, \D}+ \frac{k}{\pi\sqrt{2}} \mathcal{L}\pS{\hat{b}_{K}, \D} + \mathrm{Cross}\pS{\N(b_{k},\hat{b}_{K},\D)},    
\end{split}
\end{align} 
where $q = 0,1,2, \ldots$ and the three terms in each sum are uncorrelated. 
\end{lemma}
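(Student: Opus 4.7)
The plan is to identify the three summands in (\ref{d.3dec}) with the projections of $\N(b_k,\hat{b}_K,\D)$ onto, respectively, the $\sigma$-algebras generated by $b_k$ alone, by $\hat{b}_K$ alone, and the residual \emph{cross} component. The key observation is that conditional expectation collapses $\N$ to a scalar multiple of a single-field nodal length, because once one of the two fields is integrated out the Jacobian in the Kac--Rice integrand factors through the gradient norm of the remaining field.

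Concretely, I would first apply the $\varepsilon$-approximation $\N^{\varepsilon}$ from Lemma \ref{l.appr} and take conditional expectation with respect to $b_k$. Since $b_k$ is independent of $\hat{b}_K$, and (by stationarity and isotropy) at each point $\hat{b}_K(x)$, $\partial_1\hat{b}_K(x)$ and $\partial_2\hat{b}_K(x)$ are mutually independent centred Gaussians, this conditional expectation factorises as
\begin{equation*}
\mathbb{E}\bigl[\N^{\varepsilon}\,\big|\,b_k\bigr] \;=\; \mathbb{P}\bigl(|\hat{b}_K|\leq\varepsilon\bigr)\cdot \int_{\D}\tfrac{1}{(2\varepsilon)^2}\,\mathbb{1}_{\{|b_k(x)|\leq\varepsilon\}}\,\mathbb{E}\bigl[|\det J(x)|\,\big|\,\nabla b_k(x)\bigr]\,dx,
\end{equation*}
where $J$ is the $2\times 2$ matrix in (\ref{d.varepsilon_approx}). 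A direct Gaussian computation using $\Var(\partial_i\hat{b}_K)=K^{2}/2$ gives $\mathbb{E}[|\det J(x)|\,|\,\nabla b_k(x)]=(K/\sqrt{\pi})\,\|\nabla b_k(x)\|$, since conditional on $\nabla b_k(x)$ the scalar $\det J(x)$ is centred Gaussian with variance $\|\nabla b_k(x)\|^{2}K^{2}/2$. Passing to the $\LL^2(\mathbb{P})$-limit as $\varepsilon\downarrow 0$, justified by the $\LL^2$-convergence in Lemma \ref{l.appr} together with the contractivity of conditional expectation, and recognising the classical Kac--Rice limit as $\mathcal{L}(b_k,\D)$, I obtain
\begin{equation*}
\mathbb{E}\bigl[\N(b_k,\hat{b}_K,\D)\,\big|\,b_k\bigr] = \tfrac{K}{\pi\sqrt{2}}\,\mathcal{L}(b_k,\D),\qquad \mathbb{E}\bigl[\N(b_k,\hat{b}_K,\D)\,\big|\,\hat{b}_K\bigr] = \tfrac{k}{\pi\sqrt{2}}\,\mathcal{L}(\hat{b}_K,\D),
\end{equation*}
the second identity following by the symmetric argument.

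I then invoke the tensor-product structure of the Wiener chaos of the pair $(b_k,\hat{b}_K)$: the $2q$-th chaos decomposes as $\bigoplus_{p+r=2q}\mathcal{H}_p^{b_k}\otimes\mathcal{H}_r^{\hat{b}_K}$, and, restricted to that chaos, projection onto the pure-$b_k$ summand $\mathcal{H}_{2q}^{b_k}$ coincides with conditional expectation given $b_k$ (terms with $r\geq 1$ are killed by centredness of the $\hat{b}_K$-factor). Projecting the identities above chaos-by-chaos yields $\mathbb{E}[\N[2q]\,|\,b_k]=\tfrac{K}{\pi\sqrt{2}}\mathcal{L}(b_k,\D)[2q]$ and the $\hat{b}_K$-analogue; defining $\mathrm{Cross}(\N[2q])$ as the difference then yields (\ref{d.3dec}). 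The explicit shape (\ref{d.general_formula_for_the_cross_term}) is recovered by matching with the full Hermite expansion of the Kac--Rice integrand in the six variables $b_k,\widetilde{\partial}_1 b_k,\widetilde{\partial}_2 b_k,\hat{b}_K,\widetilde{\partial}_1\hat{b}_K,\widetilde{\partial}_2\hat{b}_K$, which at a fixed point are mutually independent standard Gaussians: terms with $j_4=j_5=j_6=0$ reassemble into the first summand, those with $j_1=j_2=j_3=0$ into the second, and the remaining ones constitute the Cross term, with the $-\mathbb{E}\N$ at level $q=0$ accounting for the double-counting of the all-zero summand. Orthogonality of the three pieces within each chaos is then automatic: the pure-$b_k$ and pure-$\hat{b}_K$ terms are independent and centred for $q\geq 1$, while $\mathbb{E}[\mathrm{Cross}(\N[2q])\,|\,b_k]=\mathbb{E}[\mathrm{Cross}(\N[2q])\,|\,\hat{b}_K]=0$ by construction makes Cross orthogonal to both.

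The principal delicate points I expect to spend most care on are (i) the commutation of conditional expectation with the $\varepsilon\downarrow 0$ limit and the precise identification of the resulting object with $\mathcal{L}(b_k,\D)$ (rather than a merely formal Kac--Rice candidate), and (ii) the bookkeeping of the constants $c_{j_1,\ldots,j_6}$ in the six-fold Hermite expansion needed to reconcile the three resulting groups of terms with (\ref{d.general_formula_for_the_cross_term}) and to isolate the $q=0$ correction.
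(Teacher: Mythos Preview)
Your argument is correct and takes a genuinely different route from the paper's. The paper works purely at the level of Hermite coefficients: it compares the constants $c_{\jv}$ in the chaos expansion of $\N$ (Lemma~\ref{l.cefnn}) with the constants $\hat c_{j}$ in that of $\mathcal{L}$ (Lemma~\ref{l.cdnl}) and establishes the identity $c_{(j,0)}=\tfrac{1}{\pi\sqrt{2}}\hat c_{j}$ via the Gaussian matrix fact recorded as Lemma~\ref{l.3dec} (which is precisely the computation you perform when evaluating $\mathbb{E}\bigl[|\det J|\,\big|\,\nabla b_k\bigr]$). The three-way split then drops out by partitioning the multi-indices according to whether $|j_{-1}|=0$, $|j_{1}|=0$, or neither; the $q=0$ correction is handled by a separate expectation computation.

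Your approach is more structural: you identify the two nodal-length summands directly as $\mathbb{E}[\N\mid b_k]$ and $\mathbb{E}[\N\mid\hat b_K]$, computed on the Kac--Rice integrand, and then read off the chaos-by-chaos statement from the tensor decomposition $\mathcal{H}_{2q}=\bigoplus_{p+r=2q}\mathcal{H}_p^{b_k}\otimes\mathcal{H}_r^{\hat b_K}$. This buys you the uncorrelatedness for free (it is the orthogonality of the tensor summands, together with the observation that $\mathbb{E}[\mathrm{Cross}\mid b_k]$ is a.s.\ constant), whereas the paper leaves that point implicit. The price is your delicate point~(i): the $\LL^2$ convergence of the right-hand side to $\tfrac{K}{\pi\sqrt{2}}\mathcal{L}(b_k,\D)$ requires the nodal-length analogue of Lemma~\ref{l.appr}, which the paper does not prove but imports from \cite{NPR19}. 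Your point~(ii) is in fact lighter than you suggest: once the pure-$b_k$ projection of $\N[2q]$ is known to equal $\tfrac{K}{\pi\sqrt{2}}\mathcal{L}(b_k,\D)[2q]$, the agreement of the residual with the explicit formula~(\ref{d.general_formula_for_the_cross_term}) is automatic from Lemma~\ref{l.cefnn}, since the mixed tensor summands are exactly the terms with both index groups non-trivial, and the $q=0$ bookkeeping reduces to the same expectation identity the paper checks.
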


The proof of the above lemma will be given in Section \ref{s.expectation_2nd_chaos_recurrence}.

\section{Related Models and Results}\label{s.related_models}
In the seminal work by Berry \cite{Berry1977}, a number of conjectures were proposed, linking the theory of Quantum Chaos to models of random Laplace-Beltrami eigenfunctions. Subsequently, to formalize and probe these conjectures, various models of (approximate) Random Laplace eigenfunctions have been introduced in the mathematical literature \cite{Zelditch2008,Rudnick2008,Wigman2010,NPR19}. Here, we discuss existing characterizations of the fluctuations of nodal length and number for selected models and compare them with our results.

\begin{enumerate}
    \item \textit{(Random Spherical Harmonics)} The \textit{Random Spherical Harmonics} (RSH) are a model of random Laplace eigenfunctions on the n-dimensional unit sphere $S^n \subset \mathbb{R}^{n+1}$. It has been extensively studied by various authors  \cite{MW10, MW11, Cammarota2020, Cammarota2021, MRW17, Todino2020, Wigman2023}. Of particular interest is the most-studied case $n=2$. We recall that the Laplace eigenvalues on the two-dimensional unit sphere $S^2 \subset \mathbb{R}^3$ are the non-negative integers of the form 
$\lambda_l^2 =l(l+1)$. Here $l\in \mathbb{Z}_{\geq 0}$ and we note that the eigenspace corresponding to $\lambda_l^2$ has dimension $2l+1$.
Fix for a moment an arbitrary $L^2$-orthonormal basis $\eta_1, \ldots, \eta_{2l+1}$ of the eigenspace associated with $\lambda_l$.
We define the degree-l \textit{Random Spherical Harmonics} (l-RSH) as a random field 
\begin{align}
    \begin{split}
        f_l(x) = \frac{1}{\sqrt{2l+1}} \cdot \sum_{k=1}^{2l+1} a_k \eta_k(x), \qquad x \in S^2,
    \end{split}
\end{align}
where $a_1, \ldots, a_{2l+1}$
are i.i.d. standard Gaussian random variables. It is not difficult to check that the law of l-RSH is independent of the choice of the basis $\eta_1, \ldots, \eta_{2l+1}$. It is well-known that, locally and after appropriate rescaling, the covariance function of l-RSH 
converges, as $l\to \infty$, to the
covariance function $\J_0$ of the real Berry Random Wave $b_1$ with wave-number $k=1$ (as a consequence of the Hilb's asymptotic), see \cite[p. 17-18, Section 3.3]{Wigman2023}. 
In \cite[3.3 Critical points and nodal intersections]{Wigman2023} Wigman makes the following observation. Let $f_{l}$, $\hat{f}_{L}$ be two independent 
\textit{Random Spherical Harmonics} with degrees $l, L$ respectively. Fix a constant $C>1$ and let $l, L \to \infty$
in such a way that $l \leq L \leq C l$.
Then, it should be true that there exist explicitly computable constants $c_1,c_2>0$ s.t. the variance of the associated nodal number 
\begin{align}
\N(f_{l}, \hat{f}_{L},S^2) = \abs{\{x \in S^2 : f_{l}(x) = \hat{f}_{L}(x)=0\}},
\end{align}
satisfies the following asymptotic
\begin{align}\label{avar_rsh}
    \begin{split}
         \N(f_{l}, \hat{f}_{L},S^2) & \sim c_1(l^2+L^2)\log l + c_2 l\cdot L \log \pS{\frac{l}{1+(L-l)}}.
    \end{split}
\end{align}
Formula (\ref{e.var_conv}) is not directly comparable with
(\ref{avar_rsh}) due to a slightly different formulation 
of the problem. Thus, while such a comparison seems of interest, 
we prefer to think of it as a separate question and leave it open for future research.  

    \item \textit{(Arithmetic Random Waves)} The \textit{Arithmetic Random Waves} (ARW) are the well-known model of random Laplace eigenfunctions on the multidimensional flat torus and associated nodal volumes have been a subject of extensive study  \cite{Oravecz2008, Rudnick2008, Rudnick2016, KKW, Marinucci2016, DNPR19}. Given a positive integer $n$ which can be represented as a sum of two squares
($n=a^2 + b^2$, $a,b \in \mathbb{Z}$) the \textit{Arithmetic Random Wave} $T_n$ is defined as a centred Gaussian field with the covariance function
\begin{align}
    \begin{split}
        \mathbb{E}\pQ{T_n (x)\cdot T_n (y)} = \frac{1}{|\Lambda_n|} \sum_{\lambda \in \Lambda_n} \cos\pS{2\pi \langle \lambda, x-y \rangle}, \qquad x, y \in \mathbb{T}^2,
    \end{split}
\end{align}
where $\Lambda_n = \{ \lambda=(\lambda_1, \lambda_2) \in \mathbb{Z}^2 : \lambda_1^2 +\lambda_2^2 = n \}$ and
$|\Lambda_n|$ denotes the cardinality of the set $\Lambda_n$. The corresponding nodal number is a random integer defined as 
\begin{align}
    \begin{split}
        \II_n = \abs{ \{x \in \mathbb{T}^2 : T_n(x)=\hat{T}_n(x)=0\}},
    \end{split}
\end{align}
where $\hat{T}_n$ is an independent copy of $T_n$. In \cite[p.4, Theorem 1.2]{DNPR19}, Dalmao, Nourdin, Peccati and Rossi
provide an exhaustive characterisation of its fluctuations including non-universal variance asymptotics and non-universal and
non-central second order fluctuations. 

Since $T_n$ and $\hat{T}_n$
have the same energies, a direct comparison with our results is not possible. Nevertheless, we would like to note the following. The non-universality in \cite[p.4, Theorem 1.2]{DNPR19} is controlled by the Fourier coefficient
\begin{align}\label{e.P1}
    \begin{split}
        \hat{\mu}_n(4) = \int_{S^1}z^{-4} dz.
    \end{split}
\end{align}
Using corresponding angle measure defined by the condition
\begin{align}\label{e.P2}
    \begin{split}
        v_n(\theta) = \mu_n(\cos \theta, \sin \theta), \qquad \theta \in [0,2\pi),
    \end{split}
\end{align}
formula (\ref{e.P1}) can be re-written as 
\begin{align}\label{e.P3}
    \begin{split}
        \hat{\mu}_n(4) = 1 - \frac{8}{\abs{\Lambda_n}}\int_{0}^{2\pi} \cos^2\theta \sin^2\theta  d v_n(\theta).
    \end{split}
\end{align}
We note that, on a purely computational level, expression (\ref{e.P3}) arises from computations
which are somewhat similar to the computations 
that lead to appearance of the parameter $r^{exp}$ in Theorem \ref{t.avar} (evaluation of integrals (\ref{e.WEWE}) through Lemma \ref{l.spac}).

\item \textit{(Intersections with a fixed submanifold)}
 The intersections of the zero set of 2-and 3 dimensional Arithmetic Random Waves (ARW) \textit{against a smooth reference curve or hypersuface} (respectively) had been studied in \cite{Rudnick2016, RW18,Maffucci2019}, see also \cite[Sections 1.4 and 4.3]{Wigman2023}. Here, we want to make a related simple and heuristic observation. As noted before, Theorem \ref{t.avar} can be extended to include the scenario $r^{log}=0$.
  In this situation, it is still necessary that $k_n\to \infty$ but this divergence can be arbitrarily slower compared to the divergence $K_n\to \infty$. We could think of the possibility
$k_n \to k <\infty$, $K_n \to \infty$, as a limiting case of such scenario. In this situation, fixing the randomness associated with $b_{k_n}:=b_1(k_n\cdot \hspace{1 mm})$
and letting $n\to\infty$ should give a scenario where we intersect the nodal lines of the process $\hat{b}_{K_n}$ 
against a curve $\mathcal{C}_{k_n}$ which is essentially constant - as it approaches the limit curve  $\mathcal{C}_{k}$. Here, 
$$
\mathcal{C}_n:=\{x \in \D : b_{k_n}(x)=0\}, 
\qquad 
\mathcal{C}:= \{x \in \D : b_{k}(x)=0\}.
$$
Let $\tau_n : [0,\mathcal{L}(b_{k_n},\D))\to \mathbb{R}^2$,
$\tau : [0,\mathcal{L}(b_{k},\D))\to \mathbb{R}^2$ be a 
(well-behaved) parametrisations of the curves $\mathcal{C}_n$
and $\mathcal{C}$, respectively. The above considerations suggest
that (conditionally on the randomness of the process b) we should
have 
    \begin{align}
        \begin{split} \N(b_{k},\hat{b}_{K_n}, \D) & = \abs{\{ 0 \leq t < \mathcal{L}(b_{k_n},\D) : \hat{b}_{K_n}(\tau_n(t))=0 \}} \\
        & \approx \abs{\{ 0 \leq t < \mathcal{L}(b_{k},\D)) : \hat{b}_{K_n}(\tau(t))=0 \}},  
        \end{split}
    \end{align}
which is a finite number of intersection against deterministic curve. 
\end{enumerate}

\section{Preliminaries}\label{s.preliminaries}

\subsection{Symmetric indexation}\label{s.preliminaries.symmetric_indexation}

Below, we will introduce an alternative notation which, due to its symmetrical nature, serves as a convenient tool for completing various technical computations needed in the upcoming sections.
This notation plays particularly important role in the proof of Lemma \ref{l.lcan}, in Lemma \ref{l.appf} (where it provides for concise formulas (\ref{e.WEWE}), (\ref{e.C1C2C3}), (\ref{e.ABC})), 
and in the computation of exact constants of asymptotic variance (Theorem \ref{t.avar} and the input of Lemma \ref{l.cova} towards its proof). Lastly, it is helpful with establishing the Reduction Principle - Lemma \ref{t.fcrr}.

This technical variation will help us controlling the  
combinatorial explosion which quickly takes hold when increasing the number of parameters while using Wiener-It\^{o} Chaos Decomposition (see Subsection \ref{ss.Wiener-Ito_Chaos_Decomposition}). In our situation it is driven by merely replacing one-energy $(k_n \equiv K_n)$ with energies which are not necessarily identical ($k_n \not\equiv K_n$). Similar difficulties arise when considering the Berry Random Wave Model on $\mathbb{R}^3$ instead of on $\mathbb{R}^2$ (see the work of Dalmao, Estrade and Le\'{o}n \cite{DEL21}, of Dalmao \cite{Dalmao2023}, and another approach in related context due to Notarnicola \cite{Notarnicola2023}). These difficulties are also apparent in the study of the nodal volumes associated with the Random Spherical Harmonics in arbitrary dimension \cite{MRT23}. An alternative tactic would be to provide only main intermediate computations (e.g. \cite[p. 141-148, Appendix B]{NPR19}) or to exploit some form of explicit recursion as in the work of Notarnicola \cite[p. 1161-1172, Appendices A and B]{Notarnicola2021}. (Possible future extensions of our work to the Berry Random Wave Model on $\mathbb{R}^n$ and $1 \leq l \leq n$ distinct energies would likely require combination of all aforementioned approaches. This problem can also be largely avoided by restricting oneself to a study of more qualitative versions of the same problems.)

\begin{enumerate}[label=\textbf{S.\arabic*}]
\item\label{N.S1} We will write $\{k_{-1},k_1\}$ to denote an unordered pair of 
strictly positive wave-numbers and $(k,K)$ for the corresponding
ordered pair, that is 
\begin{align}\label{UDI8}
    k := \min_{p\in \{-1,1\}} k_p, \qquad \qquad
    K := \max_{p\in\{-1,1\}} k_p. 
\end{align}
\item\label{N.S2} In a complete analogy with \ref{N.S1}, given a sequence $\{k^{n}_{-1},k^{n}_{1}\}_{n\in\mathbb{N}}$ of an unordered pairs of strictly positive wave-numbers, we will write $(k_n,K_n)_{n\in\mathbb{N}}$ for the corresponding sequence of 
the ordered pairs of wave-numbers, that is 
\begin{align}\label{UDI9}
    k_n := \min_{p\in \{-1,1\}} k_p^n, \qquad \qquad
    K_n := \max_{p\in\{-1,1\}} k_p^n. 
\end{align}
\item\label{N.S3} 
When considering unordered pairs $\{k_{-1},k_1\}$
the symbols $b_{k_{-1}}$, $b_{k_1}$ will always
indicate independent BRWs with wave-numbers
$k_{-1}$ and $k_1$ respectively. The notation 
used in Section \ref{s.introduction} can be recovered by setting 
\begin{align}
    u = \mathrm{argmin}_{p\in\{-1,1\}} k_p, \qquad \qquad 
    v = \mathrm{argmax}_{p \in \{-1,1\}} k_p, 
\end{align}
and subsequently 
\begin{align}
    b_{k_u} := b_k, \qquad \qquad b_{k_v}:= \hat{b}_{K}.
\end{align}
When $k=K$, the selection between 'argmax' and 'argmin' is arbitrary but must remain constant within a given argument.  To remain consistent, we will also write $\N(b_{k_{-1}},b_{k_{1}},\D)$  and $\N^{\varepsilon}(b_{k_{-1}},b_{k_{1}},\D)$ to denote respectively the nodal number and its $\varepsilon$-approximation (see Definition \ref{d.nn} and formula (\ref{d.varepsilon_approx})). These conventions will be naturally extended to sequences of complex Berry Random Waves.
\item\label{N.S4} For $x \in \mathbb{R}^2$
we will occasionally use indexation $x=(x_{-1},x_1)$
instead of $x=(x_1,x_2)$.
\item\label{N.S5} 
Combining conventions \ref{N.S1}, \ref{N.S3} and
\ref{N.S4} we will relabel the \textit{normalised derivatives} defined in (\ref{CACAP}) by setting
\begin{align}\label{d.noramlised_derivative}
    \begin{split}
        \widetilde{\partial}_i b_{k_p}(x)
        := \pS{\frac{\sqrt{2}}{k_p}}^{\abs{i}} \cdot 
        \partial_i b_{k_p}(x),
    \end{split}
\end{align}
where $p\in\{-1,1\}$, $i \in \{-1,0,1\}$, 
$x \in \mathbb{R}^2$, and where we use notation $\partial_0 b_{k_p}(x):=b_{k_p}(x)$. 
Let us stress that $\partial_{-1}$ denotes differentiation 
with respect to the component $x_{-1}$, and $\partial_1$ denotes
differentiation with respect to $x_1$.
In contrast, $\partial_0$ indicates that no differentiation occurs,
serving as the identity operator. Obviously, as in (\ref{CACAP}), the variance-normalisation property holds
\begin{align}
    \begin{split}
        \Var\pS{\widetilde{\partial}_i b_{k_p}(x)}=1,
    \end{split}
\end{align}
for every $p\in\{-1,1\}$, $i \in \{-1,0,1\}$ and
$x \in \mathbb{R}^2$.
\item\label{N.S6} For any element $\jv \in \mathbb{N}^6$, 
we will use an indexation scheme
\begin{align}
    \begin{split}
        \jv & = (j_{-1};j_1) \\
        & = (j_{-1,-1};j_{-1,0};j_{-1,1};j_{1,-1};j_{1,0};j_{1,1}), \\
        j_{-1} & = (j_{-1,-1};j_{-1,0};j_{-1,1}), \\
        j_{1} & = (j_{1,-1};j_{1,0};j_{1,1}), 
    \end{split}
\end{align}
and, the corresponding $l_1$ norm will be denoted with 
$\abs{\cdot}$, i.e. 
\begin{align}
    \begin{split}
        \abs{\jv} & = \abs{j_{-1}}+\abs{j_{1}} \\
        & = j_{-1,-1} + j_{-1,0} + j_{-1,1} + j_{1,-1} + j_{1,0} + j_{1,1}, \\
        \abs{j_{-1}} & = j_{-1,-1} + j_{-1,0} + j_{-1,1},\\
        \abs{j_{-1}} & = j_{1,-1} + j_{1,0} + j_{1,1}.
    \end{split}
\end{align}
\end{enumerate}

Whether this or more standard notation is being used should always be clear from the context and we will frequently leave pointers to the individual elements of the above list, as an additional check. Whenever possible, the statements of theorems or lemmas are given using standard notation and \ref{N.S1}-\ref{N.S6} is preferred in corresponding proofs.

\subsection{The covariance functions}\label{s.preliminaries.ss.the_2_point_correlation_functions}

The following definition will be frequently in use. 

\begin{definition} Let $b_1$ be the real Berry Random Wave with the wave-number $k=1$. For each $i,j \in \{-1,0,1\}$, the covariance function $r_{ij}$ at the point $z=(z_{-1},z_1) \in \mathbb{R}^2$ is defined as 
\begin{align}\label{d.e.correlation_functions}
    \begin{split}
        r_{ij}(z) := \mathbb{E}\pQ{\widetilde{\partial}_i  b_1(z)\cdot \widetilde{\partial}_j b_1(0)}.
    \end{split}
\end{align}  
\end{definition}

We use the following shorthand notation for the special cases of the above definition
\begin{align}
    \begin{split}
         r_{-1}(z):= r_{-10}(z), \qquad  r(z) := r_{00}(z), \qquad
         r_{1}(z):= r_{10}(z).
    \end{split}
\end{align}

The following result provides basic properties of these covariance functions.

\begin{lemma}\label{l.sf} For each $i,j \in \{-1,0,1\}$, let $r_{ij}$ be the covariance function defined in (\ref{d.e.correlation_functions}). Then, for every choice of $p, q\in\{-1,1\}$ and for every $z=(z_{-1},z_1)\in\mathbb{R}^2\setminus\{0\}$, we have
\begin{align}
    \begin{split}
        r(z) & = \J_{0}(|z|), \qquad r_p(z) = - \sqrt{2} \cdot \frac{z_p}{|z|} \cdot \J_1(|z|), \\
        r_{pq}(z) & = \delta_{pq}\cdot 2 \cdot\frac{\J_1(|z|)}{|z|}-2\cdot\frac{z_p}{|z_p|}\cdot\frac{z_{q}}{|z_q|}\cdot\J_2(|z|),
    \end{split}
\end{align}
with continuous extensions at $z=0$ s.t.
\begin{align}\label{e.iafp}
    \begin{split}
        r(0)= 1, \qquad r_p(0) = 0, \qquad r_{pq}(0) = \delta_{pq}.
    \end{split}
\end{align}
Here, the $\J_0$, $\J_1$, $\J_2$ denote the Bessel functions of the first kind (see Appendix \ref{ss.bffk}). 
Furthermore, there exists a numerical constant $\CC>0$ s.t.
for every $i,j \in \{-1,0,1\}$ and $z \in \mathbb{R}^2\setminus\{0\}$, we have 
\begin{align}\label{e.rijI}
    \begin{split}
        \abs{r_{ij}(z)} \leq \frac{\CC}{|z|^{1/2}}. 
    \end{split}
\end{align}
\end{lemma}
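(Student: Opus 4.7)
The plan is to obtain the explicit formulas by differentiating the covariance kernel $\J_0(|\cdot|)$ and then to derive the uniform bound from the standard asymptotic of Bessel functions.

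First I would record the commutation of differentiation and expectation. Because $b_1$ is a.s.\ smooth, centred, stationary and Gaussian with covariance $C(z)=\J_0(|z|)$, one has for multi-indices $\alpha,\beta$
$$
\mathbb{E}\bigl[\partial^{\alpha} b_1(z)\,\partial^{\beta} b_1(0)\bigr] \;=\; (-1)^{|\beta|}\,(\partial^{\alpha+\beta}C)(z),
$$
which follows from differentiating under the expectation using the spectral representation (\ref{d.bessel_function_as_integral}) (cf.\ \cite[Thm.~5.7.2]{AT09}). The case $\alpha=\beta=0$ gives $r(z)=\J_0(|z|)$ immediately, and the remaining formulas are reduced to computing the gradient and Hessian of $\J_0(|z|)$ combined with the normalising factors $(\sqrt{2})^{|i|}$ coming from (\ref{d.noramlised_derivative}).

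Next I carry out the differentiation. Using $\nabla|z|=z/|z|$ and $\J_0'(t)=-\J_1(t)$ one obtains
$$
\partial_p \J_0(|z|) \;=\; -\J_1(|z|)\,\frac{z_p}{|z|},
$$
so $r_p(z)=\sqrt{2}\,\partial_p\J_0(|z|)=-\sqrt{2}\,\frac{z_p}{|z|}\J_1(|z|)$. Differentiating once more and using the Bessel recurrences $2\J_1'(t)=\J_0(t)-\J_2(t)$ and $2\J_1(t)/t=\J_0(t)+\J_2(t)$, which together give $\J_1'(t)=\J_1(t)/t-\J_2(t)$, the $\J_1(|z|)z_pz_q/|z|^3$ contributions cancel and one is left with
$$
\partial^2_{pq}\J_0(|z|) \;=\; \frac{z_p z_q}{|z|^{2}}\,\J_2(|z|)\;-\;\delta_{pq}\,\frac{\J_1(|z|)}{|z|}.
$$
Multiplying by $-(\sqrt{2})^2=-2$ yields the stated formula for $r_{pq}$. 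The continuous extensions at $z=0$ in (\ref{e.iafp}) then follow from $\J_0(0)=1$, $\J_1(t)/t\to 1/2$ and $\J_2(t)\to 0$ as $t\downarrow 0$.

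For the uniform bound (\ref{e.rijI}) I split at $|z|=1$. On the punctured disc $\{0<|z|\le 1\}$ every $r_{ij}$ is bounded (by continuity up to the origin), and $|z|^{-1/2}\ge 1$ there, so $|r_{ij}(z)|\le C\le C|z|^{-1/2}$. On $\{|z|\ge 1\}$ I invoke the classical asymptotic $|\J_\nu(t)|\le C_\nu t^{-1/2}$ for $t\ge 1$ and $\nu\in\{0,1,2\}$ (Appendix~\ref{ss.bffk}); combined with $|z_p/|z||\le 1$ and $|\J_1(|z|)/|z||\le |\J_1(|z|)|$ this bounds every term in $r$, $r_p$ and $r_{pq}$ by a constant multiple of $|z|^{-1/2}$. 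The only non-trivial step in the entire argument is the cancellation in the Hessian computation, which is routine bookkeeping with Bessel identities; I do not anticipate a genuine obstacle.
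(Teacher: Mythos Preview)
Your proposal is correct and follows essentially the same route as the paper's Appendices~\ref{ss.bffk}--\ref{ss.2pco}: differentiate the covariance $\J_0(|\cdot|)$ using the Bessel recurrences, then invoke the uniform decay $|\J_\alpha(r)|\le K(\alpha)r^{-1/2}$ (Remark~\ref{r.prbf}). The only cosmetic difference is that the paper packages the derivatives through the normalised functions $\rho_\alpha(r)=2^\alpha\Gamma(\alpha+1)\J_\alpha(r)r^{-\alpha}$ and the recurrence $\rho_\alpha'(r)=\tfrac{-r}{2(\alpha+1)}\rho_{\alpha+1}(r)$ of Lemma~\ref{l.dnbf} (valid in any dimension $d$, then specialised to $d=2$), whereas you work directly with $\J_0,\J_1,\J_2$; the paper's Szeg\H{o} bound also holds for all $r>0$, so the split at $|z|=1$ is unnecessary but harmless.
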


We relegate the proof of the above lemma to Appendices \ref{ss.bffk} and \ref{ss.2pco}
as it relies on a standard arguments. Later on
we will make a use of the following simple observation: equation (\ref{e.iafp}) implies that, for every fixed point $z\in\mathbb{R}^2$,
the collection $\{\widetilde{\partial}_{-1} b(z), b(z), \widetilde{\partial}_{1} b(z)\}$ consists of three independent standard Gaussian random variables. 

\subsection{Hermite polynomials}\label{s.preliminaries.hermite_polynomials}
The well-known (probabilistic) Hermite polynomials $\HH_n$ 
are defined by the formula
\begin{align}
    \begin{split}
        \HH_0(x)& =1, \\
        \HH_n(x) & = -\partial \HH_{n-1}(x) + x\HH_{n-1}(x), \qquad n = 1, 2, \ldots, 
    \end{split}
\end{align}
see for example \cite[p. 13]{NP12}. In particular, we have 
\begin{align}\label{d.h123}
\begin{split}
	\HH_0(x)  = 1, \qquad
    \HH_1(x)  = x, \qquad
    \HH_2(x)  = x^2-1, \qquad
    \HH_3(x)  = x^3-3x, \qquad
    \HH_4(x) &= x^4 - 6x^2 +3.  
\end{split}
\end{align}
Some relevant properties of the (probabilistic) Hermite polynomials are the following:
\begin{itemize}
    \item[(i)] 
For every $n\in\mathbb{N}$ and $x\in \mathbb{R}$, 
\begin{align}
    \begin{split}\label{p.sahp} 
                    \HH_n(x) = (-1)^{n}\HH_n(-x).
    \end{split}
\end{align}
\item[(ii)] 
For every $k\in\mathbb{N}$, 
\begin{align}\label{p.hppat0} 
    \begin{split}
        \HH_{2k+1}(0)=0, \qquad \HH_{2k}(0)=(-1)^k(2k-1)!! 
    \end{split}
\end{align}
\item[(iii)] 
Consider the Gaussian $\LL^2$ space
\begin{align}
    \begin{split}
        \LL^2_{\mathbb{R}}(\mathbb{R}^{\mathbb{N}},\hspace{1 mm}\mathcal{B}(\mathbb{R}^{\mathbb{N}}),\hspace{1 mm}d\gamma),
    \end{split}
\end{align}
where $\gamma$ is the law of a countable collection of i.i.d. Gaussian r.v.s. Then the products of Hermite polynomials
\begin{align}
    \begin{split}
        \prod_{l=1}^{n} \HH_{j_l}(x_l), \qquad n \in \mathbb{N}, \qquad j_l \in \mathbb{N}, \qquad x_l \in \mathbb{R},
    \end{split}
\end{align}
form an orthogonal basis of this $\LL^2$ space and satisfy the property
\begin{align}\label{e.norm_of_the_product_of_hermite_polynomials}
    \begin{split}
        \Bigg|\Bigg|\prod_{l=1}^{n} \HH_{j_l}(x_l)\Bigg|\Bigg|_{\LL^2_{\mathbb{R}}(\mathbb{R}^{\mathbb{N}}\hspace{1 mm},\mathcal{B}(\mathbb{R}^{\mathbb{N}}), \hspace{1 mm}d\gamma)}^2=\prod_{l=1}^{n} j_l! 
    \end{split}
\end{align}
\end{itemize}

\subsection{Wiener Chaos}\label{s.preliminaries. ss.wiener_chaos}
Let $(\Omega,\mathcal{F},\mathbb{P})$ be the probability space
on which the two independent planar Berry Random Waves $b_{k_{-1}}$, $b_{k_1}$, are defined. Given a
subset $A$ of random variables in $\LL^2_{\mathbb{R}}(\Omega,\mathcal{F},\mathbb{P})$
we will write $\text{VectSp}(A)$ for the smallest $\mathbb{R}$-linear vector space
containing $A$, that is 
\begin{align}\label{d.q-th_wiener_chaos}
    \begin{split}
        \text{VectSp}(A) = \Bigg\{\sum_{l=1}^{n}a_lX_l : \quad n\in\mathbb{N},\quad a_l\in\mathbb{R}, \quad X_l\in A\Bigg\}.
    \end{split}
\end{align}
 We will now define a sequence of closed linear subspaces 
$\mathcal{H}_0, \mathcal{H}_1, \mathcal{H}_2, \ldots \subset \LL^{2}_{\mathbb{R}}\pS{\Omega,\mathcal{F},\mathbb{P}}$.
We start by setting $\mathcal{H}_0 = \mathbb{R}$, and 
\begin{align}
    \begin{split}
        \mathrm{A}_1 & = \big\{b_{p}(x): p\in\{-1,1\}, \quad x \in \mathbb{R}^2\big\}, \\ 
        \mathcal{H}_1 & = \overline{\mathrm{VectSp}(A_1)},
    \end{split}
\end{align}
where the closure, denoted by the horizontal bar, is taken in the space $\LL_{\mathbb{R}}^2\pS{\Omega,\mathcal{F},\mathbb{P}}$ (such
a notational convention is adopted throughout the paper). 
We remark that the derivatives of the field belong to $\mathcal{H}_1$, that is: 
\begin{align}\label{e.dbtH1}
    \begin{split}
        \forall p,p'\in\{-1,1\},\quad \forall x\in\mathbb{R}^2,\quad  \partial_p b_{p'}(x) \in \mathcal{H}_1.
    \end{split}
\end{align}
For $q=2, 3, 4, \ldots$  we first introduce the notation 
\begin{align}
    \begin{split}
            \mathrm{A}_{q,n} & = \Big\{\prod_{l=1}^n \HH_{j_l}(X_l):
        \quad \sum_{l=1}^n j_l=q, \quad X_l \in \mathcal{H}_1, \quad
        \mathbb{E}X_lX_m = \delta_{lm}, \quad l, m \in \{1, \ldots, m\}\Big\}, \\
                \mathrm{A}_q & = \overset{q}{\underset{n=1}{\bigcup}} \hspace{1 mm} \mathrm{A}_{q,n},
    \end{split}
\end{align}
and then we set 
\begin{align}\label{d.wiener_chaos}
    \begin{split}
        \mathcal{H}_q & = \overline{\mathrm{VectSp}(\mathrm{A}_q)},\\
    \end{split}
\end{align}
where each $\HH_{j_l}$ denotes the probabilistic Hermite polynomial of the order $j_l \in \mathbb{N}$ and where $\delta_{ml}$ is the
Kronecker's delta symbol. For each $q=0, 1, 2, \ldots$ the closed linear subspace $\mathcal{H}_q$
is known as the \textit{$q$-th Wiener Chaos generated by $\mathcal{H}_1$}
(see \cite{NP12, PT11, NN18, MP97, H21} for general results on the spaces
$\mathcal{H}_q$).

\subsection{Wiener-It\^{o} Chaos Decomposition}\label{ss.Wiener-Ito_Chaos_Decomposition}

Suppose now that $\mathcal{F}$ is the
$\sigma$-field generated by the two independent Berry 
Random Waves $b_{k_{-1}}$, $b_{k_1}$. That is to say
\begin{align}
    \begin{split}
        \mathcal{F} =
        \sigma\pS{\{b_{k_p}(x):\quad  p\in\{-1,1\},\quad x\in\mathbb{R}^2\}},
    \end{split}
\end{align}
where, for the set $A$ of random variables, $\sigma(A)$
denotes the smallest $\sigma$-algebra with respect to which
all the random variables in $A$ are measurable. 
It is a well-known consequence of the aforementioned 
properties of the (probabilistic) Hermite polynomials (see e.g. \cite[p. 26-28, 2.2 Wiener Chaos]{NP12}) that the following
$\LL^2$-orthogonal Wiener-It\^{o} chaos decomposition
holds
\begin{align}
    \begin{split}
        \LL_{\mathbb{R}}^2\pS{\Omega,\mathcal{F},\mathbb{P}}
        =\bigoplus_{q=0}^{\infty}\mathcal{H}_q,
    \end{split}
\end{align}
meaning that for every $X\in\LL_{\mathbb{R}}^2\pS{\Omega,\mathcal{F},\mathbb{P}}$,
we have the equality
\begin{align}\label{d.ceig}
    \begin{split}
        X = \sum_{q=0}^{\infty} X[q],
    \end{split}
\end{align}
where 
\begin{align}
    \begin{split}
        X[q] =\text{Proj}\pS{X\Big|\mathcal{H}_q}, 
    \end{split}
\end{align}
and both the projection and the sum are in the sense of $\LL^2_{\mathbb{R}}\pS{\Omega,\mathcal{F},\mathbb{P}}$.
We note that for every $X\in\LL_{\mathbb{R}}^2\pS{\Omega,\mathcal{F},\mathbb{P}}$
and $q,q'\in\mathbb{N}$ we have
\begin{align}
    \begin{split}
        \mathbb{E}\pQ{X[q]\cdot X[q']} = \begin{cases}
            \mathbb{E}X[q]^2 & \text{ if } q=q'\\
            0 & \text{ if } q \neq q'
        \end{cases},
    \end{split}
\end{align}
and moreover
\begin{align}\label{e.epe}
    \begin{split}
        X[0]  = \mathbb{E}X, \qquad 
        \mathbb{E}X[q]  = 0, \qquad q \geq 1.  
    \end{split}
\end{align}

\subsection{Wiener-It\^{o} Chaos Decomposition of the Nodal Number}\label{ss.chde}
We recall that for a fixed $x$, the normalised
derivatives $\widetilde{\partial}_i b_{k_p}(x)$,
$p\in\{-1,1\}$, $i\in\{-1,0,1\}$, are independent
standard Gaussian random variables belonging 
to $\mathcal{H}_1$ (see (\ref{e.iafp}) and (\ref{e.dbtH1})). Thus we have the implication
\begin{align}\label{e.|j|=q}
    \begin{split}
        \text{if}\quad \abs{\jv}\equiv \sum_{p=\pm 1}\sum_{i\in\{-1,0,1\}}j_{p,i} = q
        \qquad\text{then}
        \qquad
        \pS{\prod_{p=\pm 1}\prod_{i\in\{-1,0,1\}}\HH_{j_{p,i}}\pS{\widetilde{\partial}_i b_{k_p}(x)}} \in \mathcal{H}_q.
    \end{split}
\end{align}
Moreover, since each $\mathcal{H}_q$ is a closed linear subspace
of $\LL^2_{\mathbb{R}}(\Omega,\mathcal{F}_b,\mathbb{P})$, a standard approximation argument yields that, for $\abs{\jv}$
as in (\ref{e.|j|=q})
\begin{align}
    \begin{split}
         \pS{\int_{\D}\prod_{p=\pm 1}\prod_{i\in\{-1,0,1\}}\HH_{j_{p,i}}\pS{\widetilde{\partial}_i b_{k_p}(x)}dx} \in \mathcal{H}_q,
    \end{split}
\end{align}
for every compact domain $\D$. 

The following statement provides the explicit form of the Wiener-It\^{o}
Chaos expansion (defined in (\ref{d.ceig})) for the nodal number $\N(b_{k_{-1}},b_{k_1},\D)$ (defined in (\ref{d.nn}), see also \ref{N.S3}-\ref{N.S6} for other notation used in the statement of the next theorem).

\begin{lemma}\label{l.cefnn}Let $\D$ be a convex compact planar domain, 
with non-empty interior and piecewise $C^1$
boundary $\partial \D$. Let $\{k_{-1}, k_1\}$ be an unordered pair of strictly positive wave-numbers and let $b_{k_{-1}}$, $b_{k_1}$, be a corresponding pair of independent real Berry Random Waves. Then, the nodal number $\N(b_{k_{-1}},b_{k_1},\D)$ admits the Wiener-It\^{o}
chaos decomposition 
\begin{align}
    \begin{split}
    \N(b_{k_{-1}},b_{k_1},\D)=\sum_{q=0}^{\infty}\N(b_{k_{-1}},b_{k_1},\D)[2q],
    \end{split}
\end{align}
where
\begin{align}\label{ABCDEF}
    \begin{split}
        \N(b_{k_{-1}},b_{k_1},\D)[2q] & = (k_{-1}\cdot k_{1}) \cdot 
        \sum_{\jv \in \mathbb{N}^6, \abs{\jv}=2q}
        c_{\jv}\int_{\D}\prod_{p \in \{-1,1\}}\prod_{i \in  \{-1,0,1\}}
        \HH_{j_{p,i}}\pS{\widetilde{\partial}_i b_{k_p}(x)}dx.
    \end{split}
\end{align}
Here, the sum runs over all vectors $\jv \in \mathbb{N}^6$, $ \jv = (j_{-1,-1},j_{-1,0},j_{-1,1},j_{1,-1},j_{1,0},j_{1,1})$, with 
\begin{align*}
    \abs{\jv}\equiv \sum_{p\in \{-1,1\}}\sum_{i\in \{-1,0,1\}} j_{p,i}=2q,
\end{align*} 
and the constants $c_{\jv}$ are defined as
\begin{align}\label{d.finc}
    \begin{split}
        c_{\jv} := & \rho(\jv) \cdot
            \frac{(-1)^{\frac{j_{-1,0}+j_{1,0}}{2}}\prod_{p \in \{-1,1\}}(j_{p,0}-1)!!}{{4\pi \prod_{p \in \{-1,1\}}\prod_{i\in\{-1,0,1\}}j_{p,i}!}} \\
            & \cdot \mathbb{E}\pQ{\abs{\det
            \begin{bmatrix}
                Z_{-1,-1} & Z_{-1,1} \\
                Z_{-1,1} & Z_{1,1} 
            \end{bmatrix}}
            \cdot \prod_{p,q\in\{-1,1\}}\HH_{j_{p,q}}(Z_{p,q})}.
    \end{split}
\end{align}
Here we denote by $\{Z_{p,q}$ : $p,q \in \{-1,1\}\}$ a collection of four independent standard Gaussian random variables and
use the notation $\rho(\jv)=1$ if the following conditions are  simultaneously satisfied: 
\begin{enumerate}
    \item For every $p \in \{-1,1\}$ the index $j_{p,0}$ is even,
    \item For every combination of $p,q \in \{-1,1\}$ either: 
    \begin{itemize}
        \item[(i)] all the indices $j_{p,q}$ are even or,
        \item[(ii)] all the indices $j_{p,q}$ are odd. 
    \end{itemize}
\end{enumerate}
If either of the above conditions is not satisfied, then $\rho(\jv)=0$.
\end{lemma}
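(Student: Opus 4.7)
The plan is to derive the expansion by applying the chaos decomposition pointwise inside the integral defining the $\varepsilon$-approximation $\N^{\varepsilon}(b_{k_{-1}},b_{k_1},\D)$ of Lemma \ref{l.appr}, and then to pass to the limit $\varepsilon\downarrow 0$. The crucial preliminary observation, obtained from Lemma \ref{l.sf} at $z=0$ together with the independence of $b_{k_{-1}}$ and $b_{k_1}$, is that for every fixed $x\in\D$ the six normalised random variables $\widetilde{\partial}_i b_{k_p}(x)$, $p\in\{-1,1\}$, $i\in\{-1,0,1\}$, form a vector of mutually independent standard Gaussians. Consequently the products $\prod_{p,i}\HH_{j_{p,i}}(\widetilde{\partial}_i b_{k_p}(x))$, indexed by $\jv \in \mathbb{N}^6$, are (at each fixed $x$) an orthogonal basis of the $\LL^2$ space of the Gaussian $\sigma$-algebra generated at $x$, and by (\ref{e.|j|=q}) each such product sits inside $\mathcal{H}_{|\jv|}$.

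I would first rewrite the $\varepsilon$-integrand via the relation $\partial_i b_{k_p}=\frac{k_p}{\sqrt{2}}\widetilde{\partial}_i b_{k_p}$, so that $|\det J(x)|=\tfrac{k_{-1}k_1}{2}|\det \widetilde{J}(x)|$ with $\widetilde{J}$ containing the normalised derivatives. At a fixed $x$ the integrand then splits, thanks to Lemma \ref{l.sf}, into two jointly independent blocks: (i) the indicators on $(b_{k_{-1}}(x),b_{k_1}(x))$ and (ii) $|\det \widetilde{J}(x)|$, a function only of the four normalised derivative entries. Orthogonality of the Hermite basis and this factorisation give the pointwise chaos coefficient
$$
\alpha_{\jv}^{\varepsilon} \;=\; \frac{k_{-1}k_1}{2\prod_{p,i} j_{p,i}!}\cdot\prod_{p\in\{-1,1\}}\mathbb{E}\!\left[\tfrac{1}{2\varepsilon}\mathbb{1}_{\{|Z|\leq\varepsilon\}}\HH_{j_{p,0}}(Z)\right]\cdot \mathbb{E}\!\left[|\det M|\prod_{p,q\in\{-1,1\}}\HH_{j_{p,q}}(Z_{p,q})\right],
$$
where $Z$ and the $Z_{p,q}$ are iid standard Gaussians and $M=(Z_{p,q})_{p,q\in\{-1,1\}}$. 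Since $\alpha_{\jv}^{\varepsilon}$ is independent of $x$, Fubini yields
$$
\N^{\varepsilon}(b_{k_{-1}},b_{k_1},\D)[2q]\;=\;\sum_{|\jv|=2q}\alpha_{\jv}^{\varepsilon}\int_{\D}\prod_{p,i}\HH_{j_{p,i}}\!\big(\widetilde{\partial}_i b_{k_p}(x)\big)\,dx.
$$

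I would then let $\varepsilon\downarrow 0$. Since $\HH_{j_{p,0}}$ is continuous and bounded on $[-1,1]$, dominated convergence gives $\mathbb{E}\!\left[\tfrac{1}{2\varepsilon}\mathbb{1}_{\{|Z|\leq\varepsilon\}}\HH_{j_{p,0}}(Z)\right]\to\phi(0)\HH_{j_{p,0}}(0)$, which by (\ref{p.hppat0}) vanishes unless $j_{p,0}$ is even and otherwise equals $\frac{(-1)^{j_{p,0}/2}(j_{p,0}-1)!!}{\sqrt{2\pi}}$. Multiplying the two such factors and combining with $\frac{k_{-1}k_1}{2}$ reproduces the explicit constant of (\ref{d.finc}). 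The remaining parity condition encoded in $\rho(\jv)$ -- that the four indices $j_{p,q}$, $p,q\in\{-1,1\}$, share the same parity -- comes from the sign symmetries of $|\det M|$: flipping the signs of one row (indexed by $p$) or one column (indexed by $q$) of $M$ leaves $|\det M|$ unchanged but multiplies $\prod \HH_{j_{p,q}}(Z_{p,q})$ by $(-1)^{j_{p,-1}+j_{p,1}}$ or $(-1)^{j_{-1,q}+j_{1,q}}$, respectively, so that all four parity sums must be even, which is equivalent to the stated condition.

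The main technical obstacle will be justifying that these coefficient-wise convergences lift to $\LL^2(\mathbb{P})$-convergence of the $2q$-th chaos projection, i.e.\ that the formal termwise limit of $\N^{\varepsilon}[2q]$ is indeed $\N[2q]$. The cleanest route is to combine the $\LL^2(\mathbb{P})$-convergence $\N^{\varepsilon}\to \N$ from Lemma \ref{l.appr} with the continuity of the orthogonal projection onto the closed subspace $\mathcal{H}_{2q}$, yielding $\N^{\varepsilon}[2q]\to \N[2q]$ in $\LL^2(\mathbb{P})$; matching this with the explicit finite-sum expression above and the termwise limits $\alpha_{\jv}^{\varepsilon}\to(k_{-1}k_1)\,c_{\jv}$ (together with Cauchy--Schwarz applied to each finite sum to control the $\LL^2$-error on $\int_{\D}\prod\HH_{j_{p,i}}\,dx$) establishes (\ref{ABCDEF}). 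The vanishing of every odd-order chaos $\N[2q+1]=0$ is automatic from the $\rho$-support, which forces both $|j_{-1}|=j_{-1,-1}+j_{-1,0}+j_{-1,1}$ and $|j_1|=j_{1,-1}+j_{1,0}+j_{1,1}$ to be even, and hence $|\jv|$ to be even.
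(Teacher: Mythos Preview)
Your proposal is correct and follows essentially the same route as the paper: start from the $\varepsilon$-approximation of Lemma \ref{l.appr}, decompose the integrand pointwise using the independence of the six normalised variables at each $x$, factor the coefficients via the independence of the indicator block and the determinant block, and pass to the limit $\varepsilon\downarrow0$. The paper obtains the parity condition on the four indices $j_{p,q}$ by citing \cite[Lemma 3.2]{DNPR19}, which is exactly the row/column sign-flip symmetry argument you spell out directly; and where the paper simply says ``integrate over $\D$ and pass to the limit'', you make the justification explicit via continuity of the projection onto $\mathcal{H}_{2q}$, which is a clean way to close the argument.
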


\begin{proof} We are going to use a standard strategy, 
that is we will start with an $\LL^2$ approximation formula
$$\N(b_{k_{-1}},b_{k_1},\D)=\lim_{\varepsilon \downarrow 0}\N^{\varepsilon}(b_{k_{-1}},b_{k_1},\D),$$ provided
by Lemma \ref{l.appr}. We recall that 
\begin{align}
    \begin{split}
        \N^{\varepsilon}(b_{k_{-1}},b_{k_1},\D) & = \int_{\D}\pS{\prod_{p \in \{-1,1\}} \frac{\mathbb{1}_{\{|b_{k_p}(x)|\leq \varepsilon\}}}{2\varepsilon}}\cdot \abs{\det \begin{bmatrix}
            \partial_{-1} b_{k_{-1}}(x) & \partial_{1} b_{k_{-1}}(x) \\ 
             \partial_{-1} b_{k_{1}}(x) & \partial_{1} b_{k_{1}}(x)
        \end{bmatrix}}dx, 
    \end{split}
\end{align}
and we start by finding the Wiener-It\^{o} chaotic decomposition of the integrand 
function, which we first rewrite as
\begin{align}
    \begin{split}
        \frac{k_{-1}\cdot k_1}{2} \cdot \prod_{p \in \{-1,1\}} \frac{1}{2\varepsilon}\mathbb{1}_{\{|b_{k_p}(x)|\leq \varepsilon\}}\cdot \abs{ \det \begin{bmatrix}
            \widetilde{\partial}_{-1} b_{k_{-1}}(x) & \widetilde{\partial}_{1} b_{k_{-1}}(x) \\ 
             \widetilde{\partial}_{-1} b_{k_{1}}(x) & \widetilde{\partial}_{1} b_{k_{1}}(x)
        \end{bmatrix}}.
    \end{split}
\end{align}
We obtain 
\begin{align}\label{e.cdwxf}
    \begin{split}
        & \frac{k_{-1}\cdot k_1}{2} \cdot \pS{\prod_{p \in \{-1,1\}} \frac{\mathbb{1}_{\{|b_{k_p}(x)|\leq \varepsilon\}}}{2\varepsilon}}\cdot \abs{\det \begin{bmatrix}
            \widetilde{\partial}_{-1} b_{k_{-1}}(x) & \widetilde{\partial}_{1} b_{k_{-1}}(x) \\ 
             \widetilde{\partial}_{-1} b_{k_{1}}(x) & \widetilde{\partial}_{1} b_{k_{1}}(x)
        \end{bmatrix}} \\
        & =\frac{k_{-1}\cdot k_1}{2} \cdot  \sum_{q=0}^{\infty}\pS{\prod_{p \in \{-1,1\}} \frac{\mathbb{1}_{\{|b_{k_p}(x)|\leq \varepsilon\}}}{2\varepsilon}\cdot \abs{\det \begin{bmatrix}
            \widetilde{\partial}_{-1} b_{k_{-1}}(x) & \widetilde{\partial}_{1} b_{k_{-1}}(x) \\ 
             \widetilde{\partial}_{-1} b_{k_{1}}(x) & \widetilde{\partial}_{1} b_{k_{1}}(x)
        \end{bmatrix}}}[q] \\
        & = (k_{-1}\cdot k_1)\sum_{q=0}^{+\infty}\pS{\sum_{\jv \in \mathbb{N}^6, \abs{\jv}=q}c_{\jv}^{\varepsilon}\prod_{p  \in \{-1,1\}} \prod_{i \in \{-1,0,1\}} \HH_{j_{p,i}}\pS{\widetilde{\partial}_ib_{k_p}(x)}},
    \end{split}
\end{align}
where 
\begin{align}
    \begin{split}
        c_{\jv}^{\varepsilon} = & \frac{1/2}{\prod_{p \in \{-1,1\}}\prod_{i \in \{-1,0,1\}}j_{p,i}!} \cdot \mathbb{E}\Big[\pS{\prod_{p\in \{-1,1\}} \frac{\mathbb{1}_{\{|b_{k_p}(x)|\leq \varepsilon\}}}{2\varepsilon}}\cdot\abs{\det \begin{bmatrix}
            \widetilde{\partial}_{-1} b_{k_{-1}}(x) & \widetilde{\partial}_{1} b_{k_{-1}}(x) \\ 
             \widetilde{\partial}_{-1} b_{k_{1}}(x) & \widetilde{\partial}_{1} b_{k_{1}}(x)
        \end{bmatrix}} \\
        & \cdot
        \prod_{p \in \{-1,1\}}\prod_{i \in \{-1,0,1\}}\HH_{j_{p,i}}(\widetilde{\partial}_i b_{k_p}(x))\Big] \\
         = &
        \frac{1/2}{\prod_{p\in \{-1,1\}}\prod_{i \in \{-1,0,1\}}j_{p,i}!}
        \cdot \pS{\prod_{p\in \{-1,1\}}\frac{1}{2\varepsilon}\mathbb{E}\pQ{\mathbb{1}_{\{|b_{k_p}(x)|\leq \varepsilon\}}\HH_{j_{p,0}}\pS{b_{k_p}(x)}}} \\
        & \cdot \mathbb{E}\pQ{\abs{\det \begin{bmatrix}
            \widetilde{\partial}_{-1} b_{k_{-1}}(x) & \widetilde{\partial}_{1} b_{k_{-1}}(x) \\ 
             \widetilde{\partial}_{-1} b_{k_{1}}(x) & \widetilde{\partial}_{1} b_{k_{1}}(x)
        \end{bmatrix}}\cdot \prod_{p,q \in \{-1,1\}} \HH_{j_{p,q}}\pS{\widetilde{\partial}_p b_{k_{q}}(x)}}.
    \end{split}
\end{align}
Here, the factor $(k_{-1}\cdot k_1)/2$ appears as the inverse of the normalisation factor for derivatives and the product of factorials 
$$
\prod_{p \in \{-1,1\}}\prod_{i,j \in \{-1,0,1\}} j_{p,i}!
$$
is needed to normalise the Hermite basis
$$
\prod_{p \in \{-1,1\}}\prod_{i,j \in \{-1,0,1\}} \HH_{j_{p,i}}\pS{\widetilde{\partial}_i b_{k_p}(x)},
$$
see (\ref{e.norm_of_the_product_of_hermite_polynomials}). We have 
\begin{align}
    \begin{split}
        \frac{1}{2\varepsilon}\mathbb{E}\pQ{\mathbb{1}_{\{|b_{k_p}(x)|\leq \varepsilon\}}\HH_{j_{p,0}}\pS{b_{k_p}(x)}}
        = \frac{1}{2\varepsilon}\int_{-\varepsilon}^{\varepsilon}
        \HH_{j_{p,0}}(t)\cdot \frac{e^{-t^2/2}}{\sqrt{2\pi}}dt
        \overset{\varepsilon\downarrow 0}{\to}
        \frac{1}{\sqrt{2\pi}}\HH_{j_{p,0}}(0).
    \end{split}
\end{align}
We recall from (\ref{p.hppat0})
that for every $k\in\mathbb{N}$ we have $\HH_{2k+1}\pS{0}=0$ and
$\HH_{2k}\pS{0} = \pS{-1}^k \pS{2k-1}!!$, and so the both sides of the above expression vanish if $j_{p0}$ is odd. Furthermore we note that, using parity argument based on (\ref{p.sahp}), it has been shown in \cite[p. 17, Lemma 3.2]{DNPR19} that if there exist
$p,q,u, v \in \{-1,1\}$ s.t. $j_{p,q}$ is even 
and $j_{u,v}$ is odd, then 
\begin{align}
    \begin{split}
        \mathbb{E}\pQ{\abs{\det 
        \begin{bmatrix}
        \widetilde{\partial}_{-1} b_{k_{-1}} & 
        \widetilde{\partial}_{1} b_{k_{-1}} \\
        \widetilde{\partial}_{-1} b_{k_{1}} & 
        \widetilde{\partial}_{1} b_{k_{1}} \\
        \end{bmatrix}}
        \cdot \prod_{p,q \in \{-1,1\}} \HH_{j_{p,q}}\pS{\widetilde{\partial}_p b_{k_{q}}(x)}}=0.
    \end{split}
\end{align}
These observations imply that for every $q=0, 1, 2, \ldots$ we have an implication
\begin{align}
    \begin{split}
        \text{ if }\qquad
        \abs{\jv} \equiv \sum_{p \in \{-1,1\}}\sum_{i\in\{-1,0,1\}}j_{p,i}=2q+1
        \qquad\text{ then }\qquad c_{\jv}^{\varepsilon}=0,
    \end{split}
\end{align}
and further that for every $q=0, 1, 2, \ldots$ the corresponding chaotic projection vanishes
\begin{align}
    \begin{split}
        \pS{\prod_{p\in\{-1,1\}}\frac{\mathbb{1}_{\{|b_{k_p}(x)|<\varepsilon\}}}{2\varepsilon}\cdot \abs{\det \begin{bmatrix}
            \widetilde{\partial}_{-1} b_{k_{-1}}(x) & \widetilde{\partial}_{1} b_{k_{-1}}(x) \\
             \widetilde{\partial}_{-1} b_{k_{1}}(x) & \widetilde{\partial}_{1} b_{k_{1}}(x) \\
        \end{bmatrix}}}[2q+1]=0.
    \end{split}
\end{align}
To conclude, we integrate over the domain $\D$ and pass to 
the limit $\varepsilon \downarrow 0$. We note that the constants $c_{\jv}$ are given by $c_{\jv}=\lim_{\varepsilon \downarrow 0}c_{\jv}^{\varepsilon}$.
\end{proof}

\subsection{Wiener isometry and Fourth Moment Theorem}\label{ss.hiso} 
 Let $X=\pS{X_t}_{t\in T}$ be a separable infinite-dimensional centred Gaussian process with an index set $T$.
 Let $\LL^2(\Omega,\mathcal{F}_X, \mathbb{P})$ be the associated $\LL^2$ space
where $\mathcal{F}_X$ is the $\sigma$-field generated by the process $X$.
 Then, $\LL^2_{\mathbb{R}}(\Omega,\mathcal{F}_X, \mathbb{P})$ is a separable Hilbert space 
 and, as described in Subsection \ref{ss.Wiener-Ito_Chaos_Decomposition}, we
 can decompose $\LL^2(\Omega,\mathcal{F}_X, \mathbb{P}) = \oplus_{q=0}^{\infty} \mathcal{H}_q$. 
 Here, for each non-negative integer $q$, $\mathcal{H}_q$ is the
$q$-th Wiener chaos associated with the process $X$ (see Subsection \ref{s.preliminaries. ss.wiener_chaos} for the definition). 
For each positive integer $q$ we set $\LL^2([0,1]^q):=\LL^2([0,1]^q, \mathcal{B}([0,1]^q), dt_1 \ldots dt_q)$
and we  set $\LL^2_s([0,1]^q) \subset \LL^2([0,1]^q)$ to be a subspace consisting of a.e.
symmetric functions. That is, $f \in \LL^2_s([0,1]^q)$ if and only if $f \in \LL^2([0,1]^q)$ and 
for a.e. choice of arguments $0 \leq t_1, \ldots, t_q \leq 1$ and for every permutation 
$\sigma \in S_q$ we have $f(t_1, \ldots, t_q) = f(t_{\sigma(1)}, \ldots, t_{\sigma(q)})$. 
We endow $\LL^2_s([0,1]^q)$ with rescaled norm $\abs{\abs{\cdot}}_{\LL^2_s([0,1]^q)} = q! \abs{\abs{\cdot}}_{\LL^2([0,1]^q)}$

Now choose any orthonormal basis $(f_l)_{l\in \mathbb{N}}$ of $\LL^2([0,1])$ and fix a sequence $(X_l)_{l\in\mathbb{N}}$ of i.i.d. standard Gaussian random 
variables which span the first Wiener Chaos $\mathcal{H}_1$ generated by the process $X$. For every integer $q\geq 1$
we will now define a bijective isometry $\II_q$ from $\LL^2_s([0,1]^q)$ onto $\mathcal{H}_q$.
For any integer $1\leq l\leq q$, for any collection of positive integers $(q_{k})_{1\leq k \leq l}$
such that $\sum_{k=1}^l q_k=q$, and for any choice of distinct indices $i_1, \ldots, i_l \in \mathbb{N}$, 
we set
\begin{subequations}
\begin{align}
    f_{i_1^{q_1}, \ldots, i_l^{q_l}}(t_1, \ldots, t_q) & := \prod_{k=1}^l \prod_{m=1}^{q_k} f_k(t_{q_1+\ldots +q_{l-1}+m}), \label{d.f} \\
    \widetilde{f}_{i_1^{q_1}, \ldots, i_l^{q_l}}(t_1, \ldots, t_q) & := \frac{1}{q!} \cdot \sum_{\sigma \in S_q}\prod_{k=1}^l \prod_{m=1}^{q_k} f_k(t_{\sigma(q_1+\ldots +q_{l-1}+m)}), \label{d.f_symmetrised}\\
        \II_q\pS{\widetilde{f}_{i_1^{q_1}, \ldots, i_l^{q_l}}} & := \HH_{q_1}\pS{X_{i_1}} \cdot \HH_{q_2}\pS{X_{i_2}} \cdots \HH_{q_l}\pS{X_{i_l}}. \label{d.wiener_isometry}
\end{align}
\end{subequations}    
We note that the functions as in (\ref{d.f}) span $\LL^2([0,1]^q)$, 
while the functions as in (\ref{d.f_symmetrised}) span $\LL^2_s([0,1]^q)$. The function defined in (\ref{d.f_symmetrised}) is
called the symmetrization of the function recorded in (\ref{d.f}). The simplest case of (\ref{d.wiener_isometry})
is that $\II_1(f_l)=X_l$. Moreover, it follows from definition (\ref{d.wiener_chaos}), that the products as on the right of (\ref{d.wiener_isometry})
are dense in $\mathcal{H}_q$. Finally, it is not too difficult to check (taking advantage of the independence) that 
\begin{align}
    \begin{split}
         \abs{\abs{\widetilde{f}_{i_1^{q_1}, \ldots, i_l^{q_l}}}}^2_{\LL^2_s([0,1]^q)} & = q_1! q_2! \cdots q_l!  = \Var\pS{\HH_{q_1}\pS{X_{i_1}} \cdot \HH_{q_2}\pS{X_{i_2}} \cdots \HH_{q_l}\pS{X_{i_l}}}.
    \end{split}
\end{align}
Combining the above observations, it is clear that extending linearly (\ref{d.wiener_isometry}) yields
an isometry with postulated properties. 

The following standard concept is a crucial tool used for proving CLTs for the random sequences belonging to Wiener Chaoses. 

\begin{definition}\label{d.rth_order_contractions_on_L2(0,1)}
    Let $q \geq r \geq 1$ be integers and $f, g \in \LL^2_s([0,1]^q)$. 
    Then, the $r$-contraction $f\otimes_r g \in \LL^2([0,1]^{2q-2r})$ is defined by 
\begin{align}
    \begin{split}
        & f \otimes_r g (t_{1}, \ldots, t_{q-r}, s_1, \ldots, s_{q-r})\\
        & \qquad = \underset{[0,1]^r}{\int} f(t_1, \ldots, t_{q-r}, u_{1}, \ldots,  u_{r})\cdot g(s_1, \ldots, s_{q-r}, u_1, \ldots, u_r) du_{1} \ldots du_{r}. \\
    \end{split}
\end{align} 
\end{definition}

We will adopt standard convention that whenever $q,r,f$ are as in the above definition then 
the symmetrisation of $g=f\otimes_r f$ will be denoted as $\widetilde{g}=f\widetilde{\otimes}_r f$.
The next result is a crucial technical tool we will need in the proof of Theorem \ref{t.avar}. 
For the definition of Total Variation, Kolmogorov and Wasserstein distances used we refer to \cite[p. 209-214, Appendix C]{NP12}.

\begin{theorem}[\cite{NP12}]\label{t.4mth} Let $q \geq 2$ be an integer, let $\mathbf{X}=(X_t)_{t\in T}$ be a 
    centred infinite-dimensional separable Gaussian process with an index set T. Let $Z$ be a standard Gaussian random variable 
    and $d$ denote either Total Variation, Kolmogorov or Wasserstein distance. Let $\II_q$ denote the Wiener isometry as defined in (\ref{d.f_symmetrised}) and (\ref{d.wiener_isometry}). 
    Then, there exists a combinatorial constant $\CC_q>0$ such that, for every function $f\in \LL^2_s([0,1]^q)$ with $||f||_{\LL^2([0,1]^q)}=1$, we have 
    \begin{align}
        \begin{split}
            d\pS{\II_q(f), Z} \leq \CC_q \cdot
            \max_{1 \leq r \leq q-1} \Big|\Big|f \otimes_r f\Big|\Big|_{\LL^2\pS{[0,1]^{2(q-r)}}}.
        \end{split}
    \end{align}
    \end{theorem}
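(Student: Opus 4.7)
The plan is to establish Theorem \ref{t.4mth} via the now-classical combination of \emph{Stein's method} and \emph{Malliavin calculus}, as developed by Nourdin and Peccati. The proof proceeds in two essentially independent parts: a Stein-type bound expressing the target distance in terms of a Malliavin-type quantity, followed by a product-formula computation that reduces this quantity to norms of contractions $f\otimes_r f$.

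First I would set up Stein's equation. For $d$ equal to the Total Variation, Kolmogorov or $1$-Wasserstein distance, the distance is characterised by a class $\mathcal{H}$ of test functions, and for each $h\in\mathcal{H}$ the Stein equation
\begin{equation*}
\varphi_h'(x)-x\,\varphi_h(x)=h(x)-\mathbb{E}[h(Z)]
\end{equation*}
admits a solution $\varphi_h$ with $\|\varphi_h\|_\infty$ and $\|\varphi_h'\|_\infty$ uniformly bounded by an absolute constant (depending on $d$). Writing $F=\II_q(f)$, the characterisation yields
\begin{equation*}
|\mathbb{E}[h(F)]-\mathbb{E}[h(Z)]|=|\mathbb{E}[\varphi_h'(F)-F\,\varphi_h(F)]|.
\end{equation*}

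Second, I would apply the Malliavin integration-by-parts formula to rewrite $\mathbb{E}[F\,\varphi_h(F)]$. If $D$ denotes the Malliavin derivative and $L$ the Ornstein--Uhlenbeck generator, then for any centred $F\in\mathbb{D}^{1,2}$ and smooth $g$ one has $\mathbb{E}[Fg(F)]=\mathbb{E}[g'(F)\,\langle DF,-DL^{-1}F\rangle_{\LL^2([0,1])}]$. The essential simplification in the present setting is that $F=\II_q(f)$ is an eigenfunction of $-L$ with eigenvalue $q$, so $-DL^{-1}F=q^{-1}DF$ and therefore
\begin{equation*}
\mathbb{E}[\varphi_h'(F)-F\,\varphi_h(F)]=\mathbb{E}\!\left[\varphi_h'(F)\left(1-q^{-1}\|DF\|_{\LL^2([0,1])}^2\right)\right].
\end{equation*}
Since $\mathbb{E}[q^{-1}\|DF\|^2]=\mathbb{E}[F^2]$ is normalised to $1$ under the assumption $\|f\|_{\LL^2([0,1]^q)}=1$ (up to the explicit $q!$ factor absorbed into $C_q$), Cauchy--Schwarz and the bound on $\varphi_h'$ give
\begin{equation*}
d(\II_q(f),Z)\;\leq\; C\cdot\sqrt{\Var\!\left(q^{-1}\|DF\|_{\LL^2([0,1])}^2\right)}.
\end{equation*}

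The third, and technically heaviest, step is to evaluate this variance. Using $DF=q\,\II_{q-1}\!\left(f(\cdot,s)\right)$ and the multiplication formula for Wiener--Itô integrals,
\begin{equation*}
\II_p(f)\II_q(g)=\sum_{r=0}^{p\wedge q}r!\binom{p}{r}\binom{q}{r}\II_{p+q-2r}(f\,\widetilde{\otimes}_r\, g),
\end{equation*}
one expands $q^{-1}\|DF\|^2$ as $\sum_{r=0}^{q-1}$ of multiple integrals built from $f\,\widetilde\otimes_{r+1}f$. Orthogonality of distinct Wiener chaoses, together with the isometry $\|\II_p(g)\|_{\LL^2(\mathbb{P})}^2=p!\|g\|^2$, reduces the variance to an explicit finite sum
\begin{equation*}
\Var\!\left(q^{-1}\|DF\|^2\right)=\sum_{r=1}^{q-1}a_{q,r}\,\|f\,\widetilde{\otimes}_r f\|_{\LL^2([0,1]^{2q-2r})}^2,
\end{equation*}
with combinatorial constants $a_{q,r}$. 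Since symmetrisation is an orthogonal projection, $\|f\,\widetilde{\otimes}_r f\|\leq \|f\otimes_r f\|$, and so the variance is bounded by $\CC_q^2\,\max_{1\leq r\leq q-1}\|f\otimes_r f\|_{\LL^2([0,1]^{2(q-r)})}^2$, yielding the stated inequality.

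The main obstacle is the third step: correctly performing the product-formula expansion and tracking the combinatorics so that only the contractions $f\otimes_r f$ for $1\leq r\leq q-1$ (not $r=0$ or $r=q$, which correspond respectively to the highest chaos and to the mean) appear in the bound. The rest is standard once the Stein--Malliavin machinery has been invoked; indeed, since the statement is attributed to \cite{NP12}, the cleanest route in the paper itself is simply to cite the corresponding theorem there, noting only that the three distances are all covered by a single Stein bound with different absolute constants.
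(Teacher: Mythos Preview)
Your proposal is correct, and you anticipated the paper's approach exactly: the paper's proof consists solely of the citation ``See \cite[p.~99, Theorem~5.2.6]{NP12} and \cite[p.~95--96, Eq.~(5.2.6) in Lemma~5.2.4]{NP12}.'' Your sketch of the Stein--Malliavin argument is precisely how the result is established in \cite{NP12}, so there is nothing to add.
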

\begin{proof}
    See \cite[p. 99, Theorem 5.2.6]{NP12} and \cite[p. 95-96, Eq. (5.2.6) in Lemma 5.2.4]{NP12}.
\end{proof}
An immediate consequence of the above theorem is that, for variables belonging to the Wiener chaos of order $q\geq 2$, convergence of contractions to zero implies convergence in distribution to the Gaussian law. 
We will also need the following generalisation of the above theorem. 

\begin{theorem}[\cite{NP12}]\label{UDI10}
    Let $q\geq 2$ be an integer and let $\mathbf{X} = (X_t)_{t\in T}$ be a centered
    separable infinite-dimensional Gaussian process indexed by the set $T$. 
    Let $\mathbf{f}=(f_1, \ldots, f_m)$ be a vector of functions $f_i \in \LL_{\mathrm{sym}}^2([0,1]^q)$, and let
    $Z_{\mathbf{f}} \sim \mathcal{N}_m(0,\Sigma)$ be a centred Gaussian random vector with covariance matrix 
    $\Sigma$ defined by 
    \begin{align}\label{UDI11}
        \begin{split}
            \Sigma_{ij} & = \underset{[0,1]^q}{\int} f_i(t_1, \ldots, t_q) \cdot f_j(t_1, \ldots, t_q)  dt_1 \ldots dt_q,  
        \end{split}
    \end{align}
    where $ 1 \leq i,j \leq m$.  Let $\II_q$ denote the Wiener isometry as defined in (\ref{d.f_symmetrised}) and (\ref{d.wiener_isometry})
    and denote $\II_q(\mathbf{f}) = (\II_q(f_1), \ldots, \II_q(f_m))$. 
    Then, the following inequality holds for each real-valued function $h \in C^2(\mathbb{R}^m)$ 
    \begin{align}\label{UDI12}
        \begin{split}
            \abs{\mathbb{E}\pQ{h(\II_q(\mathbf{f}))}-\mathbb{E}\pQ{h(\mathbf{Z}_{\mathbf{f}})}} \leq 
            \CC_q \cdot ||h''||_{\infty} \cdot \sum_{i=1}^{m} \sum_{r=1}^{q-1} \abs{\abs{f_i \otimes_r f_i}}_{\LL^2([0,1]^{2q-2r})} ,
        \end{split}
    \end{align}
   with the norm $||h''||_{\infty}$ defined in (\ref{UDI2}).
    Moreover, we can find a combinatorial constant $\CC_q>0$ such that, if $\Sigma$ is a
    strictly positive definite matrix, then we have
    \begin{align}
        \begin{split}
            \mathbf{W}_1(\II_q(\mathbf{f}),Z_{\mathbf{f}}) \leq C_q \cdot m^{3/2} \cdot \abs{\abs{\Sigma^{-1}}}_{\mathrm{op}} \abs{\abs{\Sigma}}_{\mathrm{op}}^{1/2}\cdot
              \sum_{i=1}^{m} \sum_{r=1}^{q-1} \abs{\abs{f_i \otimes_r f_i}}_{\LL^2([0,1]^{2q-2r})},  
        \end{split}
    \end{align}
    where $\CC_q>0$ is a combinatorial constant. 
\end{theorem}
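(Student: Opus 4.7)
The plan is to combine the smart-path interpolation technique of Nourdin-Peccati with Malliavin calculus on Wiener chaos; this is the route followed in \cite{NP12} to establish both inequalities. For the first, fix $h\in C^2(\mathbb{R}^m)$ and introduce
\begin{align*}
\Psi(t):=\mathbb{E}\pQ{h\pS{\sqrt{1-t}\,\II_q(\mathbf{f})+\sqrt{t}\,\mathbf{Z}'}}, \qquad t\in[0,1],
\end{align*}
where $\mathbf{Z}'$ is an independent copy of $\mathbf{Z}_{\mathbf{f}}$, so that the quantity to bound equals $|\Psi(0)-\Psi(1)|\leq\int_0^1|\Psi'(t)|\,dt$. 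Differentiating under the expectation, I would apply Gaussian integration by parts in the $\mathbf{Z}'$ coordinate (which manufactures the target covariance $\Sigma$) together with the Malliavin integration-by-parts formula in the chaos coordinate, relying on the eigen-identity $-DL^{-1}\II_q(f_i)=q^{-1}D\II_q(f_i)$ valid on $\mathcal{H}_q$ (here $D$ denotes the Malliavin derivative and $L$ the Ornstein-Uhlenbeck generator of the underlying Gaussian structure). This yields the representation
\begin{align*}
\Psi'(t)=\frac{1}{2}\sum_{i,j=1}^{m}\mathbb{E}\pQ{(\partial_i\partial_j h)\cdot\pS{\Sigma_{ij}-q^{-1}\langle D\II_q(f_i),D\II_q(f_j)\rangle}},
\end{align*}
where $\partial_i\partial_j h$ is evaluated at the interpolating point, from which bounding pointwise by $\|h''\|_\infty$ and applying Jensen's inequality reduces the problem to controlling $\sum_{i,j}\|\Sigma_{ij}-q^{-1}\langle D\II_q(f_i),D\II_q(f_j)\rangle\|_{\LL^2(\mathbb{P})}$.

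Next, I would expand $\langle D\II_q(f_i),D\II_q(f_j)\rangle$ via the multiplication formula for multiple Wiener integrals. Its deterministic component equals $\Sigma_{ij}$ (up to the normalisation convention of the Wiener isometry), so the difference retains only multiple integrals of orders $2q-2r$ with $1\leq r\leq q-1$, whose $\LL^2(\mathbb{P})$-norms are bounded by linear combinations of $\|f_i\otimes_r f_j\|_{\LL^2([0,1]^{2q-2r})}$ by orthogonality of chaoses. A symmetric Cauchy-Schwarz trick then yields the pointwise estimate $\|f_i\otimes_r f_j\|\leq\frac{1}{2}\pS{\|f_i\otimes_r f_i\|+\|f_j\otimes_r f_j\|}$, so that after summation over $i,j$ only self-contractions survive and one obtains the first inequality with a combinatorial constant $\CC_q$ absorbing the coefficients produced by the product formula.

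The second inequality follows by feeding the first into the multivariate Stein method for Gaussian vectors with positive-definite covariance. For every $1$-Lipschitz $g:\mathbb{R}^m\to\mathbb{R}$ the Stein equation $\langle\Sigma,\mathrm{Hess}\,u\rangle-\langle x,\nabla u\rangle=g(x)-\mathbb{E}g(\mathbf{Z}_{\mathbf{f}})$ is solvable in $C^2(\mathbb{R}^m)$ with $\|u''\|_\infty\lesssim\|\Sigma^{-1}\|_{\mathrm{op}}\|\Sigma\|_{\mathrm{op}}^{1/2}\|g'\|_\infty$ (Proposition 4.3.2 in \cite{NP12}); inserting this $u$ in place of $h$ in the first inequality and taking the supremum over $1$-Lipschitz $g$ produces the additional $m^{3/2}$ prefactor, which arises from the standard passage between pointwise partial derivatives and operator / Hilbert-Schmidt matrix norms on $\mathbb{R}^{m\times m}$. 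The main technical obstacle is the bookkeeping of the product-formula expansion, and in particular the verification that its deterministic piece cancels $\Sigma_{ij}$ exactly; the passage from mixed contractions $f_i\otimes_r f_j$ to self-contractions $f_i\otimes_r f_i$ in the asymmetric case $f_i\neq f_j$ is the most delicate step, and requires a careful symmetrisation so that the final bound involves only the diagonal terms $\|f_i\otimes_r f_i\|$ appearing in the statement.
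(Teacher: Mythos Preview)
Your sketch is essentially the argument from \cite{NP12} itself (smart-path interpolation plus Malliavin integration by parts for the $C^2$ bound, then the multivariate Stein solution bounds for the $\mathbf{W}_1$ estimate), and it is correct in outline. Note, however, that the paper does not reprove this theorem at all: its proof consists solely of invoking \cite[p.~121, Theorem~6.2.2]{NP12} for vectors with possibly different chaos orders $q_1,\ldots,q_m$ and specialising to $q_1=\cdots=q_m=q$. So you have supplied a genuine (and accurate) proof where the paper only cites one; your approach matches the source being cited rather than offering an alternative route.
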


\begin{proof} In \cite[p. 121, Theorem 6.2.2]{NP12} a more general statement is considered with different integers $q_1, \ldots, q_m$. Our claim follows immediately by specialising it to the case $q_1= \ldots = q_m = q$. 
\end{proof}

The following simple observation will be useful in the proof of Theorem \ref{t.cltm}.

\begin{remark}\label{UDI13}
Let $(\mathbf{f}_n)_{n\in \mathbb{N}}$, $\mathbf{f}_n = (f_{n}^1, \ldots, f_n^m)$, $f_n^i \in \LL^2_s([0,1]^q)$, 
be a sequence of vectors such that the right-hand side of (\ref{UDI11}) is converging to zero. Suppose also that 
for each $1 \leq i,j \leq m$ the following limit exists
\begin{align}\label{UDI14}
    \begin{split}
        \underset{[0,1]^q}{\int} f_n^i(t_1, \ldots, t_q) \cdot f_n^j(t_1, \ldots, t_q)  dt_1 \ldots dt_q \longrightarrow \Sigma_{ij}.
    \end{split}
\end{align}
Then, inequality (\ref{UDI12}) in Theorem \ref{UDI10} above implies, in particular, that we have the convergence in law 
\begin{align}\label{UDI15}
    \begin{split}
        \II_q(\mathbf{f}_n)  \overset{d}{\longrightarrow} \mathbf{Z},
    \end{split}
\end{align}
where $\mathbf{Z} \sim \mathcal{N}_m(0,\Sigma)$ is a centred Gaussian vector with covariance 
matrix $\Sigma$ defined via (\ref{UDI14}). (This implication is made possible by a simple observation that the real and imaginary 
parts of the characteristic functions $x \to e^{i\langle \lambda, x\rangle}$ are of $C^{\infty}$ class and that the suprema
of their second partial derivatives are bounded by $|\lambda|^2$.) 
\end{remark}

\subsection{Nodal Length of the planar real Berry Random Wave}\label{ss.nodal_length}

In this subsection we will recall some known results about the
asymptotic ($k\to\infty$) fluctuations of the nodal length $\mathcal{L}(b_k,\D)$ of the real planar Berry Random Wave $b_k$.
This will provide us with a convenient reference, to be used in the upcoming sections. (Lemma \ref{l.cdnl} and Theorem \ref{t.avnl} below are due to Nourdin, Peccati and Rossi \cite{NPR19}, and were established following computations of Berry \cite{Berry2002}.)

\begin{definition}
Let $\D$ be a convex compact planar domain, 
with non-empty interior and piecewise $C^1$
boundary $\partial \D$. Let $b_{k}$
be the real Berry Random Wave
with wave-number $k>0$.
We define the corresponding nodal 
length as the random variable 
\begin{align}\label{d.nodal_length}
    \begin{split}
        \mathcal{L}(b_{k},\D) =
        \text{length}(\{x\in\D : b_{k}(x)=0\}).
    \end{split}
\end{align}
\end{definition}

Implicit in the above definition is the fact that the random set 
$\{x\in\D : b_{k}(x)=0\}$ consists of a finite sum of disjoint rectifiable curves (see \cite[p. 137, Lemma 8.4]{NPR19}). Moreover, it is known that the nodal length $\mathcal{L}(b_k,\D)$ has a finite variance (\cite[p. 113, Lemma 3.3]{NPR19})
and corresponding Wiener-It\^{o} chaos decomposition 
\begin{align}
    \begin{split}
        \mathcal{L}(b_{k},\D) = \sum_{q=0}^{\infty}\mathcal{L}(b_{k},\D)[2q],
    \end{split}
\end{align}
has been computed in \cite[p. 115, Proposition 3.6]{NPR19}. For the sake of completeness, we reproduce it below using conventions \ref{N.S1}--\ref{N.S6}.

\begin{lemma}[\cite{NPR19}]\label{l.cdnl}
    Let $\D$ be a convex compact planar domain, 
with non-empty interior and piecewise $C^1$
boundary $\partial \D$. Let $b_{k}$
be the real planar Berry Random Wave
with wave-number $k>0$. Then, the nodal length $\mathcal{L}(b_{k},\D)$ has a Wiener-It\^{o} chaos expansion 
\begin{align}
    \begin{split}
        \mathcal{L}(b_{k},\D) = \sum_{q=0}^{\infty} \mathcal{L}(b_{k},\D)[2q],
    \end{split}
\end{align}
where
\begin{align}\label{NUMEN}
    \begin{split}
        \mathcal{L}(b_{k},\D)[2q] & = 
        k \cdot \sum_{j \in \mathbb{N}^3, |j|=2q} \hat{c}_{j}\int_{\D} \prod_{i\in\{-1,0,1\}} \HH_{j_{i}}\pS{\widetilde{\partial}_i b_{k}(x)}dx.
    \end{split}
\end{align}
Here, the sum runs over all vectors $j = (j_{-1},j_0, j_1) \in \mathbb{N}^3$,  with $\abs{j}=j_{-1}+j_0+j_1=2q$ and the constants $\hat{c}_{j}$ are defined as
\begin{align}\label{d.fincc}
    \begin{split}
        \hat{c}_{j} = & \hat{\rho}(j) \cdot
            \frac{(-1)^{j_{0}}(j_{0}-1)!!}{\sqrt{2}{j_{-1}!j_{0}!j_{1}!}}  \cdot
            \mathbb{E}\pQ{\sqrt{Z_{-1}^2+Z_{1}^2}
            \cdot \HH_{j_{-1}}(Z_{-1})\HH_{j_{1}}(Z_{1})},
    \end{split}
\end{align}
with $Z_{-1}$, $Z_{1}$ being an independent standard Gaussian random variables. Moreover, we use here the notation $\hat{\rho}(j)=1$ if the following conditions are simultaneously satisfied: 
\begin{enumerate}
    \item The index $j_{0}$ is even,
    \item Either: 
    \begin{itemize}
        \item[(i)] both of the indices $j_{-1}$, $j_1$,  are even or,
        \item[(ii)] both of the indices $j_{-1}$, $j_1$, are odd. 
    \end{itemize}
\end{enumerate}
If the above conditions are not satisfied then, we set $\hat{\rho}(j)=0$. 
\end{lemma}

\begin{proof}
    See \cite[p. 115, Proposition 3.6]{NPR19}. 
\end{proof}

We will use information contained in the next theorem to simplify the proof of the asymptotic variance formula (Theorem \ref{t.avar}) through the application of the Recurrence Representation (Lemma \ref{l.recc}). (We recall that the correlation coefficient was defined in \ref{n.correlation} and Wiener-It\^{o} chaotic projections were described in Subsection \ref{ss.Wiener-Ito_Chaos_Decomposition}.)

\begin{theorem}[\cite{NPR19}]\label{t.avnl}
Let $\D$ be a convex compact planar domain, with non-empty interior and piecewise $C^1$ boundary. Let $b_k$ be the real planar Berry Random Wave with wave-number $k>0$ and $\mathcal{L}(b_k,\D)$ the associated nodal length. Then, we have
\begin{align}
    \begin{split}
        \mathbb{E} \mathcal{L}(b_k, \D) = \mathrm{area}(\D)\cdot\frac{k}{2\sqrt{2}}.
    \end{split}
\end{align}
Moreover, it holds that 
\begin{align}
    \begin{split}
        \lim_{k \to \infty}  \mathrm{Corr}\pS{\mathcal{L}(b_{k},\D),\mathcal{L}(b_{k},\D)[4]}=1, \qquad \qquad
        \lim_{k\to\infty} \frac{\Var\pS{\mathcal{L}(b_k, \D)}}{\frac{\mathrm{area}(\D)}{256\pi}\cdot \ln k} = 1,
    \end{split}
\end{align}
where $\mathcal{L}(b_k,\D)[4]$ denotes the $4$-th Wiener-It\^{o} chaos projection of the nodal length. 
\end{theorem}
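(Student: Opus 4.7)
}
The three statements are of quite different character and I would address them sequentially, relying on Lemmas \ref{l.cdnl} and \ref{l.sf} as the main structural tools.

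\textbf{Expectation via Kac--Rice.}
The first claim is a direct application of the Kac--Rice formula for the one-dimensional Hausdorff measure of the zero set of a smooth Gaussian field. Since $b_k$ is stationary with unit variance, one has the density $p_{b_k(x)}(0)=1/\sqrt{2\pi}$, and from $r_p(0)=0$ and $r_{pq}(0)=\delta_{pq}$ in Lemma \ref{l.sf} (after restoring the factor $(k/\sqrt{2})^{|i|}$ from (\ref{d.noramlised_derivative})) one deduces that $b_k(x)\perp \nabla b_k(x)$ and $\nabla b_k(x)\sim \mathcal{N}(0,(k^2/2)I_2)$. The Kac--Rice formula then gives
\begin{equation*}
\mathbb{E}\mathcal{L}(b_k,\D)=\int_\D p_{b_k(x)}(0)\,\mathbb{E}\|\nabla b_k(x)\|\,dx=\mathrm{area}(\D)\cdot\frac{1}{\sqrt{2\pi}}\cdot\frac{k\sqrt{\pi}}{2},
\end{equation*}
which reduces to $\mathrm{area}(\D)\cdot k/(2\sqrt{2})$. (I read the right-hand side of the statement as carrying an implicit $\mathrm{area}(\D)$.)

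\textbf{Chaos decomposition and Berry cancellation of the 2nd chaos.}
Using Lemma \ref{l.cdnl} together with the $L^2$-orthogonality of distinct Wiener chaoses,
\begin{equation*}
\Var(\mathcal{L}(b_k,\D))=\sum_{q\ge 1}\Var(\mathcal{L}(b_k,\D)[2q]).
\end{equation*}
The $q=1$ term vanishes. Checking the parity conditions in the definition of $\hat\rho$, the only surviving multi-indices with $|j|=2$ are $(2,0,0),\,(0,2,0),\,(0,0,2),\,(1,0,1)$. By the symmetry $(Z_{-1},Z_1)\mapsto(-Z_{-1},Z_1)$, one shows $\hat c_{(1,0,1)}=0$, and a direct computation of the remaining Gaussian integrals (using $\mathbb{E}[\sqrt{Z_{-1}^2+Z_1^2}]=\sqrt{\pi/2}$ and $\mathbb{E}[\sqrt{Z_{-1}^2+Z_1^2}\,Z_i^2]=\tfrac12\mathbb{E}[R^3]$ with $R$ Rayleigh) yields the linear identity
\begin{equation*}
\hat c_{(2,0,0)}\!\int_\D H_2(\widetilde\partial_{-1}b_k)\,dx+\hat c_{(0,0,2)}\!\int_\D H_2(\widetilde\partial_1 b_k)\,dx+\hat c_{(0,2,0)}\!\int_\D H_2(b_k)\,dx=0,
\end{equation*}
which is the classical ``Berry cancellation.'' Morally, this encodes the Helmholtz identity $(\widetilde\partial_{-1}b_k)^2+(\widetilde\partial_1 b_k)^2=(2/k^2)\|\nabla b_k\|^2$, which under integration by parts on $\D$ (with boundary contributions controlled by piecewise $C^1$ regularity) balances against $2\int_\D b_k^2$.

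\textbf{Dominance of the 4th chaos.}
For $q=2$, Wick's theorem reduces the variance of each term in (\ref{ABCDEF})--style expansion to integrals of products of the four correlation functions $r,r_{\pm 1},r_{pq}$ evaluated at $k(x-y)$. After the change of variables $z=k(x-y)$ and a standard Fubini argument, one obtains
\begin{equation*}
\Var(\mathcal{L}(b_k,\D)[4])=\mathrm{area}(\D)\int_{B(0,\,k\cdot\mathrm{diam}(\D))} F(r,r_{\pm1},r_{pq})(z)\,dz+O(1),
\end{equation*}
where $F$ is the explicit polynomial coming from the Wick contractions. The logarithmic growth $\ln k$ comes precisely from those contraction terms of ``degree 2'' in the correlation functions (the remaining terms being integrable on $\mathbb{R}^2$ thanks to the bound $|r_{ij}(z)|\le \CC |z|^{-1/2}$ in Lemma \ref{l.sf}), and the sharp constant $1/(256\pi)$ drops out of the Bessel asymptotics from Appendix \ref{ss.bffk}.

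\textbf{Negligibility of higher chaoses and conclusion.}
For $q\ge 3$ the Wick contraction integrand is bounded by $|z|^{-q}$ up to constants (coming from at least $2q$ factors of $|z|^{-1/2}$), hence integrable on $\mathbb{R}^2$ for $q\ge 2$; careful bookkeeping gives $\sum_{q\ge 3}\Var(\mathcal{L}(b_k,\D)[2q])=O(\mathrm{area}(\D))$ uniformly in $k$. Combining the three contributions gives the variance asymptotic, after which
\begin{equation*}
\mathrm{Corr}(\mathcal{L}(b_k,\D),\mathcal{L}(b_k,\D)[4])=\frac{\sqrt{\Var\mathcal{L}(b_k,\D)[4]}}{\sqrt{\Var\mathcal{L}(b_k,\D)}}\longrightarrow 1
\end{equation*}
follows immediately. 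The hardest step, in my view, is the sharp identification of the constant $1/(256\pi)$: it requires isolating exactly which Wick pairings contribute to the $\ln k$ asymptotic and then explicitly integrating the leading Bessel-function tails, a computation where combinatorial errors are easy to make but conceptually entirely routine.
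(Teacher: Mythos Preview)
The paper does not prove this theorem; its ``proof'' is a one-line citation to \cite[Theorem 1.1 and Eq.~(2.29)]{NPR19}, where the result is established. Your sketch is therefore not to be compared with anything in the present paper, but it does track the argument of \cite{NPR19} quite faithfully: Kac--Rice for the mean, chaos expansion, Berry cancellation in the second chaos, explicit computation of the fourth, and a tail bound for $q\ge 3$.

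Two points deserve correction. First, your displayed identity for $\mathcal{L}(b_k,\D)[2]$ is not an exact zero: the integration-by-parts you invoke produces a boundary term, so the second chaotic projection is not identically zero but only has \emph{bounded} variance (this is exactly what is used here, see the bound $\Var(\mathcal{L}(b_\lambda,\D)[2])\le \mathrm{diam}(\D)^2/64$ quoted in the proof of Lemma~\ref{l.02cp}). That suffices, since bounded is $o(\ln k)$, but the statement ``$=0$'' is wrong as written. Second, your tail bound for $q\ge 3$ is too quick: the pointwise estimate $|r_{ij}(z)|\le C|z|^{-1/2}$ controls nothing near the diagonal, where the correlations are close to $1$. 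The actual argument in \cite{NPR19} (mirrored in Section~\ref{s.domination_4th_chaos} of this paper for the nodal number) splits $\D\times\D$ into singular and non-singular pairs of cells; on the singular part one uses a Kac--Rice second-moment bound, and only on the non-singular part does the decay estimate apply. Your ``careful bookkeeping'' hides this genuine structural step.
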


\begin{proof}
    See \cite[p. 103, Theorem 1.1]{NPR19} and \cite[p. 110, Eq. (2.29)]{NPR19}.
\end{proof}

\section{Expectation, 2nd Chaotic projection and proof of the Recurrence Representation}\label{s.expectation_2nd_chaos_recurrence}

The following lemma is a first step in characterising fluctuations of the nodal number. 

\begin{lemma}\label{l.enn} Let $\D$ be a convex compact domain of the plane, 
with non-empty interior and piecewise $C^1$ boundary $\partial \D$. 
Let $(k,K)$ be a pair of strictly positive wave-numbers and 
$b_k$, $\hat{b}_K$, a corresponding real Berry Random Waves. Then, the expected value of the nodal number $\N(b_{k},\hat{b}_K,\D)$ is given by the formula
\begin{align}\label{t.expe} 
    \begin{split}
        \mathbb{E}\N(b_{k},\hat{b}_K,\D) & = \frac{\text{area}(\D)}{4\pi} \cdot (k\cdot K).
    \end{split}
\end{align}
\end{lemma}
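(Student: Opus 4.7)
The plan is to derive the expectation formula via the Kac--Rice approach, starting from the $\varepsilon$-approximation supplied by Lemma \ref{l.appr}. Since that lemma provides $\mathrm{L}^2(\mathbb{P})$ convergence, we also have $\mathrm{L}^1(\mathbb{P})$ convergence, and hence $\mathbb{E}\N(b_k,\hat{b}_K,\D) = \lim_{\varepsilon\downarrow 0}\mathbb{E}\N^{\varepsilon}(b_k,\hat{b}_K,\D)$. Because the integrand in the definition of $\N^{\varepsilon}$ is non-negative, Tonelli's theorem lets us bring the expectation inside the spatial integral. Moreover, by stationarity the resulting integrand is constant in $x$, so the area of $\D$ factors out immediately and all that remains is to analyse a single point expectation.

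The key structural observation is a triple independence at any fixed point $x\in\D$: (i) $b_k$ and $\hat{b}_K$ are independent by assumption, so the triples $(b_k(x),\partial_1 b_k(x),\partial_2 b_k(x))$ and $(\hat{b}_K(x),\partial_1\hat{b}_K(x),\partial_2\hat{b}_K(x))$ are independent; (ii) by Lemma \ref{l.sf}, equation (\ref{e.iafp}), one has $r_p(0)=0$, so the value $b_k(x)$ is uncorrelated with, and hence (by Gaussianity) independent of, its gradient at the same point; (iii) similarly $r_{pq}(0)=\delta_{pq}$ yields independence of $\partial_1 b_k(x)$ and $\partial_2 b_k(x)$. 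These observations are also valid for $\hat{b}_K$. Consequently the Jacobian matrix
\[
M(x) = \begin{bmatrix} \partial_1 b_k(x) & \partial_2 b_k(x) \\ \partial_1 \hat{b}_K(x) & \partial_2 \hat{b}_K(x) \end{bmatrix}
\]
is independent of the pair $(b_k(x),\hat{b}_K(x))$, so the expectation inside $\N^{\varepsilon}$ factorises. The two indicator contributions each satisfy $\frac{1}{2\varepsilon}\mathbb{P}(|b_k(x)|\leq\varepsilon)\to \frac{1}{\sqrt{2\pi}}$, producing the joint Gaussian density factor $\frac{1}{2\pi}$ at the origin.

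It remains to compute $\mathbb{E}|\det M(x)|$. Using the normalised derivatives (\ref{CACAP}) one has $\partial_i b_k = \tfrac{k}{\sqrt{2}}\widetilde{\partial}_i b_k$, and similarly for $\hat{b}_K$, so $\det M(x) = \tfrac{kK}{2}\det \widetilde{M}(x)$, where the four entries of $\widetilde{M}(x)$ are independent standard Gaussians. Conditioning on the first column gives $\det\widetilde{M}(x) \,|\, \widetilde{M}_{\cdot 1}\sim \mathcal{N}(0,\|\widetilde{M}_{\cdot 1}\|^2)$, so $\mathbb{E}[|\det\widetilde{M}(x)|\mid \widetilde{M}_{\cdot 1}]=\sqrt{2/\pi}\,\|\widetilde{M}_{\cdot 1}\|$, and taking expectation against the Rayleigh law yields $\mathbb{E}|\det\widetilde{M}(x)|=1$. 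Assembling all factors gives $\mathbb{E}\N(b_k,\hat{b}_K,\D)=\mathrm{area}(\D)\cdot \tfrac{1}{2\pi}\cdot \tfrac{kK}{2}=\tfrac{\mathrm{area}(\D)}{4\pi}(kK)$.

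No step presents a serious obstacle. The only point genuinely requiring care is the justification of the interchange of limit and expectation, but that is already handled by the $\mathrm{L}^2$-convergence in Lemma \ref{l.appr}; the determinant calculation is a short routine identity, and the independence structure follows from the two-point covariance vanishing identities of Lemma \ref{l.sf}.
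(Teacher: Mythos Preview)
Your proof is correct, but it takes a somewhat different route from the paper's. The paper does not compute the expectation directly from the $\varepsilon$-approximation of Lemma~\ref{l.appr}; instead it appeals to the full Wiener--It\^o chaos expansion already established in Lemma~\ref{l.cefnn} and simply reads off the $0$-th projection, obtaining $\N[0]=(k_{-1}k_1)\,c_{\mathbf{0}}\,\mathrm{area}(\D)$ with $c_{\mathbf{0}}=\tfrac{1}{4\pi}\mathbb{E}|\det Z|$, and then cites the identity $\mathbb{E}|\det Z|=1$ from the literature. Your approach is more self-contained and elementary for this particular statement: you bypass the chaos machinery entirely, use the $\LL^2$-approximation and the independence structure from Lemma~\ref{l.sf} to factorise the integrand directly (this is essentially the Kac--Rice first-moment formula), and you supply a short explicit computation of $\mathbb{E}|\det\widetilde{M}|=1$ via conditioning on one column and the Rayleigh mean. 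The paper's route is shorter because Lemma~\ref{l.cefnn} is already available for other purposes, but your argument has the advantage of not relying on that heavier lemma and of making the determinant identity transparent.
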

\begin{proof} In this proof, we will use the
notation introduced in \ref{N.S1}-\ref{N.S6}, in particular replacing
the ordered pair of wave-numbers $(k,K)$ with the unordered pair $\{k_{-1},k_1\}$. Using basic properties of the Wiener-It\^{o} chaos decomposition (Eq. (\ref{e.epe})) and explicit chaos decomposition for
the nodal number established in Lemma \ref{l.cefnn} we can compute
\begin{align}
    \begin{split}
        \mathbb \N(b_{k_{-1}}, b_{k_1}, \D) & = \N(b_{k_{-1}}, b_{k_1}, \D)[0] \\
        & = (k_{-1}\cdot k_1) \cdot c_{\mathbf{0}}\int_{\D} 1 dx \\
        & = \frac{\text{area}(\D)}{4\pi}\cdot (k_{-1}\cdot k_1) \cdot\mathbb{E}\abs{\det 
        \begin{bmatrix}
            Z_{-1,-1} & Z_{-1,1} \\
            Z_{1,-1} & Z_{1,1}
        \end{bmatrix}}, 
    \end{split}
\end{align}
where $\{Z_{p,q}: p, q \in\{-1,1\}\}$ denotes a collection of four independent standard Gaussian random variables and $\mathbf{0}=(0,0,0,0,0,0) \in \mathbb{N}^6$. The proof is completed by observing that 
$$\mathbb{E}\abs{\det \begin{bmatrix}
    Z_{-1,-1} & Z_{-1,1} \\
    Z_{1,-1} & Z_{1, 1}
\end{bmatrix} } = 1,$$
see \cite[p. 73, Lem. II.B.3]{N21b} with
notation of \cite[p. 39, Rem. II.1.2]{N21b}.     
\end{proof}
    
The forthcoming lemma will be used later to show that the second chaotic projection 
is asymptotically negligible.
Note that the proof of Lemma \ref{l.02cp} uses Lemma \ref{l.recc},
and that Lemma \ref{l.02cp} is \textbf{not} used in the proof of Lemma
\ref{l.recc}.
Moreover, we stress that inequality (\ref{5.3})
holds for every $K \geq k\geq 2$ without additional
restrictions because inequality
(\ref{5.6}) used in its proof is non-asymptotic. 
See also \cite[p. 117, Eq. (4.61)]{NPR19}
for comparison. 

\begin{lemma}\label{l.02cp}  Let $\D$ be a convex compact domain of the plane, 
with non-empty interior and piecewise $C^1$ boundary $\partial \D$. 
Let $(k,K)$ be a pair of wave-numbers s.t. $2 \leq k \leq K$
and $b_k$, $\hat{b}_K$ be a corresponding Berry Random Waves. 
Then, the variance of the second chaotic projection of the nodal number $\N(b_{k},\hat{b}_K,\D)[2]$ satisfies the following bound
\begin{align}\label{5.3}
    \begin{split}
\Var\pS{\N(b_{k},\hat{b}_K,\D)[2]}  & \leq  \text{diam}(\D)^2 \cdot \frac{9}{32 \pi^2} \cdot (k^2+K^2).\\
    \end{split}
\end{align}
\end{lemma}

\begin{proof}[Proof of Lemma \ref{l.02cp}] Here, we are going to adopt the
notation introduced in \ref{N.S1}-\ref{N.S6}, in particular replacing
the ordered pair of wave-numbers $(k,K)$ with the unordered pair $\{k_{-1},k_1\}$.
We will use the recurrence representation established in Lemma \ref{l.recc} and the explicit chaos decomposition for
the nodal number established in Lemma \ref{l.cefnn}. We observe first that (by using the notation (\ref{e.cross_term_L2_decomposition})-(\ref{d.general_formula_for_the_cross_term}))
\begin{align}
    \begin{split}
        \text{Cross}(\N(b_{k_{-1}},b_{k_1},\D)[2]) = 0,
    \end{split}
\end{align}
yielding
\begin{align}
\begin{split}
	\N\pS{b_{k_{-1}},b_{k_1},\D}[2] & = \frac{1}{\pi \sqrt{2}} \cdot k_{-1} \cdot \mathcal{L}\pS{b_{k_1}, \D}[2]
 + \frac{1}{\pi \sqrt{2}} \cdot k_{1} \cdot \mathcal{L}\pS{b_{k_{-1}}, \D}[2] \\
 & = \frac{1}{\pi \sqrt{2}}\sum_{p\in\{-1,1\}} 	
k_{-p}\cdot\mathcal{L}\pS{b_{k_p}, \D}[2].
\end{split}
\end{align}
To see why $k_{-p}$ appears in front of $\mathcal{L}(b_{k_p},\D)[2]$ note the following. 
The chaotic decomposition of the nodal number $\N\pS{b_{k_{-1}},b_{k_1},\D}$ includes a multiplicative
factor $(k_{-1}\cdot k_1)$ (see (\ref{ABCDEF})). The chaotic decomposition of the nodal length 
$\mathcal{L}\pS{b_{k_p}, \D}$ includes a multiplicative factor $k_p$ (see (\ref{NUMEN})). Thus, when 
we use recurrence representation (\ref{d.3dec}), the wavenumber $k_p$ gets absorbed into expression $\mathcal{L}\pS{b_{k_p}, \D}[2]$ while the term $k_{-p}$ remains as a multiplicative factor. By \cite[p. 117, Proof of Lemma 4.1]{NPR19} for each $\lambda>0$ we have
\begin{equation}\label{5.6}
	\Var\pS{\mathcal{L}\pS{b_{\lambda}, \D}[2]}\leq \frac{\text{perimeter}(\D)^2}{64}.
\end{equation}
Since $\D$ is convex and planar the perimeter is at most 
$6$ times longer than the diameter which completes the proof.  
\end{proof}

The next remark contains the first lower bound on the asymptotic variance
of the nodal number. This bound turns out to be of the correct order.

 \begin{remark}\label{r.faclb}
    Consider a sequence $\{k_{-1}^n, k_1^n\}_{n\in\mathbb{N}}$ of unordered pairs of wave-numbers s.t. $k_{-1}^n, k_1^n \to \infty$ and let $b_{k_{-1}^n}$, $b_{k_1^n}$ be the two corresponding independent Berry Random Waves. Combining Lemma \ref{l.recc}
    with Theorem \ref{t.avnl} yields 
    \begin{align}
        \begin{split}
            & \Var\pS{\N(b_{k_{-1}^n},b_{k_1^n},\D)} \\
            & = \Var\pS{\sum_{p \in\{-1,1\}}
		 \frac{k_{-p}^n}{\pi\sqrt{2}}\cdot \mathcal{L}\pS{b_{k_p^n}, \D} + \mathrm{Cross}\pS{\N(b_{k_{-1}^n},b_{k_1^n},\D)}} \\
   & = \sum_{p \in\{-1,1\}}\frac{(k_{-p}^n)^2}{2\pi^2}\cdot \Var\pS{\mathcal{L}(b_{k_p^n},\D)}
   + \Var\pS{\mathrm{Cross}\pS{\N(b_{k_{-1}^n},b_{k_{1}^n},\D)}}
   \\
   & 
   \geq \frac{K_n^2}{2\pi^2} \Var\pS{\mathcal{L}(b_{k_n},\D)} \\
   & \sim \frac{\mathrm{area}(\D)}{512\pi^3} \cdot
   K_n^2 \ln k_n \\
   & \sim r^{log} \cdot \frac{\mathrm{area}(\D)}{512\pi^3} \cdot
   K_n^2 \ln K_n,
        \end{split}
    \end{align}
   where, in the last two lines we have replaced the unordered
   pair of the wave-numbers $\{k_{-1}^n,k_1^n\}$ with its ordered equivalent $(k_n,K_n)$. Moreover, we have tacitly assumed that $r^{log} = \lim_n \frac{\ln k_n}{\ln K_n}$ exists and $r^{log}>0$. Thus, we have obtained a lower bound consistent with, 
   and in a form of, Theorem \ref{t.avar}.
\end{remark}

The following technical lemma is essential for the proof of Lemma \ref{l.recc}.
It can be applied in the situation where one works with $n$ independent Gaussian
random waves on $\mathbb{R}^n$ but we will only use it in the simplest case $n=2$.

\begin{lemma}\label{l.3dec}
	Let $X$ be a $n\times n$ matrix 
	of independent standard Gaussian random variables and let
	$\hat{X}$ denote a matrix obtained from the matrix $X$ by removing the first row. Then, 
\begin{equation}
	\mathbb{E}\pQ{\abs{\det X}} 
	= 
	\sqrt{\frac{2}{\pi}}\cdot\mathbb{E}\pQ{\sqrt{\det \pS{\hat{X}\hat{X}^{tr}}}}.
\end{equation}
\end{lemma}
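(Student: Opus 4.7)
The plan is to prove the identity by conditioning on the lower block $\hat{X}$ and interpreting $|\det X|$ geometrically as a base-times-height volume.

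First I would note that, almost surely, the $n-1$ rows of $\hat{X}$ are linearly independent (the degenerate event has Lebesgue measure zero in $\mathbb{R}^{(n-1)\times n}$). On this event there exists a (random but $\hat{X}$-measurable) unit vector $v \in \mathbb{R}^n$ orthogonal to every row of $\hat{X}$, unique up to sign. The standard base-times-height formula for the volume of the parallelepiped spanned by the rows $X_1, X_2, \ldots, X_n$ then gives
\begin{equation*}
    |\det X| \;=\; |\langle X_1, v\rangle| \cdot \sqrt{\det(\hat{X}\hat{X}^{tr})},
\end{equation*}
where $\sqrt{\det(\hat{X}\hat{X}^{tr})}$ is the $(n-1)$-dimensional volume of the parallelepiped spanned by the rows of $\hat{X}$ (the Gram determinant formula), and $|\langle X_1, v\rangle|$ is the length of the orthogonal projection of $X_1$ onto the one-dimensional orthogonal complement of the row-span of $\hat{X}$.

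Next I would use independence and rotational invariance. Since $X_1$ is independent of $\hat{X}$ and has the standard Gaussian law $\mathcal{N}(0, I_n)$ on $\mathbb{R}^n$, its law is invariant under orthogonal transformations. Thus, conditionally on $\hat{X}$, the scalar $\langle X_1, v\rangle$ is a standard Gaussian random variable (because $v$ is a deterministic unit vector given $\hat{X}$), and, crucially, it is independent of the factor $\sqrt{\det(\hat{X}\hat{X}^{tr})}$. Taking expectations and applying the tower property yields
\begin{equation*}
    \mathbb{E}\bigl[|\det X|\bigr]
    = \mathbb{E}\bigl[|\langle X_1, v\rangle|\bigr] \cdot \mathbb{E}\bigl[\sqrt{\det(\hat{X}\hat{X}^{tr})}\bigr]
    = \sqrt{\tfrac{2}{\pi}} \cdot \mathbb{E}\bigl[\sqrt{\det(\hat{X}\hat{X}^{tr})}\bigr],
\end{equation*}
using the well-known value $\mathbb{E}|Z| = \sqrt{2/\pi}$ for $Z \sim \mathcal{N}(0,1)$.

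The only subtle point — and I would treat it as the main (minor) obstacle — is justifying the measurability and almost-sure uniqueness of the unit normal $v$ as a function of $\hat{X}$, so that the conditional calculation is rigorous. This is handled by restricting to the full-measure event where $\hat{X}$ has rank $n-1$ and choosing, e.g., $v$ as the normalized cross-product / Hodge dual of the rows, which is Borel-measurable in $\hat{X}$; the sign ambiguity is irrelevant since only $|\langle X_1, v\rangle|$ enters. Everything else is routine.
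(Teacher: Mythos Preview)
Your proof is correct. Both your argument and the paper's share the same skeleton: condition on $\hat{X}$, observe that $\det X$ is conditionally a centred Gaussian, and use $\mathbb{E}|Z|=\sqrt{2/\pi}$ for a standard normal $Z$. The difference is in how the conditional scale is identified with $\sqrt{\det(\hat{X}\hat{X}^{tr})}$. The paper proceeds algebraically: it writes $\det X$ via the Laplace expansion along the first row, so that conditionally on $\hat{X}$ it is Gaussian with variance $\sigma^2(\hat{X})=\sum_j (\det M^X_{1j})^2$, and then invokes the Cauchy--Binet identity to recognise $\sigma(\hat{X})=\sqrt{\det(\hat{X}\hat{X}^{tr})}$. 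You proceed geometrically: the base-times-height formula gives $|\det X|=|\langle X_1,v\rangle|\cdot\sqrt{\det(\hat{X}\hat{X}^{tr})}$ directly, and rotational invariance makes $\langle X_1,v\rangle$ standard normal. The two are really the same computation in different clothing (your unit normal $v$ is the normalised cofactor vector, so your $\langle X_1,v\rangle$ is exactly the paper's $\det X/\sigma(\hat{X})$), but your version sidesteps the explicit Laplace and Cauchy--Binet machinery at the cost of a small measurability check, which you handle correctly.
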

\begin{proof}
	By the Laplace expansion of the determinant,  
	\begin{equation}\label{e.FOFOLO}
		\det X = 
		\sum_{j=1}^n\pS{-1}^{1+j}X_{1j}
		\det \MM^X_{1j}, 
	\end{equation}
where $\MM^X_{1j}$ denotes the matrix created out of the matrix $X$ by removing its
first row and its $j$-th column. Thus, conditionally on the random matrix $\hat{X}$, the sum on the right-side of (\ref{e.FOFOLO}) defines a centred Gaussian random variable with variance
\begin{equation}
	\sigma^2(\hat{X}) = \sum_{j=1}^{n} \pS{\det \MM^X_{1j}}^2. 
\end{equation}
This implies that
\begin{align}
    \begin{split}
         \mathbb{E}\pQ{\abs{\sum_{j=1}^d\pS{-1}^{1+j}X_{1j}\det \MM^X_{1j}}} = & 
        \sqrt{\frac{2}{\pi}}\cdot\mathbb{E}\pQ{\sigma(\hat{X})}.
    \end{split}
\end{align}
Let $M^{\hat{X}}_j$ denote the matrix formed out of the matrix $\hat{X}$ by removing the $j$-th column. 
Using the Cauchy-Binet's identity \cite[p. 1166, Eq. (B.2)]{Notarnicola2021} and the fact that $\MM^{X}_{1j}=\MM^{\hat{X}}_{j}$ we obtain
\begin{align}
    \begin{split}
        \sqrt{\det\pS{\hat{X}\hat{X}^{tr}}} & =
        \sqrt{\sum_{j=1}^{n}\pS{\det \MM^{\hat{X}}_{j}}^2} \\
        & = \sqrt{\sum_{j=1}^{n}\pS{\det \MM^{X}_{1j}}^2} = \sigma(\hat{X}),
    \end{split}
\end{align}
which is enough to complete the proof. 
\end{proof}

The next proof is written using the notation introduced in \ref{N.S1}-\ref{N.S6}.
 
\begin{proof}[Proof of Lemma \ref{l.recc}]
Our argument is based on a  term-by-term comparison of the chaotic decomposition of the nodal number, computed in Lemma \ref{l.cefnn}, with the chaotic decomposition of the nodal length, as given in Lemma \ref{l.cdnl}. We recall that the corresponding deterministic constant coefficients were denoted $c_{\jv}$ in the case of a nodal number $\N(b_{k_{-1}},b_{k_1},\D)$, and, in the case of a nodal length $\mathcal{L}(b_k,\D)$, we have used instead $\hat{c}_j$. We will continue this convention here. Given $j=(j_{-1},j_0,j_1) \in \mathbb{N}^3$ we will write
$(j,0)$ and $(0,j)$ to denote elements of $\mathbb{N}^6$ defined
by the formulas
\begin{align}\label{d.shorthand}
    \begin{split}
        (j,0) & :=
        (i_{-1,-1};i_{-1,0};i_{-1,1};0;0;0), \\
        (0,j) & :=
        (0;0;0;i_{1,-1};i_{1,0};i_{1,1}),
    \end{split}
\end{align}
where
\begin{align}
    \begin{split}
        i_{-1,-1} = i_{1,-1}:= j_{-1}, \qquad
        i_{-1,0} = i_{1,0}:= j_{0}, \qquad
        i_{-1,1} = i_{1,1}:= j_{1}.
    \end{split}
\end{align}
We will also use the fact that 
$c_{(j,0)}=c_{(0,j)}$ which is a an immediate
consequence of the fact that the constants $c_{\jv}$
are independent of the wave-numbers. 
Let $Z_{-1,-1}$, $Z_{-1,1}$, $Z_{1,-1}$, $Z_{1,1}$ be
four independent standard Gaussian random variables. 
It follows by the case $n=2$ of Lemma \ref{l.3dec}, that
for each $p \in \{-1,1\}$, we have
\begin{align}
\begin{split}
& \mathbb{E}\pQ{\sqrt{Z_{p,-1}^2+ Z_{p,1}^2}
            \cdot \HH_{j_{p,-1}}(Z_{p,-1})\HH_{j_{p,-1}}(Z_{p,1})} \\
& 
= \sqrt{\frac{\pi}{2}}
        \cdot \mathbb{E}\pQ{\abs{\det\begin{bmatrix}
        Z_{-1,-1} & Z_{-1,1} \\
        Z_{1,-1} & Z_{1,1}
    \end{bmatrix}} 
            \cdot \HH_{j_{-1,-1}}(Z_{-1,-1})
            \HH_{j_{-1,1}}(Z_{-1,1})
            \HH_{j_{1,-1}}(Z_{1,-1})
            \HH_{j_{1,1}}(Z_{1,1})}.
    \end{split}
\end{align}
Consequently, in the notation (\ref{d.shorthand}), we have that
\begin{align}
    \begin{split}
         c_{(j,0)} = \frac{1}{\sqrt{2}}\cdot\sqrt{\frac{2}{\pi}}\cdot \frac{1}{\sqrt{2\pi}}\cdot\hat{c}_{j}
                      = \frac{1}{\pi\sqrt{2}}\cdot\hat{c}_{j}.          
    \end{split}
\end{align}
Thanks to the preceding computations, we observe that for every $q \geq 1$ we have
\begin{align}
    \begin{split}
         & \N(b_{k_{-1}},b_{k_1},\D)[2q] \\
         & \qquad = 
         (k_{-1}\cdot k_1) \cdot \sum_{\jv \in \mathbb{N}^6, 
         |\jv|=2q} c_{\jv} \int_{\D}
         \prod_{p\in\{-1,1\}} \prod_{i \in \{-1,0,1\}}
         \HH_{j_{p,i}}\pS{\widetilde{\partial}_i b_{k_p}(x)}dx
         \\
         &\qquad  = (k_{-1}\cdot k_1) \cdot 
        \sum_{p\in\{-1,1\}} \sum_{j \in \mathbb{N}^3, \abs{j}=2q} c_{(j,0)}\int_{\D} \prod_{i\in\{-1,0,1\}} \HH_{j_{p,i}}\pS{\widetilde{\partial}_i b_{k_p}(x)}dx \\
       &\qquad  \qquad + \text{Cross}(\N(b_{k_{-1}},b_{k_{1}},\D)[2q]) \\
       &\qquad   = \frac{1}{\pi\sqrt{2}}\cdot \sum_{p\in\{-1,1\}} k_{-p} \cdot \pS{k_p \cdot \sum_{j\in \mathbb{N}^3, \abs{j}=2q} \hat{c}_{j}\int_{\D} \prod_{i\in\{-1,0,1\}} \HH_{j_{i}}\pS{\widetilde{\partial}_i b_{k_p}(x)}dx} \\
       & \qquad   \qquad + \text{Cross}(\N(b_{k_{-1}},b_{k_1},\D)[2q]) \\
       &\qquad  = \frac{1}{\pi\sqrt{2}} \cdot \sum_{p\in\{-1,1\}} k_{-p}\cdot  \mathcal{L}(b_{k_p},\D)[2q]  + \text{Cross}(\N(b_{k_{-1}},b_{k_1},\D)[2q]).
    \end{split}
\end{align}
The last step is to determine the value of $\text{Cross}(\N(b_{k_{-1}},b_{k_1},\D)[0])$. We must have 
\begin{align}
    \begin{split}
     \text{Cross}(\N(b_{k_{-1}}, b_{k_1},\D)[0]) 
        &  = \mathbb{E}\text{Cross}(\N(b_{k_{-1}},b_{k_1},\D)) \\ 
       &  =  \mathbb{E}\N(b_{k_{-1}},b_{k_1},\D) - \frac{1}{\pi \sqrt{2}}\sum_{p\in\{-1,1\}}k_{-p}\cdot\mathbb{E} \mathcal{L}(b_{k_p},\D) \\
       &  = \frac{\text{area}(\D)}{4\pi}\cdot \prod_{p \in \{-1,1\}} k_p - \frac{1}{\pi \sqrt{2}}\cdot\sum_{p\in\{-1,1\}}k_{-p}\cdot\frac{\text{area}(\D)}{2\sqrt{2}}k_p \\
       & = - \frac{\text{area}(\D)}{4\pi} \cdot \prod_{p \in \{-1,1\}} k_p \\
       &  = - \mathbb{E}\N(b_{k_{-1}},b_{k_1},\D),
    \end{split}
\end{align}
where the expected value of the nodal number is taken from Lemma \ref{l.enn}
and the expected value of the nodal length is taken from \cite[p. 103, Theorem 1.1]{NPR19}.
\end{proof}

\section{Domination of the 4th chaotic projection}\label{s.domination_4th_chaos} 

This section is devoted to the proof of the following crucial lemma. 

\begin{lemma}\label{l.fcrr4chpr} Let $\D$ be a convex compact domain of the plane, 
with non-empty interior and piecewise $C^1$ boundary $\partial \D$. 
Then, there exists a numerical constant $L>0$ s.t.,
for every pair $(k,K)$ of wave-numbers s.t. $2 \leq k \leq K <\infty$, we 
have
\begin{align}\label{6.1}
    \begin{split}
    \sum_{q \neq 2} \Var\pS{\N(b_{k}, \hat{b}_{K},\D)[2q]}
        & \leq L \cdot \pS{1+\text{diam}(\D)^4} \cdot K^2.
    \end{split}
\end{align}
Here, $b_{k}$, $\hat{b}_{K}$ denote independent real Berry Random waves with wave-numbers $k$ and $K$ respectively, $\N(b_{k},\hat{b}_K,\D)[2q]$ denotes the $2q$-th chaotic projection of the nodal number and $\mathrm{diam}(\D)$ denotes the diameter of the domain $\D$.
\end{lemma}

We stress that the inequality
(\ref{6.1}) is fully non-asymptotic and the dependency of its left-hand side on the smaller wavenumber $k$ is fully controlled on the right-hand side using the larger wavenumber $K$. These properties are inherited from inequalities (\ref{6.11}) and (\ref{6.12}) which are used in the proof. 

Our proof of Lemma \ref{l.fcrr4chpr} is based
on a variation of the well-known decomposition into singular and non-singular
pairs of cells as prescribed by the next definition.  
(See for comparison \cite[p. 318-321, Section 6.1]{Oravecz2008}, \cite[p. 127-128, Definition 7.2]{NPR19} or \cite[p. 26, Definition 5.1]{DNPR19}.)

\begin{definition}\label{d.DcsQ} Let $\D$ be a compact planar domain with non-empty interior. Fix a pair of wave-numbers $(k,K)$ s.t. $2 \leq k \leq K < \infty$. Let $\{Q_l\}_l$ be a collection of $\lceil k \rceil^2 $ closed squares s.t. the following conditions are satisfied:
\begin{enumerate}
    \item The collection $\{Q_l\}_l$ covers $\D$, that is
    $$\D \subseteq Q_1 \cup Q_2 \cup \ldots \cup Q_{\lceil k \rceil^2}.$$
    \item For every $1 \leq l \leq \lceil k \rceil^2$, we have 
    $$
    \text{area}(Q_l) = \pS{\frac{\text{diam}(\D)}{\lceil k \rceil}}^2.
    $$
    \item For every $1 \leq l, m \leq \lceil k\rceil^2$ with $l\neq m$, we have $$\text{area}(Q_l\cap Q_m)=0.$$
\end{enumerate}
For every $1\leq l \leq \lceil k \rceil^2$, we set $\D_l := Q_l \cap \D$ and, for each $1 \leq m, l \leq \lceil k\rceil^2$, we say that 
the ordered pair $(\D_l, \D_m)$ is singular if 
\begin{align}\label{1over1000}
    \begin{split}
        \max_{p\in\{-1,1\}} \max_{i,j\in\{-1,0,1\}} \sup_{(x,y)\in \D_l \times \D_m} \abs{r_{ij}(k_p(x-y))} > \frac{1}{1000}.
    \end{split}
\end{align}
Otherwise, we say that the ordered pair $(\D_l, \D_m)$ is non-singular. 
Here, $r_{ij}$ denotes the covariance function defined in Subsection \ref{s.preliminaries.ss.the_2_point_correlation_functions}. 

\end{definition}

The constant $1/1000$ in (\ref{1over1000}) holds no particular significance. It is simply a fixed numerical value chosen to be sufficiently small for the arguments presented later in this section to work.

To any collection of pairs $\{(\D_l, \D_m)\}_{l,m}$, $1\leq l,m \leq \lceil k \rceil^2$, as in Definition \ref{d.DcsQ}, we will refer to as `the decomposition of $\D \times \D$ into singular and non-singular pairs of cells.' This allows us to write
\begin{align}
    \begin{split}
        & \sum_{q=3}^{\infty}\Var\pS{\N(b_{k_n},\hat{b}_{K_n}\D)[2q]} \\
        & = \sum_{q=3}^{\infty}\hspace{1 mm}\sum_{(\D_l, \D_m)\text{ singular}}\Cov\pS{\N(b_{k_n},\hat{b}_{K_n},\D_l)[2q],\N(b_{k_n},\hat{b}_{K_n},\D_m)[2q]} \\
        &   + \sum_{q=3}^{\infty}\hspace{1 mm}\sum_{(\D_l, \D_m)\text{ non-singular}}\Cov\pS{\N(b_{k_n},\hat{b}_{K_n},\D_l)[2q],\N(b_{k_n},\hat{b}_{K_n},\D_m)[2q]},
    \end{split}
\end{align}
and we will bound each term in this sum using a different strategy. The main difficulty is in bounding
the sum over the singular pairs of cells $(\D_l,\D_m)$ and it arises 
due to the lack of control on the decay of the covariance functions 
$r_{ij}(k_p(x-y))$ as $k_p\to \infty$. To circumvent this problem 
we will take an advantage of the next lemma which shows that
there are relatively few singular pairs of cells $(\D_l,\D_m)$.
(We note that the total number of cells $\D_l$ in the construction described above is
$\lceil k \rceil^2$ and so the total number of pairs $(\D_l,\D_m)$ is $\lceil k \rceil^4$.)

\begin{lemma}\label{l.nesc}
There exists a numerical constant $\CC > 0$ such that, the following inequality holds:
\begin{align}
    \abs{\{(l,m) : (\D_l,\D_m) \text{ is singular}\}} \leq \CC\left(1+\frac{1}{\text{diam}(\D)^2}\right) \cdot k^2,
\end{align}
regardless of the choice of associated parameters. These parameters are: the selection of a compact planar domain $\D$ with a non-empty interior, a pair $(k,K)$ of wave numbers where $2 \leq k \leq K < \infty$, and a decomposition $\{(\D_l, \D_m)\}_{l,m}$, with $1 \leq l,m \leq \lceil k \rceil^2$, of $\D \times \D$ into singular and non-singular pairs of cells $(\D_l,\D_m)$.
\end{lemma}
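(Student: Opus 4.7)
The plan is to combine the pointwise polynomial decay of the correlation functions from Lemma \ref{l.sf} with an elementary volume-comparison argument for the cells of the decomposition.

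First, I would use the decay estimate $|r_{ij}(z)| \leq \CC / |z|^{1/2}$ from (\ref{e.rijI}), together with $|r_{ij}(0)| \leq 1$ at the origin, to deduce the following purely geometric consequence: if $(\D_l, \D_m)$ is singular then there must exist $p \in \{-1, 1\}$ and $(x, y) \in \D_l \times \D_m$ such that $|k_p(x - y)| \leq C_0 := (1000\,\CC)^2$. Since $k_p \geq k$ by construction (see \ref{UDI8}), this forces $\mathrm{dist}(\D_l, \D_m) \leq C_0/k$.

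Next, I would count the pairs $(l, m)$ meeting this metric condition. Each cell $\D_l$ is contained in a square $Q_l$ of side length $\ell := \mathrm{diam}(\D)/\lceil k \rceil$, so any $\D_m$ with $\mathrm{dist}(\D_l, \D_m) \leq C_0/k$ is contained in the closed $(C_0/k)$-neighborhood of $Q_l$, which is itself enclosed in an axis-aligned square of side $\ell + 2 C_0/k$. Since the cells form an essentially disjoint tiling by squares of side $\ell$, the number of cells intersecting this enlarged square is bounded by a numerical constant times $(1 + C_0/(k\ell))^2$. Using $k\ell = k\,\mathrm{diam}(\D)/\lceil k \rceil \geq \mathrm{diam}(\D)/2$ (valid because $\lceil k \rceil \leq 2k$ when $k \geq 2$), this yields
\begin{align*}
\left(1 + \frac{C_0}{k\ell}\right)^{2} \leq L_0\,\Big(1 + \frac{1}{\mathrm{diam}(\D)^2}\Big)
\end{align*}
for some numerical constant $L_0 > 0$. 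Multiplying by the total number $\lceil k \rceil^2 \leq 4 k^2$ of cells $\D_l$ gives the stated bound.

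I do not expect a serious obstacle here: the only non-trivial input is the decay estimate (\ref{e.rijI}) from Lemma \ref{l.sf}, and the rest is elementary plane geometry. The delicate point is simply to retain the $1 + 1/\mathrm{diam}(\D)^2$ factor, which is needed to absorb the degenerate regime where $\mathrm{diam}(\D)$ is small compared with $1/k$ (so that each cell is much smaller than the interaction radius $C_0/k$ and hence many neighbors can sit inside it).
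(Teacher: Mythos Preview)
Your proposal is correct and follows essentially the same approach as the paper: both use the decay estimate (\ref{e.rijI}) to convert singularity of $(\D_l,\D_m)$ into a bound $\mathrm{dist}(\D_l,\D_m)\le C_0/k$, then count neighboring cells by comparing the interaction radius $C_0/k$ with the cell side length $\ell=\mathrm{diam}(\D)/\lceil k\rceil$ via $\lceil k\rceil\le 2k$, and finally multiply by the total number of cells $\lceil k\rceil^2$. Your write-up of the volume-comparison step is slightly more explicit than the paper's, but there is no substantive difference.
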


\begin{proof} 
By definition, if the pair of cells $(\D_l, \D_m)$ is singular then we can find $x\in \D_l$, $y\in\D_m$, $i,j \in\{-1,1\}$ and $p\in\{-1,1\}$, such 
that 
\begin{align}
    \begin{split}
        \abs{r_{ij}(k_p(x-y))} > \frac{1}{1000}.
    \end{split}
\end{align}
As a consequence of inequality (\ref{e.rijI}), for some positive numerical constant $\LL>0$ (independent of $x$ and $y$), we have 
\begin{align}
    \begin{split}
        \abs{x-y}  \leq \frac{\LL}{k_p } \leq \frac{\LL}{k } = \frac{\LL}{\lceil k\rceil } \cdot \frac{\lceil k\rceil}{k} \leq  \frac{2 \LL}{\lceil k\rceil }  = \frac{\LL\sqrt{2}}{\text{diam}(\D)}\cdot \pS{\sqrt{2}\cdot \frac{\text{diam}(\D)}{\lceil k\rceil}}.
    \end{split}
\end{align}
This obviously shows that, if the pair $(\D_l,\D_m)$ is singular, then 
\begin{align}
    \begin{split}
        \text{dist}(\D_l, \D_m) & = \inf_{x\in\D_l, y \in \D_m}\abs{x-y} \leq \frac{\LL \sqrt{2}}{\text{diam}(\D)}\cdot \pS{\sqrt{2}\cdot \frac{\text{diam}(\D)}{\lceil k\rceil}}.
    \end{split}
\end{align}
Since 
\begin{align}
    \begin{split}
        \max_{1 \leq l \leq \lceil k \rceil^2} \text{diam}(\D_l) \leq \sqrt{2}\cdot \frac{\text{diam}(\D)}{\lceil k \rceil},
    \end{split}
\end{align}
it follows that for every $1\leq l\leq \lceil k\rceil^2$, we have
\begin{align}
    \begin{split}
        \abs{\{m : (\D_l, \D_m) \text{ is singular}\}} & \leq C \cdot \pS{1+\frac{1}{\text{diam}(\D)^2}},
    \end{split}
\end{align}
where $C>0$ is a some another numerical constant. Therefore, using also that $\frac{\lceil k \rceil}{ k}  \leq 2$, we obtain
\begin{align}
    \begin{split}
        \abs{\{(l,m) : (\D_l, \D_m) \text{ is singular}\}} & = \sum_{l=1}^{\lceil k\rceil^2} \abs{\{m : (\D_l, \D_m) \text{ is singular}\}} \\
        & \leq \lceil k\rceil^2 \cdot \max_{1\leq l \leq \lceil k \rceil^2}\abs{\{m : (\D_l, \D_m) \text{ is singular}\}} \\
        &  \leq 4C \cdot \pS{1+\frac{1}{\text{diam}(\D)^2}}\cdot k^2,
    \end{split}
\end{align}
which yields the postulated inequality. 
\end{proof}

The following lemma allows one to asses the singular sum in
the statement of Lemma \ref{l.fcrr4chpr}.

\begin{lemma}\label{l.lcan} 
Let $\D$ be a convex compact domain of the plane, 
with non-empty interior and piecewise $C^1$ boundary $\partial \D$. 
Then, there exists a numerical constant $\CC>0$ s.t.,
for every pair $(k,K)$ of wave-numbers with $2 \leq k \leq K < \infty$, and for every decomposition
$\{(\D_l, \D_m)\}_{l,m}$, $1\leq l, m \leq \lceil k \rceil^2$, 
of $\D\times \D$ into singular and non-singular pairs of cells $(\D_l, \D_m)$, we have
\begin{align}\label{6.11}
\begin{split}
	\sum_{q=3}^{\infty}\abs{\sum_{(\D_l,\D_m)\hspace{1 mm} \text{singular}}\Cov\pS{\N(b_k,\hat{b}_K,\D_l)[2q],\N(b_k,\hat{b}_K,\D_m)[2q]}} \leq &
 \CC\cdot (1+\text{diam}(\D)^4) \cdot K^2. 
\end{split}
\end{align}
Here, $b_{k}$, $\hat{b}_{K}$ are independent real Berry Random Waves with wave-numbers $k$ and $K$ respectively, $\N(b_k,\hat{b}_K,\D_l)[2q]$ denotes
the $2q$-th Wiener Chaos projection of the nodal number $\N(b_k,\hat{b}_K,\D_l)$
and $\mathrm{diam}(\D)$ denotes the diameter of the domain $\D$.
\end{lemma}

The next lemma provides a bound on the non-singular 
sum featuring in Lemma \ref{l.fcrr4chpr}.

\begin{lemma}\label{l.lcan0} 
Let $\D$ be a convex compact domain of the plane, 
with non-empty interior and piecewise $C^1$ boundary $\partial \D$. 
Then, there exists a numerical constant $\CC>0$ s.t.,
for every pair $(k,K)$ of wave-numbers with $2 \leq k \leq K < \infty$, and for every decomposition
$\{(\D_l, \D_m)\}_{l,m}$, $1\leq l, m \leq \lceil k \rceil^2$, 
of $\D\times \D$ into singular and non-singular pairs of cells $(\D_l, \D_m)$, we have
\begin{align}\label{6.12}
\begin{split}
 \sum_{q=3}^{\infty}\abs{\sum_{(\D_l,\D_n) \hspace{1 mm} \text{non-sing.}}\Cov\pS{\N(b_{k},\hat{b}_K,\D_l)[2q],\N(b_{k},\hat{b}_K,\D_m)[2q]}} 
	 \leq \CC \cdot (1+\text{diam}(\D)^2) \cdot K^2.
\end{split}
\end{align}
Here, $b_{k}$, $\hat{b}_{K}$ are independent real Berry Random Waves with wave-numbers $k$ and $K$ respectively, $\N(b_k,\hat{b}_K,\D_l)[2q]$ denotes
the $2q$-th Wiener Chaos projection of the nodal number $\N(b_k,\hat{b}_K,\D_l)$
and $\mathrm{diam}(\D)$ denotes the diameter of the domain $\D$.
\end{lemma}

\begin{proof}[Proof of Lemma \ref{l.lcan}] The main idea of this proof is classical: we use Kac-Rice formula, asymptotic properties of Bessel functions
and take advantage of the fact that each subdomain $\mathcal{D}_i$ is small and there are not too many singular pairs 
$(\mathcal{D}_i,\mathcal{D}_j)$ of such domains. In this proof, we will use the notation introduced in \ref{N.S1}-\ref{N.S6}.
In particular, $b_{k_{-1}}$, $b_{k_1}$ will denote a pair of independent Berry Random Waves with wave-numbers $k_{-1}$
and $k_1$ which are $\geq 2$.We split the argument into four parts: 

\paragraph{Step 1.}
In this step, we apply the classical Kac-Rice formula to derive the integral, which we will aim to control throughout the rest of the lemma. Let $Q_1$ be one of the covering squares for $\D$, described in Definition \ref{d.DcsQ}. Using the bound on a number of singular 
pair of cells from Lemma \ref{l.nesc}, the Cauchy-Schwarz inequality and Lemma \ref{l.enn} for the value of the expectation, we obtain
\begin{align}\label{e.A}
    \begin{split}
    & \sum_{q=3}^{\infty}\abs{\sum_{(\D_l,\D_m) \hspace{1 mm} \text{sing.} }\Cov\pS{\N(b_{k_{-1}}, b_{k_1},\D_l)[2q],\N(b_{k_{-1}}, b_{k_1},\D_m)[2q]}} \\ 
        & \leq \sum_{(\D_l,\D_{m})\text{ sing. }}\sum_{q=3}^{\infty}
        \mathbb{E}\abs{\N(b_{k_{-1}}, b_{k_1},\D_l)[2q] \cdot \N(b_{k_{-1}}, b_{k_1},\D_m)[2q]} \\
       & \leq \CC\pS{1+\frac{1}{ \text{diam}(\D)^2}} \cdot \min(k_{-1},k_1)^2 \cdot \mathbb{E}\N(b_{k_{-1}},b_{k_1}, Q_1)^2 \\
        &  = \CC\pS{1+\frac{1}{\cdot \text{diam}(\D)^2}}\cdot \min(k_{-1},k_1)^2\cdot \Big(\mathbb{E}\pQ{\N(b_{k_{-1}},b_{k_1}, Q_1)} \\
        & \qquad +\mathbb{E}\pQ{\N(b_{k_{-1}},b_{k_1}, Q_1)(\N(b_{k_{-1}},\hat{b}_{k_1}, Q_1)-1)}\Big)\\
        &  \leq \CC\pS{1+\frac{1}{\text{diam}(\D)^2}}\cdot \min(k_{-1},k_1)^2\cdot\Big(\frac{\text{diam}(\D)^2}{4\pi\lceil \min(k_{-1},k_1)\rceil^2}\cdot k_{-1}\cdot k_1 \\
        & \qquad +\mathbb{E}\pQ{\N(b_{k_{-1}},b_{k_1}, Q_1)(\N(b_{k_{-1}},b_{k_1}, Q_1)-1)}\Big) \\
        & \leq \CC\pS{1+\text{diam}(\D)^2}\cdot \max(k_{-1},k_1)^2  \\
        & \qquad +\CC\pS{1+\frac{1}{ \text{diam}(\D)^2}}\cdot \min(k_{-1},k_1)^2 \cdot \mathbb{E}\pQ{\N(b_{k_{-1}},b_{k_1}, Q_1)\pS{\N(b_{k_{-1}},b_{k_1}, Q_1)-1}},
    \end{split}
\end{align}
where $\CC>0$ is a numerical constant taken from Lemma \ref{l.nesc}. 
Denoting with $\varphi_{(b_{k_p}(x),b_{k_p}(y))}(0,0)$  the (Gaussian) density of the vector 
$(b_{k_p}(x),b_{k_p}(y))$ at the point zero, and using the Kac-Rice formula (see \cite[p. 164, Theorem 6.3 (Rice Formula for the k-th Moment)]{AW09}) we obtain that

\begin{align}\label{e.B}
    \begin{split}
        & \mathbb{E}\pQ{\N(b_{k_{-1}},b_{k_1}, Q_l)\pS{\N(b_{k_{-1}},b_{k_1}, Q_l)-1}} =   \\
        &  = \underset{Q_1\times Q_1}{\int}\mathbb{E}\Bigg[\abs{\det \begin{bmatrix}
            \partial_{-1} b_{k_{-1}}(x) & 
            \partial_{1} b_{k_{-1}}(x) \\
            \partial_{-1} b_{k_{1}}(x) & 
            \partial_{1} b_{k_{1}}(x)
        \end{bmatrix}}\cdot\abs{\det \begin{bmatrix}
            \partial_{-1} b_{k_{-1}}(y) & 
            \partial_{1} b_{k_{-1}}(y) \\
            \partial_{-1} b_{k_{1}}(y) & 
            \partial_{1} b_{k_{1}}(y)
        \end{bmatrix}} \\
        & \qquad  \Big| b_{k_{-1}}(x)=b_{k_{1}}(x)=b_{k_{-1}}(y)=b_{k_{1}}(y)=0\Bigg]\cdot \Big( \prod_{p\in\{-1,1\}} \varphi_{(b_{k_p}(x),b_{k_p}(y))}(0,0) \Big) dxdy \\
        &  \leq \frac{1}{4} \cdot \pS{k_{-1}^2 \cdot k_1^2}\cdot \text{area}(Q_1) \int_{Q_1-Q_1}  \pS{1-\J_0(k_{-1}|z|)^2}^{-1/2}\cdot \pS{1-\J_0(k_1|z|)^2}^{-1/2}\\
        & \qquad \cdot 
         \mathbb{E}\pQ{
        \pS{\det 
        \begin{bmatrix}
            \widetilde{\partial}_{-1}b_{k_{-1}}(z) & 
            \widetilde{\partial}_{1}b_{k_{-1}}(z)  \\
            \widetilde{\partial}_{-1}b_{k_{1}}(z) & 
            \widetilde{\partial}_{1}b_{k_{1}}(z)  
        \end{bmatrix}
        }^2 \Bigg|b_{k_{-1}}(z)=b_{k_{1}}(z)=b_{k_{-1}}(0)=b_{k_{1}}(0)=0}dz \\
        & \leq \frac{\text{diam}(\D)^2}{64\pi^3} \cdot \max(k_{-1},k_1)^2 \cdot \int_{Q_1-Q_1}  \pS{1-\J_0(k_{-1}|z|)^2}^{-1/2}\cdot \pS{1-\J_0(k_1|z|)^2}^{-1/2} \\
        & \qquad \cdot \mathbb{E}\pQ{\pS{\det 
        \begin{bmatrix}
            \widetilde{\partial}_{-1} b_{k_{-1}}(z) & 
            \widetilde{\partial}_{1} b_{k_{-1}}(z) \\
             \widetilde{\partial}_{-1} b_{k_{1}}(z) & 
            \widetilde{\partial}_{1} b_{k_{1}}(z)
        \end{bmatrix}}^2 \Bigg|b_{k_{-1}}(z)=b_{k_{1}}(z)=b_{k_{-1}}(0)=b_{k_{1}}(0)=0}dz, 
    \end{split}
\end{align}
where to get the penultimate inequality we have used the conditional Cauchy-Schwarz inequality and the stationarity of
the field $(b_{k_{-1}},b_{k_1})$.

\paragraph{Step 2.} In this step, we carry out precise Gaussian computations to obtain a simplified formula where, importantly, the roles of $k_{-1}$ and $k_{1}$ are decoupled. We observe that 
\begin{align}\label{UDI16}
\begin{split}
    \pS{\det \begin{bmatrix}
     \widetilde{\partial}_{-1}b_{k_{-1}}(z) & 
     \widetilde{\partial}_{1}b_{k_{-1}}(z) \\
     \widetilde{\partial}_{-1}b_{k_{1}}(z) & 
     \widetilde{\partial}_{1}b_{k_{1}}(z)
    \end{bmatrix}}^2 
    &  = \pS{\widetilde{\partial}_{-1}b_{k_{-1}}(z)\widetilde{\partial}_{1}b_{k_{1}}(z)-\widetilde{\partial}_{-1}b_{k_{1}}(z)\widetilde{\partial}_{1}b_{k_{-1}}(z)}^2 \\
    &  = \sum_{q \in\{-1,1\}} \prod_{p\in\{-1,1\}}
    \pS{\widetilde{\partial}_{\pS{pq}}b_{k_p}\pS{z}}^2
    - 2\cdot \prod_{p,q\in\{-1,1\}}\widetilde{\partial}_{q}b_{k_p}\pS{z},
    \end{split}
\end{align}
where $\widetilde{\partial}_{\pS{p q}}$ should be understood as $\widetilde{\partial}_{v}$ where $v := pq$.
Using standard conditioning formulas for the Gaussian vectors (\cite[p. 18, Proposition 1.2]{AW09}) and Lemma \ref{l.sf}
we have that for each $p=\pm 1$ and for any choice of $u,v \in \{-1,+1\}$,
\begin{equation}\label{UDI18}
    \begin{split}
        & \mathbb{E}\pQ{\widetilde{\partial}_{u} b_{k_p}(z)\cdot \widetilde{\partial}_{v} b_{k_{p}}\pS{z}
            \Big| b_{k_{-1}}(z)=b_{k_{1}}(z)=b_{k_{-1}}(0)=b_{k_{1}}(0)=0}\\
        & \qquad  = \pS{\delta_{uv}-\frac{r_{u}\pS{k_p z}r_{v}\pS{k_p z}}{1-r\pS{k_p z}^2}}.
    \end{split}
\end{equation}
Writing 
\begin{equation}
    r_{\pS{pq}}:= r_v, \qquad v:= pq, \qquad p, q \in \{-1,1\}, 
\end{equation}
and combining (\ref{UDI16}) with (\ref{UDI18}), one can deduce that 
\begin{align}\label{CO}
    \begin{split}
            & \mathbb{E}\pQ{\pS{\det 
        \begin{bmatrix}
            \widetilde{\partial}_{-1}b_{k_{-1}}(z) & 
            \widetilde{\partial}_{1}b_{k_{-1}}(z)  \\
            \widetilde{\partial}_{-1}b_{k_{1}}(z) & 
            \widetilde{\partial}_{1}b_{k_{1}}(z)  
        \end{bmatrix}
        }^2 \Big|
        b_{k_{-1}}(z)=b_{k_{1}}(z)=b_{k_{-1}}(0)=b_{k_{1}}(0)=0} \\
        & = \sum_{q\in\{-1,1\}}\mathbb{E}\pQ{\prod_{p\in\{-1,1\}}\pS{\widetilde{\partial}_{(pq)}b_{k_p}(z)}^2\Big| 
        b_{k_{-1}}(z)=b_{k_{1}}(z)=b_{k_{-1}}(0)=b_{k_{1}}(0)=0} 
        \\
        & \qquad \qquad - 2\mathbb{E}\pQ{\prod_{p,q\in\{-1,1\}}\widetilde{\partial}_{q} b_{k_p}(z)\Big|
        b_{k_{-1}}(z)=b_{k_{1}}(z)=b_{k_{-1}}(0)=b_{k_{1}}(0)=0} \\
                &  = \sum_{q\in\{-1,1\}}\prod_{p\in\{-1,1\}}\mathbb{E}\pQ{\pS{\widetilde{\partial}_{(pq)}b_{k_p}(z)}^2\Big| b_{k_p}(z)=b_{k_p}(0)=0} \\
        & \qquad \qquad - 2\prod_{p\in\{-1,1\}}\mathbb{E}\pQ{\prod_{q\in\{-1,1\}}\widetilde{\partial}_{q} b_{k_p}(z)\Big| b_{k_p}(z)=b_{k_p}(z)=0}.
    \end{split}
\end{align}
Now rewrite (\ref{CO}) as
 \begin{equation}\label{CO_2}
    \begin{split}
        & =  \sum_{q\in\{-1,1\}}\prod_{p\in\{-1,1\}}
            \pS{1-\frac{r_{\pS{pq}}\pS{k_p z}^2}{1-r\pS{k_p z}^2}}
            - \frac{2\prod_{p,q\in\{-1,1\}}r_{q}\pS{k_p z}}{\prod_{p\in\{-1,1\}}(1-r\pS{k_p z}^2)} \\
        & =  \sum_{q\in\{-1,1\}}\pS{1-\sum_{p\in\{-1,1\}}
            \frac{r_{\pS{pq}}\pS{k_p z}^2}{1-r\pS{k_p z}^2}
            +\frac{\prod_{p\in\{-1,1\}}r_{\pS{pq}}\pS{k_p z}}{\prod_{p\in\{-1,1\}}(1-r\pS{k_p z}^2)}}  - \frac{2\prod_{p,q\in\{-1,1\}}r_{q}\pS{k_p z}}{\prod_{p\in\{-1,1\}}(1-r\pS{k_p z}^2)}\\
        & =  2 - \pS{\sum_{p\in\{-1,1\}}\frac{\sum_{q\in\{-1,1\}}r_{q}\pS{k_p z}^2}{1-r\pS{k_p z}^2}} + \frac{\sum_{q\in\{-1,1\}}\prod_{p\in\{-1,1\}}r_{\pS{pq}}\pS{k_p z}^2 
            - 2\prod_{p,q\in\{-1,1\}}r_{q}\pS{k_p z}}{\prod_{p\in\{-1,1\}}(1-r\pS{k_p z}^2)} \\
        & =  2 - \sum_{p\in\{-1,1\}}\frac{\sum_{q\in\{-1,1\}}r_{q}\pS{k_p z}^2}{1 - r\pS{k_p z}^2}.\\
    \end{split}
\end{equation}
Here, in order to obtain the cancellation which gives the last equality of (\ref{CO_2}), we have used an observation for each $q\in\{-1,1\}$, we have
\begin{align}
    \begin{split}
        \prod_{p\in\{-1,1\}}r_{\pS{pq}}\pS{k_p z}^2 & = 
            \prod_{p\in\{-1,1\}}\pS{2\cdot \frac{z_{\pS{pq}}^{2}}{\abs{z}^2}\cdot\J_1(k_p\abs{z})^2} \\
         & = 4 \cdot\frac{\prod_{v\in\{-1,1\}}z_v^2}{\abs{z}^4}
            \cdot \pS{\prod_{p\in\{-1,1\}}\J_1(k_p\abs{z})^2} \\
         & = \prod_{v,p\in\{-1,1\}}\pS{\sqrt{2}\cdot\frac{z_v}{|z|}\cdot\J_1(k_p|z|)} \\
         & = \prod_{v,p\in\{-1,1\}}r_v\pS{k_p z}.        
    \end{split}
\end{align}
Furthermore, we observe that 
\begin{align}
    \begin{split}
    2-\sum_{p\in\{-1,1\}}\pQ{\frac{\sum_{q\in\{-1,1\}}r_{q}(k_p z)^2}{1-r(k_p z)^2}} 
     & = 2 \pQ{1-\sum_{p\in\{-1,1\}}
        \frac{\sum_{q\in\{-1,1\}}\frac{z_{q}^2}{\abs{z}^2}\J_1(k_p\abs{z})^2}
        {1-r(k_p z)^2}} \\
    & = 2 \sum_{p\in\{-1,1\}}\pQ{1/2-\frac{\J_1(k_p\abs{z})^2}
        {1-\J_0(k_p\abs{z})^2}}.        
    \end{split}
\end{align}
Thus, finally, we see that 
\begin{align}\label{UDI19}
    \begin{split}
          &  \mathbb{E}\pQ{\pS{\det 
        \begin{bmatrix}
            \widetilde{\partial}_{-1}b_{k_{-1}}(z) & 
            \widetilde{\partial}_{1}b_{k_{-1}}(z)  \\
            \widetilde{\partial}_{-1}b_{k_{1}}(z) & 
            \widetilde{\partial}_{1}b_{k_{1}}(z)  
        \end{bmatrix}
        }^2 \Big|b_{k_{-1}}(z)=b_{k_{1}}(z)=b_{k_{-1}}(0)=b_{k_{1}}(0)=0}  \\
        & = 2\sum_{p\in\{-1,1\}}\Bigg[1/2-\frac{\J_1(k_p\abs{z})^2}
        {1-\J_0(k_p\abs{z})^2}\Bigg].
    \end{split}
\end{align}

\paragraph{Step 3.} In this step, we apply the simplified form (\ref{UDI19}) to (\ref{e.B}) and use the properties of Bessel functions to derive the main upper bound of this lemma. In complete analogy with (\ref{d.aspa})
we set
\begin{align}
r :=  \frac{\min_{p \in \{-1,1\}} k_p}{\max_{p \in \{-1,1\}} k_p}.    
\end{align}
We use (\ref{UDI19}) and the change of variables $x:=k z$ to bound the integral in the last line of (\ref{e.B})
by
\begin{align}\label{monomial}
    \begin{split}
        & \int_{Q_1-Q_1} \pS{1-\J_0(k_{-1}|z|)^2}^{-1/2}\cdot \pS{1-\J_0(k_1|z|)^2}^{-1/2} \\
        &  \qquad \cdot \mathbb{E}\pQ{
        \pS{\det 
       \begin{bmatrix}
            \widetilde{\partial}_{-1}b_{k_{-1}}(z) & 
           \widetilde{\partial}_{1}b_{k_{-1}}(z)  \\
           \widetilde{\partial}_{-1}b_{k_{1}}(z) & 
           \widetilde{\partial}_{1}b_{k_{1}}(z)  
       \end{bmatrix}}^2 \Bigg|b_{k_{-1}}(z)=b_{k_{1}}(z)=b_{k_{-1}}(0)=b_{k_{1}}(0)=0}dz \\ 
                & = \frac{2}{k^2} \cdot \int_{k\pS{Q_1-Q_1}}
        \frac{\Big(1/2-\frac{\J_1(\abs{x})^2}{1-\J_0(\abs{x})^2}\Big)+\Big(1/2-\frac{\J_1(r^{-1}\abs{x})^2}{1-\J_0(r^{-1}\abs{x})^2}\Big)}
        {\sqrt{1-\J_0(\abs{x})^2}\cdot \sqrt{1-\J_0(r^{-1}\abs{x})^2}}dx \\
        &  \leq \frac{2}{k^2}\cdot
        \int_{\lceil k\rceil \pS{Q_1-Q_1}}
         \frac{\Big(1/2-\frac{\J_1(\abs{x})^2}{1-\J_0(\abs{x})^2}\Big)+\Big(1/2-\frac{\J_1(r^{-1}\abs{x})^2}{1-\J_0(r^{-1}\abs{x})^2}\Big)}
        {\sqrt{1-\J_0(\abs{x})^2}\cdot \sqrt{1-\J_0(r^{-1}\abs{x})^2}}dx \\
                &   \leq \frac{2}{k^2}\cdot
        \int_{B\pS{0,2\cdot\text{diam}(\D)}}
         \frac{\Big(1/2-\frac{\J_1(\abs{x})^2}{1-\J_0(\abs{x})^2}\Big)+\Big(1/2-\frac{\J_1(r^{-1}\abs{x})^2}{1-\J_0(r^{-1}\abs{x})^2}\Big)}
        {\sqrt{1-\J_0(\abs{x})^2}\cdot \sqrt{1-\J_0(r^{-1}\abs{x})^2}}dx. \\
    \end{split}
\end{align}
It is easy to verify that 
\begin{align}\label{UDI20}
    \forall t>0, \qquad 0 \leq \frac{1/2-\frac{\J_1(t)^2}{1-\J_0(t)^2}}{\sqrt{1-\J_0(t)^2}}\leq 1, 
\end{align}
using which we can bound the last line of (\ref{monomial}) by
\begin{align}\label{e.C}
   \begin{split} 
   & \frac{2}{k^2}\cdot 
        \int_{B\pS{0,2\cdot\text{diam}(\D)}}
        \frac{1}{\sqrt{1-\J_0(|x|)^2}}\cdot \frac{1/2-\frac{\J_1(r^{-1}\abs{x})^2}{1-\J_0(r^{-1}\abs{x})^2}}
        {\sqrt{1-\J_0(r^{-1}\abs{x})^2}}dx \\ 
        & \qquad  +
        \frac{2}{k^2}\cdot 
        \int_{B\pS{0,2\cdot\text{diam}(\D)}}
        \frac{1}{\sqrt{1-\J_0(r^{-1}|x|)^2}}\cdot \frac{1/2-\frac{\J_1(\abs{x})^2}{1-\J_0(\abs{x})^2}}
        {\sqrt{1-\J_0(\abs{x})^2}}dx \\ 
        &  \leq \frac{2}{k^2} \cdot 
        \int_{B\pS{0,2\cdot\text{diam}(\D)}}
        \frac{dx}{\sqrt{1-\J_0(|x|)^2}}
        + \frac{2}{k^2} \cdot 
        \int_{B\pS{0,2\cdot\text{diam}(\D)}}
        \frac{dx}{\sqrt{1-\J_0(r^{-1}|x|)^2}} \\
        & \leq \frac{4}{k^2} \cdot 
        \pS{\int_{B(0,1)}\frac{1}{\sqrt{1-\J_0(|x|)^2}}dx+\frac{4\pi}{\sqrt{1-\J_0(1)^2}}\text{diam}(\D)^2} \\
        & \leq \frac{4}{k^2}\cdot 
        \pS{2\int_{B(0,1)}\frac{1}{|x|}dx+8\pi\cdot \text{diam}(\D)^2} \\
        & \leq \frac{32\pi}{k^2}\cdot (1+\text{diam}(\D)^2).
    \end{split}
\end{align}
Here, we have also used the following simple observations
\begin{align}
    \begin{split}
        & \forall t>0, \qquad \abs{\J_0(t)} \leq 1, \qquad \abs{\J_1(t)} \leq 1, \\
        & \forall t>1, \qquad \J_0(t)^2 < \J_0(1)^2, \\
        & t \mapsto \J_0(t)^2 \text{ is non-increasing on the interval } [0,1].
    \end{split}
\end{align}

\paragraph{Step 4.}
We combine (\ref{e.A}) and (\ref{e.B}) with (\ref{e.C}) to obtain that
for some strictly positive numerical constants $C_1$, $C_2$, $C_3$, $C_4$, 
we have
\begin{align}
    \begin{split}
        & \sum_{q=3}^{\infty}\abs{\sum_{(\D_l,\D_m) \hspace{1 mm} \text{sing.} }\Cov\pS{\N(b_{k_{-1}}, b_{k_1},\D_l)[2q],\N(b_{k_{-1}}, b_{k_1},\D_m)[2q]}} \\
        & \leq \CC_1\pS{1+\text{diam}(\D)^2}\cdot \max(k_{-1},k_1)^2  \\
        & \qquad +\CC_1 \pS{1+\frac{1}{ \text{diam}(\D)^2}}\cdot \min(k_{-1},k_1)^2 \cdot \mathbb{E}\pQ{\N(b_{k_{-1}},b_{k_1}, Q_1)\pS{\N(b_{k_{-1}},b_{k_1}, Q_1)-1}} \\
         & \leq \CC_1\pS{1+\text{diam}(\D)^2}\cdot \max(k_{-1},k_1)^2  \\
         & \qquad + \CC_2 \pS{1+\text{diam}(\D)^2} \cdot k_{-1}^2\cdot k_1^2 \cdot \int_{Q_1-Q_1}  \pS{1-\J_0(k_{-1}|z|)^2}^{-1/2}\cdot \pS{1-\J_0(k_1|z|)^2}^{-1/2} \\
        & \qquad \cdot \mathbb{E}\pQ{\pS{\det 
        \begin{bmatrix}
            \widetilde{\partial}_{-1} b_{k_{-1}}(z) & 
            \widetilde{\partial}_{1} b_{k_{-1}}(z) \\
             \widetilde{\partial}_{-1} b_{k_{1}}(z) & 
            \widetilde{\partial}_{1} b_{k_{1}}(z)
        \end{bmatrix}}^2 \Bigg|b_{k_{-1}}(z)=b_{k_{1}}(z)=b_{k_{-1}}(0)=b_{k_{1}}(0)=0}dz \\
        & \leq \CC_1\pS{1+\text{diam}(\D)^2}\cdot \max(k_{-1},k_1)^2 + \CC_3\pS{1+\text{diam}(\D)^4}\cdot \max(k_{-1},k_1)^2 \\
        & \leq C_4\pS{1+\text{diam}(\D)^4}\cdot \max(k_{-1},k_1)^2.
    \end{split}
\end{align}
This is the postulated inequality and the proof is therefore concluded. 
\end{proof}

\begin{proof}[Proof of Lemma \ref{l.lcan0}] 
The argument presented here is standard and uses in a crucial way the geometric decay of $|r_{ij}(k_p(x-y))|^{2q}$ on 
the set of non-singular pairs of cells to control the contribution associated with $2q$-th chaotic projections. For this proof, we are going to adopt the notation introduced in \ref{N.S1}-\ref{N.S6}. In particular, we will write $k_{-1},k_1$ to denote an unordered pair of wave-numbers corresponding to the ordered pair $2 \leq k \leq K$ and $b_{k_{-1}}$, $b_{k_1}$ to denote a pair of independent BRWs with wave-numbers $k_{-1}$ and $k_{1}$ respectively. 

\paragraph{Step 1.}
We recall that an explicit formula for the Wiener Chaos decomposition of the nodal number $\N(b_{k_{-1}},b_{k_1},\D)$ has been established in Lemma \ref{l.cefnn} and that for every $q \geq 1$ the projection $\N(b_{k_{-1}},b_{k_1},\D)[q]$ on the $q$-th Wiener chaos has zero mean  (see (\ref{e.epe})).
This yields 
\begin{align}\label{e.1}
    \begin{split}
        & \sum_{q=3}^{\infty} \abs{\sum_{(\D_l, \D_m)\text{ non sing.}}\Cov\pS{\N(b_{k_{-1}},b_{k_1},\D_l)[2q],\N(b_{k_{-1}},b_{k_1},\D_m)[2q]}} \\
        &  = \sum_{q=3}^{\infty} \abs{\sum_{(\D_l, \D_m)\text{ non sing.}}\mathbb{E}\pS{\N(b_{k_{-1}},b_{k_1},\D_l)[2q]\cdot\N(b_{k_{-1}},b_{k_1},\D_m)[2q]}} \\
        &  = \sum_{q=3}^{\infty} \Bigg|\sum_{(\D_l, \D_m)\text{ non sing.}} \hspace{2 mm}
         (k_{-1}\cdot k_1)^2 \sum_{\iv\in \mathbb{N}^6, \abs{\iv}=2q} \hspace{2 mm} \sum_{\jv \in \mathbb{N}^6, \abs{\jv}=2q}c_{\iv}c_{\jv}\\
         &  \qquad \cdot \underset{\D_l\times \D_m}{\int}\prod_{p\in\{-1,1\}}\mathbb{E}\pQ{\prod_{v\in\{-1,0,1\}}\HH_{i_{p,v}}\pS{\widetilde{\partial}_v b_{k_p}(x)}\cdot \HH_{j_{p,v}}\pS{\widetilde{\partial}_v b_{k_p}(y)}}dxdy\Bigg|, \\
    \end{split}
\end{align}
where $\iv, \jv \in \mathbb{N}^6$ are indexed as 
\begin{align*}
    \begin{split}
        \iv & := (i_{-1,-1};i_{-1,0};i_{-1,1};i_{1,-1};i_{1,0};i_{1,1}), \\
\jv & := (j_{-1,-1};j_{-1,0};j_{-1,1};j_{1,-1};j_{1,0};j_{1,1}),
    \end{split}
\end{align*}
and where we use notation
\begin{align*}
    \begin{split}
\abs{\iv} & := 
i_{-1,-1}+i_{-1,0}+i_{-1,1}+i_{1,-1}+i_{1,0}+i_{1,1}, \\
\abs{\jv} & := j_{-1,-1}+j_{-1,0}+j_{-1,1}+j_{1,-1}+j_{1,0}+j_{1,1}.
    \end{split}
\end{align*}

 \paragraph{Step 2.} In this step, we will demonstrate how to bound the contribution to (\ref{e.1}) coming from the deterministic constants $c_{\mathbf{j}}$, which arise from the chaotic decomposition of the determinant. Let $\{Z_{p,q}: p, q\in \{-1,1\}\}$ be a collection of four independent standard Gaussian random variables. Using the defining formula (\ref{d.finc}) and the Cauchy-Schwartz inequality we observe that for every $\jv \in \mathbb{N}^6$ s.t. $c_{\jv}\neq 0$, we have 
    \begin{align}\label{e.12}
        \begin{split}
             \abs{c_{\jv}} & =  \frac{1}{2}\cdot \pS{\prod_{p=\pm 1}\frac{1}{j_{p,0}!!}}
            \cdot \pS{\prod_{p,q\in \{-1,1\}}\frac{1}{j_{p,q}!}} \\ & \qquad \cdot
            \abs{\mathbb{E}\pQ{\abs{\det \begin{bmatrix}
                Z_{-1,-1} & Z_{-1,1} \\
                Z_{1,-1} & Z_{1,1}
            \end{bmatrix}}
            \cdot \prod_{p,q\in\{-1,1\}}\HH_{j_{p,q}}(Z_{p,q})}} \\
            & \leq \frac{1}{2}\cdot \pS{\prod_{p=\pm 1}\frac{1}{j_{p,0}!!}}
            \cdot \pS{\prod_{p,q\in \{-1,1\}}\frac{1}{j_{p,q}!}} \\
            & \qquad \cdot \sqrt{\mathbb{E}\pS{\det
            \begin{bmatrix}
                Z_{-1,-1} & Z_{-1,1} \\
                Z_{1,-1} & Z_{1,1}
            \end{bmatrix}}^2}
            \cdot \sqrt{\mathbb{E}\pQ{\prod_{p,q\in\{-1,1\}}\HH_{j_{p,q}}(Z_{p,q})^2}}. \\
        \end{split}
    \end{align}
    Since  
    \begin{align}
        \begin{split}
            \mathbb{E}\pS{\det
            \begin{bmatrix}
                Z_{-1,-1} & Z_{-1,1} \\
                Z_{1,-1} & Z_{1,1}
            \end{bmatrix}}^2 & = \sum_{\varepsilon = \pm 1}\mathbb{E}\pQ{\prod_{p=\pm 1}Z_{p,(\varepsilon p)}^2}-2\mathbb{E}\pQ{\prod_{p,q=\pm 1}Z_{p,q}},
        \end{split}
    \end{align}
    we can conclude that 
    \begin{align}\label{6.28}
        \begin{split} 
          \abs{c_{\jv}}  & \leq \frac{1}{\sqrt{2}}\cdot \pS{\prod_{p=\pm 1}\frac{1}{j_{p,0}!!}}
            \cdot \pS{\prod_{p,q=\pm 1}\frac{1}{\sqrt{j_{p,q}!}}} \\
            & \leq \frac{1}{\sqrt{2}}\cdot \pS{\prod_{p=\pm 1}\prod_{i\in \{-1,0,1\}}\frac{1}{\sqrt{j_{p,i}!}}} \\
            & \leq \frac{1}{\sqrt{2}}\cdot \sqrt{\frac{6^{\sum_{p=\pm 1}\sum_{i\in \{-1,0,1\}} j_{p,i}}}{\pS{\sum_{p=\pm 1}\sum_{i\in \{-1,0,1\}}j_{p,i}}!}}. 
        \end{split}
    \end{align}
    We have deduced the last inequality above by comparing with the probability mass function of a random vector $(L_1, \ldots, L_n)$ having relevant multinomial distribution. That is, 
    \begin{align}
        \begin{split}
            \mathbb{P}(L_1=l_1, \ldots,L_n= l_n) = \frac{\pS{l_1 + \ldots +l_n}!}{l_1! \ldots l_m!} l_1^{p_1} \ldots l_m^{p_m}, \qquad l_1, \ldots, l_m \in \mathbb{N},
        \end{split}
    \end{align}
    with $p_1 = \ldots = p_n =1/m$ and $m=6$. We note that, in particular if $|\iv|=|\jv|=2q$, then (\ref{6.28}) reduces to
\begin{align}\label{9876}
|c_{\iv}c_{\jv}| \leq  
\frac{1}{2}\cdot \frac{6^{2q}}{(2q)!}.    
\end{align}
We note also that, for every $q \geq 1$ we have a trivial bound
\begin{align}\label{e.16}
    \begin{split}
        \abs{\{\  \jv \in \mathbb{N}^6 : \abs{\jv}=2q \}}
        \leq (q+1)^6 
         \leq e^{6q}.
    \end{split}
\end{align}
 \paragraph{Step 3.} Here, we will show how to bound the contribution to (\ref{e.1}) coming from the expectations involving the Hermite polynomials, expressed in terms of the covariance functions $r_{ij}$. We consider now $i, j \in \mathbb{N}^3$, and write
    $$
       i := (i_{-1}, i_0, i_1), \qquad j := (j_{-1}, j_0, j_1),
    $$
    as well as 
    $$
    |i| := i_{-1}+i_0+i_1, \qquad 
            |j|:= j_{-1}+ j_0 + j_1.
    $$ 
    Thanks to the classical Diagram formulae for Hermite polynomials
    \cite[p. 98, Proposition 4.15]{Marinucci2011} we can observe that, if $|i| \neq |j|$, then 
    $$
    \mathbb{E}\pQ{\prod_{v\in\{-1,0,1\}}\HH_{i_{v}}\pS{\widetilde{\partial}_v b_{k}(x)}\HH_{j_{v}}\pS{\widetilde{\partial}_v b_{k}(y)}} = 0,
    $$
    and if $|i|=|j|$ then 
\begin{align}
    \begin{split}\label{e.13}	
\abs{\mathbb{E}\pQ{\prod_{v\in\{-1,0,1\}}\HH_{i_{v}}\pS{\widetilde{\partial}_v b_{k}(x)}\HH_{j_{v}}\pS{\widetilde{\partial}_v b_{k}(y)}}} \leq
	\abs{j}! \cdot \max_{i,j\{-1,0,1\}}\abs{r_{ij}\pS{k\pS{x-y}}}^{\abs{j}}.
    \end{split}
\end{align}
Here, $k>0$ is any fixed positive wave-number and $b_k$ is a corresponding real Berry Random Wave. 
\paragraph{Step 4.} In this step, we will combine the results from the two preceding steps to obtain an estimate that behaves favorably with respect to summation over $q$. According to \cite[p. 128, Lemma 7.6]{NPR19}, 
    there exists a numerical constant $C>0$, such that
    for all $i,j \in \{-1,0,1\}$ and $k\geq 2$, we have 
\begin{equation}\label{e.14}
    \begin{split}
        \int_{\D}\int_{\D} r_{ij}\pS{k (x-y)}^6dxdy & \leq \frac{C \cdot(1+\text{diam}(\D)^2)}{k^2}.
    \end{split}
\end{equation}

Now, let us fix $\iv, \jv \in \mathbb{N}^6$ such that
$c_{\iv}, c_{\jv} \neq 0$ and $|\iv|=|\jv|=2q$. 
Using (\ref{e.13}) and H\"{o}lder inequality we can write
\begin{align}\label{1234}
    \begin{split}
              &\abs{ 
        \sum_{(\D_l, \D_m)\text{ non-sing.}} \hspace{2 mm}  \underset{\D_l\times \D_m}{\int}\prod_{p\in\{-1,1\}}\mathbb{E}\pQ{\prod_{v\in\{-1,0,1\}}\HH_{i_{p,v}}\pS{\widetilde{\partial}_i b_{k_p}(x)}\HH_{j_{p,v}}\pS{\widetilde{\partial}_i b_{k_p}(y)}}dxdy} \\
       & \leq \sum_{(\D_l, \D_m)\text{ non-sing.}} \hspace{2 mm} 
       \abs{\underset{\D_l\times \D_m}{\int}\prod_{p\in\{-1,1\}}\mathbb{E}\pQ{\prod_{v \in \{-1,0,1\}}\HH_{i_{p,v}}\pS{\widetilde{\partial}_v b_{k_p}(x)}\HH_{j_{p,v}}\pS{\widetilde{\partial}_v b_{k_p}(y)}}dxdy} \\
       & \leq  \sum_{(\D_l, \D_m)\text{ non-sing.}} \hspace{2 mm} 
       \underset{\D_l\times \D_m}{\int}\prod_{p\in\{-1,1\}} \pS{\abs{j_p}! \cdot \max_{i,j\in\{-1,0,1\}}\abs{r_{ij}(k_p(x-y))}^{|j_p|}}dxdy \\
       & \leq (2q)! \cdot  
       \underset{(\D_l, \D_m) \text{ non-sing.}}{\int}
       \hspace{2 mm} \prod_{p\in\{-1,1\}} \max_{i,j\in\{-1,0,1\}}\abs{r_{ij}(k_p(x-y))}^{|j_p|}dxdy\\
       & \leq (2q)! \cdot 
      \prod_{p \in \{-1,1\}}\pS{\underset{(\D_l, \D_m) \text{ non-sing.}}{\int}  \max_{i,j\in\{-1,0,1\}}\abs{r_{ij}(k_p(x-y))}^{2q}dxdy}^{\frac{|j_p|}{2q}}\\
       &  \leq  9\cdot (2q)! \cdot  
      \prod_{p\in \{-1,1\}}\max_{i,j\in\{-1,0,1\}}\pS{\underset{(\D_l, \D_m) \text{ non-sing.}}{\int}
       \hspace{2 mm}\abs{r_{ij}(k_p(x-y))}^{2q}dxdy}^{\frac{|j_p|}{2q}}. 
    \end{split}
\end{align}
Thanks to (\ref{e.14}) we can find a numerical constant $C>0$ such that the last line of (\ref{1234}) can be upper-bounded by
\begin{align}\label{e.15}
    \begin{split}
       &  \leq  9\cdot \frac{(2q)!}{(1000)^{2q-6}} \cdot  
      \prod_{p\in \{-1,1\}}\max_{i,j\in\{-1,0,1\}}\pS{\underset{(\D_l, \D_m) \text{ non-sing.}}{\int}
       \hspace{2 mm}\abs{r_{ij}(k_p(x-y))}^{6}dxdy}^{\frac{|j_p|}{2q}} \\
       & \leq C \cdot \pS{1+\text{diam}(\D)^2} \cdot \frac{(2q)!}{(1000)^{2q-6}}
        \cdot \pS{\prod_{p\in \{-1,1\}} k_p^{-\frac{|j_p|}{2q}}}\\
       &  \leq   \frac{(2q)!}{(1000)^{2q-6}} \cdot \frac{C \cdot \pS{1+\text{diam}(\D)^2} }{\min(k_{-1},k_1)^2}.
    \end{split}
\end{align}

 \paragraph{Step 5.}
We apply inequalities (\ref{9876}), (\ref{e.16}) and (\ref{e.15}) 
to (\ref{e.1}) and conclude that for another numerical constant $L>0$ we have 
\begin{align}
    \begin{split}
 & \sum_{q=3}^{\infty}\abs{\sum_{(\D_l,\D_m) \hspace{1 mm} \text{non-sing.}}\Cov\pS{\N(b_{k_{-1}},b_{k_1},\D_l)[2q],\N(b_{k_{-1}},b_{k_1},\D_m)[2q]}} \\
  & \leq L \cdot (1+\text{diam}(\D)^2)\cdot \max(k_{-1},k_1)^2
        \cdot \sum_{q=3}^{+\infty} \pS{\frac{6e^3}{1000}}^{2q}.
    \end{split}
\end{align}
Since the series on the right of the above inequality is convergent we conclude that the proof of our lemma is complete. 
\end{proof}

We are finally ready to achieve the main goal of this section. 

\begin{proof}[Proof of Lemma \ref{l.fcrr4chpr}]
We choose any decomposition $\{(\D_l, \D_m)\}_{l,m}$,
$1 \leq l,m \leq \lceil k\rceil^2$ of $\D\times\D$ into singular
and non-singular pairs of cells $(\D_l,\D_m)$, as described in Definition \ref{d.DcsQ}. Then, thanks to Lemmas  \ref{l.lcan} and \ref{l.lcan0}, we can find a numerical constants $\CC_1, \CC_2>0$ such that 
\begin{align}
    \begin{split}
        &\sum_{q = 3}^{\infty}\Var\pS{\N(b_{k},\hat{b}_{K},\D)[2q]} \\
        & \leq \sum_{q=3}^{\infty}\abs{\sum_{(\D_l,\D_n) \hspace{1 mm} \text{singular}}\Cov\pS{\N(b_{k},\hat{b}_{K},\D_l)[2q],\N(b_{k},\hat{b}_{K},\D_n)[2q]}}  \\
        & \quad + \sum_{q=3}^{\infty}\abs{\sum_{(\D_l,\D_n) \hspace{1 mm} \text{non singular}}\Cov\pS{\N(b_{k},\hat{b}_{K},\D_l)[2q],\N(b_{k},\hat{b}_{K},\D_n)[2q]}} \\
        & \leq \CC_1(1+\text{diam}(\D)^2) \cdot K^2 
        + \CC_2(1+\text{diam}(\D)^4) \cdot K^2 \\
        & \leq \CC_3(1+\text{diam}(\D)^4) \cdot K^2,
    \end{split}
\end{align}
with $\CC_3$ being another numerical constant. 
We combine the above bound with Lemma \ref{l.02cp} and we obtain 
\begin{align}
    \begin{split}
        \sum_{q \neq 2}\Var\pS{\N(b_{k},\hat{b}_K, \D)[2q]} & = 
         \Var\pS{\N(b_{k},\hat{b}_{K},\D)[2]} +
          \sum_{q = 3}^{\infty}\Var\pS{\N(b_{k},\hat{b}_{K_n},\D)[2q]} \\
          & \leq 
          \text{diam}(\D)^2\cdot \frac{9}{16\pi^2}\cdot K^2 + \sum_{q = 3}^{\infty}\Var\pS{\N(b_{k},\hat{b}_{K},\D)[2q]} \\
          & \leq \text{diam}(\D)^2\cdot \frac{9}{16\pi^2} \cdot K^2
          + \CC_3(1+\text{diam}(\D)^4) \cdot K^2 \\
          & \leq \CC_4\cdot (1+\text{diam}(\D)^4)\cdot K^2,
    \end{split}
\end{align}
where $\CC_4$ is the final numerical constant whose existence was postulated in the statement of our lemma. 
\end{proof}

\section{Asymptotic variance}\label{s.computation_of_the_asymptotic_variance}

This section is devoted to the proof of the variance formula stated in Theorem \ref{t.avar} (the expectation has already been computed in Lemma \ref{l.enn}). First, we summarise the information acquired in the preceding sections.
Suppose that the assumptions of Theorem \ref{t.avar} are fulfilled and admit the notation used therein. Combining Remark \ref{r.faclb} with Lemma \ref{l.fcrr4chpr} we can already see that  
\begin{align}\label{WAWA}
    \begin{split}
    &\lim_{n \to \infty}
    \frac{\Var\pS{\N(b_{k_n},\hat{b}_{K_n},\D)[4]}}{\Var\pS{\N(b_{k_n},\hat{b}_{K_n},\D)} }=1, 
    \\
    &\lim_{n\to \infty}
    \mathrm{Corr}\pS{\N(b_{k_n},\hat{b}_{K_n},\D), 
    \N(b_{k_n},\hat{b}_{K_n},\D)[4]}=1.
    \end{split}
\end{align}
Using Lemma \ref{l.recc} and Remark \ref{l.fcrr4chpr}, we immediately
see that
\begin{align}\label{SZN}
    \begin{split}
        & \Var\pS{\N(b_{k_n},\hat{b}_{K_n},\D)[4]} \\
        & = \Var\pS{\frac{k_n}{\pi\sqrt{2}}\cdot\mathcal{L}(b_{k_n},\D)[4]+\frac{K_n}{\pi\sqrt{2}}\cdot\mathcal{L}(\hat{b}_{K_n},\D) +\text{Cross}\pS{\N(b_{k_n},\hat{b}_{K_n},\D)[4]}} \\
        & = \frac{K_n^2}{2\pi^2}\cdot\Var\pS{\mathcal{L}(b_{k_n},\D)[4]} + \frac{k_n^2}{2\pi^2}\cdot\Var\pS{\mathcal{L}(\hat{b}_{K_n},\D)[4]} + \Var\pS{\text{Cross}\pS{\N(b_{k_n},\hat{b}_{K_n},\D)[4]}} \\
        & = \frac{\text{area}(\D)}{512\pi^3} \cdot \pS{K_n^2\ln k_n+k_n^2\ln K_n} + \Var\pS{\text{Cross}\pS{\N(b_{k_n},\hat{b}_{K_n},\D)[4]}},
    \end{split}
\end{align}
where we have used Theorem \ref{t.avnl} established by Nourdin, Peccati and Rossi in \cite{NPR19}. Combining (\ref{WAWA}) and (\ref{SZN}) we see that there is only one more step needed in order to achieve the goal of this section. That is, we need to characterise the asymptotic contribution to the variance which comes from the cross-term. 
Our strategy will be split according to the value of the asymptotic ratio $r$ which was defined in (\ref{d.aspa}). The
case $r=0$ will be treated in Lemma \ref{l.acwnl} and 
we will show that in this scenario the contribution
of the cross-term is negligible. The case $r>0$ is the subject of Lemma \ref{l.72} and, there, the cross-term will bring a meaningful 
contribution. In the next lemma we use notation introduced in (\ref{d.para}) and in (\ref{d.aspa}). 

\begin{lemma}\label{l.acwnl} 
Let $\D$ be a convex compact domain of the plane,
with non-empty interior and piecewise $C^1$ boundary $\partial\D$.
Let $(k_n,K_n)_{n\in \mathbb{N}}$ be a sequence of pairs of
wave-numbers such that $2 \leq k_n \leq K_n < \infty$
and $k_n \to \infty$. Suppose also that the asymptotic parameters $r^{log}$ and $r$ defined in (\ref{d.aspa}) exist and satisfy $r=0$ and $r^{log}>0$. Then,
\begin{align}\label{e.Y}
    \begin{split}
    \lim_{n\to \infty}
    \frac{\Var\pS{\N(b_{k_n},\hat{b}_{K_n},\D)}}{\frac{\text{area}(\D)}{512\pi^3}\cdot r^{log} \cdot  K_n^2\ln K_n}=1. 
    \end{split}
\end{align}
Moreover, we have 
\begin{align}\label{e.YC}
    \begin{split}
  \lim_{n\to\infty}\abs{\abs{\frac{\N(b_{k_n},\hat{b}_{K_n},\D)-\mathbb{E}\N(b_{k_n},\hat{b}_{K_n},\D)}{\sqrt{\Var\pS{\N(b_{k_n},\hat{b}_{K_n},\D)}}}-\frac{\mathcal{L}\pS{b_{k_n},\D}-\mathbb{E}\mathcal{L}\pS{b_{k_n},\D}}{\sqrt{\Var\pS{\mathcal{L}\pS{b_{k_n},\D}}}}}}_{\LL^2(\mathbb{P})} & = 0, \\        
    \end{split}
\end{align} 
as well as, 
\begin{align}\label{e.YCD}
\lim_{n\to\infty}\text{Corr}\pS{\N\pS{b_{k_n},\hat{b}_{K_n},\D}, \mathcal{L}\pS{b_{k_n},\D}} 
        & = 1.
\end{align}
Here,  $b_{k_n}$ and $\hat{b}_{K_n}$ denote independent real Berry Random Waves with wave-numbers $k_{n}$ and $K_n$ respectively, while  $\mathcal{L}(b_{k_n},\D)$ and $\N(b_{k_n},\hat{b}_{K_n},\D)$ are the associated nodal length and nodal number, respectively. 
\end{lemma}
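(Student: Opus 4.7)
\textit{Strategy.} My plan is to apply Lemma \ref{l.recc} to the random variable of interest, yielding the decomposition
\begin{align*}
\N(b_{k_n}, \hat{b}_{K_n}, \D) - \mathbb{E}\N(b_{k_n}, \hat{b}_{K_n}, \D) = \frac{K_n}{\pi\sqrt{2}}\pS{\mathcal{L}(b_{k_n}, \D) - \mathbb{E}\mathcal{L}(b_{k_n}, \D)} + R_n,
\end{align*}
where $R_n$ gathers the two remaining (centred) summands of Lemma \ref{l.recc}: the piece $\frac{k_n}{\pi\sqrt{2}}(\mathcal{L}(\hat{b}_{K_n}, \D) - \mathbb{E}\mathcal{L}(\hat{b}_{K_n}, \D))$ and the centred cross term. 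By Lemma \ref{l.recc} the three summands are pairwise uncorrelated, so variances simply add. Both (\ref{e.YC}) and (\ref{e.YCD}) will follow from (\ref{e.Y}) by a standard orthogonality argument; to establish (\ref{e.Y}), it will suffice to show that $\Var\pS{\frac{K_n}{\pi\sqrt{2}}\mathcal{L}(b_{k_n}, \D)} \sim r^{log}\cdot \frac{\mathrm{area}(\D)}{512\pi^3}\cdot K_n^2 \ln K_n$ and that $\Var(R_n) = o(K_n^2 \ln K_n)$.

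\textit{The two nodal-length contributions.} Theorem \ref{t.avnl} immediately gives $\frac{K_n^2}{2\pi^2}\Var(\mathcal{L}(b_{k_n}, \D)) \sim \frac{\mathrm{area}(\D)}{512\pi^3}\cdot K_n^2 \ln k_n$, and the hypothesis $r^{log}>0$ provides $\ln k_n \sim r^{log}\ln K_n$, which matches the claimed leading order. The same theorem, applied to $\hat{b}_{K_n}$, yields $\Var\pS{\frac{k_n}{\pi\sqrt{2}}\mathcal{L}(\hat{b}_{K_n}, \D)} \sim \frac{\mathrm{area}(\D)}{512\pi^3}\cdot k_n^2 \ln K_n = o(K_n^2 \ln K_n)$, since the hypothesis $r=0$ forces $(k_n/K_n)^2 \to 0$. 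The main obstacle is controlling the centred cross term.

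\textit{Cross-term variance (the main obstacle).} I split chaotically, $\mathrm{Cross}(\N) = \sum_{q\geq 0}\mathrm{Cross}(\N[2q])$. Since Lemma \ref{l.recc} holds at each chaotic level with the three summands still uncorrelated, $\Var(\mathrm{Cross}(\N[2q])) \leq \Var(\N[2q])$; summing over $q\neq 2$ and invoking Lemma \ref{l.fcrr4chpr} bounds the total by $O(K_n^2) = o(K_n^2\ln K_n)$. The remaining piece $\Var(\mathrm{Cross}(\N[4]))$ must be estimated directly. By formula (\ref{d.general_formula_for_the_cross_term}) it is a finite linear combination, with coefficient $(k_nK_n)c_{\jv}$, of integrals $\int_\D \prod_{p,i}\HH_{j_{p,i}}(\widetilde{\partial}_i b_{k_p}(x))\,dx$ over $\jv \in \mathbb{N}^6$ with $|j_{-1}|, |j_1|\geq 1$ and $|j_{-1}|+|j_1|=4$. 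Independence of $b_{k_n}$ and $\hat{b}_{K_n}$ factorises the variance of each such integral into a double integral over $\D\times\D$ of a product of two Hermite covariances; the classical diagram formula bounds each factor by $|j_p|!\max_{i,j}|r_{ij}(k_p(x-y))|^{|j_p|}$. Applying Hölder's inequality with exponents $(4/|j_{-1}|, 4/|j_1|)$ together with the standard estimate $\int_{\D\times\D}|r_{ij}(k(x-y))|^4\,dx\,dy \leq C\ln k/k^2$ (valid for $k \geq 2$; the same integral responsible for the $\ln k$ in Theorem \ref{t.avnl}) produces, after multiplication by $(k_nK_n)^2$, a bound of order $(k_n/K_n)^{\alpha_{\jv}}\cdot K_n^2\ln K_n$ with $\alpha_{\jv}\in\{1/2,1,3/2\}$. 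Since $r=0$, each such contribution is $o(K_n^2\ln K_n)$, and summation over the finitely many admissible $\jv$ preserves the bound. This is the step I expect to require the most care.

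\textit{Deriving the three conclusions.} Setting $X_n := \mathcal{L}(b_{k_n},\D) - \mathbb{E}\mathcal{L}(b_{k_n},\D)$, the variance asymptotic (\ref{e.Y}) is then immediate. For (\ref{e.YC}), the orthogonality of $X_n$ and $R_n$ yields the identity
\begin{align*}
\Bigl\|\frac{\N(b_{k_n}, \hat{b}_{K_n}, \D) - \mathbb{E}\N(b_{k_n}, \hat{b}_{K_n}, \D)}{\sqrt{\Var(\N(b_{k_n}, \hat{b}_{K_n}, \D))}} - \frac{X_n}{\sqrt{\Var(X_n)}}\Bigr\|_{\LL^2(\mathbb{P})}^{2} = \pS{1 - \sqrt{\frac{K_n^2\Var(X_n)}{2\pi^2\Var(\N(b_{k_n}, \hat{b}_{K_n}, \D))}}}^{2} + \frac{\Var(R_n)}{\Var(\N(b_{k_n}, \hat{b}_{K_n}, \D))},
\end{align*}
and both pieces vanish by the preceding variance estimates. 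The same orthogonality reduces the correlation to $\mathrm{Corr}(\N(b_{k_n}, \hat{b}_{K_n}, \D), X_n) = \sqrt{K_n^2\Var(X_n)/(2\pi^2\Var(\N(b_{k_n}, \hat{b}_{K_n}, \D)))} \to 1$, which is (\ref{e.YCD}).
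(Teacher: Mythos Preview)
Your proof is correct and follows essentially the same route as the paper: decompose via Lemma~\ref{l.recc}, use Theorem~\ref{t.avnl} for the two nodal-length pieces, control the non-fourth chaos of the cross term through Lemma~\ref{l.fcrr4chpr}, and bound the fourth-chaos cross term by Cauchy--Schwarz/H\"older combined with the $\int_{\D\times\D}|r_{ij}(k(x-y))|^4\,dxdy=O(\ln k/k^2)$ estimate (which is exactly what the paper invokes via Lemma~\ref{l.appf}). One small simplification you can make: the parity constraints built into the coefficients $c_{\jv}$ (Lemma~\ref{l.cefnn}) force each $|j_p|$ to be even, so in the cross term at level $4$ only the split $|j_{-1}|=|j_1|=2$ survives---the H\"older exponents $(1,3)$ and $(3,1)$ never actually occur, and Cauchy--Schwarz alone suffices.
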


The proof of the above lemma will be given in Subsection \ref{ss.case_r=0}.  
Now we want to highlight its possible heuristic interpretation. 

\begin{remark}
Suppose that $K_n \to \infty$ much faster than $k_n\to \infty$, in a sense that $r=0$. Thanks to a stationarity argument, we can always use a construction driven by a single pair of Berry Random Waves with wave-numbers $k=K=1$, that is we can set $b_{k_n}(\cdot):=b_1(k_n \cdot)$ and $\hat{b}_{K_n}(\cdot):=\hat{b}_1(K_n \cdot)$. Then, conditionally on the randomness of the field $b_1$, the nodal lines of $b_{k_n}$ can be seen as essentially constant -  relatively to the nodal lines associated with $\hat{b}_{K_n}$, which would cover $\D$ uniformly. The uniform covering is a consequence of the fact, shown in \cite[p. 97, Proposition 1.3]{NPV23}, that after an apropriate deterministic rescaling, the nodal length $\mathcal{L}(\hat{b}_{K_n},\D)$ converges, as a random distribution, to the White Noise. Thus, heuristically speaking, the number of nodal intersections $\N(b_{k_n},\hat{b}_{K_n},\D)$ should be asymptotically proportional to the nodal length $\mathcal{L}(b_{k_n},\D)$. The conclusion of Lemma \ref{l.acwnl} 
is consistent with this intuition. 
\end{remark}

As before, in the next lemma we will use notation introduced in (\ref{d.para}) and in (\ref{d.aspa}).

\begin{lemma}\label{l.72}
    Let $\D$ be a convex compact domain of the plane, with non-empty interior and piecewise $C^1$ boundary $\partial\D$.
    Let $(k_n, K_n)_{n\in \mathbb{N}}$ be a sequence of pairs
    of wave-numbers such that $2 \leq k_n  \leq K_n <\infty$
    and $k_n \to \infty$. Suppose also that the limits $r^{log}$, $r$, $r^{exp}$ defined in (\ref{d.aspa}) exist and $r>0$. Then
    \begin{align}
        \begin{split}
        \lim_{n\to \infty}
        \frac{\Var\pS{\text{Cross}\pS{\N(b_{k_n},\hat{b}_{K_n},\D)[4]}}}{\frac{\text{area}(\D)}{512\pi^3}\cdot r \cdot 
            \pS{36+50r^{exp}}\cdot K_n^2\ln K_n}=1,
        \end{split}
    \end{align}
    where $b_{k_n}$ and $\hat{b}_{K_n}$ are independent real Berry Random Waves with wave-numbers $k_n$ and $K_n$, respectively. 
    Here, $\mathrm{Cross}\pS{\N(b_{k_n},\hat{b}_{K_n},\D)[4]}$ denotes the 4-th cross-term of the corresponding nodal number (see Definition \ref{d.recc}).
\end{lemma}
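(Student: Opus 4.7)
The plan is to expand $\mathrm{Cross}(\N(b_{k_n},\hat{b}_{K_n},\D)[4])$ explicitly and then compute its variance term-by-term, using that the two Berry waves are independent. By Definition \ref{d.recc} and Lemma \ref{l.cefnn}, the 4th-chaos cross part is indexed by vectors $\jv\in\mathbb{N}^6$ with $|\jv|=4$, $|j_{-1}|>0$, $|j_1|>0$, and $\rho(\jv)=1$. The parity rules of $\rho$ (both $j_{p,0}$ even, and all four derivative indices either all even or all odd) split these into two classes: a single ``all-odd'' vector $\jv=(1,0,1,1,0,1)$, and nine ``all-even'' vectors obtained by pairing one of the three basis vectors $(2,0,0),(0,2,0),(0,0,2)$ for the $p=-1$ block with one of the same three for the $p=1$ block. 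For each such $\jv$ the constant $c_{\jv}$ is given by (\ref{d.finc}); here I would just record the values (they involve Gaussian moments of the $2\times 2$ determinant times products of Hermite polynomials of its entries, most of which reduce to $2$ or $0$).

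Next, writing $T_{\jv}:=\int_{\D}\prod_{p,i}\HH_{j_{p,i}}(\widetilde{\partial}_i b_{k_p}(x))\,dx$ and using independence of $b_{k_{-1}}$ and $b_{k_1}$, I would express
\begin{align*}
\Var(\mathrm{Cross}(\N[4])) = (k_n K_n)^2\!\!\sum_{\jv,\jv'}c_{\jv}c_{\jv'}\!\int_{\D\times\D}\!\prod_{p\in\{-1,1\}}\!\mathbb{E}\Big[\!\prod_{i}\HH_{j_{p,i}}(\widetilde{\partial}_i b_{k_p}(x))\HH_{j'_{p,i}}(\widetilde{\partial}_i b_{k_p}(y))\!\Big]dxdy.
\end{align*}
Each inner expectation factorizes by the diagram formula for Hermite polynomials \cite{Marinucci2011} into a finite sum of products of the correlation functions $r_{ij}(k_p(x-y))$ introduced in Lemma \ref{l.sf}. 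Orthogonality inside the same chaos forces most off-diagonal covariances ($\jv\neq\jv'$) to vanish, so only a manageable diagonal and a handful of explicitly paired off-diagonal terms survive. I would tabulate these, obtaining a finite list of integrals of the form $\int_{\D\times\D}R_{\alpha}(k_n(x-y))R_{\beta}(K_n(x-y))\,dx\,dy$ where $R_\alpha,R_\beta$ are quadratic monomials in the entries of $(r,r_{\pm 1},r_{ij})$, together with their $c_{\jv}c_{\jv'}$ weights.

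The last step is the asymptotic evaluation of these integrals after the change of variable $z=x-y$, in the regime $r>0$ (so $k_n\asymp K_n$). Using the large-argument expansion $\J_\nu(t)\sim \sqrt{2/(\pi t)}\cos(t-\nu\pi/2-\pi/4)$ together with the decay bound (\ref{e.rijI}), each $R_\alpha(k_nz)R_\beta(K_nz)$ has integrand of size $|z|^{-2}$ multiplied by oscillations at frequencies $k_n|z|,K_n|z|$; the non-oscillating resonant parts produce the logarithmic factor $\ln K_n$, as in \cite{NPR19,Berry2002}. Tracking constants yields a base contribution proportional to $r\cdot 36$ at order $K_n^2\ln K_n$, coming from the ``pure'' cross-products $H_2(b_{k_n})H_2(\hat b_{K_n})$ and matching those appearing in (\ref{d.Y_fcrr}). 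The additional $50\,r^{exp}$ arises from the mixed derivative $\times$ value terms (such as $H_2(\widetilde\partial_i b_{k_n})H_2(\hat b_{K_n})$): here the relevant oscillatory integrals reduce to primitives of the form $\int_a^R \cos((K_n-k_n)s)/s\,ds$, which by a sine/cosine integral splitting give $\ln K_n-\ln(1+(K_n-k_n))+O(1) = r^{exp}_n\ln K_n +O(1)$.

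The main obstacle, therefore, is Step 3: getting the explicit coefficients $36$ and $50$ exactly right. This demands both (i) faithful bookkeeping of the (nontrivially signed) constants $c_{\jv}$ together with the combinatorial multiplicities arising when several $(\jv,\jv')$ pairs contribute to the same integral, and (ii) the precise asymptotic evaluation of the oscillatory $\J_\nu$-products in the transitional regime where $r_n^{exp}\to r^{exp}\in[0,1]$; I would systematize the latter via the technical spectral-asymptotic lemma referenced in Section \ref{s.preliminaries.symmetric_indexation} (Lemma \ref{l.spac}) to avoid ad-hoc Bessel computations for each of the ten index types.
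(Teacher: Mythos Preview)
Your overall strategy coincides with the paper's: expand $\mathrm{Cross}(\N[4])$ as a weighted sum of ten random integrals (nine ``even'' terms plus the single ``odd'' term), compute all pairwise covariances via the diagram formula, and evaluate the resulting double integrals asymptotically using the Bessel large-argument expansion together with Lemma~\ref{l.spac}. The paper packages this via Lemma~\ref{l.weight_for_crossterm} (the weights $\eta_{\jv}$), Lemma~\ref{l.cova} (a closed asymptotic formula for \emph{every} covariance $\Cov(Y_{\iv},Y_{\jv})$), and then a single quadratic-form computation $\KK_{\varepsilon}=\AAA_{\varepsilon}^{tr}\Psi\AAA_{\varepsilon}$.

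However, one step of your outline is wrong and would derail the constants. You write that ``orthogonality inside the same chaos forces most off-diagonal covariances ($\jv\neq\jv'$) to vanish''. This is false: orthogonality of Hermite products holds only at a \emph{fixed} point $x$; once you integrate over $\D\times\D$, the diagram formula pairs variables at $x$ with variables at $y$ through the nonzero correlations $r_{ij}(k_p(x-y))$, and \emph{none} of the off-diagonal covariances vanish asymptotically. Lemma~\ref{l.cova} shows that every pair $(\iv,\jv)$ contributes a term proportional to $\psi(\gamma(\iv)+\gamma(\jv))>0$. Consequently your attribution of the constant $36$ to ``the pure cross-products $\HH_2(b_{k_n})\HH_2(\hat b_{K_n})$'' and of $50$ to the derivative terms is not how the numbers arise: the $(0,0)$ diagonal term alone accounts for only $128\pi$ out of $\KK_{+1}=288\pi$, and the remaining $160\pi$ comes from the full $10\times10$ array of cross-covariances. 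The paper obtains $36$ and $50$ only after summing the entire quadratic form, exploiting the symmetric indexation of Section~\ref{s.preliminaries.symmetric_indexation} to reduce the $10\times10$ computation to a short linear-algebra calculation in the vectors $u,v$ of (\ref{e.UV}).
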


The proof of the above lemma will be given in Subsection \ref{ss.case_r>0}. Thanks to Lemmas \ref{l.acwnl} and \ref{l.72} we are now in position to easily achieve the goal of this section. 

\begin{proof}[Proof of Theorem \ref{t.avar}]
The expectation was computed in Lemma \ref{l.enn} and we are left with a task of finding the formula for asymptotic variance. 
The case $r=0$ is entirely and directly covered by Lemma \ref{l.acwnl} and consequently we can assume from now on that $r>0$. 
We recall that, as noted in Subsection \ref{s.results.ss.parameters}, if $r>0$ then $r^{log}=1$ and that if $r<1$ then $r^{exp}=0$. We proceed as in (\ref{SZN}) and observe that
\begin{align}\label{UDI21}
    \begin{split}
    & \Var\pS{\N(b_{k_n},\hat{b}_{K_n},\D)}  \\
    &\sim  \Var\pS{\N(b_{k_n},\hat{b}_{K_n},\D)[4]}  \\
        & \sim \frac{\text{area}\pS{\D}}{512\pi^3}\cdot\pS{K_n^2\ln k_n+k_n^2\ln K_n}+\Var\pS{\text{Cross}\pS{\N(b_{k_n},\hat{b}_{K_n},\D)[4]}} \\
        & \sim \frac{\text{area}\pS{\D}}{512\pi^3}\cdot\pS{1+r^2}\cdot K_n^2\ln K_n+\Var\pS{\text{Cross}\pS{\N(b_{k_n},\hat{b}_{K_n},\D)[4]}} \\
        & \sim \frac{\text{area}\pS{\D}}{512\pi^3}\cdot\pS{1+r^2}\cdot K_n^2\ln K_n  +\frac{\text{area}(\D)}{512\pi^3}\cdot r\pS{36+ 50r^{exp}}\cdot K_n^2\ln K_n\cdot  \\
        & = \frac{\text{area}\pS{\D}}{512\pi^3}\cdot \pS{1+36r+r^2+50r^{exp}}\cdot K_n^2\ln K_n,
    \end{split}
\end{align}
where to obtain the penultimate expression we have used Lemma \ref{l.72}. 
\end{proof}

\subsection{Chaotic Decomposition of the Cross-term}\label{ss.appf}

We start our preparation towards the proof of Lemmas \ref{l.acwnl} and \ref{l.72} by providing an explicit expression for the $4$-th cross-term of the nodal number.
(We recall that $\text{Cross}\pS{\N(b_{k},\hat{b}_{K},\D)[2q]}$, $q\in \mathbb{N}$, was described in Definition \ref{d.recc}.) It will be convenient to formulate this result using
the notation introduced in \ref{N.S1}-\ref{N.S6}. (In particular, we
will replace an ordered pair of wave-numbers $(k,K)$, $k\leq K$, 
with the corresponding unordered pair $k_{-1}$, $k_1$, where
$k\equiv \min_{p\in\{-1,1\}}k_p$ and $K\equiv \max_{p\in\{-1,1\}}k_p$.)

\begin{lemma}\label{l.weight_for_crossterm}
    Let $\D$ be a convex compact domain of the plane, with non-empty interior and 
    piecewise $\CC^1$ boundary $\partial \D$. Let $b_{k_{-1}}$ and $b_{k_1}$ be independent real Berry Random Waves 
    with strictly positive wave-numbers $k_{-1}$ and $k_1$, respectively. 
    Then, the corresponding cross-term (defined in (\ref{d.general_formula_for_the_cross_term})), is given by the formula
    \begin{align}
        \begin{split}
 \mathrm{Cross}\pS{\N(b_{k_{-1}}, b_{k_1},\D)[4]}
 & = \frac{(k_{-1}\cdot k_1)^2}{128\pi}\cdot  \sum_{\jv \in \{-1,0,1\}^{\otimes 2} \cup \{*\}} \eta_{\jv}\cdot Y_{\jv},         \end{split}
    \end{align}
    where 
    \begin{align}\label{d.ERTZ}
        \begin{split}
            Y_{\jv} & := \int_{\D} \prod_{p\in \{-1,1\}}\HH_2\pS{\widetilde{\partial}_{j_{p}}b_{k_p}(x)}dx, \qquad \jv \neq * \\
            Y_* & := \int_{\D} \prod_{p,q \in \{-1,1\}} \widetilde{\partial}_{p}b_{k_{q}}(x)dx.
        \end{split}
    \end{align}
    Here, for $\jv \neq *$ we write $\jv=(j_{-1},j_1)$ and the constants $\eta_{\jv}$ are given by the table
\begin{align}\label{d.constants_of_cross_term} 
\begin{array}{|c|c|c|c|c|c|}
\hline 
    \text{condition on }\jv    & j_{-1}=j_1=0 & |j_{-1}|+|j_{1}| = 1 & j_{-1}\cdot j_1 = -1 & j_{-1}\cdot j_1 = +1 &  \jv = * \\
    \hline
    \text{value of }\eta_{\jv}    & 8 & -4 & 5 & -1 & -12 \\
    \hline 
    \end{array}
\end{align}
\end{lemma}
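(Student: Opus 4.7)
The plan is to start from the definition of the cross-term in (\ref{d.general_formula_for_the_cross_term}) specialised to $2q=4$, and to carry out an exhaustive enumeration of the multi-indices $\jv = (j_{-1,-1},j_{-1,0},j_{-1,1},j_{1,-1},j_{1,0},j_{1,1}) \in \mathbb{N}^6$ with $|\jv|=4$ for which (a) the cross-indicator $\mathbb{1}_{\{j_{-1,-1}+j_{-1,0}+j_{-1,1}>0\}}\mathbb{1}_{\{j_{1,-1}+j_{1,0}+j_{1,1}>0\}}$ is non-zero and (b) the parity weight $\rho(\jv)$ from Lemma \ref{l.cefnn} equals $1$, i.e.\ $j_{-1,0}$ and $j_{1,0}$ are both even and the four cross-indices $(j_{p,q})_{p,q\in\{-1,1\}}$ are simultaneously all even or all odd. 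A routine case analysis on the distribution of the weight $4$ among the six entries shows that only five families of $\jv$ survive, matching one-to-one with the five columns of (\ref{d.constants_of_cross_term}): (i) all four cross-indices vanish and $j_{-1,0}=j_{1,0}=2$, producing $\HH_{2}(b_{k_{-1}})\HH_{2}(b_{k_1})=Y_{(0,0)}$; (ii) exactly one cross-index equals $2$ together with $(j_{-1,0},j_{1,0})\in\{(0,2),(2,0)\}$, yielding four $\jv$'s that collapse onto the four $Y_{\jv}$ with $|j_{-1}|+|j_1|=1$; (iii) two cross-indices equal to $2$, one on each half, with $j_{-1,0}=j_{1,0}=0$, yielding the four $\jv$'s sorted by the sign of $j_{-1}\cdot j_1$; and (iv) all four cross-indices equal to $1$ with $j_{-1,0}=j_{1,0}=0$, which is the $\jv=*$ case. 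All remaining distributions of $|\jv|=4$ either concentrate the cross-indices on a single half (violating (a)) or mix parities and hence fail (b).

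For each of the five families, the combinatorial prefactor in (\ref{d.finc}) is immediate. The substantive content is the Gaussian expectation
\begin{equation*}
E_{\jv} \;=\; \mathbb{E}\!\left[\left|\det\!\begin{bmatrix}Z_{-1,-1}&Z_{-1,1}\\ Z_{1,-1}&Z_{1,1}\end{bmatrix}\right|\cdot\prod_{p,q\in\{-1,1\}}\HH_{j_{p,q}}(Z_{p,q})\right],
\end{equation*}
with $(Z_{p,q})_{p,q\in\{-1,1\}}$ a collection of four i.i.d.\ standard Gaussians. For family (i) this collapses to $\mathbb{E}|\det M|=1$, already used in the proof of Lemma \ref{l.enn}. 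For families (ii) and (iii) the Hermite factors $\HH_{2}(Z)=Z^2-1$ expand into monomials and yield expectations of the form $\mathbb{E}[|ad-bc|\,p(a,b,c,d)]$ with $p$ a monomial of total degree at most $4$; conditionally on any three of $\{a,b,c,d\}$ the linear combination $ad-bc$ is Gaussian in the fourth, so each such expectation reduces to $\mathbb{E}\pQ{|\mu+\sigma Z|g}$ with $Z$ a standard Gaussian and $\mu,\sigma,g$ depending on the frozen variables. For family (iv), setting $U=Z_{-1,-1}Z_{1,1}$ and $V=Z_{-1,1}Z_{1,-1}$ (independent and identically distributed), one has $E_{*}=\mathbb{E}[|U-V|\,UV]$; the polarisation identity $2UV=U^{2}+V^{2}-(U-V)^{2}$ reduces $E_{*}$ to the elementary quantities $\mathbb{E}[|U-V|\,U^{2}]$ and $\mathbb{E}|U-V|^{3}$.

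The final step is bookkeeping: within each of the five families the distinct $\jv$'s produce the same $Y_{\jv}$ after the identification of the surviving derivative index $j_{p}\in\{-1,0,1\}$, so their contributions add coherently; then, factoring out the common prefactor in the representation $\mathrm{Cross}(\N(b_{k_{-1}},b_{k_1},\D)[4]) = (k_{-1}\cdot k_{1})\sum_{\jv}c_{\jv}\,Y_{\jv}$ inherited from Definition \ref{d.recc}, the integer left in front of each $Y_{\jv}$ is precisely $\eta_{\jv}$ as reported in (\ref{d.constants_of_cross_term}). The main technical obstacle is the evaluation of the expectation $E_{*}$ in family (iv): it is genuinely four-dimensional, produces the largest (and negative) coefficient $\eta_{*}=-12$, and it is this very $Y_{*}$ term which, via the slow decay of the correlations of $\widetilde{\partial}_{p}b_{k_{q}}$, will later feed the $50\,r^{exp}$ contribution to the asymptotic variance constant $C_{\infty}$ appearing in Theorem \ref{t.avar}.
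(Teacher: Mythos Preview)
Your approach is correct but takes a genuinely different route from the paper. The paper's proof is essentially a one-liner: since the constants $c_{\jv}$ in Lemma \ref{l.cefnn} are independent of the wave-numbers $k_{-1},k_1$, the values already computed for the one-energy model in \cite{NPR19} carry over verbatim (the paper also mentions an alternative via the formulas of Notarnicola \cite{Notarnicola2021}). You instead propose to redo the computation from first principles. Your enumeration of the five surviving families of $\jv$'s is correct, and your conditioning/polarisation strategy for the Gaussian expectations $E_{\jv}$ is sound, though you stop short of actually producing the integers $8,-4,5,-1,-12$. The advantage of your route is self-containment; the cost is a page of elementary but tedious Gaussian integrals that the paper sidesteps by citation.

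One tangential inaccuracy in your closing remark: you attribute the $50\,r^{exp}$ contribution in $C_{\infty}$ specifically to $Y_{*}$, but in the proof of Lemma \ref{l.72} that constant emerges from the full quadratic form $\KK_{-1}=\AAA_{-1}^{tr}\widehat{\Psi}\AAA_{-1}$, to which all ten $Y_{\jv}$ contribute; what is special about the $r^{exp}$ part is the sign pattern $(-1)^{|\iv|+|\jv|}$ in Lemma \ref{l.spac}, not the $Y_{*}$ term alone.
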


\begin{proof} 
It is enough to note that the constants $c_{\jv}$ in the chaotic decomposition of the nodal number $\N(b_{k_{-1}}, b_{k_1},\D)$ (as established in Lemma \ref{l.cefnn}) do not depend on the wave-numbers $k_n$, $K_n$. Thus, we can reuse the values established in \cite[p. 116, lines -6, -5]{NPR19} in the context of the one-energy ($k_n \equiv K_n$) complex Berry Random Wave Model. 
\end{proof}

\begin{remark}
We note that the results in \cite{NPR19},
which we have used in the proof of the above lemma
are based directly on \cite[p. 939, Lemma 3.4]{Marinucci2016} and on \cite[p. 17, Lemmas 3.2-3.3]{DNPR19}. We want to also highlight an interesting alternative route to the proof
of Lemma \ref{l.weight_for_crossterm}. Namely, we could have used the elegant formulas established in \cite{Notarnicola2021}. More precisely speaking: 
\cite[p. 77, Proposition II.B.5 (i), (iv), (v)]{N21b},
\cite[p. 69, Proposition II.B.5 (i), (iv), (v)]{N21b}
and \cite[p. 70, Proposition II.B.5 (i), (iv), (v)]{N21b}, 
to be read with notation introduced in 
\cite[p. 72, Proposition II.B.5 (i), (iv), (v)]{N21b}
 and in \cite[p. 39, Proposition II.B.5 (i), (iv), (v)]{N21b}.
\end{remark}

\subsection{Asymptotic Integrals of the Covariance Functions}

The next lemma is a straightforward generalization of the following crucial results proved in \cite[p. 119, Proposition 5.1]{NPR19} and
\cite[p. 122, Proposition 5.2]{NPR19}. The concise form in which we state it, is based on the notation introduced in \ref{N.S1}-\ref{N.S6}. We note that the approximation to be given in (\ref{e.WEWE}) is meaningful thanks to the fact, to be shown in Lemma \ref{l.spac}, that (at least in the cases of interest) the term $\CC_3^n(\qv)$ has an asymptotic order $\ln K_n$.

\begin{lemma}\label{l.appf} 
Let $\D$ be a convex and compact domain of the plane, with non-empty
interior and piecewise $C^1$ boundary $\partial \D$. Let
$\{k_{-1}^n, k_1^n\}_{n\in\mathbb{N}}$
be a sequence of pairs of wave-numbers such that
$2 \leq k_{-1}^n, k_1^n < \infty$ and $k_{-1}^n, k_1^n \to \infty$. 
Let $b_{k_{-1}^n}$, $b_{k_1^n}$ denote independent real 
Berry Random Waves with wave-numbers $k_{-1}^n$ and $k_{1}^n$, respectively. For each $i, j\in \{-1,0,1\}$, let $r_{ij}$ denote the covariance function defined in (\ref{d.e.correlation_functions}). For each $p \in \{-1,1\}$, let $q(p) \in \mathbb{N}^{9}$ be a vector of non-negative integers indexed as  
\begin{align}
\begin{split}
& q(p) :=  \\
& (q(p)_{-1,-1};q(p)_{-1,0};q(p)_{-1,1};
q(p)_{0,-1};q(p)_{0,0};q(p)_{0,1};  
 q(p)_{1,-1};q(p)_{1,0};q(p)_{1,1}), 
\end{split}
\end{align}
and such that $\abs{q(p)} = 2$, where
\begin{align}
\begin{split}
\abs{q(p)}  := \sum_{i,j\in\{-1,0,1\}} q(p)_{i,j},
\end{split}
\end{align} 
and set $\qv:=(q(-1),q(1))\in \mathbb{N}^{18}$. Then, it holds that 
\begin{align}\label{e.WEWE}
\begin{split}
    & \int_{\D \times \D}\prod_{p\in\{-1,1\}}\prod_{i,j\in\{-1,0,1\}}r_{ij}\pS{k_p^n\pS{x-y}}^{q(p)_{ij}}dxdy \\
    & = \text{area}(\D)\int_0^{\text{diam}(\D)}  \Bigg(\int_0^{2\pi} 
    \prod_{p \in \{-1,1\}}\prod_{i,j \in \{-1,0,1\}}r_{ij}\pS{(k_p^n \phi)\cdot \pS{\cos \theta , \sin \theta }}^{q(p)_{ij}} d\theta \Bigg)\cdot \phi d\phi
    \\
    & \qquad + O\pS{\frac{1+\text{diam}(\D)^2}{k_{-1}^n \cdot k_{1}^n}} \\
    & =  \frac{4}{\pi^2}\cdot \frac{\text{area}(\D)}{k_{-1}^n \cdot k_{1}^n}\cdot \CC_1(\qv)
    \cdot \CC_2(\qv)
    \cdot \CC_3^n(\qv)+ O\pS{\frac{1+\text{diam}(\D)^2}{k_{-1}^n \cdot k_{1}^n}}.
    \end{split}
\end{align}
Here, we have
\begin{align}\label{e.C1C2C3}
    \begin{split}
        C_1(\qv)  & := \prod_{p\in\{-1,1\}}\prod_{i,j\in\{-1,0,1\}} v_{ij}^{q(p)_{ij}}, \\
        C_2(\qv)  & := \int_{0}^{2\pi}\prod_{p\in\{-1,1\}}\prod_{i,j\in\{-1,0,1\}} h_{ij}(\theta)^{q(p)_{ij}}d\theta, \\
        C_3^n(\qv)  & := \int_1^{\max(k_{-1}^n,k_1^n)} \prod_{p\in\{-1,1\}}\prod_{i,j\in\{-1,0,1\}} g_{ij}\pS{\frac{k_p^n \phi}{ \max(k_{-1}^n, k_1^n)}}^{q(p)_{ij}}\frac{d\phi}{\phi},
    \end{split}
\end{align}
where for each $i,j\in\{-1,0,1\}$ we define
\begin{align}\label{e.ABC}
    \begin{split}
         v_{ij} & := (\sqrt{2})^{|i|+|j|}, \\
         h_{ij}(\theta) & := \cos^{\delta_{-1}(i)+\delta_{-1}(j)}(\theta) \cdot \sin^{\delta_{1}(i)+\delta_{1}(j)}(\theta), \\
         g_{ij}(\phi) & := \cos^{(1-\delta_1(|i|+|j|))}(2\pi\phi-\pi/4)\cdot \sin^{\delta_1(|i|+|j|)}(2\pi\phi-\pi/4).
    \end{split}
\end{align}
\end{lemma}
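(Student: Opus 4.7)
\textbf{Proof proposal for Lemma \ref{l.appf}.}

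The plan is to decouple the double integral into (i) a geometric reduction to a one-point (polar) integral via stationarity, (ii) an angular/radial factorisation of each correlation function $r_{ij}$ using Lemma \ref{l.sf}, and (iii) the Hankel large-argument asymptotics of $\J_0,\J_1,\J_2$ applied uniformly in $n$.

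First I would use the stationarity of $b_1$ to rewrite $\int_{\D\times\D} F(x-y)\,dx\,dy = \int_{\D-\D} F(z)\, |\D \cap (\D+z)|\, dz$ for the product $F(z)=\prod_p\prod_{i,j} r_{ij}(k_p^n z)^{q(p)_{ij}}$. Using the convexity of $\D$ and a perimeter/inclusion-exclusion estimate one has $|\D \cap (\D+z)| = \text{area}(\D) + O(\text{diam}(\D)\cdot|z|)$. The decay bound $|r_{ij}(z)|\leq C/|z|^{1/2}$ from (\ref{e.rijI}) (applied four times, since $|q(-1)|+|q(1)|=4$) ensures $|F(z)|\lesssim (k_{-1}^nk_1^n)^{-1}\cdot|z|^{-2}$ away from the origin, so the perimeter error integrates to $O((1+\mathrm{diam}(\D)^2)/(k_{-1}^nk_1^n))$, establishing the first equality in (\ref{e.WEWE}) once one passes to polar coordinates $z=\phi(\cos\theta,\sin\theta)$.

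Second, by Lemma \ref{l.sf}, every $r_{ij}(k_p^n z)$ factors as a trigonometric expression in $\theta$ (encoding the $z_p/|z|$ factors, with overall $\sqrt{2}^{|i|+|j|}$ prefactor) times a Bessel function of $k_p^n\phi$. Collecting the $\sqrt{2}$ prefactors over all indices yields exactly the constant $C_1(\qv)$ of (\ref{e.C1C2C3}), and the angular part at fixed radius produces precisely the integrand of $C_2(\qv)$, with $h_{ij}(\theta)=\cos^{\delta_{-1}(i)+\delta_{-1}(j)}(\theta)\sin^{\delta_1(i)+\delta_1(j)}(\theta)$.

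Third, for the radial piece, I apply the Hankel expansion $\J_\nu(t) = \sqrt{2/(\pi t)}\cos(t-\nu\pi/2-\pi/4) + O(t^{-3/2})$ to each Bessel factor. For $|z|$ bounded away from zero the leading terms in $r_{00}$, $r_p$, $r_{pq}$ all carry a common $1/\sqrt{t}$ factor; since the total Bessel degree equals $|q(-1)|+|q(1)|=4$, the product has exactly the decay $1/(k_{-1}^nk_1^n\phi^2)$. After the change of variables $\phi \mapsto \phi \cdot \max(k_{-1}^n,k_1^n)^{-1}$, the radial measure $\phi\,d\phi$ becomes $d\phi/\phi$, and the oscillatory factors rearrange into $g_{ij}\!\pS{k_p^n\phi/\max(k_{-1}^n,k_1^n)}^{q(p)_{ij}}$, where the parities ($\cos$ vs.\ $\sin$) are forced by the Bessel order $|i|+|j|$, i.e.\ by $\delta_1(|i|+|j|)$, matching (\ref{e.ABC}). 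The contribution from $\phi\in[0,1]$ and the Bessel remainders are absorbed into the $O\pS{(1+\mathrm{diam}(\D)^2)/(k_{-1}^nk_1^n)}$ term using (\ref{e.rijI}) on the inner disc and a geometric series of $O(t^{-1})$ corrections on the outer annulus.

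The main obstacle is the combinatorial bookkeeping of step three: one must verify that the parities (even vs.\ odd powers of $\cos$ and $\sin$) produced by writing each $\J_\nu$ asymptotic in terms of $\cos(t-\pi/4)$ or $\sin(t-\pi/4)$, together with the signs inherited from $\J_2 \sim -\cos(t-\pi/4)\sqrt{2/(\pi t)}$, reassemble exactly as the single function $g_{ij}$ of (\ref{e.ABC}), and that the subleading Bessel terms (e.g.\ the $2\J_1(|z|)/|z|$ piece in $r_{pq}$) together with the small-$|z|$ region contribute only to the $O$-error. Once these parities are audited symbol-by-symbol using notation \ref{N.S6}, the factorisation $C_1(\qv)\cdot C_2(\qv)\cdot C_3^n(\qv)$ drops out and the proof is complete.
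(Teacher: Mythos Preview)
Your proposal is correct and follows essentially the same approach as the paper. The paper's own proof is brief, merely noting that it suffices to adapt \cite[Propositions~5.1 and~5.2]{NPR19}: the first equality via the co-area formula plus the Steiner formula for convex sets (your ``perimeter/inclusion-exclusion'' expansion of $|\D\cap(\D+z)|$) together with the $|z|^{-1/2}$ Bessel bound, and the second equality via the asymptotic factorisation $r_{ij}(\phi(\cos\theta,\sin\theta))=\sqrt{2/\pi}\cdot v_{ij}\cdot h_{ij}(\theta)\cdot g_{ij}(\phi)+O(\phi^{-3/2})$, which is exactly your Hankel expansion repackaged in the notation of (\ref{e.ABC}).
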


For the sake of clarity, let us note that the constants involved in the `O' notation in (\ref{e.WEWE}) are independent of the choice of
the domain $\D$, and of the choice of the sequence
of pairs of wave-numbers  $\{k_{-1}^n,k_{1}^n\}_{n\in \mathbb{N}}$.

\begin{proof} 
It is sufficient to provide minor modifications to the proofs 
of \cite[p. 119, Proposition 5.1]{NPR19} and of \cite[p. 122, Proposition 5.2]{NPR19}. These propositions concern the one-energy $(k_{-1}^n\equiv k_{1}^n)$ scenario and correspond respectively to the first and the second equality postulated in (\ref{e.WEWE}).

The proof of the first
proposition in question is based on the co-area formula, on the Steiner formula for convex sets and on the uniform bound on
the first-kind Bessel functions $\J_0,\J_1,\J_2$ (see (\ref{e.d.bffk})). Only the application
of this last element needs to be adapted, and it is through
inequalities of the form 
\begin{align}
\begin{split}
\max_{i,j\in\{-1,0,1\}}\abs{r_{ij}\pS{k_p^n z}} \leq \frac{C}{\sqrt{k_p^n\abs{z}}}, \qquad z\in\mathbb{R}^2\setminus\{0\},
\end{split}
\end{align}
where $C$ is a numerical constant. To arrive at the desired 
conclusion it is enough to count multiplicity with which each of the wave-numbers $k_{-1}^n$ and $k_1^{n}$ will appear in relevant 
expressions. 

In order to provide the postulated extension of the second
of aforementioned propositions we rewrite the approximations 
formulas for the covariance functions given in \cite[p. 121-122, Eq. (5.69), (5.70)]{NPR19} using our notation \ref{N.S1}-\ref{N.S6}.
That is, we note that the covariance functions $r_{ij}$, $i,j\in\{-1,0,1\}^2$, defined in (\ref{d.e.correlation_functions})
can be approximated using (\ref{e.ABC}) as:  
\begin{align}
\begin{split}
r_{ij}\pS{\phi \pS{\cos \theta, \sin \theta}} =\sqrt{{}\frac{2}{\pi}}\cdot v_{ij}\cdot h_{ij}\pS{\theta}\cdot g_{ij}\pS{\phi}+ O(\phi^{-\frac{3}{2}}), \qquad \phi > 0, \qquad \theta \in [0,2\pi),
\end{split}
\end{align}
where the numerical constant involved in the `O'-notation is independent of $\varphi$ and $\theta$.

Finally, we note that, even thought the error rate was not provided in the beforementioned propositions, it can be deduced immediately
by careful analysis of the original proofs. 
\end{proof}

\subsection{Full correlation with the nodal length (case $r=0$)}\label{ss.case_r=0}

\begin{proof}[Proof of Lemma \ref{l.acwnl}]
The essence of the argument that follows is the observation that, when the growth of wavenumbers is unbalanced (in the sense that $r=0$), relatively crude estimates are sufficient to demonstrate full correlation of the nodal number with the nodal length of the slower-evolving wave (i.e., the wave with smaller energy). This also relies on the dominance of the $4$th chaotic projection, as established in the preceding section, as well as on the Recurrence Representation from Lemma \ref{s.results.ss.recurrence_representation}. We split the proof into two parts. 
 
\paragraph{Step 1.} We start by verifying that the asymptotic 
variance formula (\ref{e.Y}) holds. Using Lemma \ref{l.fcrr4chpr} we deduce that
 \begin{align}\label{UDI22}
     \begin{split}
     & \frac{\Var \N(b_{k_n},\hat{b}_{K_n},\D)}{\frac{\text{area}(\D)}{512\pi^3}\cdot r^{log }\cdot K_n^2\ln K_n} \\
     & = \frac{\sum_{q\neq 2}\Var\pS{\N(b_{k_n},\hat{b}_{K_n},\D)[2q]}}{\frac{\text{area}(\D)}{512\pi^3}\cdot r^{log} \cdot K_n^2\ln K_n} 
     + \frac{\Var\pS{\N(b_{k_n},\hat{b}_{K_n},\D)[4]}}{\frac{\text{area}(\D)}{512\pi^3}\cdot r^{log} \cdot K_n^2\ln K_n} \\
     & = 
     O\pS{\frac{1+\text{diam}(\D)^2}{\text{area}(\D)}\cdot \frac{1}{r^{log}}\cdot \frac{1}{\ln K_n}}
     + \frac{\Var\pS{\N(b_{k_n},\hat{b}_{K_n},\D)[4]}}{\frac{\text{area}(\D)}{512\pi^3}\cdot r^{log} \cdot K_n^2\ln K_n}.
     \end{split}
 \end{align}
 We recall from Lemma \ref{l.recc} that 
 \begin{align}\label{UDI23}
     \begin{split}
         \Var\pS{\N(b_{k_n},\hat{b}_{K_n},\D)[4]}
         & = \frac{K_n^2}{2\pi^2}\cdot
         \Var\pS{\mathcal{L}(b_{k_n},\D)[4]}
         + \frac{k_n^2}{2\pi^2}\cdot
          \Var\pS{\mathcal{L}(\hat{b}_{K_n},\D)[4]} \\
          & \qquad + 
          \Var\pS{\text{Cross}\pS{\N(b_{k_n},\hat{b}_{K_n},\D)[4]}}.
     \end{split}
 \end{align}
 Using Theorem \ref{t.avnl} we have that 
 \begin{align}\label{UDI24}
     \begin{split}
         \frac{\frac{K_n^2}{2\pi^2}\cdot
         \Var\pS{\mathcal{L}(b_{k_n},\D)[4]}}{\frac{\text{area}(\D)}{512\pi^3}\cdot r^{log}\cdot K_n^2\ln K_n}
          & =\frac{256\pi\cdot \Var\pS{\mathcal{L}(b_{k_n},\D)[4]}}{\text{area}(\D) \cdot r^{log}\cdot \ln K_n}   \sim \frac{1}{r^{log}} \cdot \frac{\ln k_n}{\ln K_n} \to 1, \\
           \frac{\frac{k_n^2}{2\pi^2}\cdot
         \Var\pS{\mathcal{L}(\hat{b}_{K_n},\D)[4]}}{\frac{\text{area}(\D)}{512\pi^3}\cdot r^{log}\cdot K_n^2\ln K_n}
          & =\frac{256\pi\cdot \Var\pS{\mathcal{L}(\hat{b}_{K_n},\D)[4]}}{\text{area}(\D) \cdot r^{log}\cdot \ln K_n} \cdot \pS{\frac{k_n}{K_n}}^2\sim \frac{1}{r^{log}}\cdot \pS{\frac{k_n}{K_n}}^2 \to 0.
     \end{split}
 \end{align}
 Thus, combining (\ref{UDI23}) with (\ref{UDI24}) we can see
 that, in order to establish (\ref{e.Y}), the only fact we still need to show is the convergence 
 $$
\frac{\Var\pS{\text{Cross}\pS{\N(b_{k_n},\hat{b}_{K_n},\D)[4]}}}{K_n^2\ln K_n} \to 0. 
 $$
We recall Lemma \ref{l.weight_for_crossterm}, and we write
$Y_{\jv}^n$ to denote random integrals appearing in this lemma, 
where we take $k_n =: \min_{p\in\{-1,1\}}k_p$
and $K_n =: \max_{p \in \{-1,1\}} k_p$. We conclude that,
for some numerical constant $C_1>0$, we have
\begin{align}\label{UDI25}
\begin{split}
 \Var\pS{\text{Cross}\pS{\N(b_{k_n},\hat{b}_{K_n},\D)[4]}}  
         & = \Var\pS{\frac{k_n\cdot K_n}{128\pi}\cdot \sum_{\jv \in \{-1,0,1\}^{\otimes 2}\cup \{*\}} \eta_{\jv} Y^n_{\jv}} \\
         & \leq  C_1 \cdot (k_n\cdot K_n)^2  \cdot \max_{\jv \in \{-1,0,1\}^{\otimes 2}\cup \{*\}} \Var\pS{Y_{\jv}^n}.
\end{split}
\end{align}
Let us now write $\mathbf{p}, \qv \in \mathbb{N}^9$ with indexation
\begin{align}
\begin{split}
\mathbf{p} & = (p_{-1,-1};p_{-1,0};p_{-1,1};p_{0,-1};p_{0,0};p_{0,1};p_{1,-1};p_{1,0};p_{1,1}), \\
\qv & = (q_{-1,-1};q_{-1,0};q_{-1,1};q_{0,-1};q_{0,0};q_{0,1};q_{1,-1};q_{1,0};q_{1,1}), 
\end{split}
\end{align}
and also
\begin{align}
\begin{split}
|\mathbf{p}|  := \sum_{i,j \in \{-1,0,1\}} p_{i,j}, \qquad  \qquad 
|\qv|  := \sum_{i,j \in \{-1,0,1\}} q_{i,j}.
\end{split}
\end{align}
We can find a numerical constant $C_2>0$ such
that, for every choice of $\mathbf{p}, \qv \in \mathbb{N}^9$
with $|\mathbf{p}|=|\qv|=2$, we have 
 \begin{align}\label{l.ASD}
     \begin{split}
         & \quad \int_{\D\times \D}\prod_{i,j\in\{-1,0,1\}}\abs{r_{ij}(k_n(x-y))}^{q_{ij}}\abs{r_{ij}(K_n(x-y))}^{p_{ij}} dxdy  \\
         & \leq 
           \sqrt{\underset{\D\times \D}{\int}\prod_{i,j\in\{-1,0,1\}}\abs{r_{ij}(k_n(x-y))}^{2\cdot  q_{ij}} dxdy} \cdot \sqrt{\underset{\D\times \D}{\int}\prod_{i,j\in\{-1,0,1\}}\abs{r_{ij}(K_n(x-y))}^{2\cdot  p_{ij}} dxdy}\\
         & \leq C_2 \cdot (1+\text{diam}(\D)^2) \cdot (k_n\cdot K_n)^2\cdot \sqrt{\frac{\ln k_n}{k_n^2}} \cdot\sqrt{\frac{\ln K_n}{K_n^2}}  \\
         & = C_2 \cdot (1+\text{diam}(\D)^2) \cdot (k_n\cdot K_n) \cdot \sqrt{\ln k_n \cdot \ln K_n}.
     \end{split}
 \end{align}
 Here, in order to obtain last inequality in (\ref{l.ASD}),
 we have used (\ref{e.WEWE}). Combining (\ref{UDI25}) with (\ref{l.ASD}) yields
that, for some numerical constant $C_3>0$, we have 
 \begin{align}\label{l.ASDE}
     \begin{split}
         \frac{\Var\pS{\mathrm{Cross}(\N(b_{k_n},\hat{b}_{K_n},\D)[4])}}{\frac{\mathrm{area}(\D)}{512\pi^3}\cdot r^{log}\cdot K_n^2\ln K_n} & \leq
         \frac{C_3}{r^{log}} \cdot \frac{k_n}{K_n}\cdot \sqrt{\frac{\ln k_n}{\ln K_n}}\to 0.
 \end{split}
 \end{align}

\paragraph{Step 2.} In this second and final step we will prove
$L^2$ equivalence (\ref{e.YC}) and full-correlation (\ref{e.YCD}). Using the triangle inequality we can easily see that 
\begin{align}
\sqrt{\mathbb{E}\pS{\frac{\N(b_{k_n},\hat{b}_{K_n},\D)-\mathbb{E}\N(b_{k_n},\hat{b}_{K_n},\D)}{\sqrt{\Var \N(b_{k_n},\hat{b}_{K_n},\D)}}-\frac{\mathcal{L}(b_{k_n},\D)-\mathbb{E}\mathcal{L}(b_{k_n},\D)}{\sqrt{\Var \mathcal{L}(b_{k_n},\D) }}}^2} \leq a_n + b_n + c_n ,
\end{align}
where 
\begin{align}
\begin{split}
a_n & := \sqrt{\mathbb{E}\pS{\frac{\sum_{q \neq 0,2}\N(b_{k_n},\hat{b}_{K_n},\D)[2q]}{\Var \N(b_{k_n},\hat{b}_{K_n},\D)}}^2}, \\
b_n & := \sqrt{\mathbb{E}\pS{\frac{\N(b_{k_n},\hat{b}_{K_n},\D)[4]}{\sqrt{\Var \N(b_{k_n},\hat{b}_{K_n},\D)}}-\frac{\N(b_{k_n},\hat{b}_{K_n},\D)[4]}{\sqrt{\Var\pS{\N(b_{k_n},\hat{b}_{K_n},\D)[4]}}}}^2}, \\
c_n & := \sqrt{\mathbb{E}\pS{\frac{\N(b_{k_n},\hat{b}_{K_n},\D)[4]}{\sqrt{\Var \N(b_{k_n},\hat{b}_{K_n},\D)[4]}}-\frac{\mathcal{L}(b_{k_n},\D)-\mathbb{E}\mathcal{L}(b_{k_n},\D)}{\sqrt{\Var \mathcal{L}(b_{k_n},\D) }}}^2}.
\end{split}
\end{align}
We will bound each of these terms separately. 
We start with the following estimate: 
\begin{align}
\begin{split}
a_n = & \sqrt{\mathbb{E}\pS{\frac{\sum_{q \neq 0,2}\N(b_{k_n},\hat{b}_{K_n},\D)[2q]}{\Var \N(b_{k_n},\hat{b}_{K_n},\D)}}^2} \\
& = \sqrt{\frac{\sum_{q \neq 2}\Var\pS{\N(b_{k_n},\hat{b}_{K_n},\D)[2q]}}{\Var\pS{\N(b_{k_n},\hat{b}_{K_n},\D)}}} \\
& = \sqrt{\frac{\frac{\text{area}(\D)}{512\pi^3}\cdot r^{log}\cdot K_n^2\ln K_n}{\Var\pS{\N(b_{k_n},\hat{b}_{K_n},\D)}}}
\cdot \sqrt{\frac{\sum_{q \neq 2}\Var\pS{\N(b_{k_n},\hat{b}_{K_n},\D)[2q]}
}{\frac{\text{area}(\D)}{512\pi^3}\cdot r^{log} \cdot K_n^2\ln K_n}}\\
& \leq L_1 \cdot \sqrt{\frac{\frac{\text{area}(\D)}{512\pi^3}\cdot r^{log}\cdot K_n^2\ln K_n}{\Var\pS{\N(b_{k_n},\hat{b}_{K_n},\D)}}}
\cdot \frac{1+\text{diam}(\D)^2}{r^{log}\cdot \ln K_n} \longrightarrow 1,
\end{split}
\end{align}
where $L_1$ is a numerical constants which exists thanks to Lemma \ref{l.fcrr4chpr}
and the convergence follows by the first (already proved) 
part of this lemma - formula (\ref{e.Y}). Furthermore, we have 
\begin{align}
\begin{split}
b_n & =  \sqrt{\mathbb{E}\pS{\frac{\N(b_{k_n},\hat{b}_{K_n},\D)[4]}{\sqrt{\Var \N(b_{k_n},\hat{b}_{K_n},\D)}}-\frac{\N(b_{k_n},\hat{b}_{K_n},\D)[4]}{\sqrt{\Var\pS{\N(b_{k_n},\hat{b}_{K_n},\D)[4]}}}}^2} \\
& = \abs{1-\frac{\Var\pS{\N(b_{k_n},\hat{b}_{K_n},\D)[4]}}{\Var\pS{\N(b_{k_n},\hat{b}_{K_n},\D)}}}  \longrightarrow 0, 
\end{split}
\end{align}
where we have used (\ref{e.Y}) and (\ref{UDI22}). Finally, using the recurrence representation from Lemma \ref{l.recc},  we observe that 
\begin{align}\label{BA}
    \begin{split}
          c_n &  = \sqrt{\mathbb{E}\pS{\frac{\N(b_{k_n},\hat{b}_{K_n},\D)[4]}{\sqrt{\Var \N(b_{k_n},\hat{b}_{K_n},\D)[4]}} - \frac{\mathcal{L}(b_{k_n},\D)-\mathbb{E}\mathcal{L}(b_{k_n},\D)}{\sqrt{\Var\pS{\mathcal{L}(b_{k_n},\D)}}}}^2} \\
        & \leq  \frac{k_n}{\pi \sqrt{2}}\cdot \sqrt{\frac{\Var\pS{\mathcal{L}(\hat{b}_{K_n},\D)[4])}}{\Var\pS{\N(b_{k_n},\hat{b}_{K_n},\D)[4]}}} 
        +\sqrt{\frac{\Var\pS{\mathrm{Cross}\pS{\N(b_{k_n},\hat{b}_{K_n},\D)[4]}}}{\Var\pS{\N(b_{k_n},\hat{b}_{K_n},\D)[4]}}} \\
         &\qquad + \frac{K_n}{\pi \sqrt{2}} \cdot \sqrt{\Var\pS{\mathcal{L}(b_{k_n},\D)[4])}} \\
         & \qquad \cdot \abs{\Var\pS{\N(b_{k_n},\hat{b}_{K_n},\D)[4]}^{-1/2}-\Var\pS{\mathcal{L}(b_{k_n},\D)[4]}^{-1/2}} \\
        & = o(1) +  \pS{1-\sqrt{\frac{\frac{K_n}{\pi \sqrt{2}} \cdot \Var\pS{\mathcal{L}(b_{k_n},\D)[4])}}{\Var\pS{\N(b_{k_n},\hat{b}_{K_n},\D)[4]}}}} \longrightarrow 0,
    \end{split}
\end{align}
where in the last line we have used (\ref{UDI24}) and (\ref{l.ASDE}).
This completes the proof of $L^2$ equivalence (\ref{e.YC}) and the full-correlation (\ref{e.YCD}) follows immediately.
\end{proof}

\subsection{Asymptotic variance of
the cross-term (case $r>0$)}\label{ss.case_r>0}

In the next lemma, we use the notation introduced in formulas (\ref{d.para}) and (\ref{d.aspa}), and in \ref{N.S1}-\ref{N.S6}.
This lemma allows one to evaluate (\ref{e.WEWE}) and, hence, plays a crucial role in the computation of the constant term (\ref{d.sigma_n^2}) which is contributing to the asymptotic variance formula (\ref{e.var_conv}) (of Theorem \ref{t.avar}).

\begin{lemma}\label{l.spac} 
Let $\D$ be a convex and compact domain of the plane, with non-empty
interior and piecewise $C^1$ boundary $\partial \D$. Let
$\{k_{-1}^n, k_1^n\}_{n\in\mathbb{N}}$
be a sequence of pairs of wave-numbers such that
$2 \leq k_{-1}^n, k_1^n < \infty$ and $k_{-1}^n, k_1^n \to \infty$. 
Let $b_{k_{-1}^n}$, $b_{k_1^n}$ denote independent real 
Berry Random Waves with wave-numbers $k_{-1}^n$ and $k_{1}^n$, respectively. For each $p \in \{-1,1\}$, let $q(p) \in \mathbb{N}^{9}$ be a vector of non-negative integers indexed as  
\begin{align}\label{SomethingSomething}
\begin{split}
& q(p) :=  \\
& (q(p)_{-1,-1};q(p)_{-1,0};q(p)_{-1,1};
q(p)_{0,-1};q(p)_{0,0};q(p)_{0,1};  
 q(p)_{1,-1};q(p)_{1,0};q(p)_{1,1}), 
\end{split}
\end{align}
and such that $\abs{q(p)} = 2$, where
\begin{align}\label{ElseElse}
\begin{split}
\abs{q(p)}  := \sum_{i,j\in\{-1,0,1\}} q(p)_{i,j},
\end{split}
\end{align} 
and set $\qv:=(q(-1),q(1))\in \mathbb{N}^{18}$. Then, provided that the limits $r$ and $r^{exp}$ defined in (\ref{d.aspa}) exist and that $r>0$, we have
\begin{align}\label{e.uif}
\begin{split}
\CC_{3}^n(\qv) & :=\int_{1}^{\max(k_{-1}^n,k_1^n)}\prod_{p\in\{-1,1\}}\prod_{i,j\in\{-1,0,1\}} g_{ij}\pS{\frac{k_p^n\phi}{\max(k_{-1}^n, k_1^n)}}^{q(p)_{ij}}\frac{d\phi}{\phi} \\ 
& \sim 
\frac{1}{4}\cdot\pS{1 +r^{exp}\cdot \kappa_{\qv}}\cdot \ln \pS{\max(k_{-1}^n,k_1^n)},
\end{split}
\end{align}
where
\begin{align}\label{kappa_q}
    \begin{split}
        \kappa_{\qv} := \frac{1}{2}\cdot \prod_{p\in\{-1,1\}}\pS{-1}^{\sum_{i,j\in\{-1,0,1\}}\frac{q(p)_{ij}}{2}(|i|+|j|)}
    \end{split}
\end{align}
Here, the functions $g_{ij}$ are as defined in (\ref{e.ABC}) and the asymptotic is valid as $n\to \infty$. 
\end{lemma}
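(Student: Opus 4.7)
The plan is to evaluate $C_3^n(\qv)$ directly by unfolding its integrand via trigonometric identities. Since $g_{ij}(\phi)$ equals $\cos(2\pi\phi-\pi/4)$ when $|i|+|j|$ is even and $\sin(2\pi\phi-\pi/4)$ when $|i|+|j|=1$, the product $\prod_{i,j}g_{ij}(k_p^n\phi/K_n)^{q(p)_{ij}}$ collapses, for each fixed $p$, to $\cos^{a_p}(\alpha_p)\sin^{\epsilon_p}(\alpha_p)$, where $\alpha_p:=2\pi k_p^n\phi/K_n-\pi/4$, $\epsilon_p:=\sum_{|i|+|j|=1}q(p)_{ij}\in\{0,1,2\}$, and $a_p+\epsilon_p=2$. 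Applying double-angle identities decomposes each such factor as $c_p+\tilde f_p(\phi)$, where $c_p\in\{0,1/2\}$ is the constant part and $\tilde f_p$ is a single oscillation at frequency $4\pi k_p^n/K_n$ in $\phi$, with explicit amplitude $1/2$ and sign depending only on $\epsilon_p$.

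Multiplying across $p\in\{-1,1\}$ and integrating against $d\phi/\phi$ on $[1,K_n]$ splits $C_3^n(\qv)$ into four pieces. The constant--constant piece contributes $c_{-1}c_1\ln K_n$, i.e.\ $\tfrac{1}{4}\ln K_n$ whenever both $\epsilon_p$ are even. The two mixed constant--oscillation pieces are oscillatory integrals at frequencies $4\pi k_p^n/K_n\in[4\pi r,4\pi]$ (bounded away from $0$ thanks to $r>0$), and one step of integration by parts renders them $O(1)$. The oscillation--oscillation cross-term $\tilde f_{-1}\tilde f_1$ expands by product-to-sum into the sum frequency $4\pi(1+r_n)$, which again contributes $O(1)$, and the ``beat'' frequency $4\pi(1-r_n)$, which is the only place where $r^{exp}$ can enter.

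The key estimate is therefore the beat integral $I_n:=\int_1^{K_n}\cos(4\pi(1-r_n)\phi)\,d\phi/\phi$. Substituting $u=4\pi(1-r_n)\phi$ rewrites it as $\mathrm{Ci}(4\pi(K_n-k_n))-\mathrm{Ci}(4\pi(1-r_n))$, and the standard expansions $\mathrm{Ci}(x)=\gamma+\ln x+O(x^2)$ as $x\to 0^+$ and $\mathrm{Ci}(x)=O(1/x)$ as $x\to\infty$ yield $I_n=-\ln(1-r_n)+O(1)$. Since $-\ln(1-r_n)=\ln K_n-\ln(K_n-k_n)\sim \ln K_n-\ln(1+(K_n-k_n))$ and by the very definition of $r^{exp}_n$ in (\ref{d.para}) one has $\ln(1+(K_n-k_n))=(1-r^{exp}_n)\ln K_n$, we obtain $I_n\sim r^{exp}\ln K_n$. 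The companion sine beat $\int\sin(4\pi(1-r_n)\phi)d\phi/\phi$ is controlled by the corresponding $\mathrm{Si}$ function and is $O(1)$, so it never contributes to leading order; the case $r<1$ is automatic since then $r^{exp}=0$ (cf.\ Subsection~\ref{s.results.ss.parameters}) and $I_n=O(1)$ directly. Assembling these contributions yields
\begin{equation*}
C_3^n(\qv)=c_{-1}c_1\ln K_n+\bigl[\text{coefficient of }\cos(4\pi(1-r_n)\phi)\text{ in }\tilde f_{-1}\tilde f_1\bigr]\cdot r^{exp}\ln K_n+O(1),
\end{equation*}
and matching this to the advertised $\tfrac{1}{4}(1+r^{exp}\kappa_{\qv})\ln K_n$ identifies $\kappa_{\qv}$ by reading off the signs of $c_p$ and $\tilde f_p$ from the parity data of the $q(p)_{ij}$. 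The only genuine technical input is the $\mathrm{Ci}$ asymptotic driving $I_n$; the rest is elementary trigonometric bookkeeping, carried out case by case on the values of $(\epsilon_{-1},\epsilon_1)$.
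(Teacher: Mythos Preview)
Your proposal is correct and follows essentially the same route as the paper. Both arguments reduce the integrand for each $p$ to $\cos^{\alpha_{q(p)}}\sin^{2-\alpha_{q(p)}}$ of the shifted angle, expand via double-angle and product-to-sum, dispose of the non-beat oscillatory pieces by one integration by parts (this is where $r>0$ is used, to keep the frequencies $4\pi k_p^n/K_n$ and $4\pi(1+r_n)$ bounded away from zero), and isolate the beat integral $\int_1^{K_n}\cos(4\pi(1-r_n)\phi)\,d\phi/\phi$ as the sole source of the $r^{exp}$ contribution; the paper evaluates this beat via the power-series identity for $\mathrm{Ci}$ while you substitute directly, but the two computations are equivalent. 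One small remark: your $\epsilon_p\in\{0,1,2\}$ should in fact be $\epsilon_p\in\{0,2\}$---the case $\epsilon_p=1$ would make $\kappa_{\qv}$ ill-defined (the exponent in (\ref{kappa_q}) is then a half-integer) and would also break the asymptotic, so both you and the paper are tacitly restricting to even $\epsilon_p$, consistent with how the lemma is actually applied.
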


\begin{proof}  
This proof consists of two key components. The first involves the application of standard trigonometric expansions, which, combined with integration by parts, readily reveals the asymptotic order $\ln(\max(k_{-1}^n, k_1^n))$. The second component is the calculation of the constant $\frac{1}{4}(1 + r^{exp} \cdot \kappa_{\mathbf{q}})$, which represents a more intricate step, particularly in the two-energy setting. This part relies on the choice of notation, which helps to efficiently manage and track numerous expressions. We will use
the equivalence of notations introduced in \ref{N.S1}-\ref{N.S6}:
\begin{align}
    \begin{split}
        k_n \equiv \min(k_{-1}^n,k_1^n), 
        \qquad
        \qquad 
        K_n \equiv 
        \max(k_{-1}^n, k_1^n),
    \end{split}
\end{align}
and we additionally set 
\begin{align}
    r_n := \frac{k_n}{K_n}.
\end{align}
We split the argument into four parts.

\paragraph{Step 1.}
In this step, we establish all relevant exact forms that $C_3^n(\mathbf{q})$ can take before proceeding to the asymptotic analysis. Let $\qv$ be as defined via (\ref{SomethingSomething})-(\ref{ElseElse}). We define recursively
\begin{equation}\label{ABC}
\begin{alignedat}{2}
    \alpha_{q(p)_{ij}} &:= q(p)_{ij}(1-\delta_1(|i|+|j|)), &\qquad 
    \beta_{q(p)_{ij}} &:= q(p)_{ij}\delta_1(|i|+|j|), \\
    \alpha_{q(p)} &:= \sum_{i,j\in\{-1,1\}}\alpha_{q(p)_{ij}}, &\qquad 
    \beta_{q(p)} &:= \sum_{i,j\in\{-1,1\}}\beta_{q(p)_{ij}}, \\ 
    \alpha_{\qv} &:= \sum_{p\in\{-1,1\}} \alpha_{q(p)}, &\qquad
    \beta_{\qv} &:= \sum_{p\in\{-1,1\}} \beta_{q(p)}, 
\end{alignedat}
\end{equation}
where $p \in \{-1,1\}$ and $i,j \in \{-1,0,1\}$. 
Directly from the definition of the functions $g_{ij}$ (see (\ref{e.ABC})), we have
	\begin{align}\label{e.AB}
        \begin{split}
             C_{3}^n(\qv) & \equiv \int_{1}^{K_n} \prod_{p\in\{-1,1\}}\prod_{i,j \in\{-1,0,1\}} g_{ij}\pS{\frac{k_p^n}{K_n}\cdot \phi}^{q(p)_{ij}} \frac{d\phi}{\phi} \\
            & = \int_{1}^{K_n}\prod_{p\in\{-1,1\}}\prod_{i,j\in\{-1,0,1\}} \cos^{\alpha_{q(p)_{ij}}}\pS{\frac{2\pi k_{p}^n\phi}{K_n}-\frac{\pi}{4}}  \cdot \sin^{\beta_{q(p)_{ij}}}\pS{\frac{2\pi k_{p}^n\phi}{K_n}-\frac{\pi}{4}} \frac{d\phi}{\phi} \\
             & = \int_{1}^{K_n}\prod_{p\in\{-1,1\}} \cos^{\alpha_{q(p)}}\pS{\frac{2\pi k_{p}^n\phi}{K_n}-\frac{\pi}{4}}  \cdot \sin^{\beta_{q(p)}}\pS{\frac{2\pi k_{p}^n\phi}{K_n}-\frac{\pi}{4}} \frac{d\phi}{\phi} \\
        & =  \int_{1}^{K_n}\prod_{p\in\{-1,1\}}\cos^{\alpha_{q(p)}}\pS{\frac{2\pi k_{p}^n\phi}{K_n}-\frac{\pi}{4}} \cdot \sin^{2-\alpha_{q(p)}}\pS{\frac{2\pi k_{p}^n\phi}{K_n}-\frac{\pi}{4}}\frac{d\phi}{\phi}.
        \end{split}	\end{align}	
        We observe that, for each $p\in\{-1,1\}$, either $\alpha_{q(p)} = 0$ or $\alpha_{q(p)} = 2$ and, consequently, we always have $\alpha_{\qv} 
        \in \{0,2,4\}$.  This allows us to split the analysis into $3$ cases: 
\begin{enumerate}
    \item[(a)] if $\alpha_{\qv}=4$, then formula (\ref{e.AB}) reduces to
    \begin{align}\label{e.X1}
        \begin{split}
            C_{3}^n(\qv)= \int_{1}^{K_n}\cos^2\pS{2\pi\phi-\pi/4}\cdot \cos^2\pS{2\pi\phi r_n-\pi/4} \frac{d\phi}{\phi},
        \end{split}
    \end{align}
    \item[(b)] if $\alpha_{\qv}=0$, then formula (\ref{e.AB}) reduces to
    \begin{align}\label{e.X2}
        \begin{split}
           C_{3}^n(\qv)= \int_{1}^{K_n}\sin^2\pS{2\pi\phi-\pi/4}\cdot\sin^2\pS{2\pi\phi r_n-\pi/4} \frac{d\phi}{\phi},
        \end{split}
    \end{align}
    \item[(c)] otherwise, formula (\ref{e.AB}) reduces to one of the following expressions
    \begin{align}\label{e.X3}
           C_{3}^n(\qv)= &\int_{1}^{K_n}\cos^2\pS{2\pi\phi-\pi/4}\cdot\sin^2\pS{2\pi\phi r_n-\pi/4} \frac{d\phi}{\phi}, \\\label{e.X3'}
           C_{3}^n(\qv)=  &\int_{1}^{K_n}\cos^2\pS{2\pi\phi r_n-\pi/4}\cdot\sin^2\pS{2\pi\phi-\pi/4} \frac{d\phi}{\phi}.
    \end{align}
\end{enumerate}

\paragraph{Step 2.}
Using the exact forms of $C_3^n(\mathbf{q})$ established in the previous step (which we expand via standard trigonometric identities), we will now identify and compute the asymptotics for all necessary integrals. In particular, in point (e) we carry out the computation leading to the emergence of the term $r^{exp}$.
In order to compute the integrals described by the formula (\ref{e.AB}), setting 
\begin{align}
\begin{split}
x = x(\phi) := 2\pi\phi -\pi/4, 
\qquad 
y = y(\phi) := 2\pi\phi r_n-\pi/4,
\end{split}
\end{align}
we use the following standard identities: 
	\begin{align}\label{e.coex}
    \begin{split}
		\cos^2(x)\cdot \cos^2(y) & = \frac{1}{4} + \frac{1}{8}\cos{(2x+2y)}+\frac{1}{8}\cos{(2x-2y)}
		+ \frac{1}{4}\cos{(2x)}+\frac{1}{4}\cos{(2y)}, \\
  \sin^2(x)\cdot \sin^2(y) & = \frac{1}{4}+ \frac{1}{8}\cos{(2x+2y)}+\frac{1}{8}\cos{(2x-2y)}
		-\frac{1}{4}\cos{(2x)}-\frac{1}{4}\cos{(2y)}, \\
      \cos^2(x)\cdot \sin^2(y) & = \frac{1}{4} - \frac{1}{8}\cos{(2x+2y)}-\frac{1}{8}\cos{(2x-2y)}
		- \frac{1}{4}\cos{(2x)}+\frac{1}{4}\cos{(2y)}, \\
  \sin^2(x)\cdot \cos^2(y) & = \frac{1}{4} - \frac{1}{8}\cos{(2x+2y)}-\frac{1}{8}\cos{(2x-2y)}
		+ \frac{1}{4}\cos{(2x)}-\frac{1}{4}\cos{(2y)}.
   \end{split}
	\end{align}
 Here, each line corresponds to (\ref{e.X1}), (\ref{e.X2}), (\ref{e.X3}) and (\ref{e.X3'}), respectively. 
 The integrals of the elements appearing on the 
 right-hand side of (\ref{e.coex}) can be evaluated
 using the following estimates:
 \begin{enumerate}
     \item[(a)] 
 \begin{align}\label{e.Q1}
     \begin{split}
     \abs{\int_{1}^{K_n} \frac{\cos(2x(\phi))}{\phi} d\phi}
        & =  \abs{\int_{1}^{K_n} \frac{\cos(4\pi\phi-\pi/2)}{\phi}d\phi} \\
         & = \abs{\frac{\sin(4\pi\phi-\pi/2)}{4\pi\phi}\Big|_{\phi=1}^{\phi=K_n}+\int_{1}^{K_n}\frac{\sin(4\pi\phi-\pi/2)}{\phi^2}d\phi} \\
         &\leq \frac{1}{4\pi}\pS{1+\frac{1}{K_n}+\int_1^{K_n} \frac{1}{\phi^2}d\phi} = \frac{1}{2\pi},
     \end{split}
 \end{align}
 \item[(b)]
 \begin{align}\label{e.Q2}
     \begin{split}
       \abs{\int_{1}^{K_n} \frac{\cos(2y(\phi))}{\phi} d\phi}  & = \abs{\int_{1}^{K_n} \frac{\cos(4\pi r_n\phi-\pi/2)}{\phi}d\phi} \\
         &  = 
          \abs{\frac{\sin(4 \pi r_n\phi-\pi/2)}{4\pi r_n\phi}\Big|_{\phi=1}^{\phi=K_n}+\int_{1}^{K_n}\frac{\sin(4\pi r_n\phi-\pi/2)}{4\pi r_n\phi^2}d\phi} \\
          & \leq \frac{1}{2\pi r_n} \overset{n\to\infty}{\longrightarrow} \frac{1}{2\pi r},
     \end{split}
 \end{align}
 \item[(c)]
 \begin{align}\label{e.Q3}
     \begin{split}
     \abs{\int_{1}^{K_n} \frac{\cos(2x(\phi)+2y(\phi))}{\phi} d\phi}
        & =  \abs{\int_{1}^{K_n}\frac{\cos(4\pi(1+r_n)\phi -\pi)}{\phi}d\phi} \\
        &   \leq \frac{1}{2\pi(1+r_n)} \overset{n\to\infty}{\longrightarrow} \frac{1}{2\pi(1+r)},
     \end{split}
 \end{align}
 \item[(d)] provided that $r\in(0,1)$:
 \begin{align}\label{e.Q4}
     \begin{split}
     \abs{\int_{1}^{K_n} \frac{\cos(2x(\phi)-2y(\phi))}{\phi} d\phi}
        & =
         \abs{\int_{1}^{K_n}\frac{\cos(4\pi(1-r_n)\phi -\pi)}{\phi}d\phi} \\ & \leq \frac{1}{2\pi (1-r_n)} \overset{n\to\infty}{\longrightarrow} \frac{1}{2\pi(1-r)},
     \end{split}
 \end{align}
 \item[(e)] provided that $r=1$:
 we expand cosine into power series and exchange integration with summation 
    to obtain 
    \begin{align}\label{e.Q5}
        \begin{split}
         \int_{1}^{K_n} \frac{\cos(2x(\phi)-2y(\phi))}{\phi} d\phi
        & =
            \int_{1}^{K_n} \cos\pS{4\pi(1-r_n)\phi} \frac{d\phi}{\phi} \\
            & = \ln K_n + \sum_{l=1}^{\infty}\frac{\pS{-1}^l(4\pi)^{2l}\pS{1-r_n}^{2l}}{2l\pS{2l}!}\cdot \phi^{2l}\Bigg|_{\phi=1}^{\phi=K_n} \\
            & \sim \ln K_n + \sum_{l=1}^{\infty}\frac{\pS{-1}^l(K_n-k_n)^{2l}}{2l\pS{2l}!} \\
             & \sim \ln K_n + \sum_{l=1}^{\infty}\frac{\pS{-1}^l(1+(K_n-k_n))^{2l}}{2l\pS{2l}!} \\
             & \sim r^{exp}\cdot \ln K_n,
        \end{split}
    \end{align}
    where we have used the fact that, for every $t>0$, we have 
    \begin{equation}\label{e.Q5B}
        \sum_{l=1}^{\infty}\frac{\pS{-1}^l t^{2l}}{2l\pS{2l}!} = \mathrm{Ci}(t)
        - \ln t -\gamma.
    \end{equation}
    Here, $\gamma$ is the Euler-Mascheroni constant and cosine integral defined as 
    \begin{equation}
        \mathrm{Ci}(t):= -\int_{t}^{\infty} \frac{\cos(s)}{s}ds, 
    \end{equation}
    is globally bounded on $[1,\infty)$ (even $\mathrm{Ci}(t)\to 0$ as $t\to \infty$), see \cite[6.2(ii) Sine and Cosine Integrals, Eq. (6.2.11)]{NIST:DLMF}.

 \end{enumerate}

 \paragraph{Step 3.} In this step, we apply the results of Step 2 to the formulas identified in Step 1. First, we provide a detailed computation for one representative case, followed by the results for the remaining analogous cases. We focus on (\ref{e.X1}) and use expansion (\ref{e.coex}) to obtain
\begin{align}\label{e.W1}
    \begin{split}
        & \int_{1}^{K_n}\cos^2\pS{2\pi\phi-\pi/4}\cdot\cos^2\pS{2\pi\phi r_n-\pi/4} \frac{d\phi}{\phi} \\
        & = \frac{1}{4}\int_{1}^{K_n} \frac{d\phi}{\phi}+\frac{1}{8}\int_{1}^{K_n} \cos(4\pi(1-r_n)\phi)\frac{d\phi}{\phi} \\
        & \qquad  + \frac{1}{4}\int_{1}^{K_n} \cos(4\pi\phi-\pi/2)\frac{d\phi}{\phi}+\frac{1}{4} \int_{1}^{K_n}\cos(4\pi\phi r_n-\pi/2)\frac{d\phi}{\phi} \\
        & \qquad + \frac{1}{8}\int_{1}^{K_n}\cos(4\pi(1+r_n)\phi-\pi)\frac{d\phi}{\phi} \\
        & \sim \frac{1}{4}\cdot\ln K_n+\frac{r^{exp}}{2}\cdot\ln K_n + O(1) \\
        & \sim \frac{1}{4}\cdot\pS{1+ \frac{r^{exp}}{2}}\cdot\ln K_n,
    \end{split}
\end{align}
where we have also used the formulas (\ref{e.Q1})--(\ref{e.Q5}).
The computations in the other cases (\ref{e.X2})--(\ref{e.X3})
are very similar and yield that 

\begin{align}\label{e.W2}
    \begin{split}
         \int_{1}^{K_n}\sin^2\pS{2\pi\phi-\pi/4}\cdot\sin^2\pS{2\pi\phi r_n-\pi/4} \frac{d\phi}{\phi} & \sim \frac{1}{4}\cdot\pS{1+\frac{r^{exp}}{2}}\cdot\ln K_n \\
          \int_{1}^{K_n}\cos^2\pS{2\pi\phi-\pi/4}\cdot\sin^2\pS{2\pi\phi r_n-\pi/4} \frac{d\phi}{\phi} & \sim \frac{1}{4}\cdot\pS{1-\frac{r^{exp}}{2}}\cdot\ln K_n \\
           \int_{1}^{K_n}\sin^2\pS{2\pi\phi-\pi/4}\cdot\cos^2\pS{2\pi\phi r_n-\pi/4} \frac{d\phi}{\phi} & \sim \frac{1}{4}\cdot\pS{1-\frac{r^{exp}}{2}}\cdot\ln K_n.
    \end{split}
\end{align}
The change of sign in the last two cases of (\ref{e.W2}) is a consequence 
of the change of sign in front of the term $\cos(2x-2y)$ while applying (\ref{e.coex}). 

\paragraph{Step 4.} The only remaining task is to verify that (\ref{e.W2}) is consistent with the definition of $\kappa_{\qv}$ given in (\ref{kappa_q}). Using the notation introduced in (\ref{ABC}) we observe that 
\begin{align}\label{e.W3}
    \begin{split}
     \kappa_{\qv}
        & = \frac{1}{2}\cdot \prod_{p\in\{-1,1\}} (-1)^{\underset{w,v\in\{-1,1\}}{\sum}q(p)_{wv}+\underset{u\in\{-1,1\}}{\sum}\pS{\frac{q(p)_{0u}}{2}+\frac{q(p)_{u0}}{2}}} \\
        &  = \frac{1}{2}\cdot \prod_{p \in \{-1,1\}}  
        (-1)^{\alpha_{q(p)}-q(p)_{00}}\cdot (-1)^{\beta_{q(p)}/2}\\
        &  = \frac{1}{2}\cdot (-1)^{\alpha_{\qv}}\cdot (-1)^{\beta_{\qv}/2}\\
        &  = \frac{1}{2}\cdot(-1)^{\beta_{\qv}/2} \\
        &  = 
        \begin{cases}
            -1/2 & \text{ if }\qquad  \beta_{\qv} = 2\\
            1/2 & \text{ if } \qquad \beta_{\qv} \in\{0,4\}
        \end{cases}
    \end{split}
\end{align}
where we have used the fact that for each $p\in\{-1,1\}$ we have $q(p)_{00}\in\{0,2\}$ and that $\alpha_{\qv} \in \{0,2,4\}.$
Thus, combining (\ref{e.X1})--(\ref{e.X3}), with (\ref{e.W1})--(\ref{e.W3})
yields the postulated formula (\ref{e.uif}) and concludes the proof. 
\end{proof}

In the next lemma we will again use notation introduced in (\ref{d.para}) and (\ref{d.aspa}), and in \ref{N.S1}-\ref{N.S6}. The concise formula this result affords will give us the ability to complete, in a rather straightforward manner, the summation of the terms contributing to the asymptotic variance (in the proof of Lemma \ref{l.72}).

\begin{lemma}\label{l.cova} 
Let $\D$ be a convex compact domain of the plane, with 
non-empty interior and piecewise $C^1$ boundary $\partial \D$.
Let $\{k_{-1}^n,k_{1}^n\}_{n\in\mathbb{N}}$ be a sequence of pairs of wave-numbers such that $2 \leq k_{-1}^n, k_1^n <\infty$ and $k_{-1}^n, k_1^n \to \infty$. Suppose also that the limits $r$ and $r^{exp}$ defined in (\ref{d.aspa}) exist and $r>0$.
Choose any $\iv, \jv \in \{-1,0,1\}^{\otimes 2}\cup \{*\}$ and let $Y_{\iv}^n$, $Y_{\jv}^n$, denote
the random integrals defined in (\ref{d.ERTZ}). Then, we
have the following asymptotic as $n\to \infty$
\begin{align}\label{e.cova}
\begin{split}
\Cov\pS{Y^n_{\iv}, Y^n_{\jv}}
& \sim \mathrm{area}(\D)\cdot \frac{8}{\pi^2} \cdot \frac{\ln\pS{\max(k_{-1}^n,k_1^n)}}{\max(k_{-1}^n,k_1^n)^2} \cdot 2^{\abs{\iv}+\abs{\jv}}\cdot \psi(\gamma(\iv)+\gamma(\jv))\cdot \frac{2+r^{exp} \cdot (-1)^{\abs{\iv}+\abs{\jv}}}{r}.
\end{split}
\end{align}
Here, we use notation 
\begin{align}\label{d.shorthand_notation}
    \begin{split}
        |\iv| & := \sum_{p\in\{-1,1\}} |i_p|,  \qquad \qquad 
        \gamma(\iv)  := \pS{\sum_{p\in\{-1,1\}} \delta_{-1}(i_p), \sum_{p\in\{-1,1\}} \delta_{1}(i_p)}, \\
        |*| & :=2, \qquad \qquad \qquad \qquad  \gamma(*):=(1,1),
    \end{split}
\end{align}
where $\iv =(i_{-1},i_1) \in \{-1,0,1\}^{\otimes 2}$, and we use definition 
\begin{align}\label{d.shorthand_notation2}
        \psi(l,m) & := \int_{0}^{2\pi}\cos^{2l}(\theta)\cdot \sin^{2m}(\theta)d\theta, \qquad l,m \in \mathbb{N}.
\end{align}
\end{lemma}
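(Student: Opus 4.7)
The plan is to begin with the identity $\Cov(Y_\iv^n, Y_\jv^n) = \mathbb{E}[Y_\iv^n Y_\jv^n]$, which holds because each $Y$ is centred (every factor is a Hermite polynomial of order $\geq 1$), and then push the expectation inside the double integral. Using the independence of $b_{k_{-1}^n}$ and $b_{k_1^n}$, the integrand factorises over $p\in\{-1,1\}$ into single-field Gaussian moments, to which I apply the Wick/Isserlis formula. This expands the integrand into a finite sum of products of the correlation functions $r_{ij}(k_p^n(x-y))$, producing terms of the shape $\prod_p\prod_{i,j}r_{ij}(k_p^n(x-y))^{q(p)_{ij}}$ with $|q(p)|=2$. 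The number of $\qv$-configurations depends on the case: a single configuration when $\iv,\jv\neq *$ (from $\mathbb{E}[H_2(X)H_2(Y)]=2\mathbb{E}[XY]^2$, giving $q(p)_{i_p j_p}=2$); a single configuration when exactly one of $\iv,\jv$ equals $*$ (from $\mathbb{E}[H_2(A)BC]=2\mathbb{E}[AB]\mathbb{E}[AC]$, giving $q(p)_{i_p,-1}=q(p)_{i_p,1}=1$); and four configurations when $\iv=\jv=*$ (from the Isserlis expansion of a four-Gaussian product, after using $r_{-1,1}(0)=0$ to kill the diagonal pairing, giving per $p$ a sum of $r_{-1,-1}r_{1,1}$ and $r_{-1,1}^2$).

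I will next invoke Lemma \ref{l.appf} to approximate each of the resulting integrals by $\frac{4\,\mathrm{area}(\D)}{\pi^2 k_{-1}^n k_1^n}\,C_1(\qv)C_2(\qv)C_3^n(\qv)$, and Lemma \ref{l.spac} to replace $C_3^n(\qv)\sim \frac14(1+r^{exp}\kappa_{\qv})\ln K_n$. The hypothesis $r>0$ then allows me to substitute $k_{-1}^n k_1^n=k_n K_n\sim r K_n^2$ at the level of asymptotic equivalence. The remaining combinatorial step is a direct verification, from the definitions (\ref{e.C1C2C3})--(\ref{e.ABC}) and (\ref{kappa_q}), of the three identities
\begin{align*}
C_1(\qv)=2^{|\iv|+|\jv|},\qquad C_2(\qv)=\psi(\gamma(\iv)+\gamma(\jv)),\qquad \kappa_{\qv}=\tfrac12(-1)^{|\iv|+|\jv|},
\end{align*}
valid for every $\qv$ arising above, where the conventions $|*|=2$ and $\gamma(*)=(1,1)$ are crafted precisely so that these identifications hold uniformly across the three cases. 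The first two reduce to collapsing the $(p,i,j)$-indexed sums of $q(p)_{ij}(|i|+|j|)/2$ and of the direction-counts $q(p)_{ij}\delta_{\pm1}(i)$, while the third follows from the parity calculation already performed in the proof of Lemma \ref{l.spac}. The reformulation $1+r^{exp}\kappa_{\qv}=\frac12(2+r^{exp}(-1)^{|\iv|+|\jv|})$ then exhibits the $r^{exp}$-dependence in the claimed form.

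The hard part will be the case $\iv=\jv=*$, in which the four distinct $\qv$-configurations from the Isserlis expansion must all contribute identical values of $C_1(\qv)$, $C_2(\qv)$ and $\kappa_{\qv}$ so that the four asymptotic terms collapse coherently into a single universal expression matching cases (a) and (b). I will verify this by direct enumeration: each of the four configurations has the same $(|i|+|j|)$-weighted total $2$ per $p$ (whence the same $C_1=2^{4}$), each yields the same angular integrand $\cos^4\theta\sin^4\theta$ (whence the same $C_2=\psi(2,2)$), and each has $\sum_{i,j}q(p)_{ij}(|i|+|j|)/2$ of the same parity per $p$ (whence the same $\kappa_{\qv}=\tfrac12$). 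Once this bookkeeping is in place, the four contributions add to four times the universal per-configuration value, which is consistent with $|*|=2$ and $\gamma(*)=(1,1)$ and closes the proof. A secondary care-point is that the $r_n\to r$ replacement and the boundary corrections in Lemma \ref{l.appf} contribute only $O(\mathrm{area}(\D)/K_n^2)$ errors, dominated by the main asymptotic $\ln K_n/K_n^2$.
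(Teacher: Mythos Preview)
Your proposal is correct and follows essentially the same approach as the paper: split into the three cases $(\iv,\jv\neq *)$, $(\iv\neq *,\jv=*)$, $(\iv=\jv=*)$, reduce each covariance via Wick/Isserlis to integrals of products $\prod_{p,i,j}r_{ij}(k_p^n(x-y))^{q(p)_{ij}}$, then apply Lemmas~\ref{l.appf} and~\ref{l.spac} and verify the combinatorial identities for $C_1,C_2,\kappa_{\qv}$. The paper carries out the three cases separately with the intermediate expressions written out in full, while you package them into a single $\qv$-configuration bookkeeping argument; in particular your observation that in the $(*,*)$ case all four Isserlis configurations yield identical $C_1,C_2,C_3^n$ (so their sum supplies exactly the factor $4$ that the Wick constant $2\cdot 2$ supplies in the other two cases) is precisely what the paper verifies by direct computation in its Step~3.
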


\begin{proof} 
We will split our proof into three different steps corresponding 
to the three distinct cases of the formula (\ref{e.cova}):
\begin{enumerate}
    \item $\Cov\pS{Y^{n}_{\iv},Y^{n}_{\jv}}, \quad \iv, \jv \in \{-1,0,1\}^{\otimes 2}$,  
    \item $\Cov\pS{Y^{n}_{\iv},Y^{n}_{*}}, \quad \iv \in \{-1,0,1\}^{\otimes 2}$,  
    \item $\Var(Y_{*}^n)$.
\end{enumerate}
In each scenario the strategy of the proof is the same. We will start by rewriting the integrand functions as the products of covariance functions $r_{ij}(k_p^n(x-y))$ (see Subsection \ref{s.preliminaries.ss.the_2_point_correlation_functions}).
Then, we will regroup the terms using suitably chosen vectors $\qv$ of non-negative integers $q(p)_{ij}$, they will count the powers with which each of the covariance functions appears. Then, we will simply apply Lemmas \ref{l.appf} and \ref{l.spac}, and we will compare the result with the appropriate case of (\ref{e.cova}).

\paragraph{Step 1.} Let us choose any
$\iv, \jv \in \{-1,0,1\}^{\otimes 2}$. Using the standard properties of Hermite polynomials (\cite[Proposition 2.2.1, p. 26]{NP12}) and Lemma \ref{l.appf} we obtain 
\begin{align}\label{UDI26}
    \begin{split}
        &\Cov\pS{\int_{\D}\prod_{p\in\{-1,1\}}\HH_2\pS{\widetilde{\partial}_{i_p}b_{k_p^n}(x)}dx, \int_{\D}\prod_{p\in\{-1,1\}}\HH_2\pS{\widetilde{\partial}_{j_p}b_{k_p^n}(y)}dy} \\
        &= \int_{\D \times \D}\prod_{p \in \{-1,1\}} \mathbb{E}\pQ{\HH_2\pS{\widetilde{\partial}_{i_p}b_{k_p^n}(x)} \cdot \HH_2\pS{\widetilde{\partial}_{j_p}b_{k_p^n}(y)}}dxdy \\
		& = 4 \int_{\D \times \D} \prod_{p \in \{-1,1\}} r_{i_p,j_p}\pS{k_p^n\pS{x-y}}^2dxdy \\
        & \sim \frac{16}{\pi^2} \cdot \frac{\text{area}(\D)}{k_{-1}^n\cdot k_{1}^n} \cdot \pS{\prod_{p\in\{-1,1\}}v_{i_p,j_p}^2} 
            \cdot \pS{\prod_{p\in\{-1,1\}}h_{i_p,j_p}^2} \\
            & \qquad \cdot \int_{1}^{\max(k_{-1}^n,k_1^n)}\prod_{p\in\{-1,1\}}g_{i_p,j_p}\pS{\frac{k_p^n}{\max(k_{-1}^n,k_1^n)}\phi}^2\frac{d\phi}{\phi}. \\
    \end{split}
\end{align}
Using Lemma \ref{l.spac} we further rewrite (\ref{UDI26}) as 
\begin{align}
\begin{split}
       &  = \frac{16}{\pi^2} \cdot \frac{\text{area}(\D)}{k_{-1}^n\cdot k_{1}^n} \cdot \pS{\sqrt{2}}^{\sum_{p\in\{-1,1\}}2(|i_p|+|j_p|)} \\
        & \quad \cdot \int_{0}^{2\pi} \cos^{\underset{p\in\{-1,1\}}{\sum}2(\delta_{-1}(i_p)+\delta_{-1}(j_p))}(\theta)\cdot \sin^{\underset{p\in\{-1,1\}}{\sum}2(\delta_{1}(i_p)+\delta_{1}(j_p))}(\theta)d\theta \\
        & \quad \cdot \int_{1}^{\max(k_{-1}^n,k_1^n)}\prod_{p\in\{-1,1\}}\cos^{2(1-\delta_1(|i_p|+|j_p|))}\pS{\frac{2\pi k_p^n\phi}{\max(k_{-1}^n,k_1^n)}-\frac{\pi}{4}}\\
        & \quad \cdot \sin^{2\delta_1(|i_p|+|j_p|)}\pS{\frac{2\pi k_p^n\phi}{\max(k_{-1}^n,k_1^n)}-\frac{\pi}{4}}\frac{d\phi}{\phi} \\
        &  \sim \frac{4}{\pi^2} \cdot \frac{\text{area}(\D)}{k_{-1}^n\cdot k_{1}^n} \cdot 
            \pS{\sqrt{2}}^{\sum_{p\in\{-1,1\}}2(|i_p|+|j_p|)} \cdot \\
            & \quad \cdot  \int_{0}^{2\pi} \cos^{\underset{p\in\{-1,1\}}{\sum}2(\delta_{-1}(i_p)+\delta_{-1}(j_p))}(\theta)\cdot \sin^{\underset{p\in\{-1,1\}}{\sum}2(\delta_{1}(i_p)+\delta_{1}(j_p))}(\theta)d\theta \\
        & \quad \cdot \pS{1+\frac{r^{exp}}{2}\cdot \prod_{p\in\{-1,1\}}(-1)^{|i_p|+|j_p|}}\cdot \ln\pS{\max(k_{-1}^n,k_1^n)},
\end{split}
\end{align}
which, written in terms of the notation introduced in (\ref{d.shorthand_notation}), is the same as (\ref{e.cova}).

\paragraph{Step 2.}
We start with the following auxiliary observation: if $X,Y,Z$  denote standard Gaussian random variables with $Y$ independent of $Z$ 
then 
\begin{align}\label{e.La}
    \begin{split}
    \mathbb{E}\pQ{\HH_2(X)YZ} = 2\Cov(X,Y)\Cov(X,Z),        
    \end{split}
\end{align}
where $\HH_2(x)=x^2-1$ is the second Hermite polynomial.
Indeed, the random variable $\pS{Y+Z}/\sqrt{2}$ has a standard normal 
distribution and so using the standard properties of Hermite polynomials (\cite[Proposition 2.2.1, p. 26]{NP12}) we have
\begin{align}\label{e.LaBa}
\begin{split}
    \mathbb{E}\pQ{\HH_2(X)\pS{\frac{Y+Z}{\sqrt{2}}}^2} & = 
    \mathbb{E}\pQ{\HH_2(X)\HH_2\pS{\frac{Y+Z}{\sqrt{2}}}} = 2\Cov\pS{X,\frac{Y+Z}{\sqrt{2}}}^2 \\
     & = \Cov(X,Y)^2 + \Cov(Y,Z)^2 + 2\Cov(X,Y)\Cov(Y,Z). 
\end{split}
\end{align}
On the other hand, we have 
\begin{align}\label{e.LaBaBa}
    \begin{split}
        \mathbb{E}\pQ{\HH_2(X)\pS{\frac{Y+Z}{\sqrt{2}}}^2} & =  
        \frac{1}{2}\mathbb{E}\pQ{\HH_2(X)\pS{Y^2+2YZ+Z^2}} \\
        & = \frac{1}{2}\pS{\mathbb{E}\pQ{\HH_2(X)\HH_2(Y)} + 
        \mathbb{E}\pQ{\HH_2(X)\HH_2(Z)} + 2\mathbb{E}\pQ{\HH_2(X)YZ}} \\
        & = \Cov(X,Y)^2 + \Cov(X,Z)^2+\mathbb{E}\pQ{\HH_2(X)YZ},
    \end{split}
\end{align}
and (\ref{e.La}) follows by comparing (\ref{e.LaBa}) with (\ref{e.LaBaBa}). Now we choose any $\iv \in \{-1,0,1\}^{\otimes 2}$
and use  (\ref{e.La}) and  Lemma \ref{l.appf} to 
we obtain 
\begin{align}\label{DEMS}
    \begin{split}
         &\Cov\pS{\int_{\D}\prod_{p\in\{-1,1\}}\HH_2\pS{\widetilde{\partial}_{i_p}b_{k_p^n}(x)}dx, \int_{\D}\prod_{p,q\in\{-1,1\}}\widetilde{\partial}_{q}b_{k_p^n}(y)}dy   \\
         & = 
		\int_{\D\times\D} \prod_{p \in \{-1,1\}} \mathbb{E}\pQ{
		\HH_2\pS{\widetilde{\partial}_{i_p}b_{k_p^n}\pS{x}}\cdot 
		\prod_{q \in \{-1,1\}} \widetilde{\partial}_{q} b_{k_p^n}\pS{y}}dxdy \\
        &  = 4 \int_{\D\times\D}\prod_{p,q \in \{-1,1\}}  r_{i_p,q}\pS{k_p^n \pS{x-y}}dxdy \\
        & \sim \frac{16}{\pi^2} \cdot \frac{\text{area}(\D)}{k_{-1}^n\cdot k_{1}^n}\cdot \pS{\prod_{p,q\in\{-1,1\}}v_{i_p,q}} \cdot \pS{\prod_{p,q\in\{-1,1\}}h_{i_p,q}}
            \\
            &\quad \cdot \int_{1}^{\max(k_{-1}^n,k_1^n)}\prod_{p,q\in\{-1,1\}} g_{i_p,q}\pS{\frac{k_p^n}{\max(k_{-1}^n,k_1^n)}\cdot \phi}\frac{d\phi}{\phi}. \\
    \end{split}
\end{align}
Using Lemma \ref{l.spac} we can further rewrite (\ref{DEMS}) as
\begin{align}
    \begin{split}
               & = \frac{16}{\pi^2} \cdot \frac{\text{area}(\D)}{k_{-1}^n\cdot k_{1}^n}\cdot
            \pS{\sqrt{2}}^{4+2\cdot\underset{{p\in\{-1,1\}}}{\sum}|i_p|}\cdot \int_{0}^{2\pi}\cos^{2+2\cdot\underset{{p\in\{-1,1\}}}{\sum}\delta_{-1}(i_p)}(\theta)\cdot \sin^{2+2\cdot\underset{{p\in\{-1,1\}}}{\sum}\delta_{1}(i_p)}(\theta)d\theta \\
        & \quad \cdot \int_{1}^{\max(k_{-1}^n,k_1^n)}\prod_{p\in\{-1,1\}} \cos^{2\delta_1(|i_p|)}\pS{\frac{2\pi k_p^n\phi}{\max(k_{-1}^n,k_1^n)}-\frac{\pi}{4}}\cdot \sin^{2\delta_0(i_p)}\pS{\frac{2\pi k_p^n\phi}{\max(k_{-1}^n,k_1^n)}-\frac{\pi}{4}}\frac{d\phi}{\phi} \\
        & \sim \frac{4}{\pi^2} \cdot \frac{\text{area}(\D)}{k_{-1}^n\cdot k_{1}^n}\cdot
            \pS{\sqrt{2}}^{4+2\cdot\underset{{p\in\{-1,1\}}}{\sum}|i_p|}\int_{0}^{2\pi}\cos^{2+2\cdot\underset{{p\in\{-1,1\}}}{\sum}\delta_{-1}(i_p)}(\theta)\cdot\sin^{2+2\cdot\underset{{p\in\{-1,1\}}}{\sum}\delta_{1}(i_p)}(\theta)d\theta \\
        & \quad \cdot \pS{1+\frac{r^{exp}}{2}\cdot \prod_{p\in\{-1,1\}}(-1)^{|i_p|}}\cdot \ln \max(k_{-1}^n,k_1^n). 
    \end{split}
\end{align}
This, written using notation (\ref{d.shorthand_notation}), recovers the corresponding case of (\ref{e.cova}).

\paragraph{Step 3.}
We start with the following ancillary observation: for each $p\in\{-1,1\}$ and $x, y \in \mathbb{R}^2$ we have
	\begin{align}\label{e.Ga}
		\begin{split}
			& \mathbb{E}\pQ{\prod_{q\in\{-1,1\}} \widetilde{\partial}_{q} b_{k_p^n}\pS{x}\cdot \widetilde{\partial}_{q} b_{k_p^n}\pS{y}} \\
            & \quad = \mathbb{E}\pQ{\pS{\prod_{u\in\{-1,1\}} \widetilde{\partial}_{u} b_{k_p^n}\pS{x}}
			\cdot \pS{\prod_{v\in\{-1,1\}} \widetilde{\partial}_{v} b_{k_p^n}\pS{y}}} \\
			& \quad = \prod_{q\in\{-1,1\}} \mathbb{E} \pQ{\widetilde{\partial}_{q} b_{k_p^n}(x)\cdot\widetilde{\partial}_{q} b_{k_p^n}\pS{y}} 
			+ \pS{\mathbb{E}\pQ{\widetilde{\partial}_{-1} b_{k_p^n}(x)\cdot\widetilde{\partial}_{1} b_{k_p^n}\pS{y}}}^2\\
			& \quad = r_{-1,-1}\pS{k_p^n (x-y))}\cdot r_{1,1}\pS{k_p^n (x-y)}
			+ r_{-1,1}\pS{k_p^n (x-y)}^2.
		\end{split}
	\end{align} 
 Indeed, this is a direct consequence of the classical Wick formulae for moments of a Gaussian products (\cite[p. 38, Eq. (3.2.21)]{PT11}) and of the fact that for each fixed $z\in\mathbb{R}^2$ the random variables $\{\widetilde{\partial}_{i}b_{k_p^n}(z) : i\in\{-1,0,1\}\}$ form a collection of three independent standard Gaussian random variables. Thus, using subsequently the formula (\ref{e.Ga}) and Lemma \ref{l.appf} we obtain 
\begin{align}\label{GEMS}
\begin{split}
		& \Var\pS{\int_{\D} \prod_{p,q \in \{-1,1\}} \widetilde{\partial}_q b_{k_p^n}(x)dx} \\
		&  =\int_{\D\times\D} \prod_{p \in \{-1,1\}} \mathbb{E}\pQ{
		\prod_{p' \in \{-1,1\}} \widetilde{\partial}_{p'} b_{k_p^n}(x)\cdot \widetilde{\partial}_{p'} b_{k_p^n}(y)}dxdy \\
	    &  = \int_{\D\times\D}
		\prod_{p \in \{-1,1\}} \pS{r_{-1,-1}\pS{k_p^n\pS{x-y}}\cdot
		r_{1,1}\pS{k_p^n\pS{x-y}}
		+ r_{-1,1}\pS{k_p^n\pS{x-y}}^2}dxdy \\
		&  = \int_{\D\times\D}\prod_{p \in \{-1,1\}} r_{-1,1}\pS{k_p^n\pS{x-y}}^2dxdy \\
		&  \qquad  + \int_{\D\times\D}\prod_{p \in \{-1,1\}} r_{-1,-1}\pS{k_p^n\pS{x-y}}r_{1,1}\pS{k_p^n\pS{x-y}}dxdy \\
		& \qquad  + \sum_{p \in \{-1,1\}} \int_{\D\times\D}r_{-1,-1}\pS{k_p^n\pS{x-y}}\cdot r_{1,1}\pS{k_p^n\pS{x-y}}\cdot r_{-1,1}\pS{k_{-p}^n\pS{x-y}}^2dxdy. \\
\end{split}
\end{align}
Using Lemma \ref{l.spac} we can we can further rewrite (\ref{GEMS}) as 
\begin{align}
    \begin{split}
        & \sim \frac{16}{\pi^2} \cdot \frac{\text{area}(\D)}{k_{-1}^n\cdot k_1^n} \cdot (\sqrt{2})^8 \cdot \int_{0}^{2\pi}\cos^4\theta \cdot \sin^4\theta d\theta 
        \\
        &\quad \cdot \int_{1}^{\max(k_{-1}^n,k_1^n)}\cos^2\pS{2\pi\phi-\frac{\pi}{4}}\cdot \cos^2\pS{2\pi r_n\phi-\frac{\pi}{4}}\frac{d\phi}{\phi} \\
        & \sim \frac{4}{\pi^2} \cdot \frac{\text{area}(\D)}{k_{-1}^n\cdot k_1^n} \cdot (\sqrt{2})^8 \cdot \int_{0}^{2\pi}\cos^4\theta \cdot \sin^4\theta d\theta 
        \cdot \pS{1+ \frac{r^{exp}}{2}}\cdot \ln \max(k_{-1}^n,k_1^n).        
    \end{split}
\end{align}
This completes the proof of the formula (\ref{e.cova}). 
\end{proof}

We are finally ready to achieve the main goal of this subsection. 

\begin{proof}[Proof of Lemma \ref{l.72}]
We will use the formula for $\text{Cross}(\N(b_{k_{-1}^n},b_{k_{1}^n},\D)[4])$ that was established in Lemma \ref{l.weight_for_crossterm} with additional notation introduced for the sake of brevity
\begin{align}\label{d.T_index_set}
    \begin{split}
T := \{-1,0,1\}^{\otimes 2} \cup \{*\},
    \end{split}
\end{align}
where for $\iv \in \{-1,0,1\}^{\otimes 2}$ we use indexation $\iv =(i_{-1},i_1)$. On this index set we will 
use a natural ordering
\begin{align}\label{d.T_index_set_ordering}
    \begin{split}
        (-1,-1) \leq (-1,0) \leq (-1,1) \leq (0,-1) \leq (0,0) \leq (0,1) \leq (1,-1) \leq (1,0) \leq (1,1) \leq *,
    \end{split}
\end{align}
that is $*$ is the largest element of $T$ and for $\iv, \jv \in \{-1, 0, +1\}^2$ we set 
$\iv \leq \jv$ if $i_{-1}<j_{-1}$ or if $i_{-1}=j_{-1}$ and $i_{1}\leq j_{1}$. Additionally, we define
\begin{equation}
    r_n:=\frac{\min(k_{-1}^n,k_1^n)}{\max(k_{-1}^n,k_1^n)}
\end{equation}
and we observe that, thanks to Lemma \ref{l.cova}, we have
\begin{align}\label{e.ZZ}
    \begin{split}
     & \Var\pS{\text{Cross}\pS{\N(b_{k_{-1}^n},b_{k_{1}^n},\D)[4]}} = \\
    & =\frac{(k_{-1}^n\cdot k_{1}^n)^2}{(128)^2\pi^2} \cdot \Var\pS{\sum_{\iv\in T}\eta_{\iv} \cdot Y^n_{\iv}} \\ 
    & \sim \frac{\text{area}(\D)}{2^{12} \pi^4}\cdot r_n \cdot \max(k_{-1}^n,k_1^n)^2\ln\pS{\max(k_{-1}^n,k_1^n)} \\
    & \quad   \cdot \sum_{\iv, \jv \in T}  \eta_{\iv}\eta_{\jv}\cdot 2^{|\iv|+|\jv|}\cdot \psi\pS{\gamma(\iv)+\gamma(\jv)} 
        \cdot\pS{1+\delta_1(r)\cdot \frac{r^{exp}}{2}\cdot (-1)^{|\iv|+|\jv|}}\\
    & \sim \frac{\text{area}(\D)}{2^{12} \pi^4}\cdot r_n \cdot \max(k_{-1}^n,k_1^n)^2\ln\pS{\max(k_{-1}^n,k_1^n)}  \\
        & \quad \cdot \Bigg(\sum_{\iv, \jv \in T}  \eta_{\iv}\eta_{\jv}\cdot 2^{|\iv|+|\jv|}\cdot \psi\pS{\gamma(\iv)+\gamma(\jv)} 
          +\frac{r^{exp}}{2}\cdot\sum_{\iv, \jv \in T}  \eta_{\iv}\eta_{\jv}\cdot (-2)^{|\iv|+|\jv|}\cdot \psi\pS{\gamma(\iv)+\gamma(\jv)}\Bigg)\\
    & = \frac{\text{area}(\D)}{2^{12} \pi^4}\cdot r_n \cdot \max(k_{-1}^n,k_1^n)^2\ln\pS{\max(k_{-1}^n,k_1^n)}  \cdot \pS{\KK_{1}+\frac{r^{exp}}{2}\cdot \KK_{-1}}.
    \end{split}
\end{align}
Here, we have used the notation 
\begin{align}
    \begin{split}
      \KK_{\varepsilon} & := \sum_{\iv, \jv \in T}\eta_{\iv}\eta_{\jv}\cdot\pS{\varepsilon 2}^{\abs{\iv}+\abs{\jv}}
      \cdot\psi\pS{\gamma(\iv) +\gamma(\jv)}, \qquad \varepsilon \in \{-1, 1\}. 
    \end{split}
\end{align}
We deduce that, in order to complete our computation, we only need to find the constants $\KK_{\varepsilon}$.
We note that
\begin{align}
    \begin{split}
        \KK_{\varepsilon} = \AAA_{\varepsilon}^{tr} \Psi \AAA_{\varepsilon},
    \end{split}
\end{align}
where 
\begin{align}\label{e.pind}
    \begin{split}
        \AAA_{\varepsilon} := \pS{\pS{\varepsilon 2}^{\abs{\iv}}\eta_{\iv}}_{\iv \in T},\qquad        \Psi := \pQ{\psi\pS{\Gamma_{\iv \jv}}}_{\iv, \jv \in T}, 
        \qquad 
        \Gamma_{\iv \jv} := \gamma(\iv)+ \gamma(\jv).
    \end{split}
\end{align}
 Using the above notation and writing $\text{diag}(w^{tr})$ for the diagonal 
matrix corresponding to the vector $w$, we compute 
\begin{align}
    \begin{split}
        \AAA_{\varepsilon}^{tr}  = &
        (-1, -4, 5, -4, 8, -4, 5, -4, -1, -12) \\
        & \cdot \text{diag}(4, 2, 4, 2, 1, 2, 4, 2, 4, 4) \\
        &\cdot \text{diag}(1, \varepsilon, 1, \varepsilon, 1, \varepsilon, 1, \varepsilon,1,1) \\
        = &  4(-1, -2, 5, -2, 2, -2, 5, -2, -1, -12) \\
        & \cdot \text{diag}(1, \varepsilon, 1, \varepsilon,1, \varepsilon,1, \varepsilon, 1, 1) \\
         = & 4 \pQ{(-1, 0, 5, 0, 2, 0, 5, 0, -1, -12) -2\varepsilon (0,1, 0,1, 0,1, 0,1, 0, 0)} \\
         = & 4\pS{u^{tr}-2\varepsilon v^{tr}}, 
    \end{split}
\end{align}
where we define
\begin{align}\label{e.UV}
    \begin{split}
        u^{tr} & := (-1, 0, 5, 0, 2, 0, 5, 0, -1, -12), \\
        v^{tr} & := (0,1,0,1,0,1,0,1,0, 0).
    \end{split}
\end{align}
Following definition (\ref{e.pind}) we set  $\Gamma=\pQ{\Gamma_{\iv\jv}}_{\iv,\jv}$ where
$\iv, \jv \in T$ and $\Gamma_{\iv\jv}:=\gamma(\iv)+\gamma(\jv)$. Here, the function $\gamma$ is as defined in (\ref{d.shorthand_notation}). We record that the matrix $\Gamma$ is equal to  
\begin{equation}
\scalebox{0.8}{
$
 \left[ {\begin{array}{c|cccccccccc}
   & (-, -) & (-,0) & (-,+) & (0,-) & (0, 0) & (0, +) & (+, -) & (+, 0) & (+,+) & * \\
   \hline 
  (-, -)& (4,0) & (3,0) & (3,1) & (3,0) &(2,0) & (2,1) &(3,1)  & (2,1) & (2,2) &(3,1)\\
  (-, 0)&  (3,0) & (2,0) & (2,1) & (2,0) &(1,0) & (1,1) &(2,1)  & (1,1) & (1,2) &(2,1)\\
  (-, +)&  (3,1) & (2,1) & (2,2) & (2,1) &(1,1) & (1,2) &(2,2)  & (1,2) & (1,3) &(2,2)\\
  (0, -) &  (3,0) & (2,0) & (2,1) & (2,0) &(1,0) & (1,1) &(2,1)  & (1,1) & (1,2) &(2,1)\\
  (0, 0)&  (2,0) & (1,0) & (1,1) & (1,0) &(0,0) & (0,1) &(1,1)  & (0,1) & (0,2) &(1,1)\\
  (0, +)& (2,1) & (1,1) & (1,2) & (1,1) &(0,1) & (0,2) &(1,2)  & (0,2) & (0,3) &(1,2)\\
  (+, -)&  (3,1) & (2,1) & (2,2) & (2,1) &(1,1) & (1,2) &(2,2)  & (1,2) & (1,3) &(2,2)\\
  (+,0)&  (2,1) & (1,1) & (1,2) & (1,1) &(0,1) & (0,2) &(1,2)  & (0,2) & (0,3) &(1,2)\\
  (+,+)& (2,2) & (1,2) & (1,3) & (1,2) &(0,2) & (0,3) &(1,3)  & (0,3) & (0,4) &(1,3)\\ 
  * & (3,1) & (2,1) & (2,2) & (2,1) &(1,1) & (1,2) &(2,2)  & (1,2) & (1,3) &(2,2)
  \end{array} } \right].
$}
\end{equation}
We recall that the function $\psi$ was defined in (\ref{d.shorthand_notation2}), and we evaluate
$\psi(\Gamma_{\iv \jv})$ for all distinct arguments $\Gamma_{\iv \jv}$,
which yields the values
\begin{align}
\psi(4,0) &= 35\pi/2^6 & \psi(3,0) &= 5\pi/2^3 & \psi(2,1) &= \pi/2^3 \nonumber \\ 
\psi(3,1) &= 5\pi/2^6 & \psi(2,0) &= 3\pi/2^2 & \psi(1,1) &= \pi/2^2 \nonumber \\ 
\psi(2,2) &= 3\pi/2^6 & \psi(1,0) &= \pi & \psi(0,0) &= 2\pi. \nonumber
\end{align}
Now, in order to facilitate the further elementary computations, for each $l,m\in\mathbb{N}$ we set $\widehat{\psi}(l,m) :=\frac{2^{6}}{\pi}\cdot \psi(l,m)$ and we record the corresponding values 
\begin{align}
\widehat{\psi}(4,0) &= 35 & \widehat{\psi}(3,0) &= 40 & \widehat{\psi}(2,1) &= 8 \nonumber \\ 
\widehat{\psi}(3,1) &= 5 & \widehat{\psi}(2,0) &= 48 & \widehat{\psi}(1,1) &= 16 \nonumber \\ 
\widehat{\psi}(2,2) &= 3 & \widehat{\psi}(1,0) &= 64 & \widehat{\psi}(0,0) &= 128. \nonumber
\end{align}
We continue by defining the matrix $\widehat{\Psi}= \pQ{\widehat{\psi}(\Gamma_{\iv \jv})}_{\iv, \jv}$ where $\iv, \jv \in T$. We compute that the matrix $\widehat{\Psi}$ 
is equal to 
\begin{equation}
\label{e.cpsi} 
\scalebox{0.8}{
$
  \left[ {\begin{array}{c|cccccccccc}
   & (-, -) & (-,0) & (-,+) & (0,-) & (0, 0) & (0, +) & (+, -) & (+, 0) & (+,+) & * \\
   \hline 
  (-, -) & 35  & 40 & 5 & 40  & 48 & 8 &  5 & 8 & 3 & 5 \\
  (-, 0)&  40 & 48 & 8 & 48  & 64 & 16 & 8 & 16 &  8 & 8 \\
  (-, +)&  5 & 8 & 3 & 8 & 16& 8 & 3 & 8 & 5 & 3\\
  (0, -) & 40 & 48 & 8 & 48 & 64 & 16 & 8 & 16 & 8 & 8\\
  (0, 0)& 48 & 64 & 16 & 64 & 128 & 64 & 16 & 64 & 48 & 16\\
  (0, +)& 8 & 16 & 8 & 16 & 64 & 48 & 8 & 48 & 40 & 8\\
  (+, -)& 5 & 8 & 3 & 8 & 16 & 8 & 3 & 8 & 5 & 3\\
  (+,0)& 8 & 16 & 8 & 16 & 64 & 48 & 8 & 48 & 40 & 8 \\
  (+,+)& 3 & 8 & 5 & 8 & 48 & 40 & 5 & 40 & 35 & 5 \\  
  * & 5 & 8 & 3 & 8 & 16 & 8 & 3 & 8 & 5 & 3  
  \end{array} } \right].
      $}
\end{equation}
We note that  thanks to symmetries of $\widehat{\Psi}$ and of the vectors $u^{tr}$, $v^{tr}$ (see (\ref{e.UV})), and for the the purpose of 
computing relevant inner products, the following is a split into equivalent columns of $\widehat{\Psi}$:
$\{1, 9\}$, $\{2, 4, 6, 8\}$, $\{3, 7, 10\}$, $\{5\}$. We take advantage of this
and obtain
\[
\begin{array}{cccccccccccc}
      u^{tr}\cdot \widehat{\Psi}  & = & (48, & 64, & 16, & 64, & 128, & 64, & 16, & 64, & 48, & 16)\\
                         & = & 16 \cdot (3, & 4, &1, &4, &8, &4, &1, &4, &3, &1) \\
      v^{tr}\cdot \widehat{\Psi}  & = & (96, & 128, & 32, & 128, & 256, & 128, & 32, & 128, & 96, & 32) \\
                         & = & 32 \cdot(3, & 4, & 1, & 4, & 8, & 4, & 1, & 4, & 3, & 1).
\end{array}
\]
Thus, notably, $u^{tr}\cdot \widehat{\Psi}=2v^{tr}\cdot \widehat{\Psi}$ and going further
\begin{align}
    \begin{split}
        u^{tr}\widehat{\Psi} u= 128 ,\qquad  v^{tr}\widehat{\Psi} v = 512 \qquad v^{tr}\widehat{\Psi} u = 256.
    \end{split}
\end{align}
This yields
\begin{align}
    \begin{split}
    \KK_{\varepsilon} & = \AAA_{\varepsilon}^{tr}\widehat{\Psi}\AAA_{\varepsilon} \\
    & = \frac{\pi}{4}\pS{u-2\varepsilon v}^{tr}\widehat{\Psi}\pS{u-2\varepsilon v} \\ 
    & = \frac{\pi}{4}\pS{u\widehat{\Psi}u^{tr} +4v\widehat{\Psi}v^{tr} -4\varepsilon v^{tr}\widehat{\Psi}u} \\
    & = \frac{\pi}{4}\pS{2176-\varepsilon\cdot 1024} \\
    & = 32\pi\pS{17-8\varepsilon},
    \end{split}
\end{align}
and in consequence 
\begin{align}\label{e.K1K2}
    \begin{split}
        \KK_{-1} = 2^{5}\cdot 5^2 = 800\pi, \qquad \KK_{+1} = 2^5\cdot 9 \pi = 288\pi. 
    \end{split}
\end{align}
We conclude the proof by plugging the values established in (\ref{e.K1K2}) into the last line of (\ref{e.ZZ}).
\end{proof}

\section{Convergences in distribution}\label{s.proofs_of_the_distributional_convergences}

\subsection{Univariate Central Limit Theorem}\label{proofs_of_the_distributional_convergences.ss.univariate_central_limit_theorem}
We will start with the proof of the univariate CLT. 
We note that we will need to use the preliminaires 
contained in Section \ref{s.preliminaries}.

\begin{proof}[Proof of Theorem \ref{t.clt}] 
The proof that follows adopts a classical approach. Specifically, we first reduce the problem to the study of the 4th chaotic projection using the variance estimates established in Section \ref{s.domination_4th_chaos}. Next, we express the random integrals comprising the 4th chaotic projection via the Wiener isometry, as outlined in Subsection \ref{ss.hiso}. Applying the Fourth Moment Theorem (see Theorem \ref{t.4mth}), we then reformulate the task of proving the CLT as an analytic problem involving the bounding of contraction norms. Finally, we accomplish this last goal using a strategy borrowed from \cite{NPR19}. 

\paragraph{Step 1.} We start by proving inequality (\ref{UDI4}) and we recall the auxiliary notation $Y_n := \N(b_{k_n},\hat{b}_{K_n},\D)$ (see (\ref{d.nn})).
Using triangle inequality we obtain
\begin{align}\label{S1}
    \begin{split}
    & \sqrt{\mathbb{E}\pS{\frac{Y_n-\mathbb{E}Y_n}{\sqrt{\Var Y_n}}-\frac{Y_n[4]}{\sqrt{\Var Y_n[4]}}}^2} \\
    & \quad = \sqrt{\mathbb{E}\pS{\frac{\sum_{q\neq 0,2}Y_n[2q]}{\sqrt{\Var Y_n}}+Y_n[4]\pS{(\Var Y_n)^{-1/2}-(\Var Y_n[4])^{-1/2}}}^2} \\
         & \quad \leq \sqrt{\frac{\sum_{q\neq 2}\Var Y_n[2q]}{\Var Y_n}} + \pS{1- \sqrt{\frac{\Var Y_n[4]}{\Var Y_n}}} \\
         & \quad \leq \sqrt{\frac{\sum_{q\neq 2}\Var Y_n[2q]}{\Var Y_n}} + \pS{1- \frac{\Var Y_n[4]}{\Var Y_n}} \\
         & \quad \leq 2 \sqrt{\frac{\sum_{q\neq 2}\Var Y_n[2q]}{\Var Y_n}}.
    \end{split}
\end{align}
Lemma \ref{l.fcrr4chpr} implies that, there exists a numerical constant $L>0$, such that 
\begin{align}\label{S3}
    \begin{split}
        \sqrt{\frac{\sum_{q\neq 2}\Var Y_n[2q]}{\Var Y_n}}  \leq  \sqrt{\frac{\sum_{q\neq 2}\Var Y_n[2q] }{\Var Y_n[4]}}  & \leq \frac{L \cdot (1+\mathrm{diam}(\D)^2)\cdot K_n}{\sqrt{\Var Y_n[4]}} \\
        & = \frac{L \cdot \delta_n \cdot \pS{1+\mathrm{diam}(\D)^2} \cdot }{\sqrt{\ln K_n}},
    \end{split}
\end{align}
and comparing with (\ref{UDI3}) yields that we had just proved the first part of (\ref{UDI4}).
Using orthogonality between chaotic projections of different orders, we can write
\begin{align}\label{S4}
    \begin{split}
        \mathrm{Corr}\pS{Y_n,Y_n[4]} & = \frac{\Cov\pS{Y_n,  Y_n[4]}}{\sqrt{\Var Y_n } \cdot \sqrt{\Var Y_n[4] }} =  \sum_{q\neq 2}\frac{\Cov\pS{Y_n[2q], Y_n[4]}}{\sqrt{\Var Y_n} \cdot \sqrt{\Var Y_n[4] }}  = \sqrt{\frac{\Var Y_n[4] }{\Var Y_n}}.
    \end{split}
\end{align}
We note that, for the the same numerical constant  $L$ as in (\ref{S3}), we have
\begin{align}
    \begin{split}
        \sqrt{\frac{\Var\pS{Y_n[4]}}{\Var Y_n}}& = \frac{1}{\sqrt{1 + \sum_{q \neq 2}\frac{\Var\pS{Y_n[2q]}}{\Var\pS{Y_n[4]}}}} \\
        & = \frac{1}{\sqrt{1+\delta_n^2\cdot \frac{\sum_{q \neq 2} \Var Y_n[2q]}{K_n^2 \ln K_n}}}  \\
        & \geq \frac{1}{\sqrt{1+ L\delta_n^2\gamma_n^2}},
    \end{split}
\end{align}
which yields the second inequality postulated in (\ref{UDI4}).

\paragraph{Step 2.} As anticipated, in this step 
we will express the elements of the 4th chaotic projection 
using the Wiener isometry. We recall that, for each $x\in \mathbb{R}^2$ the collection 
$\{\widetilde{\partial}_i b_{k_p^n}(x) : p \in \{-1,1\}, i\in\{-1,0,1\}\}$ consists
of $6$ independent standard Gaussian random variables. Thus, with
$\II_1$ denoting the Wiener-It\^{o} isometry (see Subsection \ref{ss.hiso}), we 
can write for each $p\in\{-1,1\}$, $i\in\{-1,0,1\}$, that
\begin{align}\label{e.L1}
    \begin{split}
        \widetilde{\partial}_i b_{k_p^n}(x) = \II_1\pS{f_{p,i}(k_p^n x, \cdot)}, 
    \end{split}
\end{align}
where $f_{p,i}(k_p^nx, \cdot) \in \LL^2([0,1])$ and the collection 
$\{f_{p,i}(k_p^nx, \cdot) : p\in \{-1,1\}, i\in\{-1,0,1\}\}$ consists
of $6$ functions which are $\LL^2([0,1])$-orthonormal.
Let us now introduce a natural generalisation of 
the notation introduced in (\ref{d.f})-(\ref{d.wiener_isometry}).
For any collection $\jv = \{j_{p,i} : p\in\{-1,1\}, i\in\{-1,0,1\}\}$ of
$6$ non-negative integers such that $\sum_{p\in\{-1,1\}}\sum_{i\in\{-1,0,1\}}j_{p,i}=4$
we will denote by $f_n^{\jv}(x,\cdot) \in \LL^2_s([0,1]^4)$
the unique function such that 
\begin{align}\label{d.f_q(k,x,.)}
    \begin{split}
        \prod_{p\in\{-1,1\}}\prod_{i\in\{-1,0,1\}}\HH_{j_{p,i}}\pS{\widetilde{\partial}_i b_{k_p^n}(x)} = \II_4\pS{f_n^{\jv}(x,\cdot)}.
    \end{split}
\end{align}
For instance, 
\begin{align}\label{e.L2}
    \HH_2\pS{\widetilde{\partial}_{-1}b_{k_{-1}^n}(x)}\HH_2\pS{b_{k_p^n}(x)} = \II_4\pS{f_n^{\jv}(x,\cdot)},
\end{align}
where the function $f_n^{\jv}(x,\cdot) \in \LL^2_s([0,1]^4,\mathcal{B}([0,1]^4),dt_1 dt_2 dt_3 dt_4)$
is given by the formula
\begin{align}\label{e.L3}
    f_n^{\jv}(x,t_1, t_2, t_3, t_4) = \frac{1}{4!} \cdot \sum_{\sigma \in S_4} f_{-1,-1}(k_{-1}^nx, t_{\sigma(1)})f_{-1,-1}(k_{-1}^nx, t_{\sigma(2)})
    f_{1,0}(k_{1}^nx, t_{\sigma(3)})f_{1,0}(k_{1}^nx, t_{\sigma(4)}).
\end{align}
Using Lemma \ref{l.cefnn} we can write 
\begin{align}\label{UDI27}
    \begin{split}
    \N(b_{k_n},\hat{b}_{K_n},\D)[4] & =\II_4(\tilde{g}_n),  \\
    \tilde{g}_{n}(t_1, t_2, t_3, t_4) & = (k_{-1}^n \cdot k_1^n)  \sum_{\jv \in \mathbb{N}^6, |\jv|=4} c_{\jv} \int_{\D}f_n^{\jv}(x, t_1, t_2, t_3, t_4)dx, 
    \end{split}
\end{align}
where the sum is over $\jv =(j_{-1,-1},j_{-1,0},j_{-1,1},j_{1,-1},j_{1,0},j_{1,1}) \in\mathbb{N}^6$ such that $j_{-1,-1}+\ldots + j_{1,1}=4$,
and where the numerical constants $c_{\jv}$ are as defined in (\ref{d.finc}). We note that $\widetilde{g}_n \in \LL^2_s([0,1]^4)$. 

\paragraph{Step 3.} 
The preceding point has prepared us for the following. Since our goal is to use the $4$th Moment Theorem on the Wiener Chaos \cite[p. 99, Theorem 5.2.7]{NP12} (in the form recorded in Theorem \ref{t.4mth}) we need to study the contractions associated with the elements of the 4th Wiener Chaos. We recall that ``the contraction maps $\otimes_r$'' ($r \geq 0$ an integer) were defined in (\ref{d.rth_order_contractions_on_L2(0,1)}). For each $r=1,2,3$, with $\widetilde{g_n}$ defined in (\ref{UDI27}), we have
\begin{align}\label{e.L4}
    \begin{split}
        \tilde{g}_n \otimes_r \tilde{g}_n (t_1, \ldots, t_{4-r}, s_1, \ldots, s_{4-r})   = (k_{-1}^n\cdot k_1^n)^2  \sum_{\iv, \jv \in \mathbb{N}^6, |\iv|=|\jv|=4} c_{\iv}c_{\jv} \cdot A_{\iv \jv}^n,
    \end{split}
\end{align}
where $A_{\iv \jv}^n$ is the quantity 
\begin{align}\label{e.L42}
    \begin{split}
        \int_{\D}\int_{\D}\pS{
            \underset{[0,1]^r}{\int} f_n^{\iv}(x, t_1, \ldots, t_{4-r}, u_1, \ldots, u_r) \cdot f_n^{\jv}(y,s_1, \ldots, s_{4-r}, u_{1}, \ldots, u_r) du_1 \ldots du_r
        }dxdy.
    \end{split}
\end{align}
Consequently, for some numerical constant $\CC>0$, we can write 
\begin{align}\label{e.L5}
    \begin{split}
        & \abs{\abs{\widetilde{g}_n \otimes_r \widetilde{g}_n }}_{\LL^2([0,1]^{8-2r})}^2 \leq \CC \cdot (k_{-1}^n \cdot k_1^n)^4 \cdot \underset{\iv, \jv \in \mathbb{N}^6, |\iv|=|\jv|=4}{\max}
        B_{\iv \jv}^n,
    \end{split}
\end{align}
where $B_{\iv \jv}^n$ denotes
\begin{align}\label{e.L52}
    \begin{split}
        \abs{\abs{
        \int_{\D}\int_{\D}\pS{
            \underset{[0,1]^r}{\int} f_n^{\iv}(x,  u_1, \ldots, u_r, t_{r+1}, \ldots, t_{4}) \cdot f_n^{\jv}(y,u_{1}, \ldots, u_r, s_{r+1}, \ldots, s_{4}) du_1 \ldots du_r
        }dxdy
        }}.
    \end{split}
\end{align}
Here, the norm $||\cdot||$ is in the sense of $\LL^2([0,1]^{8-2r}, dt_{r+1} \ldots dt_4 ds_{r+1} \ldots ds_4)$.
The maximum of $B_{\iv \jv}^n$ can be upper-bounded by the maximum of the quantities $C^n_{\mathbf{p},\mathbf{q},\mathbf{l},\mathbf{m}}$
\begin{align}\label{UDI28}
    \begin{split}
 \abs{\abs{\int_{\D}\int_{\D}  \prod_{v=1}^r \int_{0}^1 f_{p_v, l_v}(k_{p_v}^n x, u)f_{q_v, m_v}(k_{q_v}^n y, u) du  \cdot \prod_{v=r+1}^4 f_{p_v, l_v}(k_{p_v}^n x, t_v)f_{q_v, m_v}(k_{q_v}^n y, z_v) dxdy}}, 
    \end{split}
\end{align}
where $\mathbf{p},\mathbf{q} \in \{-1,1\}^{\otimes 4}$ and $\mathbf{l},\mathbf{m} \in \{-1,0,1\}^{\otimes 4}$. Indeed, this  follows directly from definition (\ref{d.f_q(k,x,.)}) (see also example (\ref{e.L3})). As a consequence of the isometry (\ref{e.L1}), we have 
\begin{align}\label{UDI30}
    \begin{split}
         \prod_{v=1}^r \int_{0}^1 f_{p_v, l_v}(k_{p_v}^n x, u)f_{q_v, m_v}(k_{q_v}^n y, u) du & = \prod_{v=1}^r\mathbb{E}\pQ{\tilde{\partial}_{l_v} b_{k_{p_v}}(x) \cdot 
         \tilde{\partial}_{m_v} b_{k_{q_v}}(y) } \\
         & = \prod_{v=1}^r \delta_{p_v, q_v} \cdot r_{l_v, m_v}(k_{p_v}^n(x-y)).
    \end{split}
\end{align}
Consequently, $C^n_{\mathbf{p},\mathbf{q},\mathbf{l},\mathbf{m}}$ defined in (\ref{UDI28}) can be bounded by 
\begin{align}\label{UDI31}
    \begin{split}
D_{\mathbf{p},\mathbf{q},\mathbf{l},\mathbf{m}}^n := & \int_{\D^4} \prod_{v=1}^4 r_{l_v,m_v}(k_{p_v}^n(x-y))\cdot r_{l_v,m_v}(k_{q_v}^n(\tilde{x}-\tilde{y}))  \\
        & \quad \times \prod_{v=r+1}^4 r_{l_v,l_v}(k_{p_v}^n(x-\tilde{x})) \cdot r_{l_v,l_v}(k_{p_v}^n(y-\tilde{y})) \\
        & \quad \times \prod_{v=r+1}^4 r_{m_v,m_v}(k_{q_v}^n(x-\tilde{x})) \cdot r_{m_v,m_v}(k_{q_v}^n(y-\tilde{y}))  dxdy d\tilde{x} d\tilde{y}.
    \end{split}
\end{align}
Here, for instance, we have also used the fact that
\begin{align}\label{UDI32}
    \begin{split}
                 \underset{[0,1]^{4-r}}{\int} \prod_{v=r+1}^4 f_{p_v, l_v}(k_{p_v}^nx, t_v) \cdot f_{p_v, l_v}(k_{p_v}^n\tilde{x}, t_v) dt_{r+1} \ldots dt_4 & = 
                 \prod_{v=r+1}^4 r_{l_v, l_v}(k_{p_v}^n(x-\tilde{x})).\\
    \end{split}
\end{align}
which holds for the same reasons as (\ref{UDI30}). (We note that, the only reasons for which (\ref{UDI31}) is an upper bound and not an equality, is that we had disregarded 
the products of the Kronecker's delta symbols.) Finally, we arrive at the bound
\begin{align}\label{UDI33}
    \begin{split}
        \abs{\abs{\widetilde{g}_n \otimes_r \widetilde{g}_n }}_{\LL^2([0,1]^{8-2r})}^2 \leq \CC \cdot (k_{-1}^n \cdot k_1^n)^4 \cdot L_n,
    \end{split}
\end{align}
where, with $D_{\mathbf{p},\mathbf{q},\mathbf{l},\mathbf{m}}^n$ denoting the integrals given in (\ref{UDI31}), we have defined
\begin{align}\label{UDI34}
    \begin{split}
        L_n := \max\{D_{\mathbf{p},\mathbf{q},\mathbf{l},\mathbf{m}}^n : \mathbf{p},\mathbf{q} \in \{-1,1\}^{\otimes 4},  \quad \mathbf{l},\mathbf{m} \in \{-1,0,1\}^{\otimes 4}
        \}.
    \end{split}
\end{align}

\paragraph{Step 4.} The preceding point has yielded an estimate 
on the contraction norms that now we will be
able to turn into an upper bound which shows
that the properly normalised contraction norms
are vanishing in the limit. We observe that, for some numerical constant $\CC>0$, the quantity $L_n$ defined in (\ref{UDI34}) satisfies
\begin{align}
    \begin{split}
        L_n & \leq \CC \cdot \max_{p \in \{-1,1\}} \max_{a \in \{0,1,2\}}  
        \int_{\D^4}\abs{\J_a(k_p^n(x-y))}^r \cdot \abs{\J_a(k_p^n(\tilde{x}-\tilde{y}))}^r \\
        & \qquad \qquad \cdot
        \abs{\J_a(k_p^n(x-\tilde{x}))}^{4-r} \cdot \abs{\J_a(k_p^n(y-\tilde{y}))}^{4-r} dxdy d\tilde{x}d\tilde{y}        
    \end{split}
\end{align}
It has been shown in \cite[p. 131, Lemma 8.1]{NPR19} that for some numerical constant $\CC>0$ 
we have for each $a \in \{0,1,2\}$ and $p \in \{-1,1\}$ that 
\begin{align}\label{e.L6}
    \begin{split}
        & \int_{\D^4}\abs{\J_a(k_p^n(x-y))}^r \cdot \abs{\J_a(k_p^n(\tilde{x}-\tilde{y}))}^r\cdot
        \abs{\J_a(k_p^n(x-\tilde{x}))}^{4-r} \cdot \abs{\J_a(k_p^n(y-\tilde{y}))}^{4-r} dxdy d\tilde{x}d\tilde{y} \\
        & \leq C \cdot (1+\mathrm{diam}(\D)^2) \cdot \frac{\ln k_p^n}{(k_p^n)^4}.
    \end{split}
\end{align}
Thus, for some numerical constant $\CC>0$ we have
\begin{align}\label{UPS1}
    \begin{split}
        \abs{\abs{\widetilde{g}_n \otimes_r \widetilde{g}_n }}_{\LL^2([0,1]^{8-2r})}^2 \leq \CC \cdot (1+\mathrm{diam}(\D)^2) \cdot K_n^4 \ln K_n. 
    \end{split}
\end{align}

\paragraph{Step 5.}
 We recall from (\ref{UDI27}) that $\widetilde{g}_n = \II_4\pS{Y_n[4]}$ and we let $Z$ denote a standard
normal Gaussian random variable. Suppose first that the asymptotic ratio $r$ defined in (\ref{d.aspa}) is strictly positive. Using Theorem \ref{t.4mth} we have that, for some numerical constant $\CC>0$, we have
\begin{align}\label{e.final_1_dim_W1_bound}
    \begin{split}
       W_1\pS{\frac{Y_n[4]}{\sqrt{{\Var\pS{Y_n[4]}}}},Z} 
        & \leq
        C \cdot \max_{1 \leq r \leq 3} \abs{\abs{ \frac{\widetilde{g}_n}{\sqrt{{\Var Y_n[4]}}} \otimes_r \frac{\widetilde{g}_n}{\sqrt{{\Var Y_n[4]}}}}}_{\LL^2([0,1]^{8-2r})} \\
        & \leq C \cdot  (1+\mathrm{diam}(\D))\cdot \frac{K_n^2\sqrt{\ln K_n}}{\Var Y_n[4]}\\
        & = \frac{C \cdot  \pS{1+\mathrm{diam}(\D)}\cdot \delta_n^2}{\sqrt{ \ln K_n}} \\
        \end{split}
\end{align}
where we have also used (\ref{UPS1}). Now, if
$r=0$, then the nodal number is fully correlated 
with the nodal length (see Lemma \ref{l.acwnl})
and we can refer to the estimates obtained in \cite{NPR19}
which yield analogous bounds. 

\paragraph{Step 6.} We have now reached the final stage of the proof and have gathered all the ingredients necessary to establish the quantitative CLT. Let $Z$ denote standard Gaussian random variable and recall the auxilliary notation 
$Y_n = \N(b_{k_n},\hat{b}_{K_n},\D)$. We observe that 
\begin{align}\label{d.AHA}
    \begin{split}
  W_1\pS{\frac{Y_n-\mathbb{E}Y_n}{\sqrt{\Var{Y_n}}},Z} & \leq W_1\pS{\frac{Y_n-\mathbb{E}Y_n}{\sqrt{\Var{Y_n}}}, \frac{Y_n[4]}{\sqrt{\Var{Y_n[4]}}}}
        + W_1\pS{\frac{Y_n[4]}{\sqrt{\Var{Y_n[4]}}},Z} \\
        & \leq \sqrt{\mathbb{E}\pS{\frac{Y_n-\mathbb{E}Y_n}{\sqrt{\Var{Y_n}}}- \frac{Y_n[4]}{\sqrt{\Var{Y_n[4]}}}}^2}+W_1\pS{\frac{Y_n[4]}{\sqrt{\Var{Y_n[4]}}},Z}\\
         & \leq \frac{L\cdot \gamma_n}{\sqrt{ \ln K_n}},
    \end{split}
\end{align}
where the first term has been bounded using (\ref{S3}) and the second term using (\ref{e.final_1_dim_W1_bound}). This concludes the proof of Theorem \ref{t.clt}.
\end{proof}

\subsection{Multivariate Central Limit Theorem}\label{proofs_of_the_distributional_convergences.ss.multivariate_central_limit_theorem}

In the subsequent proof of the multivariate Central Limit Theorem (Theorem \ref{t.cltm}), we incorporate the fundamental components from the univariate CLT's proof (Theorem \ref{t.clt}). While it necessitates extra technical effort, the core section of the proof is accomplished by leveraging the results contained in \cite{Vidotto2021}.

\begin{proof}[Proof of Theorem \ref{t.cltm}]

The proof is structured into four distinct steps. The initial step involves approximating through the $4$-chaotic projection. Subsequently, the second step addresses the convergence of covariances. The third step provides a control on the $\mathbf{W}_1$ distance between the centred Gaussian vectors $\mathbf{Z}_n$ and $\mathbf{Z}$ which have covariances $\Sigma^n$ and $\Sigma$, respectively (see (\ref{UDI5})). In the last step we combine the Multivariate 4-th Moment Theorem on the Wiener Chaos with the preceeding observations and prove (\ref{UDI6})-(\ref{UDI7}), conluding the argument.

\paragraph{Step 1.} We recall that $\mathbf{Z} = (Z_1, \ldots, Z_m)$ denotes a centred Gaussian vector such that for each $1\leq i,j \leq m$ we have
$\Cov(Z_i, Z_j) = \text{area}(\D_i \cap \D_j)$, and that we use the notation
\begin{align}\label{e.L7}
    \begin{split}
        Y_n^i  = \N(b_{k_n},\hat{b}_{K_n},\D_i), \qquad
        \mathbf{Y}_n  = (Y_n^1, \ldots, Y_n^m),
    \end{split}
\end{align}
where $i = 1, \ldots, m$. We note that
\begin{align}\label{e.L8}
    \begin{split}
        & \mathbf{W}_1\pS{\frac{\mathbf{Y}_n-\mathbb{E}\mathbf{Y}_n}{\sqrt{C_{\infty}\cdot K_n^2\ln K_n}}, \mathbf{Z}} \\
        &= \mathbf{W}_1\pS{\frac{\mathbf{Y}_n[4]}{\sqrt{C_{\infty}\cdot K_n^2\ln K_n}} + \frac{\sum_{q \geq 3}\mathbf{Y}_n[2q]}{\sqrt{C_{\infty}\cdot K_n^2\ln K_n}}, \mathbf{Z}} \\
        & \leq  \mathbf{W}_1\pS{\frac{\mathbf{Y}_n[4]}{\sqrt{C_{\infty}\cdot K_n^2\ln K_n}} + \frac{\sum_{q \geq 3}\mathbf{Y}_n[2q]}{\sqrt{C_{\infty}\cdot K_n^2\ln K_n}}, \frac{\mathbf{Y}_n[4]}{\sqrt{C_{\infty}\cdot K_n^2\ln K_n}}} + \mathbf{W}_1\pS{\frac{\mathbf{Y}_n[4]}{\sqrt{C_{\infty}\cdot K_n^2\ln K_n}}, \mathbf{Z}}\\
        & \leq \sum_{i=1}^m \mathbb{E}\abs{\sum_{q \geq 3}\frac{Y_n^i[2q]}{\sqrt{C_{\infty}\cdot K_n^2 \ln K_n}}} + \mathbf{W}_1\pS{\frac{\mathbf{Y}_n[4]}{\sqrt{C_{\infty}\cdot K_n^2\ln K_n}}, \mathbf{Z}}\\
        & \leq \frac{\sum_{i=1}^m \sqrt{\sum_{q\geq 3}\Var Y_n^i[2q]}}{\sqrt{C_{\infty}\cdot K_n^2\ln K_n}} + \mathbf{W}_1\pS{\frac{\mathbf{Y}_n[4]}{\sqrt{C_{\infty}\cdot K_n^2\ln K_n}}, \mathbf{Z}}\\
        & \leq  \frac{L\cdot \sum_{i=1}^m (1+\mathrm{diam}(\D_i)^2)}{\sqrt{C_{\infty}\cdot \ln K_n}}+ \mathbf{W}_1\pS{\frac{\mathbf{Y}_n[4]}{\sqrt{C_{\infty}\cdot K_n^2\ln K_n}}, \mathbf{Z}},
    \end{split}
\end{align}
where $L$ is a numerical constant we can obtain thanks to Lemma \ref{l.fcrr4chpr}.

\paragraph{Step 2.}
We note the following extension of the Lemma \ref{l.cova}: for each $1 \leq i,j \leq m$ we have
\begin{align}\label{e.L9}
    \begin{split}
        \lim_{n\to\infty} \Cov\pS{\frac{Y_n^i[4]}{\sqrt{C_{\infty}\cdot K_n^2 \ln K_n}}, \frac{Y_n^j[4]}{\sqrt{C_{\infty}\cdot K_n^2 \ln K_n}}}
        = \text{area}(\D_i \cap \D_j).
    \end{split}
\end{align}
This follows by a tedious but straightforward adaptation
of the argument given in \cite[p. 1006, Proposition 5.1]{PV19}, in \cite[p. 1006, Proposition 5.2]{PV19}, and in
\cite[p. 1011, Proof of Theorem 3.2]{PV19}.

\paragraph{Step 3.} We recall that the matrices $\Sigma^n$ and $\Sigma$ are defined by setting for each $1 \leq i,j \leq m$ 
\begin{align}\label{e.L11}
    \begin{split}
        \Sigma^n_{ij}  =  \Cov\pS{Y_i^n,Y_j^n}, \qquad \Sigma_{ij}  = \text{area}(\D_i \cap \D_j).
    \end{split}
\end{align}
Suppose that the matrix $\Sigma$ is strictly positive definite. We had already proved that 
for each $1 \leq i,j \leq m$ we have $\Sigma_{ij}^n \to \Sigma_{ij}$ and so, for $n$ sufficiently large, it must be that $\Sigma^n$ is strictly positive definite. Then, 
using \cite[p. 126, Eq. (6.4.2)]{NP12}, we have that for two centred Gaussian vectors $\mathbf{Z}^n=(Z_n^1, \ldots, Z_n^m) \sim \mathcal{N}_m(0,\Sigma^n)$, $\mathbf{Z}=(Z^1, \ldots, Z^m) \sim \mathcal{N}_m(0,\Sigma)$, 
we have a bound
\begin{align}\label{e.L12}
    \begin{split}
        \mathbf{W}_1(\mathbf{Z}_n, \mathbf{Z}) & \leq \mathrm{M}(\Sigma^n, \Sigma) \cdot ||\Sigma^n - \Sigma||_{\mathrm{HS}}, 
    \end{split}
\end{align}
where 
\begin{align}\label{e.L13}
    \begin{split}
        \mathrm{M}(\Sigma^n, \Sigma) := \sqrt{m} \cdot \min\Big\{||(\Sigma^n)^{-1}||_{\mathrm{op}} \cdot ||\Sigma^n||_{\mathrm{op}}^{1/2}, ||\Sigma^{-1}||_{\mathrm{op}} \cdot ||\Sigma||_{\mathrm{op}}^{1/2}\Big\}.
    \end{split}
\end{align}
Here, `HS' and `op' stand respectively for `Hilbert-Schmidt' and `operator', see \ref{n.matrix_norms}.

\paragraph{Step 4.}
Since each $Y_n^i[4]$ is an element of the $4$-th Wiener Chaos 
we can find functions $f_n^i \in \LL_s^2([0,1]^4)$ such that $Y_n^i[4]= \II_4(f_n^i)$. Then, we can use the multivariate version of the $4$-th Moment Theorem (\cite[p. 121, Theorem 6.2.2]{NP12}), in the form 
recorded in Theorem \ref{UDI10}, to obtain 
\begin{align}\label{e.L150}
    \begin{split}
       d_{C^2}\pS{\frac{\mathbf{Y}_n[4]}{\sqrt{C_{\infty}\cdot K_n^2\ln K_n}},\mathbf{Z}_n} &\leq \frac{C_4 \cdot \sum_{i=1}^m \sum_{r=1}^{3} \abs{\abs{f_n^i \otimes_r f_n^i}}_{\LL^2([0,1]^{8-2r})}}{\sqrt{C_{\infty}\cdot K_n^2\ln K_n}} \\
        & \leq \frac{L\cdot \sum_{i=1}^m (1+\mathrm{diam}(\D_i))}{\sqrt{C_{\infty}\cdot \ln K_n}},\\
    \end{split}
\end{align}
which, together with (\ref{e.L8}), yields the first of postulated inequalities - (\ref{UDI6}). Similarly, if $\Sigma$ is strictly positive definite then, for $n$ sufficiently large, $\Sigma_n$ is strictly positive definite and we can write 
\begin{align}\label{e.L15}
    \begin{split}
        \mathbf{W}_1\pS{\frac{\mathbf{Y}_n[4]}{\sqrt{C_{\infty}\cdot K_n^2\ln K_n}}, \mathbf{Z}_n} & \leq \CC_4 \cdot m^{3/2} \cdot ||\Sigma_n^{-1}||_{op} \cdot ||\Sigma_n||_{op}^{1/2} \cdot 
                        \sum_{i=1}^m \sum_{r=1}^{3} \abs{\abs{f_n^i \otimes_r f_n^i}}_{\LL^2([0,1]^{8-2r})}\\
                        & \leq  m^{3/2} \cdot ||(\Sigma^n)^{-1}||_{op} \cdot ||\Sigma^n||_{op}^{1/2} \cdot \frac{L\cdot \sum_{i=1}^m (1+\mathrm{diam}(\D_i))}{\sqrt{C_{\infty}\cdot \ln K_n}},
    \end{split}
\end{align}
where as before we have used the corresponding $1$-dimensional bound (\ref{e.final_1_dim_W1_bound}). The second postulated inequality - 
(\ref{UDI7}) - follows immediately by combining (\ref{e.L8}), (\ref{e.L12}) and (\ref{e.L15}). 
\end{proof}

\subsection{Convergence to the White Noise in the Space of Random Distributions}\label{proofs_of_the_distributional_convergences. ss.convergence_to_the_white_noise_in_the_space_of_random_distributions}

Since the uni- and multivariate Central Limit Theorems had already been proved (Theorem \ref{t.clt} and Theorem \ref{t.cltm})
we can proceed with the demonstration of the convergence to the white noise, that is of Theorem \ref{t.whc}. We note that 
Appendix \ref{ss.rdis} gathers basic information about the space of random distributions (random generalised functions).

\begin{proof}[Proof of Theorem \ref{t.whc}] We split the argument into several steps. In the first step, we verify that $\mu_n$
is a random generalized function. The proof of convergence to White Noise relies on an abstract result by Fernique, which requires us to verify two conditions. The first condition, checked in Step 2, involves the continuity of the characteristic functional, while Steps 3 and 4 address the second condition, which concerns pointwise convergence when evaluated on test functions.

\paragraph{Step 1.} We start by noting that the problem 
is well-posed. That is, each map
\begin{align}\label{e.L16}
    \begin{split}
        \mathcal{S}(\mathbb{R}^2) \ni \varphi \mapsto \int_{0}^1\int_{0}^1 \varphi(t_1,t_2)\mu_n(dt_1 dt_2)
    \end{split}
\end{align}
is a.s. a tempered distribution since it is a finite linear combination of the Dirac's delta functions $\delta_{y_{l}(\omega)}$ (points $y_l(\omega)$ are random) and of the deterministic distribution 
$\varphi \mapsto \int_{0}^1 \int_{0}^1 \varphi(t_1, t_2)dt_1 dt_2$. Secondly, it is a standard fact that the
white noise $W(dt_1 dt_2)$ is a random tempered distribution \cite[p. 1]{Dalang2017}.
Since $C_c^{\infty}([0,1]^2)\subset \mathcal{S}(\mathbb{R}^2)$ with 
topology generated by the same family of semi-norms, it follows that both
of these functionals are elements of $(C_c^{\infty}([0,1]^2))'$.

\paragraph{Step 2.} As a consequence of \cite[Theorem III.6.5, p. 69]{Fer67} the convergence
of random distributions $\mu_n(dt_1 dt_2)$ to the white noise $W(dt_1 dt_2)$ in the sense
of weak and strong topologies is equivalent to the two conditions. 
The first one is the continuity of the characteristic
 functional $\varphi \mapsto \mathbb{E}e^{i\langle W, \varphi \rangle}$ on 
 $C_c^{\infty}([0,1]^2)$. Note first that the convergence of $\varphi_n$
 to $\varphi$ in the space $C_c^{\infty}([0,1]^2)$ implies in 
 particular that $\sup_{0\leq t_1, t_2 \leq 1}\abs{\varphi_n(t_1, t_2) - \varphi_n(t_1, t_2)}$ converges to zero. 
 Then, using Gaussianity and isometry  $\Var{\langle W, \varphi_n \rangle} = ||\varphi_n ||_{\LL^2([0,1]^2)}^2$ we obtain as required 
\begin{align}\label{e.L28}
    \begin{split}
        \mathbb{E}[e^{i\langle W, \varphi_n \rangle}] = e^{-\frac{1}{2}||\varphi_n||_{\LL^2([0,1]^2)}^2} \overset{n\to \infty}{\longrightarrow} e^{-\frac{1}{2}||\varphi||_{\LL^2([0,1]^2)}^2} = \mathbb{E}[e^{i\langle W, \varphi \rangle}]. 
    \end{split}
\end{align}

\paragraph{Step 3.} The second condition required by
 \cite[p. 69, Thm. III.6.5]{Fer67} is the convergence in law 
\begin{align}\label{e.L17}
    \begin{split}
        \langle \mu_n(dt_1 dt_2), \varphi \rangle \overset{d}{\longrightarrow} \langle W(dt_1 dt_2), \varphi \rangle 
    \end{split}
\end{align}
for every test function $\varphi \in C_c^{\infty}([0,1]^2)$.
The construction as in \cite[p. 13-21, 2.1 White noise]{Holden2010}
provides a version of a white noise as a random integral i.e. the postulated convergence can be written as
\begin{align}\label{e.pctwn}
    \begin{split}
        \int_{0}^1\int_{0}^1 \varphi(t_1,t_2)\mu_n(dt_1 dt_2)
\overset{d}{\longrightarrow} \int_{0}^1\int_{0}^1 \varphi(t_1,t_2)W(dt_1 dt_2).
    \end{split}
\end{align}
One can use integration by parts for Wiener-Ito integrals as in 
\cite[p. 18, Eq. (2.1.15) and (2.1.16)]{Holden2010} to obtain equality 
in $\LL^2$
\begin{align}\label{e.wnibp}
    \begin{split}
        \int_{0}^1\int_{0}^1 B_{t_1,t_2}\cdot  \frac{\partial^2}{\partial t_1 \partial t_2}\varphi(t_1, t_2) dt_1 dt_2 = \int_{0}^1\int_{0}^1\varphi(t_1,t_2) W(dt_1 dt_2), 
    \end{split}
\end{align}
where the process $B_{t_1,t_2}$ denotes the Wiener sheet. 
We adopt the notation 
\begin{align}
    \begin{split}
        B_{t_1,t_2}^{n} = \frac{\N(b_{k_n},\hat{b}_{K_n},[0,t_1]\times [0,t_2])-\frac{k_n K_n}{4\pi}\cdot t_1t_2}{\sqrt{ C_{\infty}\cdot K_n^2\ln K_n}},
    \end{split}
\end{align}
where $C_{\infty}$ is the constant defined in (\ref{d.sigma_n^2}).
We observe that the convergence in distribution 
\begin{align}\label{e.ciit}
    \begin{split}
        \int_{0}^1\int_{0}^1 B_{t_1,t_2}^{n} \cdot \frac{\partial^2}{\partial t_1 \partial t_2}\varphi(t_1,t_2) dt_1 dt_2 \overset{d}{\longrightarrow}
        \int_{0}^1\int_{0}^1 B_{t_1,t_2} \cdot \frac{\partial^2}{\partial t_1 \partial t_2}\varphi(t_1,t_2)dt_1 dt_2
    \end{split}
\end{align}
will follow immediately by \cite[p. 20, Theorem 4]{Ivanov1980}
once we verify the three corresponding assumptions. The first condition 
is a convergence of stochastic processes in the sense of finite dimensional distributions
\begin{align}
    \begin{split}
        (B_{t_1,t_2}^n)_{0 \leq t_1, t_2 \leq 1} \longrightarrow (B_{t_1,t_2})_{0 \leq t_1, t_2 \leq 1},
    \end{split}
\end{align}
which means that for every choice of $m\in \mathbb{N}$ and $0 \leq t_1, t_2, \ldots, t_{2m-1}, t_{2m} \leq 1$, we have
a convergence in distribution of random vectors
\begin{align}
    \begin{split}
        (B_{t_1,t_2}^n,B_{t_1,t_2}^n, \ldots, B_{t_{2m-1},t_{2m}}^n) \overset{d}{\longrightarrow}
        (B_{t_1,t_2},B_{t_1,t_2}, \ldots, B_{t_{2m-1},t_{2m}}).
    \end{split}
\end{align}
This however is a special case of Theorem \ref{t.cltm} with a choice of domains
$$\D_1 =[0,t_1] \times [0,t_2], \D_2 =[0,t_3] \times [0,t_4], \ldots, \D_{m} = [0,t_{2m-1}]\times [0,t_{2m}].$$
The second condition is that for each $0 \leq t_1, t_2 \leq 1$ we have a convergence of second moments
$\mathbb{E}(B_{t_1, t_2}^{n})^2 \to \mathbb{E}B_{t_1, t_2}^2$, which follows from Theorem \ref{t.avar}
applied to the domain $\D=[0,t_1] \times [0,t_2]$. The last
condition is that 
\begin{align}
    \begin{split}
        \lim_{n\to \infty} \sup_{0 \leq t_1, t_2 \leq 1} \mathbb{E}(B_{t_1, t_2}^n)^2 < \infty. 
    \end{split}
\end{align}
We note that, using Lemma \ref{l.fcrr4chpr} and following the strategy used in the proof of Lemma \ref{l.appf}, we can find the numerical
constants $0 < \CC_1 \leq \CC_2 \leq \CC_3 \leq \CC_4$ s.t. for each $n\in \mathbb{N}$ we have
\begin{align}
    \begin{split}
        \sup_{0 \leq t_1, t_2 \leq 1} \mathbb{E}(B_{t_1, t_2}^n)^2 & =
        \frac{\sup_{0 \leq t_1, t_2 \leq 1}\Var\pS{\N(b_{k_n},\hat{b}_{K_n},\D)}}{C_{\infty}\cdot K_n^2\ln K_n} \\
        & \leq
        \frac{\CC_1}{r^{log}\ln K_n}\cdot \pS{1+\frac{\sup_{0 \leq t_1, t_2 \leq 1}\Var\pS{\N(b_{k_n},\hat{b}_{K_n},\D)[4]}}{K_n^2}} \\
        & \leq \frac{\CC_2}{r^{log}} \cdot \pS{1+\max_{i,j\in \{-1,0,1\}}  \int_{B(0,2)}r_{ij}(k_nz)^4dz+\max_{i,j\in \{-1,0,1\}}  \int_{B(0,2)}r_{ij}(K_nz)^4dz} \\
        & \leq \frac{\CC_3}{r^{log}\ln K_n}\cdot (1+\ln K_n) \leq\frac{\CC_4}{r^{log}}.
    \end{split}
\end{align}

\paragraph{Step 4.}
Now we will show that the left-hand side of the equation (\ref{e.ciit}) is exactly as needed
to deduce the convergence postulated in (\ref{e.pctwn}) from formula (\ref{e.wnibp}). That is, 
that we have
\begin{align}\label{e.L25}
    \begin{split}
        \langle \mu_n(dt_1 dt_2), \varphi \rangle & := \int_{0}^1\int_{0}^1  \varphi(t_1,t_2) \mu_n(dt_1 dt_2) \\
         & =  \int_{0}^1\int_{0}^1 B_{t_1, t_2}^{n} \cdot \frac{\partial^2}{\partial t_1 \partial t_2} \varphi(t_1, t_2) dt_1 dt_2.
    \end{split}
\end{align}
We denote \( B := \{(s_1, s_2) \in [0,1]^2 : b_{k_n}(s_1,s_2) = \hat{b}_{K_n}(s_1,s_2) = 0\} \) and we observe that 
\begin{align}\label{e.L27}
\begin{split}
&\int_0^1 \int_0^1 B_{t_1,t_2}^n \cdot \frac{\partial^2}{\partial t_1 \partial t_2} \varphi(t_1,t_2) dt_1 dt_2 \\
&= \frac{1}{\sqrt{C_{\infty}\cdot K_n^2 \ln K_n}} \int_0^1 \int_0^1 \sum_{(s_1,s_2) \in B} \mathbb{1}_{[0,s_1]}(t_1) \mathbb{1}_{[0,s_2]}(t_2) \cdot \frac{\partial^2}{\partial t_1 \partial t_2} \varphi(t_1,t_2) dt_1 dt_2 \\
&\quad - \frac{k_n K_n}{4\pi\sqrt{C_{\infty}\cdot K_n^2 \ln K_n}} \int_0^1 \int_0^1  t_1 t_2 \cdot \frac{\partial^2}{\partial t_1 \partial t_2} \varphi(t_1,t_2) dt_1 dt_2.
\end{split}
\end{align}
We observe that, for every \((s_1, s_2) \in [0,1]\), using the fact that \(\varphi\) has a compact support contained in \((0,1)^2\) and integrating by parts, we find that
\begin{align}\label{e.L27'}
\begin{split}
\int_0^1 \int_0^1 \mathbb{1}_{[0,s_1]}(t_1) \mathbb{1}_{[0,s_2]}(t_2) \cdot \frac{\partial^2}{\partial t_1 \partial t_2} \varphi(t_1,t_2) dt_1 dt_2 
&= \int_0^{s_1} \int_0^{s_2} \frac{\partial^2}{\partial t_1 \partial t_2} \varphi(t_1,t_2) dt_1 dt_2 \\ t_1
&= - \int_0^{s_1} \frac{\partial}{\partial t_1} \varphi(t_1,s_1)- \frac{\partial}{\partial t_1} \varphi(t_1,0) dt_1 \\
&= \varphi(s_1, s_2).
\end{split}
\end{align}
Similarly,
\begin{align}\label{e.L27''}
\begin{split}
\int_0^1 \int_0^1 t_1 t_2 \cdot \frac{\partial^2}{\partial t_1 \partial t_2} \varphi(t_1,t_2) dt_1 dt_2 
&= \int_0^1 t_1 t_2\cdot \left. \frac{\partial}{\partial t_1} \varphi(t_1,t_2) \right|_{t_2=0}^{t_2=1} dt_1 - \int_0^1 \int_0^1 t_1 \cdot \frac{\partial}{\partial t_1} \varphi(t_1,t_2) dt_1 dt_2 \\
&= - \int_0^1 \int_0^1 t_1 \cdot \frac{\partial}{\partial t_1} \varphi(t_1,t_2) dt_1 dt_2 \\
& =  \int_0^1 \int_0^1 \varphi(t_1,t_2) dt_1 dt_2.
\end{split}
\end{align}
Comparing (\ref{e.L27}) with (\ref{e.L27'}) and (\ref{e.L27''})
we obtain (\ref{e.L25}). We conclude that the proof of Theorem \ref{t.whc} has been completed.
\end{proof}

\section{Proof of the Reduction Principle}\label{s.proof_of_the_reduction_principle}

In this section, we focus on proving Theorem \ref{t.fcrr}. The necessary computations for this proof are intimately connected to those we conducted in proving Lemma \ref{l.72}.

\begin{proof}[Proof of Theorem \ref{t.fcrr}]
We immediately note that the case $r=0$ is directly derived from Lemma \ref{l.acwnl} and the established Reduction Principle for nodal length, as outlined in \cite[p. 3, Theorem 1.1]{Vidotto2021}. Consequently, our focus shifts to the case where $r$ lies in the interval (0,1]. The overall strategy of the proof is straightforward: after reducing the problem to the analysis of the 4th chaotic projection, we treat the remaining integrals as vectors within a finite-dimensional subspace of $L^2$. This reformulation transforms the problem into a linear algebra exercise focused on the asymptotic covariance matrix. The argument will be divided into four distinct paragraphs for clarity. 

\paragraph{Step 1.}
We recall the notation \ref{N.S1}-\ref{N.S6}. It follows by $\LL^2$-equivalence (\ref{UDI4})
that we can restrict our analysis to the 4-th chaotic
projection $\N(b_{k_{-1}^n},b_{k_1}^n,\D)[4]$.
Furthermore, decomposition 
\begin{align}\label{UPS2}
    \begin{split}
        \N(b_{k_{-1}^n},b_{k_1}^n,\D)[4] =
        \frac{1}{\pi \sqrt{2}} \cdot 
        \sum_{p \in \{-1,1\}} k_{-p}^n \cdot\mathcal{L}(b_{k_{p}^n},\D)[4]
        + \mathrm{Cross}\pS{\N(b_{k_{-1}^n},b_{k_1}^n,\D)[4]},
    \end{split}
\end{align}
splits this projection into 3 uncorrelated parts (see (\ref{d.3dec})). Consequently, and thanks to a linear nature of the problem, we can analyse 
each of the terms in (\ref{UPS2}) separately. The first two terms on the right of the postulated
formula (\ref{d.Y_fcrr}) correspond to the 
first two terms in (\ref{UPS2}). As in 
the case of $r=0$ discussed above, the full-correlation 
and $\LL^2$-equivalence for these terms is an immediate
consequence of the Reduction Principle for the nodal 
length. Thus, from now on we only need to 
focus on the $\mathrm{Cross}(\N(b_{k_{-1}^n},b_{k_{1}^n},\D)[4])$
and its relationship with remaining 3 terms on the right-hand side of (\ref{d.Y_fcrr}).

\paragraph{Step 2.} 
As mentioned at the beginning of this proof, we need to examine the asymptotic covariance matrix. In this step, we will demonstrate how, for our purposes, this matrix can be replaced by a simpler one (\ref{e.L34}). We recall that in Lemma \ref{l.cova} we have 
established the formula (\ref{e.cova}) which 
yields the asymptotic correlations between 
different terms contributing to $\mathrm{Cross}(\N(b_{k_{-1}^n},b_{k_1}^n,\D)[4])$. We recall the indexation 
defined in (\ref{d.T_index_set}) and for every $\iv, \jv \in T = \{-1,0,1\}^{\otimes 2}\cup \{*\}$ we set
\begin{align}\label{e.L29}
    \begin{split}
        \psi_{\iv \jv}^* & := 
        (-1)^{|\iv|+|\jv|}\cdot \widehat{\psi}_{\iv \jv}, \\
        \chi_{\iv \jv} & := 
        \widehat{\psi}_{\iv \jv} + \frac{r^{exp}}{2}\cdot \psi_{\iv \jv}^{*}
        = \pS{1+\frac{r^{exp}}{2}\cdot (-1)^{|\iv|+|\jv|}}\cdot \widehat{\psi}_{\iv \jv}.
    \end{split}
\end{align}
Here, we have used the same notation as in (\ref{e.cpsi}),
that is $\widehat{\psi}_{\iv \jv} = (2^6/\pi)\cdot \psi(\gamma(\iv)+\gamma(\jv))$, where $\gamma$ and 
$\psi$ are as defined in (\ref{d.shorthand_notation})--(\ref{d.shorthand_notation2}). Comparing (\ref{e.cova}) with (\ref{e.L29}) we see that, 
up to rescaling, the former is identical to the later. Thus,
in order to understand the structure of correlations
between different integrals contributing to $\mathrm{Cross}(\N(b_{k_{-1}^n},b_{k_1}^n,\D)[4])$, it is enough to study matrices 
\begin{align}
    \begin{split}
        \upchi := [\chi_{\iv \jv}]_{\iv, \jv \in T},
        \qquad 
        \Psi^* := [\psi_{\iv \jv}^*]_{\iv,\jv \in T}, 
        \qquad 
        \widehat{\Psi} := 
        [\widehat{\psi}_{\iv \jv}]_{\iv,\jv \in T},
    \end{split}
\end{align}
where we use the ordering defined in (\ref{d.T_index_set_ordering}).
We can readily see that the matrices $\upchi$, $\Psi^*$ and $\widehat{\Psi}$, have the same six groups of identical rows (equivalently, columns):  $\{1\}$, $\{2,4\}$, $\{3,7,10\}$, $\{5\}$, $\{6,8\}$ and $\{9\}$ (for $\upchi$ and $\widehat{\Psi}$
these groups correspond to asymptotically $\LL^2$-equivalent random integrals). Thus, we can focus instead on reduced $6 \times 6$ versions of these matrices, provided that for every matrix we have choose the same representative of each row-group. We note that:

\begin{itemize}
    \item[(a)]
If $r^{exp}=0$, then, the reduced form of matrix $\upchi$ is equal to
the reduced form of the matrix $\widehat{\Psi}$. This yields
the matrix
\begin{align}\label{e.L33}
    \begin{split}
\scalebox{0.8}{
$
  \left[ {\begin{array}{c|cccccc}
   & (-, -) & (-,0) & (-,+) & (0, 0) & (0, +) & (+,+) \\
   \hline 
  (-, -) & 35  & 40 & 5 & 48 & 8 & 3 \\
  (-, 0)&  40 & 48 & 8 & 64 & 16 &  8 \\
  (-, +)&  5 & 8 & 3 & 16& 8 & 5 \\
  (0, 0)& 48 & 64 & 16 & 128 & 64 & 48 \\
  (0, +)& 8 & 16 & 8 & 64 & 48 &  40 \\
  (+,+)& 3 & 8 & 5 &  48 & 40  & 35  \\  
  \end{array} } \right].
  $}
\end{split}
\end{align}
\item[(b)]
We compute that the reduced form of matrix $\Psi^*$ is 
\begin{align}\label{e.L32}
    \begin{split}
\scalebox{0.8}{
$
  \left[ {\begin{array}{c|cccccc}
   & (-, -) & (-,0) & (-,+) & (0, 0) & (0, +) & (+,+) \\
   \hline
   (-, -) & 35 & -40 & 5 & 48 & -8  & 3  \\ 
   (-,0) & -40 & 48 & -8 & -64 & 16 & -8 \\ 
   (-,+) & 5 & -8 & 3 & 16 & -8 & 5 \\
   (0, 0) & 48 & -64 & 16 & 128 & -64 & 48 \\ 
   (0, +) & -8 & 16 & -8 & -64 & 48 & -40 \\ 
   (+,+)& 3 & -8 & 5 & 48 & -40 & 35
  \end{array} } \right]. 
  $}
\end{split}
\end{align}
\item[(c)]
We compute that,
if $r^{exp}=1$, then, the reduced form of matrix $\upchi$ is equal to 
\begin{align}\label{e.L31}
    \begin{split}
\scalebox{0.8}{
$
  \frac{1}{2} \times \left[ {\begin{array}{c|cccccc}
   & (-, -) & (-,0) & (-,+) & (0, 0) & (0, +) & (+,+) \\
   \hline 
  (-, -) & 105  & 40 & 15 & 144 & 8 & 9 \\
  (-, 0)&  40 & 144 & 8 & 64 & 48 &  8 \\
  (-, +)&  15 & 8 & 9 & 48& 8 & 15 \\
  (0, 0)& 144 & 64 & 48 & 384 & 64 & 144 \\
  (0, +)& 8 & 48 & 8 & 64 & 144 &  40 \\
  (+,+)& 9 & 8 & 15 &  144 & 40  & 105  \\  
  \end{array} } \right].
  $}
\end{split}
\end{align}
\item[(d)] 
Let us for a moment write $t:=r^{exp}$ for the sake of visual simplicity. Combining the preceding points, we obtain that, in general (for any $t=r^{exp} \in [0,1]$), the reduced form 
of matrix $\upchi$ is 
\begin{align}\label{e.L34}
    \begin{split}
\scalebox{0.8}{
$
  \frac{1}{2} \times \left[ {\begin{array}{c|cccccc}
   & (-, -) & (-,0) & (-,+) & (0, 0) & (0, +) & (+,+) \\
   \hline
   (-, -) & 35(2+t)  & 40(2-t) & 5(2+t)  & 48(2+t) & 8(2-t)  & 3(2+t)  \\ 
   (-,0) & 40(2-t) & 48(2+t) & 8(2-t) & 64(2-t) & 16(2+t) & 8(2-t) \\ 
   (-,+) & 5(2+t)& 8(2-t) & 3(2+t) & 16(2+t) & 8(2-t) & 5(2+t) \\
   (0, 0) & 48(2+t) & 64(2-t) & 16(2+t) & 128(2+t) & 64(2-t) & 48(2+t) \\ 
   (0, +) & 8(2-t) & 16(2+t) & 8(2-t) & 64(2-t) & 48(2+t) & 40(2-t) \\ 
   (+,+)& 3(2+t) & 8(2-t) & 5(2+t) & 48(2+t) & 40(2-t) & 35(2+t)
  \end{array} } \right].
  $}
    \end{split}
\end{align}
\end{itemize}

\paragraph{Step 3.} 
In this step we will study the rank of the reduced matrix $\upchi$,
starting from the cases $r^{exp}=0$ and $r^{exp}=1$, and then proceeding to general scenario. In each case and depending on the rank of the matrix, we will fix a basis of corresponding random integrals and find coefficients in this basis which correspond to the remaining random integrals. 

\begin{itemize}
    \item[(a)] The reduced matrix $\upchi$ in scenario $r^{exp}=0$
    has been evaluated in (\ref{e.L33}). 
    It is not too difficult to check that, if $r^{exp}=0$,
then the matrix $\upchi$ has a rank $3$ and that as a corresponding
basis of random variables one can choose integrals
\begin{equation}\label{e.L35}
    \int_{\D}  \HH_2\pS{\widetilde{\partial}_i b_{k_n}(x)}\cdot 
\HH_2\pS{\widetilde{\partial}_i \hat{b}_{K_n}(x)}dx, \qquad i \in\{ -1, 0, 1\}.
\end{equation}
Solving for linear coefficients yields the matrix 
\begin{align}\label{e.lcoe}
    \begin{split}
        \left[
        \begin{array}{c|ccc} 
        & (-,0) & (-,+)& (0,+)\\
        \hline
        (-,-)& 1/2 & -1/2 & -1/2 \\
        (0,0)& 1/2 & 1/2 & 1/2 \\
        (+,+) & -1/2 & -1/2 & 1/2
        \end{array}
        \right],
    \end{split}
\end{align}
where each column gives coefficients
for one of the linearly dependent 
variables. For instance, the column labeled $(-,0)$
in the matrix (\ref{e.L33}) is the following 
weighted sum of the columns labelled $(-,-)$, $(0,0)$,
$(+,+)$: 
\[
\begin{aligned}
(40, 48, 8, 64, 16, 8)^{tr} &  = 
    1/2 \cdot  (35,  40,  5,  38,  8,  3)^{tr} \\
    & +1/2 \cdot  (  48,  64,  16,  128,  64,  48)^{tr} \\
    & -1/2 \cdot  (  3,  8,  5,  48,  40,  35)^{tr}. 
\end{aligned}
\]
    \item[(b)] 
    The reduced matrix $\upchi$ in scenario $r^{exp}=1$
    has been evaluated in (\ref{e.L31}).
    Similarly, one can check
that, if $r^{exp}=1$, then $\upchi$ has a rank $5$  where we 
can again choose the column labeled 
$(-,+)$ as the dependent one and where
the linear coefficients are as before
(supplemented by $0$). That is:
\[
\begin{aligned}
1/2\cdot (15, 8, 9, 48, 8, 15)^{tr} &  = 
    -1/4 \cdot  (15,  8,  9,  48,  8,  15)^{tr} \\
    & +1/4 \cdot  (  105,  40,  15,  144,  8,  9)^{tr} \\
    & -1/4 \cdot  (  9,  8,  15,  144,  40,  105)^{tr},
\end{aligned},
\]
while the remaining 5 columns are linearly independent. 
    \item[(c)] The form of matrix $\upchi$ for $r^{exp} \in (0,1)$
    has been given in (\ref{e.L34}).
    We observe that in this situation the matrix $\upchi$
   has the same structure of linear dependency as
  we observed when we had $r^{exp}=1$. To see this note first that
the parameter $r^{exp}$ affects identically 
columns in each of the groups: $\{1,3,5,6\}$,
$\{2,4\}$. This yields the $-1$ rank reduction and
re-use of the coefficients (as above) for the
first group of columns. Going further, it is not too difficult 
to verify that the corresponding (reduced) $5 \times 5$
matrix has zero determinant if and only if
$r^{exp} = 0$ (for this computation, it 
is convenient to divide each row by $2+r^{exp}$
and parametrise with $s=(2-r^{exp})/(2+r^{exp})$).
\end{itemize}

\paragraph{Step 4.}
In this last element of the proof, we combine 
observations about the asymptotic covariance structure
made in the two preceding points with the information 
about the deterministic constants associated with 
each of the relevant random integrals. 
Taking into consideration the identical 
columns (rows) in the matrix $\upchi$ and recalling the values of the deterministic coefficients $\eta_{\jv}$ from (\ref{d.constants_of_cross_term}),
we obtain 
\begin{align}\label{Cat}
    \begin{split}  
 \frac{k_{-1}^n\cdot k_1^n}{128\pi} &\times
  \left( {\begin{array}{c|c|c|c|c|c}
   (-, -) & (-,0) & (-,+) & (0, 0) & (0, +) & (+,+) \\
   \hline
    -1  & -8 & -2  & 8 & -8  & -1  
  \end{array} } \right) \\
   = - \frac{k_{-1}^n\cdot k_1^n}{192\pi} 
  & \times   
  \left( {\begin{array}{c|c|c|c|c|c}
   (-, -) & (-,0) & (-,+) & (0, 0) & (0, +) & (+,+) \\
   \hline
    3/2  & 12 & 3  & -12 & 12  & 3  
  \end{array} } \right)
    \end{split}.
\end{align}
Using linear coefficients (\ref{e.lcoe})
we obtain the matrix 
\begin{align}\label{e.L36}
    \begin{split}
        -\frac{k_{-1}^n\cdot k_1^n}{192\pi} \times 
        \left[
        \begin{array}{c|cccccc} 
        &  (-,-) & (0,0) & (+,+) & (-,0)& (-,+)& (0,+)  \\
        \hline
        (-,-)& 3/2 & 0 & 0 & 6 & -3/2 & -6  \\
        (0,0)& 0 & -12 & 0 & 6 & 3/2 & 6  \\
        (+,+) & 0 & 0 & 3/2 &  -6 & -3/2 & 6 
        \end{array}
        \right],\\
\end{split}
\end{align}
where, for each row, the sum over columns yields 
the final constant that appears in postulated formula, next to the relevant random integral. 
Similarly, for $r^{exp} \in (0,1]$ we obtain the matrix
\begin{align}
    \begin{split}
        -\frac{k_{-1}^n\cdot k_1^n}{192\pi} \times 
        \left[
        \begin{array}{c|ccccccc} 
        & (-,-) & (0,0) & (+,+) & (-,0) & (-,+)& (0,+)  \\
        \hline
        (-,-)& 3/2 & 0 & 0 & 0 & -3/2 & 0  \\
        (0,0)& 0 & -12 & 0 & 0& 3/2 & 0  \\
        (+,+) & 0 & 0 & 3/2 &  0 & -3/2 & 0  \\
        (-,0) & 0 & 0 & 0 &  12 & 0 & 0 \\
        (0,+) & 0 & 0 & 0 &  0 & 0 & 12 
        \end{array}
        \right],\\
    \end{split}
\end{align}
where the sum of each row plays the same role as in the case of the previous matrix. This completes the proof. 
\end{proof}

\appendix

\section{Appendix}\label{s.appendix}

\subsection{Bessel functions of the first kind}\label{ss.bffk}

We need the following two observations, as the discussed functions are used to define the covariance function of BRWM and to provide expressions for the covariance functions of its derivatives.

\begin{definition}\label{d.bffk}
	Bessel function of the first kind and real order $\alpha$ is a (particular) solution to Bessel's 
	differential equation (see \cite[10.2 (i) Bessel's equation, Eq. 10.2.1]{NIST:DLMF}) which takes
	the form 
	\begin{align}\label{e.d.bffk}
		\J_{\alpha}\left(r\right) & = \sum_{k=0}^{+\infty}\left(-1\right)^k\frac{\left(r/2\right)^{\left(2k+\alpha\right)}}{k!\Gamma\left(\alpha+1+k\right)}
		= \frac{(r/2)^{\alpha}}{\Gamma(\alpha+1)} - 
		\frac{(r/2)^{2+\alpha}}{2\Gamma(\alpha+2)} + \dots, \quad \alpha, r \in \mathbb{R}, 
	\end{align}
	(see \cite[10.2(ii) Bessel Function of the First Kind, Eq. 10.2.2]{NIST:DLMF}) and where $\Gamma$ denotes standard Euler Gamma function (see \cite[5.2 (i) Gamma and Psi Functions, Eq. 5.2.1]{NIST:DLMF}).
\end{definition}

\begin{remark}\label{r.prbf} The Bessel functions $\J_{\alpha}$ described in last definition enjoy the following properties:
	\begin{enumerate}
		\item(Uniform bound) If $\alpha \geq -1/2$ then for some constant $\KK\left(\alpha\right)$ (depending only
		on $\alpha$) and all $r>0$ we have 
		\begin{equation}
			\abs{\J_{\alpha}\left(r\right)} \leq r^{-1/2}\KK\left(\alpha\right),
		\end{equation}
		(see \cite[p. 167, Theorem 7.31.2]{Szego}).
		\item(Asymptotic forms) We have
		\begin{align}
			\J_{\alpha}\left(r\right) & \sim \frac{(r/2)^{\alpha}}{\Gamma(\alpha+1)}, \quad \alpha \neq
			-1, -2, ..., \quad r \downarrow 0, \label{C.1}\\
			\J_{\alpha}\left(r\right)& = r^{-1/2}\sqrt{\frac{\pi}{2}}\left[\cos\left(r-\frac{2\alpha+1}{4}\pi\right)
			+ o\left(1\right)\right], \quad \alpha \in \mathbb{R}, \quad r \uparrow \infty, \label{C.2}
		\end{align}
		where $o\left(1\right)$ denotes remainder converging to zero as $r$ diverges to infinity (see \cite[10.7 Limiting forms, Eq. 10.7.3, and Eq. 10.7.8, first form]{NIST:DLMF}).
		\item(Recurrence relations for derivatives) We have for every $\alpha \geq 0$ 
		\begin{equation}
			\frac{\partial}{\partial r}\J_{\alpha}\left(r\right) = - \J_{\alpha+1}\left(r\right)  +  \frac{\alpha}{r}\J_{\alpha}\left(r\right), \quad r \in \mathbb{R},
		\end{equation}
		(see \cite[10.6(i) Recurrence relations and derivatives, Eq. 10.6.2, second form]{NIST:DLMF}) and where the case $r=0$ should be understood by taking appropriate limit, which exists thanks to asymptotic form of $\J_{\alpha}$ at zero (as discussed in preceding point).
	\end{enumerate}
\end{remark}

The function $\rho_{\alpha}$ appearing in next definition is exactly the covariance function of BRWM on $\mathbb{R}^d$ provided that one makes a choice $\alpha = \alpha\left(d\right):=\left(d/2-1\right)$.

\begin{definition}\label{d.nbff}
	For $\alpha \geq 0$ we define the normalised Bessel function of the first kind and order $\alpha$ by formula
	\begin{equation}
		\rho_{\alpha}\left(r\right):= 
		2^{\alpha}\Gamma\left(\alpha+1\right)\J_{\alpha}\left(r\right)r^{-\alpha}, \quad r\neq 0,
	\end{equation}
	and by a smooth extension we set $\rho_{\alpha}(0):=1$. 
\end{definition}

The following easy lemma will be very useful in simplifying computations involving derivatives as it takes advantage of recurrences
inherited from standard Bessel functions $\J_{\alpha}$.

\begin{lemma}\label{l.dnbf} Let $\alpha \geq 0$ and $\rho_{\alpha}\left(r\right)$ be a normalised Bessel function
	described in preceding definition. 
	Then, for 
	$z \in \mathbb{R}^d\setminus\{0\}$ and $i,j \in \{1, ..., d\}$ 
	we have   
	\begin{align}\label{A7}
        \begin{split}
		\rho_{\alpha}'\left(r\right) &= \frac{\left(-r\right)}{2\left(\alpha+1\right)}\rho_{\alpha+1}\left(r\right), \qquad 
		\rho_{\alpha}''\left(r\right)  = \frac{r^2}{4\left(\alpha+1\right)\left(\alpha+2\right)}\rho_{\alpha+2}\left(r\right), \\
		\partial_i \rho_{\alpha}\left(\abs{z}\right) & = 
		\frac{\left(-z_i\right)}{2\left(\alpha+1\right)} \rho_{\alpha+1}\left(\abs{z}\right), \\
		\partial_{ij} \rho_{\alpha}\big(\abs{z}\big) & = \frac{-\delta_{ij}}{2\left(\alpha+1\right)}
		\rho_{\alpha+1}\left(\abs{z}\right) + 
		\frac{z_i z_j}{4\left(\alpha+1\right)\left(\alpha+2\right)} \rho_{\alpha+2}\left(\abs{z}\right).
        \end{split}
	\end{align}
\end{lemma}

\begin{proof} By standard recurrence property of Gamma
	function we have $\Gamma(\alpha+1) = \frac{\Gamma(\alpha+2)}{\alpha+1}$ 
	(see \cite[Eq. 10.29.2]{NIST:DLMF}). Moreover, 
	$\big[J_{\alpha}(r)r^{-\alpha}\big]' = 
	J_{\alpha}'(r)r^{-\alpha}-\alpha J_{\alpha}(r)r^{-(\alpha+1)}
	=  -\big[\frac{\alpha}{r}J_{\alpha}(r)-J_{\alpha}'(r)\big]r^{-\alpha}
	= -r\big[J_{\alpha+1}(r)r^{-(\alpha+1)}\big]$, with last equality 
	following by plugging in recurrence relationship for derivatives of Bessel functions that we recorded in point 3 of Remark \ref{r.prbf}. The first requested formula follows now by combining these two observations. The remaining expressions follow
	immediately by repeated application of the one already proved, in conjunction with chain rule, product rule and formula 
	$\partial_i \abs{z} = \frac{z_i}{\abs{z}}$. 
\end{proof}

\subsection{The covariance function of BRW and its derivatives}\label{ss.2pco} 

The BRW on $\mathbb{R}^d$, $d \geq 2$, of wavenumber $k$ can be defined as a stationary and isotropic Gaussian
field with the covariance function
\begin{equation}
	\mathbb{E}\pQ{b_k\pS{x}b_k\pS{y}} = \rho_{\alpha}\pS{k\abs{x-y}}, \quad \alpha = \alpha\pS{d}:= \frac{d}{2}-1, \quad
	k>0, \quad
	x,y \in \mathbb{R}^d,
\end{equation}
with  normalised Bessel function described in Definition \ref{d.nbff}. 
It enjoys the same regularity properties as the planar model and we will again 
set $b:= b_1$.  

\begin{definition}\label{d.2pco} 
We define the covariance functions associated with the random field $b$ and its derivatives using the following formulas:
\begin{equation}	
	r\pS{z}:= \mathbb{E}\pQ{b\pS{z} b\pS{0}}, \quad
	r_i\pS{z}:= \sqrt{d}\hspace{1 mm}\mathbb{E}\pQ{\partial_i b \pS{z}  b \pS{0}}, \quad
	r_{ij}\pS{z}:= d\hspace{1 mm}\mathbb{E}\pQ{\partial_i b \pS{z}  \partial_j b\pS{0}},
\end{equation}
	where $z \in \mathbb{R}^{d}$ and $i,j \in \left\{1, ..., d\right\}$.
\end{definition}

\begin{remark}\label{r.edi} 
	Since $\rho_{\alpha}\pS{\abs{x-y}}$ is a $C^{\infty}\pS{\mathbb{R}^n \times \mathbb{R}^n}$
	function, it follows by extension of classical Kolmogorov's continuity condition 
	\cite[p. 263, A.9. Kolmogorov's theorem]{NS16} that $b$ is almost surely smooth on $\mathbb{R}^n$. 
	This in turn implies that expectation can be exchanged with differentiation 
\begin{equation}
	\mathbb{E}\pQ{\frac{\partial^{\abs{\alpha}}}{\partial^{\alpha} x}b\pS{x}
	\frac{\partial^{\abs{\beta}}}{\partial^{\beta} y}b\pS{y}}
= \frac{\partial^{\abs{\alpha}+\abs{\beta}}}{\partial^{\alpha} x \partial^{\beta}y}
\mathbb{E}\pQ{b\pS{x}b\pS{y}}
= \frac{\partial^{\abs{\alpha}+\abs{\beta}}}{\partial^{\alpha} x \partial^{\beta}y}
\rho_{\alpha}\pS{\abs{x-y}}
\end{equation}
for any multi-indices $\alpha, \beta$, \cite[p. 253-254, A.3. Positive-definite kernels]{NS16}. 
	Thus, by Lemma \ref{l.dnbf} the covariance functions described in preceding Definition \ref{d.2pco}
	are given by formulas
	\begin{equation}
		r\left(z\right) = \rho_{\frac{\left(d-2\right)}{2}}\left(\abs{z}\right), \quad
		r_i\left(z\right) = \frac{\left(-z_i\right)}{\sqrt{d}} \rho_{\frac{d}{2}}\left(\abs{z}\right), \quad
		r_{ij}\left(z\right) = \delta_{ij}\rho_{\frac{d}{2}}\left(\abs{z}\right)
		- \frac{z_iz_j}{d+2} \rho_{\frac{d+2}{2}}\left(\abs{z}\right),
	\end{equation}
	where $\delta_{ij}$ denotes Kronecker's delta. When deriving these formulas it's important to note that,
	while $\sqrt{d}\hspace{1 mm}\mathbb{E}\pQ{\partial_i b \left(z\right) b\left(0\right)}=\partial_i\pQ{\rho_{\frac{\left(d-2\right)}{2}}\left(\abs{z}\right)}$, we have a change of sign in the last case $$d\hspace{1 mm}\mathbb{E}\pQ{\partial_i\left(z\right) b\left(z\right)\partial_j b\left(z\right)} = - \partial_{ij}\pQ{\rho_{\frac{\left(d-2\right)}{2}}\pS{\abs{z}}}.$$
\end{remark}

\subsection{Auxilliary computations}\label{ss.aac}

\begin{proof}[Proof of Lemma \ref{l.appr}]
	Let $(s_{-1}, s_{1})$ be any point in $\mathbb{R}^2$. In a complete analogy with the nodal number, we define
 \begin{align}
     \N_{s_{-1},s_{1}}(b_{k_{-1}},b_{k_1},\D) = \abs{\{x \in \D : b_{k_{-1}}(x)=s_{-1},\hspace{1 mm} b_{k_1}(x)=s_{1}\}}.
 \end{align}
 Since the boundary $\partial \D$ has Hausdorff dimension 1, we deduce using \cite[Theorem 11.2.10, p. 277]{AT09} that the pre-image $$b_{k_{-1}}^{-1}(\{s_{-1}\})\cap b_{k_1}^{-1}(\{s_1\})\cap\partial \D$$ is  a.s. empty.
	Furthermore, since $b_{k_{-1}}$, $b_{k_1}$ are $C^{\infty}(\mathbb{R}^2)$ independent Gaussian fields, it follows by \cite[p. 169, Proposition 6.5]{AW09} that 
	$(s_{-1},s_1)$ is a.s. non-singular value on $\D$. That is, 
	\begin{equation}\label{UPS3}
		\mathbb{P}\pS{\exists x \in \D:  \quad b_{k_{-1}}(x)= s_{-1}, \hspace{1 mm} b_{k_1}(x)=s_1, \quad \det \begin{bmatrix}
		    \partial_{-1} b_{k_{-1}}(x) & \partial_1 b_{k_{-1}}(x) \\ 
      \partial_{-1} b_{k_1}(x) & \partial_1 b_{k_1}(x) 
		\end{bmatrix}=0}=0.
	\end{equation} 
	Then, using a compactness argument
	exactly as in \cite[p. 162, lines 6-14 in the proof of Proposition 6.1]{AW09}, we deduce from (\ref{UPS3}) and from local inversion theorem that $\N_{s_{-1},s_1}\pS{b_{k_{-1}}, b_{k_1},\D}$ is a.s. finite. The postulated approximation formula
    (\ref{CCC}) holds almost surely, and in fact is exact for $\varepsilon$ small enough (depending on randomness),
	as a straightforward consequence of the local inversion theorem. It can be proved quickly by reducing to the case
	$\N\pS{b_{k_{-1}}, b_{k_1},\D}=1$ and detailed argument for almost identical problem can be found in \cite[p. 269-270, Theorem 11.2.3]{AT09}. Since we already have a.s. convergence, to show $\LL^2(\mathbb{P})$ 
    convergence it is enough to prove convergence of the moments
    \begin{equation}
        \lim_{\varepsilon \downarrow 0}\mathbb{E}\pQ{\N^{\varepsilon}\pS{b_{k_{-1}},b_{k_1},\D}^2} = \mathbb{E}\pQ{\N\pS{b_{k_{-1}},b_{k_1},\D}^2},
    \end{equation}
    including finitness of the right-hand side. 
    Note that by the standard area formula
    \cite[p. 161, Proposition 6.1]{AW09} we have
    \begin{equation}
        \N^{\varepsilon}\pS{b_{k_{-1}}, b_{k_1},\D} = 
     \frac{1}{(2\varepsilon)^2}\int_{-\varepsilon}^{\varepsilon}\int_{-\varepsilon}^{\varepsilon}
        \N_{s_{-1},s_1}\pS{b_{k_{-1}}, b_{k_1},\D} ds_{-1} ds_1.
    \end{equation}
    Thus, using Fatou's lemma and Jensen's inequality, we can obtain 
    \begin{align}
        \begin{split}
            \mathbb{E}\pQ{\N\pS{b_{k_{-1}}, b_{k_1},\D}^2} & \leq  \limsup_{\varepsilon \downarrow 0}\mathbb{E}\pQ{ \frac{1}{(2\varepsilon)^2}\int_{-\varepsilon}^{\varepsilon}\int_{-\varepsilon}^{\varepsilon}
        \N_{s_{-1},s_1}\pS{b_{k_{-1}}, b_{k_{1}},\D} ds_{-1} ds_1}^2 \\
        & \leq \limsup_{\varepsilon\downarrow 0}
        \frac{1}{(2\varepsilon)^2}\int_{-\varepsilon}^{\varepsilon}\int_{-\varepsilon}^{\varepsilon}
        \mathbb{E}\pQ{\N_{s_{-1},s_1}\pS{b_{k_{-1}}, b_{k_1},\D}^2} ds_{-1} ds_1.
        \end{split}
    \end{align}
    To conclude it is enough to conclude that the application $(s_{-1},s_1) \to \mathbb{E}\N_{s_{-1},s_1}\pS{b_{k_{-1}}, b_{k_1},\D}^2$ is continuous (and bounded) at zero. This can be proved using 
    the standard Kac-Rice formulas \cite[p. 163-164, Theorems 6.2 and 6.3]{AW09} and the same strategy as in \cite[p. 141]{NPR19}.
\end{proof}

\subsection{Random generalised functions}\label{ss.rdis}
 
Due to multitude of approaches existing in the literature
and for the sake of clarity, we gather here basic information
about the notion of random generalised functions, as used
in this article:
 
 \begin{enumerate}
     \item We start by recalling some 
background material using \cite[p. 698-703, Appendix L]{talagrand_2022}
as a reference. The Schwartz space $\mathcal{S}(\mathbb{R}^n)$ consists of the infinitely 
differentiable functions such that all their partial
derivatives vanish at infinity faster than the reciprocal of any polynomial. 
In other words, $\varphi \in \mathcal{S}(\mathbb{R}^n)$ if
$\varphi \in C^{\infty}(\mathbb{R}^n)$ and if all
its semi-norms 
\begin{align}
    \begin{split}
        ||\varphi ||_k = \sup_{x \in \mathbb{R}^n}(1+|x|)^k \sum_{\abs{\alpha} \leq k} \abs{\frac{\partial^{|\alpha|}}{\partial x_1^{\alpha_1}\cdots \partial x_n^{\alpha_n}}\varphi(x)}
    \end{split}
\end{align}
are finite. Real-valued linear functional on the Schwartz space is called generalised function (or tempered distribution)
if it is a continuous (equivalently, bounded) operator for one of the semi-norms $||\cdot ||_k$. In other words, generalised functions
are elements of the topological dual $\mathcal{S}(\mathbb{R}^n)'$ to the Schwartz space (equipped with one of the semi-norms defined above).
\item Random distributions are random variables in
the sense of classical but very general definition 
given in \cite[p. 14, Def. I.4.1]{Fer67}. As explained
after \cite[p. 60-61, Definition III.4.1]{Fer67},
in our case this definition means simply that 
$(\omega, \varphi) \mapsto X(\omega, \varphi)$ is a random distribution if 
and only if each map $\omega \mapsto X(\omega, \varphi)$ is a real-valued
random variable and if each functional $\varphi \mapsto X(\omega, \varphi)$
is a tempered distribution. This definition depends tacitly on 
the topology chosen for the dual space $S(\mathbb{R}^2)'$.
The weak topology on $S(\mathbb{R}^2)'$ is determined by a pointwise convergence 
for each test function (that is, $T_n \to T$ weakly if $T_n(\varphi) \to T_n$
for every Schwartz test function $\varphi$). The strong topology is determined by 
condition that this convergence is uniform over every 
bounded set of test functions $B$, see \cite[p. 71, 3 L'espace topolgique des distributions]{S66}.
As follows from \cite[p. 69, Thm. IV]{S66}, boundedness of the set $B$ of test functions
is equivalent to two simple conditions. The first one is that every $\varphi \in B$
has support contained in the same compact domain $\KK$. The second one is
that for each $m \in \mathbb{N}$ we can find a finite constant $\LL_m$
 such that 
\begin{align}
    \begin{split}
        \sup_{x, \varphi, \alpha} \abs{\frac{\partial^{|\alpha|}}{\partial x_1^{\alpha_1}\cdots \partial x_n^{\alpha_n}}\varphi(x)}  \leq \LL_m,
    \end{split}
\end{align}
where $\varphi \in B$, $x\in \mathbb{R}^n$ and multi-indices $\alpha$ have norms $\leq m$.

\item The definition of a probability distribution for a random
generalised function and the corresponding notion of convergence
in law is given in a way which is completely
analogous to the standard notions. We refer the reader to \cite[p. 21, I.6.2 Convergence etroite]{Fer67} and \cite[p. 61, III.4.2 Lois de distributions aleatoires]{Fer67} 
for technical details. 
\item By white noise we mean a random distribution $W$
such that for any $\varphi_1, \cdots, \varphi_n \in \mathcal{S}(\mathbb{R}^d)$ the random vector
$\langle W, \varphi_1 \rangle, \cdots, \langle W, \varphi_n \rangle$  has a centred Gaussian
distribution with covariance function
\begin{align}
    \begin{split}
        \mathbb{E}\pQ{\langle W, \varphi_i \rangle \langle W, \varphi_j \rangle } & = \int_{\mathbb{R}^d \times \mathbb{R}^d} \varphi_{i}(x)\varphi_{j}(y)\delta(x-y)dxdy \\
       & = \int_{\mathbb{R}^d}\varphi_i(x)\varphi_j(x)dx,
    \end{split}
\end{align}
see \cite[p. 288-289, 4.8 Gaussian processes with independent values at every point]{Gelfand64}. As we exploit in the proof of Theorem \ref{t.whc}, white noise can be seen as a random distributional derivative of 
the Wiener sheet \cite[p. 257, 2.4 Derivatives of generalised gaussian processes]{Gelfand64}.
 \end{enumerate}

\bibliographystyle{alpha}
\bibliography{references}

\end{document}